\newtheorem{theorem}{Theorem}[section]
\newtheorem{lemma}[theorem]{Lemma}
\newtheorem{proposition}[theorem]{Proposition}
\newtheorem{corollary}[theorem]{Corollary}
\theoremstyle{definition}
\newtheorem{definition}[theorem]{Definition}
\newtheorem{example}[theorem]{Example}
\newtheorem{remark}[theorem]{Remark}
\begin{document}

\title[Closures and co-closures]{Closures and co-closures attached to FCP ring extensions}

\author[G. Picavet and M. Picavet]{Gabriel Picavet and Martine Picavet-L'Hermitte}
\address{Math\'ematiques \\
8 Rue du Forez, 63670 - Le Cendre\\
 France}
\email{picavet.mathu (at) orange.fr}

\begin{abstract} The paper deals with ring extensions $R\subseteq S$ and the poset $[R,S]$ of their subextensions, with a special look at FCP extensions (extensions such that $[R,S]$ is Artinian and Noetherian).  When the extension has FCP, we show that there exists a co-integral closure, that is a least element $\underline R$ in $[R,S]$ such that $\underline R \subseteq S$ is integral. Replacing the integral property by the integrally closed property, we are able to prove a similar result for an FCP extension. 
 The radicial closure of $R$ in $S$ is well known. We are able to exhibit a suitable separable closure of $R$ in $S$ in case the extension has FCP, and then results are similar to those of field theory. The FCP property being always guaranteed, we discuss when an extension has a co-subintegral or a co-infra-integral closure. Our theory is made easier by using anodal extensions. These (co)-closures exist for example when the extension is catenarian, an interesting special case for the study of distributive extensions to appear in a forthcoming paper. 
\end{abstract} 

\subjclass[2010]{Primary: 13B02,  13B22; Secondary: 13B40}

\keywords  {FCP extension, minimal extension, integral extension, Pr\"ufer extension, closure, co-closure}

\maketitle

\section{Introduction and Notation}

We consider the category of commutative and unital rings, whose    epimorphisms will be involved. If $R\subseteq S$ is a (ring) extension, we denote by $[R,S]$ the set of all $R$-subalgebras of $S$. We set $]R,S[: =[R,S]\setminus\{R,S\}$ (with a similar definition for $[R,S[$ or $]R,S]$). 
Then, $[R,S]$ endowed with the partial order $\subseteq$ is a lattice,  called  the lattice of the extension. 

Now, $(R:S)$ is the conductor of the extension $R\subseteq S$. The integral closure of $R$ in $S$ is denoted by $\overline R^S$ (or by $\overline R$ if no confusion can occur).

The aim of this paper is to introduce or revisit various relative closures or  hulls associated to ring extensions. But we will mainly work in the context of FCP or FIP extensions, whose meanings are defined below. We emphasize that these restricted contexts produce results that may not be valid in an arbitrary context. Actually, some of them are established because we need them in forthcoming papers, for example when we characterize extensions, whose lattices  are distributive. Note also that many of them have their own interest.

The extension $R\subseteq S$ is said to have FIP (for the ``finitely many intermediate algebras property") or is an FIP  extension if $[R,S]$ is finite. A {\it chain} of $R$-subalgebras of $S$ is a set of elements of $[R,S]$ that are pairwise comparable with respect to inclusion.   
 We will say that $R\subseteq S$ is {\it chained} if $[R,S]$ is a chain. We  also say that the extension $R\subseteq S$ has FCP (or is an FCP extension) if each chain in $[R,S]$ is finite, or equivalently, its lattice  is Artinian and Noetherian. Clearly, each extension that satisfies FIP must also satisfy FCP. Dobbs and the authors characterized FCP and FIP extensions \cite{DPP2}. 

Our main tool are the minimal (ring) extensions, a concept that was introduced by Ferrand-Olivier \cite{FO}. Recall that an extension $R\subset S$ is called {\it minimal} if $[R, S]=\{R,S\}$. An extension $R\subseteq S$ is called {\it simple} if $S=R[t]$ for some $t\in S$.     
A minimal extension is simple. The key connection between the above ideas is that if $R\subseteq S$ has FCP, then any maximal (necessarily finite) chain $\mathcal C$ of $R$-subalgebras of $S$, $R=R_0\subset R_1\subset\cdots\subset R_{n-1}\subset R_n=S$, with {\it length} $\ell(\mathcal C):=n <\infty$, results from juxtaposing $n$ minimal extensions $R_i\subset R_{i+1},\ 0\leq i\leq n-1$. An FCP extension is finitely generated, and (module) finite if integral. For any extension $R\subseteq S$, the {\it length} $\ell[R,S]$ of $[R,S]$ is the supremum of the lengths of chains of $R$-subalgebras of $S$. Notice that if $R\subseteq S$ has FCP, then there {\it does} exist some maximal chain of $R$-subalgebras of $S$ with length $\ell[R,S]$ \cite[Theorem 4.11]{DPP3}. 
 An FCP extension $R\subset S$ is called {\it catenarian} if all the maximal chains of $[R,S]$ have the same length. We study such extensions in \cite{Pic 12}.
 
Recall that an extension $R\subseteq S$ is called {\it Pr\"ufer} if $R\subseteq T$ is a flat epimorphism for each $T\in[R,S]$ (or equivalently, if $R\subseteq S$ is a normal pair) \cite[Theorem 5.2, p. 47]{KZ}. It follows that a Pr\"ufer integral extension is an isomorphism. The {\it Pr\"ufer hull} of an extension $R\subseteq S$ is the greatest {\it Pr\"ufer} subextension $\widetilde R$ of $[R,S]$ \cite{Pic 3}. 
  In \cite{Pic 5}, we defined an extension $R\subseteq S$ to be {\it quasi-Pr\"ufer} if it can be factored $R\subseteq R'\subseteq S$, where $R\subseteq R'$ is integral and $R'\subseteq S$ is Pr\"ufer. An FCP extension is quasi-Pr\"ufer \cite[Corollary 3.4]{Pic 5}. 
     
 An extension $R\subseteq S$ is called {\it almost-Pr\"ufer} if $\widetilde R\subseteq S$ is integral, or equivalently, when $R\subseteq S$ is FIP, if $S=\widetilde R\overline R$ \cite[Theorem 4.6]{Pic 5}. An almost-Pr\"ufer extension  is quasi-Pr\"ufer.

\subsection{A summary of the main results} Any undefined material
 is explained in the next sections and at the end of this section, where the reader may find standard results on minimal extensions. 

Ring extensions admit numerous closures and we add in this paper some more. One of our aims is to define dual closures, for example like the co-integral closure of a ring extension.  These dual closures exist for FCP extensions but the reader is warned that they do not for arbitrary extensions. We think that the main reason is that FCP extensions have Artinian and Noetherian lattices although   proofs   most of time rely on induction over tower of minimal extensions. 

For a ring extension $R\subseteq S$, Section 2 recalls some known facts about the seminormalization and the $t$-closure of $R$ in $S$. We also recall less known facts about the $u$-closure of $R$ in $S$, that can be found in a comprehensive study of the first author \cite{Pic 0}. Also some technical results about these closures, that are needed in the sequel, are established in the FCP context. We have postponed deeper results on the $u$-closure in Section 5, because it is interwoven with another closure. Theorem \ref{1.313} shows that the t-closure of an integral FCP extension is the composite of its u-closure and its seminormalization. 
  
We show that an FCP extension $R\subseteq S$ has a co-integral closure $\underline R$ containing $\widetilde R$, with equality if and only if the extension is almost-Pr\"ufer (Theorems \ref{0.19} and \ref{0.21}). We give an FIP example of an almost-Pr\"ufer extension (Example \ref{0.23}). 

In Section 3, we consider the elements $T\in[R,S]$ such that $R\subseteq T$ is integrally closed. Evidently the Pr\"ufer hull of $R$ in $S$ is such an element. We are interested in extensions that have a greatest integrally closed subextension, for example if the extension is chained. Another example is a quasi-Pr\"ufer extension (for example, an FCP extension). In some cases, this greatest element $T$ verifies $T\subseteq S$ is integral, and could have been considered as a co-integral closure, but it is not always the case. Actually the right co-integral closure is defined in Section 3.

We consider in Section 4 some closures linked to the properties of their residual field extensions. Kubota introduced the radicial closure of a ring extension \cite{KU}, reconsidered by Manaresi under another name \cite{MA}. The second author considered a weaker closure \cite{Pic 14}.  Another type of ring extensions, that are linked to residual extensions, are the unramified extensions of Algebraic Geometry. In order to unify these concepts, if $\mathcal P$ is a property of field extensions, we say that an extension is a $\kappa$-$\mathcal P$ extension if all its residual extensions verify $\mathcal P$. For example a radicial extension is a $\kappa$-radicial $i$-extension. As a first result we show that an unramified radicial FCP integral extension is trivial (Proposition \ref{SEP}). We give some properties of unramified extensions in the FCP context. We introduce a new concept: the $\kappa$-separable extensions, within the framework of integral extensions. This concept is more tractable than the unramified property. They coincide in some cases: an integral $t$-closed FCP extension is unramified if and only if it is $\kappa$-separable (Corollary \ref{RAM1}). An important result is that an FCP integral extension $R\subseteq S$ admits a $\kappa$-separable closure ${}^\square_SR$ such that ${}^\square_SR \subseteq S$ is radicial (Theorem \ref{1.20} and Corollary \ref{1.21}). 

In Section 5, we give properties of $u$-integral extensions and anodal ($u$-closed)  extensions, valid in the FCP context. For example, an FCP integral extension is $u$-integral if and only if it is locally an epimorphism, in which case the extension is unramified (Theorem \ref{1.320}). Moreover, an integral FCP extension is anodal if and only if it is an $i$-extension (Proposition \ref{1.311}). We also examine the links between the $u$-closure and the other closures or properties previously considered. We then introduce the unramified closure ${}^\omega_SR$ of a ring extension $R\subseteq S$ and show that ${}^\omega_SR \subseteq S$ is $u$-closed in case the extension is integral and has FCP (Proposition  \ref{1.14}). Results of Section 5  about the radicial property may not hold. 

In Section 6, we consider dual closures of the seminormalizations and $t$-closures of  ring extensions. This may seem weird, but in a forthcoming paper on distributive FCP extensions, these new closures are an essential tool. The reader is warned that these closures may not exist but exist in the context we have to consider. A co-subintegral closure of a ring extension $R\subseteq S$ is the least $T\in[R,S]$ such that $T\subseteq S$ is subintegral. The co-infra-integral closure is similarly defined. We give some generalities about these closures and their properties. For example,  an infra-integral FCP extension admits a co-subintegral closure. We then compare all the closures we have introduced with these dual closures. Now considering an integral FCP extension $R\subseteq S$ and setting $T=\cap[U\in[R,S]\mid U\subseteq S$ subintegral$]$, we get that the co-subintegral closure exists if and only if $T\subseteq S$ is catenarian (resp.; subintegral), in which case $T$ is the co-subintegral closure (Theorem \ref{1.425}). As a corollary, we have that if in addition the extension is catenarian, the co-subintegral closure exists. We have similar results for the co-infra-integral closure. But the proofs are much more involved. The rest of the paper exhibits examples where these dual closures   exist or not.

Section 7 explores the cardinality of lattices of integral FIP extensions.
Some results are linked to co-subintegral closures and complements. 
 A striking result is Theorem \ref{B5.8} which says that if  $R\subset S$ is an FIP integral  extension over the local ring $R$, then  $|[R,S]|$ is the sum of the number of the complements of the t-closure ${}_S^tR$ in all extensions $R'\subseteq S'$, for each $(R',S')\in[R,{}_S^tR]\times[{}_S^tR,S]$.
   
\subsection{Definitions   and notation} 
A {\it local} ring is here what is called elsewhere a quasi-local ring. As usual, Spec$(R)$ and Max$(R)$ are the set of prime and maximal ideals of a ring $R$. 
 For an extension $R\subseteq S$ and an ideal $I$ of $R$, we write  
  $\mathrm{V}_S(I):=\{P\in\mathrm{Spec}(S)\mid I\subseteq P\}$ and $\mathrm D_ S(I)$ for its complement.
  The support of an $R$-module $E$ is $\mathrm{Supp}_R(E):=\{P\in\mathrm{Spec}(R)\mid E_P\neq 0\}$, and $\mathrm{MSupp}_R(E):=\mathrm{Supp}_R(E)\cap\mathrm{Max}(R)$. 
  We denote by ${\mathrm L}_R(E)$ the length of $E$ as an $R$-module.
  
When $R\subseteq S$ is an extension, we will set $\mathrm{Supp}(T/R):=\mathrm{Supp}_R(T/R)$ and $\mathrm{Supp}(S/T):=\mathrm{Supp}_R(S/T)$ for each $T\in [R,S]$, unless otherwise specified. 
  
Let $R\subset S$ be a ring extension. For $T\in[R,S]$, an element $U\in[R,S]$ is called a {\it complement} of $T$ if $T\cap U=R$ and $TU=S$. If $T$ has a unique complement, this complement is denoted by $T^o$. 
 
If $R\subset S$ is an extension and $T\in]R,S[$, we say that the extension {\it splits} at $T$, if $\mathrm{MSupp}(S/T)\cap\mathrm{MSupp}(T/R)=\emptyset$. In \cite[Theorem 4.8]{Pic 15}, we proved that if an FCP extension splits at $T$, then $T$ has a unique complement $T^o$.
 
Let $R\subset S$ be an FCP extension with 
 $\mathrm{MSupp}(S/R):=\{M_i\}_{i=1}^n$. In \cite[the paragraph before Proposition 2.21]{Pic 10}, we say that $R\subset S$ is a {\it$\mathcal B$-extension} if the map $\varphi:[R,S]\to\prod_{i=1}^n[R_{M_i},S_{M_i}]$ defined by $\varphi(T):=(T_{M_i})_{i=1}^n$ is bijective. In this case, $\varphi$ is an order isomorphism. An integral FCP extension is a $\mathcal B$-extension \cite[Theorem 3.6]{DPP2}. If, in addition, $R\subset S$ has FIP, then $|[R,S]|=\prod_{i=1}^n|[R_{M_i},S_{M_i}]|$.
  
If $R\subseteq S$ is a ring extension and $P\in\mathrm{Spec}(R)$, then $S_P$ is both the localization $S_{R\setminus P}$ as a ring and the localization at $P$ of the $R$-module $S$. We denote by $\kappa_R(P)$ the residual field $R_P/PR_P$ at $P$. If $R\subseteq S$ is a ring extension and $Q\in\mathrm{Spec}(S)$, there exists a residual field extension $\kappa_R(Q\cap R)\to\kappa_S(Q)$. Moreover, for $P\in \mathrm{Spec}(R)$, the ring morphism $\kappa(P)\to\kappa(P)\otimes_RS=S_P/PS_P$ is called the fiber morphism at $P$, whose target is the fiber of  the extension at $P$.

A ring extension $R\subseteq S$ is called an {\it i-extension} if the natural map $\mathrm{Spec}(S)\to\mathrm{Spec}(R)$ is injective. 

Finally, $|X|$ is the cardinality of a set $X$, $\subset$ denotes proper inclusion and for a positive integer $n$, we set $\mathbb{N}_n:=\{1,\ldots,n\}$. The characteristic of an integral domain $k$ is denoted by $\mathrm{c}(k)$. 

\subsection{Results on minimal extensions}
 
\begin{definition}\label{crucial 1}\cite[Definition 2.1]{Pic 15}  An extension $R\subset S$ is called {\it $M$-crucial} if $\mathrm{MSupp}(S/R)=\{M\}$. Such a maximal ideal $M$ is called the {\it crucial (maximal) ideal} $\mathcal{C}(R,S)$ of $R\subset S$. 
 \end{definition}

\begin{theorem}\label{crucial}\cite[Th\'eor\`eme 2.2]{FO} A minimal extension is simple, crucial and is either integral (finite) or a flat epimorphism.
\end{theorem}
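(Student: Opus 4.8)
The displayed statement is the Ferrand--Olivier classification of minimal extensions, so I would reconstruct its proof along the classical lines. Simplicity is immediate: for any $t\in S\setminus R$ one has $R[t]\in[R,S]$ with $R[t]\neq R$, so minimality gives $S=R[t]$. The coarse dichotomy is also essentially forced: the integral closure $\overline R$ of $R$ in $S$ lies in $[R,S]$, hence equals $R$ or $S$. If $\overline R=S$ the extension is integral, and since $S=R[t]$ is generated over $R$ by a single integral element it is module-finite, which is the ``integral (finite)'' alternative. Thus the genuine content is concentrated in two points: (A) if $\overline R=R$, i.e.\ $R$ is integrally closed in $S$, then $R\subseteq S$ is a flat epimorphism; and (B) crucialness, that is $|\mathrm{MSupp}(S/R)|=1$.

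Two general mechanisms make minimality usable. First (localization): for every $P\in\mathrm{Spec}(R)$ the map $[R,S]\to[R_P,S_P]$, $T\mapsto T_P$, is surjective, because any $U\in[R_P,S_P]$ equals $(U\cap S)_P$ with $U\cap S:=\{s\in S: s/1\in U\}\in[R,S]$; hence whenever $R_P\neq S_P$ the extension $R_P\subset S_P$ is again minimal. Since flatness is local and $(S\otimes_RS)_P=S_P\otimes_{R_P}S_P$, for (A) I may localize at a supporting maximal ideal and assume $(R,M)$ local with $R\neq S=R[t]$, $t\notin R$, $R$ integrally closed in $S$. Second (ideal trick): for every ideal $\mathfrak a$ of $S$ the set $R+\mathfrak a$ is an $R$-subalgebra of $S$ (as $(R+\mathfrak a)^2\subseteq R+\mathfrak a$), so $R+\mathfrak a$ is $R$ or $S$; equivalently, every ideal of $S$ is contained in $(R:S)$ or generates $S$ together with $R$.

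For crucialness (B) I would treat the integral case first, where it is clean. There $S/R$ is a finitely generated $R$-module and $\mathrm{Supp}(S/R)=\mathrm V_R\bigl((R:S)\bigr)$, a closed set. If it were not a single maximal ideal, then after localizing at a maximal ideal in it and invoking mechanism (i) one reaches a local ring $R$ with $R\neq S$, $S$ module-finite over $R$, minimal, and $\mathrm{Supp}(S/R)\supsetneq\{M\}$. But mechanism (ii) applied to $\mathfrak a=MS$ gives $R+MS\in\{R,S\}$: if $R+MS=R$ then $M\subseteq MS\subseteq(R:S)\subseteq R$, so $(R:S)=M$ and $\mathrm V_R((R:S))=\{M\}$, a contradiction; if $R+MS=S$ then $S/R=M(S/R)$ with $S/R$ finitely generated over the local ring $R$, so Nakayama forces $S=R$, again absurd. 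Hence $\mathrm{MSupp}(S/R)=\{M\}=\mathcal C(R,S)$ in the integral case. In the remaining (flat-epimorphism) case one argues from the description $\mathrm{Supp}(S/R)=\mathrm{Spec}(R)\setminus\mathrm{Im}\bigl(\mathrm{Spec}(S)\to\mathrm{Spec}(R)\bigr)$, valid because flat epimorphisms are ``localization-like'', reducing again to $R$ local and using (ii) on a suitable ideal of $S$ to rule out a second surviving prime; this is where extra care is required, since the two cases behave quite differently.

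Finally, for (A): with $(R,M)$ local, $S=R[t]$, $t\notin R$, $R$ integrally closed in $S$, $R\neq S$, I would apply (ii) to $\mathfrak a=MS$. One shows $MS\subseteq(R:S)$ is impossible — it would put an ideal of $S$ inside $R$ large enough to conflict with $t$ being a non-integral generator over the local, integrally closed $R$, in effect manufacturing either an integral dependence for $t$ or a proper intermediate subalgebra — so $MS=S$, i.e.\ the fibre $\kappa_R(M)\otimes_RS$ vanishes. Combining $MS=S$ with crucialness (so $R_P=S_P$ for every $P\neq M$) identifies $R\subseteq S$ as a normal pair, hence a flat epimorphism, and globalizing recovers the assertion for the original extension. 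The main obstacle is precisely this last step: minimality by itself supplies almost no structure beyond the ideal-theoretic constraints (i)--(ii), and extracting the full flat-epimorphism (equivalently Pr\"ufer/normal-pair) behaviour from the bare facts that $[R,S]=\{R,S\}$ and $R$ is integrally closed in $S$ is the genuine technical core, exactly as in Ferrand and Olivier's original argument.
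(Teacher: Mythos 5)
The paper gives no proof of this statement: Theorem~\ref{crucial} is quoted verbatim from Ferrand--Olivier \cite{FO}, so the only benchmark is the classical argument you are trying to reconstruct. The routine parts of your proposal are correct and standard: simplicity, the dichotomy via $\overline R\in[R,S]$, the surjectivity of $[R,S]\to[R_P,S_P]$, and the $R+\mathfrak a$ trick. One repair is needed in the integral case of crucialness: your reduction (``localize at a maximal ideal of the support and assume the localized support strictly contains $\{M\}$'') breaks down when $\mathrm{Supp}(S/R)$ consists of two incomparable maximal ideals, since then each localization has support a single point and no contradiction appears. Run the trick globally instead: for $M\in\mathrm{MSupp}(S/R)$, Nakayama applied after localizing at $M$ excludes $R+MS=S$, hence $MS\subseteq R$, so $(R:S)=M$ and $\mathrm{Supp}(S/R)=\mathrm V_R(M)=\{M\}$; this settles uniqueness in all cases at once.

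The genuine gap is the non-integral alternative, i.e.\ exactly the part you yourself flag as the technical core. As written the argument is circular: crucialness in the flat case is deduced from the ``localization-like'' behaviour of flat epimorphisms, i.e.\ from (A), while (A) is concluded by ``combining $MS=S$ with crucialness''. Moreover the decisive implication -- from $R$ local and integrally closed in the minimal extension $S=R[t]$ with $MS=S$ to ``flat epimorphism'' -- is never argued: the phrase ``identifies $R\subseteq S$ as a normal pair, hence a flat epimorphism'' either leaves the implication (normal pair $\Rightarrow$ flat epimorphism) unproved -- this is the substantial equivalence the paper quotes as \cite[Theorem 5.2, p.~47]{KZ} -- or, if that theorem is taken as known, it trivializes your whole treatment of (A): a minimal extension with $R$ integrally closed in $S$ has $[R,S]=\{R,S\}$, hence is a normal pair for free, hence Pr\"ufer and in particular a flat epimorphism, with no need of $MS=S$ or of crucialness. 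Finally, the exclusion of $MS\subseteq(R:S)$ is only gestured at; it can be made precise (if $MS\subseteq R$ then $(R:S)=M$, and $R/M\subset S/M$ is a minimal extension of a field in which $R/M$ is integrally, hence algebraically, closed, so $S/M\cong (R/M)[X]$ is a polynomial ring, contradicting minimality because of $(R/M)[X^2]$), but as stated it is a handwave. So your proposal reproduces the easy half of Th\'eor\`eme 2.2 of \cite{FO} and leaves the hard half -- the flat-epimorphism alternative together with its crucialness -- unproved.
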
 

In \cite{Pic 5}, a minimal flat epimorphism is called a {\it Pr\"ufer minimal} extension. An FCP Pr\"ufer extension has FIP and is a tower of finitely many Pr\"ufer minimal extensions \cite[Proposition 1.3]{Pic 5}.

Three types of minimal integral extensions exist, characterized in the next theorem, (a consequence of the fundamental lemma of Ferrand-Olivier), whence four types of minimal extensions exists and are mutually exclusive.

\begin{theorem}\label{minimal} \cite [Theorems 2.2 and 2.3]{DPP2} Let $R\subset T$ be an extension and  $M:=(R: T)$. Then $R\subset T$ is minimal and finite if and only if $M\in\mathrm{Max}(R)$ and one of the following three conditions holds:

\noindent (a) {\bf inert case}: $M\in\mathrm{Max}(T)$ and $R/M\to T/M$ is a minimal field extension.

\noindent (b) {\bf decomposed case}: There exist $M_1,M_2\in\mathrm{Max}(T)$ such that $M=M _1\cap M_2$ and the natural maps $R/M\to T/M_1$ and $R/M\to T/M_2$ are both isomorphisms; or, equivalently, there exists $q\in T\setminus R$ such that $T=R[q]$ and $q^2-q\in M$.

\noindent (c) {\bf ramified case}: There exists $M'\in\mathrm{Max}(T)$ such that ${M'}^2\subseteq M\subset M',\  [T/M:R/M]=2$, and the natural map $R/M\to T/M'$ is an isomorphism;
 or, equivalently, there exists $q\in T\setminus R$ such that $T=R[q]$ and $q^2\in M$.

In each of the above cases, $M=\mathcal{C}(R,T)$.
\end{theorem}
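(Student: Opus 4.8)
The plan is to reduce the whole statement to the classification of minimal ring extensions of a \emph{field}, which is the content of the fundamental lemma of Ferrand--Olivier, and then to transport that classification back along the conductor. The pivotal elementary remark is that $M:=(R:T)$ is, by its very definition, the largest ideal of $R$ that is also an ideal of $T$; hence $MT=M$ (as $1\in T$), the map $U\mapsto U/M$ is an order isomorphism $[R,T]\to[R/M,T/M]$ (its inverse sending $\overline{U}$ to its preimage in $T$, which lies between $R=R+M$ and $T$), and $M\cdot(T/R)\subseteq MT=M\subseteq R$ forces $M\cdot(T/R)=0$, so $T/R$ is a nonzero $R/M$-module. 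In particular, once $M$ is known to be a maximal ideal --- as it is, being so by hypothesis in one direction of the equivalence and by the fundamental lemma in the other --- we get $\mathrm{Supp}_R(T/R)=\{M\}$, hence $M=\mathcal C(R,T)$, which takes care of the final sentence of the statement; and $R\subset T$ is minimal if and only if $R/M\subset T/M$ is a minimal extension of the field $k:=R/M$.

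\emph{Necessity.} Suppose $R\subset T$ is minimal and finite; by Theorem~\ref{crucial} it is crucial. The integral case of the fundamental lemma of Ferrand--Olivier (see \cite{FO}, and \cite[Section~2]{DPP2}) gives $M=(R:T)\in\mathrm{Max}(R)$, $MT=M$, and that the minimal $k$-algebra $B:=T/M$ has one of three shapes: (i) $B$ is a field, hence a minimal field extension of $k$; (ii) $B\cong k\times k$; (iii) $B\cong k[X]/(X^{2})$. One then reads off (a), (b), (c) by pulling these shapes back through $T\twoheadrightarrow B$ together with $[R,T]\cong[R/M,T/M]$: shape (i) is exactly ``$M\in\mathrm{Max}(T)$ and $R/M\to T/M$ a minimal field extension''; shape (ii) yields the two maximal ideals $M_{1},M_{2}$ of $T$ lying over $M$ with $R/M\to T/M_{i}$ isomorphisms, and, lifting the idempotent $(1,0)$ of $B$, an element $q\in T\setminus R$ with $T=R[q]$ and $q^{2}-q\in M$; shape (iii) yields the unique maximal ideal $M'$ of $T$ over $M$ with ${M'}^{2}\subseteq M\subset M'$, $[T/M:R/M]=2$ and $R/M\cong T/M'$, and, lifting $\overline{X}$, an element $q\in T\setminus R$ with $T=R[q]$ and $q^{2}\in M$.

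\emph{Sufficiency.} Suppose $M:=(R:T)\in\mathrm{Max}(R)$ and one of (a), (b), (c) holds. First, $T$ is module-finite over $R$: in (b) and (c) the displayed $q$ is integral over $R$ since $q^{2}-q\in M\subseteq R$, resp.\ $q^{2}\in M\subseteq R$, and $T=R[q]$; in (a) choose $q\in T$ whose image generates the field extension $k\subseteq T/M$ --- since $M$ is an ideal of $T$, the ring $R[q]/M$ is a $k$-subalgebra of $T/M$ containing $\overline{q}$, hence equals $T/M$, so $R[q]+M=T$ and thus $R[q]=T$, while lifting to $R$ a monic polynomial killing $\overline{q}$ over $k$ shows $q$ is integral over $R$. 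In every case $T=R[q]$ with $q$ integral, so $T$ is finite over $R$. Finally $R\subset T$ is minimal: by the correspondence it suffices that $T/M$ has no $k$-subalgebra other than $k$ and $T/M$, which is clear because in case (a) $T/M$ is a finite field extension of $k$ with no intermediate field and any intermediate ring of a finite field extension, being a domain of finite $k$-dimension, is a field, while in cases (b), (c) one has $\dim_{k}(T/M)=2$ --- via $M=M_{1}\cap M_{2}$ and the Chinese Remainder Theorem, resp.\ ${M'}^{2}\subseteq M$ and $T/M'\cong k$ --- so that every intermediate $k$-subalgebra has $k$-dimension $1$ or $2$. The ``equivalently'' clauses in (b), (c) are obtained by passing between $q$ and the data $M_{1},M_{2}$, resp.\ $M'$: the relation $q^{2}=q$ in $T/M$ with $q\notin R$ exhibits a nontrivial idempotent and a splitting $T/M\cong k\times k$, while $q^{2}=0$ in $T/M$ with $q\notin R$ gives $T/M=k[\overline{q}]\cong k[X]/(X^{2})$.

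The step carrying the real weight is the field classification invoked for necessity, i.e.\ that a minimal extension $k\subset B$ of a field $k$ is of shape (i), (ii) or (iii). For a self-contained argument: minimality gives $B=k[t]$ with $t$ algebraic over $k$ (were $t$ transcendental, $k[t^{2}]$ would be a proper intermediate ring), so $B$ is a finite-dimensional, hence Artinian, $k$-algebra; if $B$ is a field we are in (i); otherwise $B$ has either a nontrivial idempotent $e$, and then $B=k[e]\cong k\times k$ by minimality, or $B$ is local with nonzero nilradical $\mathfrak n$, and choosing $0\neq x\in\mathfrak n^{r-1}$ where $r$ is least with $\mathfrak n^{r}=0$ (so $r\geq 2$ as $\mathfrak n\neq 0$, whence $x^{2}\in\mathfrak n^{2(r-1)}\subseteq\mathfrak n^{r}=0$) gives $B=k[x]\cong k[X]/(X^{2})$ by minimality. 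The only remaining point, that in shape (iii) there is a single maximal ideal of $T$ over $M$, is automatic since $T/M$ is then local.
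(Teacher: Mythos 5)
Your proof is correct. The paper gives no proof of this statement — it is quoted from \cite{DPP2} and presented as a consequence of the fundamental lemma of Ferrand--Olivier \cite{FO} — and your argument (pass to the conductor $M=(R:T)$, use the order isomorphism $[R,T]\cong[R/M,T/M]$, and classify the minimal algebras over the field $R/M$, with the final clause $M=\mathcal{C}(R,T)$ read off from $M\cdot(T/R)=0$) is exactly that standard route; your only external input, that the conductor of a finite minimal extension is a maximal ideal, is the same Ferrand--Olivier fact the paper itself treats as a black box.
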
 

\section{Recall on  closures involved in ring extensions}

\subsection{Definitions}

The following definitions are needed for our study. 
 We first consider  more or less  classical concepts.
  
\begin{definition}\label{1.3}  An integral extension  $R\subseteq S$  is called  {\it infra-integral} \cite{Pic 2} (resp.; {\it subintegral} \cite{S}) if all its residual extensions are isomorphisms (resp$.$; and is an {\it i-extension}).    
\end{definition} 
  
We next consider some other concepts that will allow us to give an elementwise interpretation of the above definitions. Two of them are classical and the third was developed by the first author and does not seem well known, but is essential in the sequel.
  
  \begin{definition}\label{1.33}
 According to \cite[Definitions 1.1, 1.2 and 1.5, Proposition 1.6]{Pic 0}, \cite{Pic 2} and \cite{S}, and setting $p_r(X):=X^2-rX\in R[X]$, where $R\subseteq S$ is a ring extension, $R\subseteq S$ is called:

\begin{itemize} 
  
\item {\it s-elementary} (resp.; {\it t-elementary, u-elementary }) if $S=R[b]$, where $p_0(b),bp_0(b)\in R$ (resp.; $p_r(b),bp_r(b)\in R$ for some $r\in R,\ p_1(b),bp_1(b)\in R$). 
  \end{itemize} 
 In the following, the letter x denotes s, t or u. 
 \begin{itemize}
\item {\it cx-elementary} if $R\subseteq S$ is a tower of finitely many x-elementary extensions. 
 
\item{\it x-integral} if there exists a directed set $\{S_i\}_{i\in I}\subseteq[R,S]$ such that $R\subseteq S_i$ is cx-elementary and $S=\cup_{i\in I}S_i$.
 \end{itemize} 
 
In particular, an s-elementary (resp.; u-elementary) extension is a t-elementary extension. In the same way, a cs-elementary (resp.; cu-elementary) extension is a ct-elementary extension and an s-integral  (resp.; u-integral) extension is a t-integral extension.
  
\begin{proposition} \label{1.36}
An s-integral (resp.; t-integral) extension is nothing but a subintegral \cite{S} (resp.; infra-integral \cite{Pic 2}) extension.
  \end{proposition}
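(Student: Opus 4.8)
The plan is to establish the two equivalences together, splitting each into a \emph{forward} direction (every \emph{cx}-elementary, hence every \emph{x}-integral, extension is subintegral, resp.\ infra-integral) and a \emph{backward} direction. The recurring mechanism is that the defining pairs of conditions for an elementary generator carry built-in divisibility relations: for an \emph{s}-elementary generator one has $b^2=p_0(b)\in R$ together with $b^3=b\,p_0(b)\in R$, and for a \emph{t}-elementary generator one has $p_r(b)\in R$ together with $b\,p_r(b)=b^3-rb^2\in R$, so that $b\cdot p_r(b)=p_r(b)\cdot b$ exhibits $\overline b$ as constrained, in each residue field $\kappa_R(P)$, by images of elements of $R$. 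This makes the fibres of $\mathrm{Spec}(S)\to\mathrm{Spec}(R)$ and the residual extensions explicitly computable.

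Forward direction. Let $R\subseteq R[b]$ be \emph{s}-elementary; it is integral since $b^2\in R$. Fix $P\in\mathrm{Spec}(R)$ and look at the fibre $\kappa_R(P)\otimes_RR[b]$, in which $\overline X^{\,2}=\overline{b^2}$ and $\overline X^{\,3}=\overline{b^3}$. If $\overline{b^2}\neq 0$, the second relation forces $\overline X=\overline{b^3}/\overline{b^2}\in\kappa_R(P)$, so the fibre is $\kappa_R(P)$ (one prime, trivial residual extension). If $\overline{b^2}=0$, the fibre is a nonzero quotient of $\kappa_R(P)[X]/(X^2)$, hence $\kappa_R(P)$ or $\kappa_R(P)[X]/(X^2)$ (again one prime, trivial residual extension; note $\overline{b^3}=0$ is forced, as $\overline{b^3}=\overline X^{\,3}=0$ there). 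Thus $R\subseteq R[b]$ is integral, an \emph{i}-extension, and has trivial residual extensions, i.e.\ is subintegral. For a \emph{t}-elementary generator the same fibre computation (now $\overline X(\overline X-\overline r)=0$ on the fibre over $P$ when $\overline{p_r(b)}=0$, giving a fibre $\kappa_R(P)$, $\kappa_R(P)[X]/(X^2)$, or $\kappa_R(P)\times\kappa_R(P)$) shows every residual extension is trivial, so the extension is infra-integral. Finally, ``integral with trivial residual extensions'', with or without the \emph{i}-extension clause, is stable under composition (isomorphisms compose; injectivity of $\mathrm{Spec}$-maps composes) and under filtered unions $S=\bigcup_iS_i$ (using lying over for surjectivity/injectivity over $R$, and $\kappa_S(Q)=\varinjlim_i\kappa_{S_i}(Q\cap S_i)$, a filtered colimit of copies of $\kappa_R(P)$ along isomorphisms). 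Feeding this into the definitions of \emph{cx}-elementary and \emph{x}-integral gives: \emph{s}-integral $\Rightarrow$ subintegral and \emph{t}-integral $\Rightarrow$ infra-integral.

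Backward direction. Let $R\subseteq S$ be subintegral (resp.\ infra-integral). Write $S=\bigcup_{F}R[F]$ over the finite $F\subseteq S$. Each $R\subseteq R[F]$ is again subintegral (resp.\ infra-integral): by lying over for $R[F]\subseteq S$, the residual extensions of $R\subseteq R[F]$ are squeezed inside those of $R\subseteq S$ and are therefore isomorphisms, while the \emph{i}-extension property is inherited from the bottom. By the same squeezing together with lying over, a ``top'' subextension $R[F']\subseteq R[F]$ of a subintegral (resp.\ infra-integral) extension is again of that type. Hence, by induction on $|F|$, the assertion reduces to the single-generator case: \emph{if $R\subseteq R[b]$ is subintegral (resp.\ infra-integral), then it is a tower of finitely many \emph{s}-elementary (resp.\ \emph{t}-elementary) extensions.}

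This single-generator statement is the heart of the matter and the step I expect to be the main obstacle: it is genuinely not a formal consequence of the definitions, and it is where one must invoke the structure theory of subintegral and infra-integral extensions rather than argue by hand. For the subintegral case this is the Roberts--Singh/Swan description of the subintegral closure, \cite{S}, and for the infra-integral case the analogous theorem of the first author, \cite{Pic 2}, both conveniently packaged in the elementwise form of \cite[Proposition 1.6]{Pic 0}; concretely one produces the tower by interpolating rings such as $R[b^2,b^3]\subseteq R[b]$ and decreasing a suitable invariant attached to $b$ over $R$. Once this input is available, the composition and filtered-union observations from the forward direction close the loop and yield both equivalences.
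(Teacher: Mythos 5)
There is nothing to check your argument against: the paper offers no proof of Proposition \ref{1.36} at all. It is stated as a recalled fact, with the identifications attributed to Swan \cite{S} (subintegral) and \cite{Pic 2} (infra-integral), and, via the preamble of Definition \ref{1.33}, to \cite[Proposition 1.6]{Pic 0}, which is precisely the elementwise packaging you invoke. Measured against that, your proposal is at least as complete as the paper's treatment, and the parts you do prove are correct: the fibre computation showing that an s-elementary (resp.\ t-elementary) extension is integral with trivial residual extensions and (in the s-case) at most one prime in each fibre is right, as are the stability of ``subintegral'' and ``infra-integral'' under composition and under directed unions, which together give x-integral $\Rightarrow$ subintegral/infra-integral. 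Your reduction of the converse is also sound: writing $S=\bigcup_F R[F]$, using lying over to see that both $R\subseteq R[F]$ and the intermediate extensions $R[F']\subseteq R[F]$ inherit the subintegral (resp.\ infra-integral) property, and inducting on $|F|$ legitimately reduces everything to the monogenic statement. You are also right that this monogenic converse is the genuine content of the cited theorems (for instance, in the subintegral case it follows from Swan's theorem that the seminormalization is the directed union of finite towers of elementary subintegral extensions, since a finitely generated algebra equal to such a directed union must coincide with one member of the family); deferring to \cite{S}, \cite{Pic 2} and \cite[Proposition 1.6]{Pic 0} at that point is exactly what the paper itself does for the whole proposition, so no gap is created relative to the paper. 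The only caveat is that if you wanted a self-contained proof you would still have to reproduce that structure theory; within the conventions of this paper the citation suffices, and your forward direction even adds detail the paper omits.
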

  
  \begin{definition} \label{closure}
An extension $R\subseteq S$ is called {\it s-closed} (or {\it seminormal}) (resp.; {\it t-closed}, {\it u-closed} (or {\it anodal})) if an element $b\in S$ is in $R$ whenever $p_0(b),bp_0(b)\in R$ (resp.; $p_r(b),bp_r(b)\in R$ for some $r\in R,\ p_1(b),bp_1(b)\in R$).
\end{definition}
 
Let x$\in\{$s,t,u$\}$. The {\it x-closure} ${}_S^xR$ of $R$ in $S$ is the smallest element $B\in [R,S]$ such that $B\subseteq S$ is x-closed and the greatest element $B'\in [R,S]$ such that $R\subseteq B'$ is x-integral. It follows that ${}_S^uR\subseteq{}_S^tR$.  Note that the $s$-closure is actually the seminormalization ${}_S^+R$ of $R$ in $S$ and is the greatest subintegral extension of $R$ in $S$. Note also that the $t$-closure $ {}_S^tR$ is the greatest infra-integral  extension of $R$ in $S$.
  
   In some next subsections, other closures will be defined.

The {\it canonical decomposition} of an arbitrary ring extension $R\subset S$ is $R\subseteq{}_S^+R\subseteq{}_S^tR\subseteq\overline R\subseteq S$.  One of our aims is to investigate when a dual decomposition exists.
   \end{definition}  
 
  \begin{proposition}\label{1.34}An FCP Pr\"ufer extension is $t$-closed.
 \end{proposition}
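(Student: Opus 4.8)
The plan is to reduce everything to the fact, recalled in the introduction, that a Pr\"ufer extension which is integral is an isomorphism (a flat epimorphism that is integral is trivial). In fact the FCP hypothesis will play essentially no role: the statement holds for an arbitrary Pr\"ufer extension, and I will establish that slightly more general fact, the FCP context being the one in which it will later be applied.

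First I would unwind Definition \ref{closure}: to say that $R\subseteq S$ is $t$-closed means that whenever $b\in S$ satisfies $p_r(b)\in R$ and $bp_r(b)\in R$ for some $r\in R$, then $b\in R$. So fix such a $b$ and such an $r$, and set $T:=R[b]\in[R,S]$. Since $p_r(b)=b^2-rb=:c$ lies in $R$, the element $b$ is a root of the monic polynomial $X^2-rX-c\in R[X]$, so $R\subseteq T$ is a (module-)finite, in particular integral, extension. The second condition $bp_r(b)\in R$ is not even needed for this step; it is only there to match the $u$- and $s$-variants of the notion.

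Next I would observe that $R\subseteq T$ inherits the Pr\"ufer property: by hypothesis $R\subseteq U$ is a flat epimorphism for every $U\in[R,S]$, and $[R,T]\subseteq[R,S]$, so $R\subseteq U$ is a flat epimorphism for every $U\in[R,T]$ as well, i.e., $R\subseteq T$ is Pr\"ufer. Being simultaneously integral and Pr\"ufer, $R\subseteq T$ is an isomorphism, whence $b\in T=R$. This proves that $R\subseteq S$ is $t$-closed.

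There is no genuine obstacle here; the only point that deserves a word is the stability of the Pr\"ufer property under passage to a subextension, which is immediate from the ``normal pair'' formulation of the definition. If one prefers to route the argument through the canonical decomposition $R\subseteq{}^+_SR\subseteq{}^t_SR\subseteq\overline R\subseteq S$, one can instead note that $R\subseteq{}^t_SR$ is infra-integral, hence integral, and also Pr\"ufer as a subextension of $R\subseteq S$, so ${}^t_SR=R$; equivalently, a Pr\"ufer extension is integrally closed ($\overline R=R$), and since a $t$-elementary extension is integral, being integrally closed forces being $t$-closed.
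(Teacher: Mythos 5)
Your proof is correct, and it takes a genuinely different (and more general) route than the paper. You reduce everything to the fact, stated in the Introduction, that an integral Pr\"ufer extension is an isomorphism: given $b\in S$ with $p_r(b)=b^2-rb\in R$, the subring $T:=R[b]$ is integral over $R$ and, being in $[R,S]$, is Pr\"ufer over $R$, whence $T=R$ and $b\in R$; equivalently, a Pr\"ufer extension is integrally closed, and an integrally closed extension is automatically $t$-closed since the relation $p_r(b)\in R$ already makes $b$ integral over $R$. This needs neither FCP nor the condition $bp_r(b)\in R$, so you in fact prove that every Pr\"ufer (indeed every integrally closed) extension is $t$-closed. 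The paper instead exploits its FCP hypothesis: an FCP Pr\"ufer extension is a finite tower of Pr\"ufer minimal extensions, and for a minimal flat epimorphism $R_{n-1}\subset R_n$ the Ferrand--Olivier result \cite[Proposition 3.1]{FO} gives that $b^2-rb,\,b^3-rb^2\in R_{n-1}$ forces $b$ or $b-r$ to lie in $R_{n-1}$, so $b\in R_{n-1}$, and one descends the chain by induction. The paper's argument stays inside the minimal-extension machinery that is its main tool and uses the full $t$-closedness test elementwise at each minimal step; your argument is shorter, bypasses the chain decomposition entirely, and isolates the real reason the statement holds, at the price of invoking the (standard, and already quoted) fact that a flat epimorphism which is integral is trivial.
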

 
\begin{proof}Let $R\subset S$ be an FCP Pr\"ufer extension. Since an FCP Pr\"ufer extension is a tower of finitely many Pr\"ufer minimal extensions, let $R_0:=R\subset\ldots\subset R_i\subset\ldots\subset R_n:=S$ be a maximal chain; so that, each $R_i\subset R_{i+1}$ is minimal Pr\"ufer for $i\in\{0,\ldots,n-1\}$. Let $b\in S$ such that $b^2-rb, b^3-rb^2\in R\subseteq R_{n-1}$ for some $r\in R\subseteq R_{n-1}$. Then, either $b$ or $b-r$ is in $R_{n-1}$ by \cite[Proposition 3.1]{FO}; so that, $b\in R_{n-1}$. An easy induction shows that $b\in R$ and $R\subset S$ is t-closed.
 \end{proof}
 
   \subsection{ Technical results needed in the sequel}

   The next proposition describes the link between the elements of the canonical decomposition and minimal extensions.

\begin{proposition}\label{1.31} \cite[Proposition 4.5]{Pic 6} Let there be an integral extension $R\subset S$ and a maximal chain  $R=R_0\subset\cdots\subset R_i\subset\cdots\subset R _n= S$ of subextensions,  each $R_i\subset R_{i+1}$ being minimal.  The following statements hold: 

\begin{enumerate}
\item $R\subset S$ is subintegral if and only if each $R_i\subset R_{i+1}$ is  ramified. 

\item $R\subset S$ is seminormal and infra-integral if and only if each  $R_i\subset R_{i+1}$ is decomposed. 

\item  $R\subset S$ is  infra-integral if and only if each  $R_i\subset R_{i+1}$ is either decomposed or ramified. 

\item $R \subset S$ is t-closed if and only if each $R_i\subset R_{i+1}$ is inert. 

\item  $R\subset S$ is  seminormal if and only if each  $R_i\subset R_{i+1}$ is either decomposed or inert.
\end{enumerate}
If either (1) or (4) holds, then $\mathrm {Spec}(S)\to\mathrm{Spec}(R)$ is bijective.
\end{proposition}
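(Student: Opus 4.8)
The plan is to separate the length-one case from the passage along the chain: statements (1)--(3) will come from the behaviour of the classes ``subintegral'' and ``infra-integral'' under composition, and then (4) and (5) will be reduced to those by a reordering argument on consecutive minimal steps. For $n=1$ I would argue directly from Theorem \ref{minimal}: reading off the residual data there, a minimal integral extension is subintegral exactly when it is ramified, and infra-integral exactly when it is decomposed or ramified (the residual extensions are isomorphisms in the decomposed and ramified cases and a proper field extension in the inert case, while $\mathrm{Spec}$ is injective only in the inert and ramified cases). Dually, the $t$-closure (resp.\ the seminormalization) of a minimal extension is either its bottom or its top, and equals the top precisely when the extension is infra-integral (resp.\ subintegral); hence the only $t$-closed minimal integral extension is the inert one, and the seminormal ones are exactly the inert and the decomposed ones. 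This also proves the final assertion for $n=1$, since $\mathrm{Spec}$ is bijective for the inert and for the ramified minimal extensions, and the general final assertion then follows from (1) and (4) by composing bijections along the chain.

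For (1) and (3) I would invoke Proposition \ref{1.36} to identify ``subintegral'' and ``infra-integral'' with ``s-integral'' and ``t-integral'', and use that these classes are stable under composition and are inherited by sub- and over-extensions: for $R\subseteq T\subseteq S$, the extension $R\subseteq S$ is subintegral (resp.\ infra-integral) if and only if both $R\subseteq T$ and $T\subseteq S$ are. (For infra-integrality this is the stability of ``$\mathrm{Spec}$ surjective with isomorphic residual extensions'' along a tower; for subintegrality one adds injectivity of $\mathrm{Spec}$; both are classical.) An induction on $n$ then shows that $R\subseteq S$ is subintegral (resp.\ infra-integral) if and only if every $R_i\subset R_{i+1}$ is, hence, by the length-one case, if and only if every step is ramified (resp.\ decomposed or ramified). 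Statement (2) is then the conjunction of (3) and (5): being seminormal and infra-integral forces each step to be both decomposed-or-ramified and decomposed-or-inert, i.e.\ decomposed.

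There remain (4) and (5). For the ``if'' directions I would induct downwards along the chain in the style of Proposition \ref{1.34}: if $R_{n-1}\subset S$ is inert (resp.\ decomposed or inert) it is $t$-closed (resp.\ seminormal) by the length-one case, so any $b\in S$ with $p_r(b),bp_r(b)\in R\subseteq R_{n-1}$ for some $r$ (resp.\ $p_0(b),bp_0(b)\in R\subseteq R_{n-1}$) already lies in $R_{n-1}$, and one applies the induction hypothesis to $R=R_0\subset\cdots\subset R_{n-1}$, whose steps are again all inert (resp.\ all decomposed or inert). The converse directions are the delicate point and, I expect, the main obstacle: $t$-closedness and seminormality do not pass to over-extensions, so one cannot simply peel off an initial inert step. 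I would handle this via a reordering lemma for two consecutive minimal steps of an integral FCP extension --- a decomposed step followed by a ramified step can be rewritten, with the same endpoints, as a ramified step followed by a decomposed step, and an inert step followed by a decomposed (resp.\ ramified) step as a decomposed (resp.\ ramified) step followed by an inert step. This is a finite, essentially local computation from the explicit generators in Theorem \ref{minimal}; and since an integral FCP extension is a $\mathcal{B}$-extension and the $x$-closures commute with localization, it suffices to carry it out with $R$ local. Granting the lemma, any maximal chain sorts, without changing its multiset of types, into one in which all ramified steps precede all decomposed steps, which precede all inert steps, and such a sorted chain refines the canonical decomposition $R\subseteq{}_S^+R\subseteq{}_S^tR\subseteq S$. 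Then $R\subseteq S$ being $t$-closed means ${}_S^tR=R$, which forces the sorted chain, hence the original one, to have no ramified and no decomposed step, i.e.\ to consist of inert steps; the ``only if'' part of (5) is identical with ${}_S^+R$ replacing ${}_S^tR$.
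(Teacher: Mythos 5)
Most of your plan is sound: the length-one case read off from Theorem \ref{minimal}, the treatment of (1) and (3) via the stability of subintegral and infra-integral extensions in towers (both directions), the deduction of (2) from (3) and (5), the ``if'' halves of (4) and (5) by the descending induction modelled on Proposition \ref{1.34}, and the final assertion on $\mathrm{Spec}$ are all correct. The genuine gap is exactly where you anticipated trouble: the reordering lemma you propose for the ``only if'' halves of (4) and (5) is false, and so is the claim that a maximal chain can be sorted ``without changing its multiset of types''. Two of the three transpositions you need fail. Take $k\subset K$ a minimal field extension with $[K:k]\geq 2$ and $C:=K[X]/(X^2)=K+Kx$. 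Then $k\subset K$ is inert and $K\subset C$ is minimal ramified, but there is no $B'$ with $k\subset B'$ minimal ramified and $B'\subset C$ minimal: by Theorem \ref{minimal}(c) such a $B'$ would be $k[q]$ with $q^2=0$, hence $q\in Kx$ and $B'=k+kbx$ for some $b\in K\setminus\{0\}$, and then $B'\subsetneq k+Kx\subsetneq C$, so $B'\subset C$ is not minimal. Likewise for $k\subset K\subset K\times K$ (inert then decomposed): the only minimal decomposed extension of $k$ inside $K\times K$ is $k\times k$ (generated by an idempotent), and $k\times k\subset K\times K$ is not minimal since $K\times k$ lies strictly between. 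Moreover the multiset of types of a maximal chain is not an invariant: when $[K:k]=2$, the extension $k\subset K+Kx$ has the maximal chain $k\subset K\subset K+Kx$ of length $2$ with types (inert, ramified) and the maximal chain $k\subset k+kx\subset k+Kx\subset K+Kx$ of length $3$ with types (ramified, ramified, inert); such extensions are not catenarian, so no length-preserving bubble sort can exist.

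Consequently your argument does not establish the ``only if'' directions of (4) and (5). Note that what your sorting was meant to simulate is precisely the heredity of $t$-closedness (resp.\ seminormality) by the upper part of the extension: if $R\subset S$ is $t$-closed (resp.\ seminormal) and integral, then $T\subseteq S$ is $t$-closed (resp.\ seminormal) for every $T\in[R,S]$; granting this, each $R_i\subset R_{i+1}$ is a minimal $t$-closed (resp.\ seminormal) extension, hence inert (resp.\ decomposed or inert) by your length-one case. That heredity is not formal (unlike heredity by subextensions with the same bottom, which you do use correctly) and must be proved, e.g.\ through the conductor descriptions in the spirit of Proposition \ref{1.91} and Lemma \ref{SUP1} (reduce modulo the conductor, localize, and use that intermediate rings of the resulting residue field extensions are fields), which is essentially how the cited source \cite[Proposition 4.5]{Pic 6} proceeds; the present paper itself only quotes the statement and gives no proof. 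As it stands, the converse halves of (4) and (5) are unproved in your proposal.
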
 

\begin{lemma}\label{1.9} \cite[Corollary 3.2]{DPP2} If there exists a maximal chain $R=R_0 \subset\cdots\subset  R_i \subset\cdots \subset R_n=S$ of extensions, such that each $R_i\subset R_{i+1}$ is minimal, then  $\mathrm{Supp}(S/R)=\{ \mathcal C (R_i, R_{i+1})\cap R\mid i=0,\ldots,n-1\}$.
\end{lemma}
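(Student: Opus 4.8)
The plan is to reduce the statement to a one‑step computation by combining the filtration $R=R_0\subset R_1\subset\cdots\subset R_n=S$ of $R$-modules with the additivity of the support functor on short exact sequences. For each $i$ the exact sequence of $R$-modules $0\to R_i/R\to R_{i+1}/R\to R_{i+1}/R_i\to 0$ gives $\mathrm{Supp}_R(R_{i+1}/R)=\mathrm{Supp}_R(R_i/R)\cup\mathrm{Supp}_R(R_{i+1}/R_i)$, and an immediate induction yields $\mathrm{Supp}(S/R)=\bigcup_{i=0}^{n-1}\mathrm{Supp}_R(R_{i+1}/R_i)$. Hence it suffices to prove that $\mathrm{Supp}_R(R_{i+1}/R_i)=\{\mathcal C(R_i,R_{i+1})\cap R\}$ for every $i$.

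Fix $i$ and set $M_i:=\mathcal C(R_i,R_{i+1})\in\mathrm{Max}(R_i)$ and $\mathfrak m_i:=M_i\cap R$. Because $R_i\subset R_{i+1}$ is minimal, its conductor $(R_i:R_{i+1})$ equals $M_i$ — by Theorem \ref{minimal} when the step is integral, and by a theorem of Ferrand-Olivier \cite{FO} when it is a Pr\"ufer minimal (flat epimorphic) step — so $M_iR_{i+1}\subseteq R_i$, i.e. $M_i$, and hence also $\mathfrak m_i$, annihilates the nonzero $R$-module $R_{i+1}/R_i$. I next claim that $\mathfrak m_i$ is maximal in $R$: for a minimal extension the contraction of a maximal ideal is maximal (integral extensions preserve maximality under contraction; in the Pr\"ufer minimal case $\mathrm{Spec}(R_{i+1})\to\mathrm{Spec}(R_i)$ is a homeomorphism onto $\mathrm{Spec}(R_i)\setminus\{M_i\}$, which preserves maximal elements), so running down the tower $R\subseteq R_1\subseteq\cdots\subseteq R_i$ shows that every maximal ideal of $R_i$ contracts to a maximal ideal of $R$. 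Consequently $R_{i+1}/R_i$ is a nonzero vector space over the field $R/\mathfrak m_i$, whence $\mathrm{Supp}_R(R_{i+1}/R_i)=\{\mathfrak m_i\}$. Together with the first paragraph this gives $\mathrm{Supp}(S/R)=\{\mathcal C(R_i,R_{i+1})\cap R\mid i=0,\ldots,n-1\}$.

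A purely local variant would bypass the module bookkeeping: for $P\in\mathrm{Spec}(R)$ one has $P\in\mathrm{Supp}(S/R)$ iff $R_P\subsetneq S_P$, iff some localized step $(R_i)_P\subsetneq(R_{i+1})_P$ is proper; and the localization at $R\setminus P$ of the minimal extension $R_i\subset R_{i+1}$ is either an equality or again minimal, being proper precisely when the crucial ideal $M_i$ survives, i.e. when $M_i\cap R\subseteq P$, so one recovers the same list. In either approach the delicate point, and the main obstacle, is the treatment of the Pr\"ufer minimal (flat epimorphic) steps: one must know that their conductor is the crucial maximal ideal and that they contract maximal ideals to maximal ideals, which is exactly where the Ferrand-Olivier structure theory of minimal extensions is needed; the integral steps are immediate from Theorem \ref{minimal} and the elementary properties of integral extensions.
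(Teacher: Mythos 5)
The paper itself gives no proof here (it quotes \cite[Corollary 3.2]{DPP2}), so your argument has to stand on its own. It is fine when every step is integral, but it breaks down at the Pr\"ufer minimal steps, which you yourself single out as the delicate point and then handle with two false claims. First, the conductor of a Pr\"ufer minimal extension is \emph{not} its crucial maximal ideal: the Ferrand--Olivier dichotomy is precisely that a minimal extension is finite when $M_iR_{i+1}\neq R_{i+1}$ (and only then is $(R_i:R_{i+1})=M_i$), while in the flat epimorphism case $M_iR_{i+1}=R_{i+1}$, so $M_i$ is not even an ideal of $R_{i+1}$; e.g. $(\mathbb{Z}_{(p)}:\mathbb{Q})=0\neq p\mathbb{Z}_{(p)}$. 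Hence $\mathfrak m_i$ does not annihilate $R_{i+1}/R_i$ and that module is not an $R/\mathfrak m_i$-vector space. Second, contractions of maximal ideals need not be maximal in $R$: a maximal element of the subspace $\mathrm{Spec}(R_i)\setminus\{M_i\}$ need not be a maximal ideal of $R_i$ (the maximal ideal $(0)$ of $\mathbb{Q}$ contracts to the non-maximal prime $(0)$ of $\mathbb{Z}_{(p)}$); note that the lemma's conclusion itself allows non-maximal primes in $\mathrm{Supp}(S/R)$. Consequently your key intermediate claim $\mathrm{Supp}_R(R_{i+1}/R_i)=\{\mathfrak m_i\}$ is false in general: from $\mathrm{Supp}_{R_i}(R_{i+1}/R_i)=\{M_i\}$ one only gets $\mathrm{Supp}_R(R_{i+1}/R_i)=\mathrm{V}_R(\mathfrak m_i)$. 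Concretely, in the maximal chain $\mathbb{Z}_{(p)}\subset\mathbb{Q}\subset\mathbb{Q}[X]/(X^2)$ the second quotient is $\mathbb{Q}$ as a $\mathbb{Z}_{(p)}$-module, whose support is all of $\mathrm{Spec}(\mathbb{Z}_{(p)})$, not $\{(0)\}$.

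Thus both your main argument and your local variant only establish $\mathrm{Supp}(S/R)=\bigcup_{i=0}^{n-1}\mathrm{V}_R(\mathfrak m_i)$ together with the easy inclusion $\{\mathfrak m_0,\ldots,\mathfrak m_{n-1}\}\subseteq\mathrm{Supp}(S/R)$; the substantive half, namely $\mathrm{Supp}(S/R)\subseteq\{\mathfrak m_0,\ldots,\mathfrak m_{n-1}\}$ (why a prime merely containing some $\mathfrak m_i$ must itself be the contraction of a crucial ideal), is never proved. It can be obtained by pushing your local variant one step further: given $P\in\mathrm{Supp}(S/R)$, localize the chain at $P$ and let $j$ be the least index with $(R_j)_P\neq(R_{j+1})_P$, so that $(R_j)_P=R_P$. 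By Ferrand--Olivier (Theorem \ref{crucial}), $\mathrm{Supp}_{R_j}(R_{j+1}/R_j)=\{M_j\}$, so $M_j$ is disjoint from $R\setminus P$ and $(M_j)_P$ is a maximal ideal of $(R_j)_P=R_P$ (any prime of $R_j$ containing $M_j$ equals $M_j$); being maximal in the local ring $R_P$ it equals $PR_P$, and contracting to $R$ gives $\mathfrak m_j=M_j\cap R=(M_j)_P\cap R=PR_P\cap R=P$. With this replacement the lemma follows, and your vector-space computation for the integral steps is no longer needed as a separate case.
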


 We emphasize on the following useful result.

\begin{proposition}\label{1.91} \cite[Proposition 2.4]{Pic 15} Let $R\subset S$ be an integral FCP extension. Then, $R\subset S$ is seminormal if and only if $(R:S)$ is an intersection of finitely many maximal ideals of $S$.  

If this condition holds, the intersection giving $(R:S)$ is irredundant in $R$ and $S$.
\end{proposition}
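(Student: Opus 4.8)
\emph{Proof strategy.} The plan is to reduce the statement to the single equivalence
\[
R\subset S \text{ is seminormal} \iff S/\mathfrak c \text{ is reduced, where } \mathfrak c:=(R:S).
\]
First I would record the ``shape'' of the conductor. Since $\mathfrak c=\mathrm{Ann}_R(S/R)$, we have $\mathrm V_R(\mathfrak c)=\mathrm{Supp}_R(S/R)$, and applying a maximal chain of minimal subextensions together with Lemma~\ref{1.9}, Theorem~\ref{minimal} and lying-over, this set is a finite set $\{M_1,\dots,M_r\}$ of maximal ideals of $R$. Hence the primes of $S$ containing $\mathfrak c$ form the finite set $\{N_1,\dots,N_s\}$ of all maximal ideals of $S$ lying over some $M_i$, so $\sqrt{\mathfrak c}=N_1\cap\cdots\cap N_s$ in $S$ (and $=M_1\cap\cdots\cap M_r$ after contraction to $R$). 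Therefore ``$\mathfrak c$ is an intersection of finitely many maximal ideals of $S$'' is equivalent to ``$\mathfrak c=\sqrt{\mathfrak c}$ in $S$'', i.e.\ to $S/\mathfrak c$ being reduced; and when this holds, prime avoidance shows at once that the representations $\mathfrak c=N_1\cap\cdots\cap N_s$ (resp.\ $\mathfrak c=M_1\cap\cdots\cap M_r$) are irredundant, which will settle the last assertion once the equivalence is proved.

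For the implication ``seminormal $\Rightarrow$ $S/\mathfrak c$ reduced'' I would argue elementwise. Let $x\in S$ with $x^2\in\mathfrak c$. For each $s\in S$, both $(xs)^2=x^2s^2$ and $(xs)^3=x^2(xs^3)$ lie in $\mathfrak cS\subseteq R$; these being $p_0(xs)$ and $(xs)p_0(xs)$, seminormality gives $xs\in R$. Hence $xS\subseteq R$, i.e.\ $x\in\mathfrak c$. Thus $S/\mathfrak c$ has no nonzero square-zero element, and (replacing a putative nilpotent by a suitable power of it) $S/\mathfrak c$ is reduced.

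The hard part is the converse. Assume $\mathfrak c=N_1\cap\cdots\cap N_s$ and, for contradiction, that $R\subset S$ is not seminormal. The obstacle is that non-seminormality a priori only yields \emph{some} $b\in S\setminus R$ with $b^2,b^3\in R$, with no control on where $b^2$ sits; to fix this I would pass to the seminormalization $T:={}_S^+R$, which is the greatest subintegral subextension, so $R\subsetneq T$ and $R\subseteq T$ is subintegral with FCP. Choose a maximal chain $R=R_0\subset R_1\subset\cdots\subset R_m=T$ of minimal subextensions; by Proposition~\ref{1.31}(1) each step is ramified, so by the ramified case of Theorem~\ref{minimal} there is $q\in R_1\setminus R$ with $R_1=R[q]$ and $q^2\in M_0:=(R:R_1)\in\mathrm{Max}(R)$. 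Since $\mathrm{Ann}_R(R_1/R)=M_0$ is maximal, $\mathrm{Supp}_R(R_1/R)=\{M_0\}$, so $q\notin R_{M_0}$, and $M_0$ is one of the $M_i$, say $M_0=M_1$ (indeed $M_0\in\mathrm{Supp}_R(R_1/R)\subseteq\mathrm{Supp}_R(S/R)$). Now the key computation: by the Chinese Remainder Theorem (the $M_i$ are pairwise comaximal) pick $e\in R$ with $e\equiv 1\pmod{M_1}$ and $e\equiv 0\pmod{M_i}$ for $i\ge 2$, and set $q':=eq$. For each $N_j$, lying over some $M_i$: if $i=1$ then $(q')^2=e^2q^2\in M_1\subseteq N_j$, so $q'\in N_j$; if $i\ne 1$ then $e\in M_i\subseteq N_j$, so $q'\in N_j$. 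Hence $q'\in\bigcap_j N_j=\mathfrak c\subseteq R$. But $e\notin M_0$ is a unit of $R_{M_0}$, so $q'=eq\notin R_{M_0}$, whence $q'\notin R$ --- a contradiction. Therefore $R\subset S$ is seminormal, which together with the first paragraph also yields the irredundancy statement.
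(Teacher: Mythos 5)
Your proof is correct. Note that the paper does not prove this statement at all: it is quoted verbatim from \cite[Proposition 2.4]{Pic 15}, so there is no internal argument to compare with; your proposal is a legitimate self-contained substitute. Its structure is sound: the reduction of the stated condition to ``$(R:S)$ is a radical ideal of $S$'' is justified by $\mathrm{V}_R((R:S))=\mathrm{Supp}_R(S/R)$ being a finite set of maximal ideals (this is exactly the paper's Proposition \ref{SUP}, which you reprove via Lemma \ref{1.9}), the forward implication is the standard elementwise argument ($x^2\in(R:S)$ forces $(xs)^2,(xs)^3\in R$ for all $s\in S$, hence $xs\in R$ by $s$-closedness), and the converse correctly avoids the trap you identify: instead of an uncontrolled $b$ with $b^2,b^3\in R$, you pass to ${}_S^+R\neq R$, extract a first minimal ramified step $R\subset R_1=R[q]$ with $q^2\in M_1=(R:R_1)\in\mathrm{Max}(R)$ via Proposition \ref{1.31}(1) and Theorem \ref{minimal}(c), and the CRT multiplier $e$ places $q'=eq$ in every maximal ideal of $S$ over the conductor, hence in $(R:S)\subseteq R$, contradicting $q'\notin R_{M_1}$. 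The only cosmetic point is the appeal to ``prime avoidance'' for irredundancy; what you actually use is that a prime containing a finite intersection of ideals contains one of them, which is the correct (and sufficient) fact here.
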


\begin{proposition}\label{SUP} Let $R\subseteq S$ be an FCP integral extension with conductor $I$. Then $\mathrm V_R(I)$ has finitely many elements and is contained in $\mathrm{Max}(R)$; so that, $\mathrm V_R(I)=\mathrm{Supp}(S/R)=\mathrm{MSupp}(S/R)$. Moreover, for each $Q\in\mathrm{Spec}(S)$ such that $R\cap Q\in\mathrm D_R(I)$, the residual extension $\kappa(Q\cap R)\to\kappa(Q)$ is an isomorphism.
\end{proposition}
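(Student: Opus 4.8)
The plan is to argue in three steps, all resting on a maximal chain of minimal extensions, which is available since $R\subseteq S$ has FCP. The first step is to identify $\mathrm{Supp}(S/R)$ with $\mathrm V_R(I)$. As the extension is integral and has FCP, $S$ is a finite $R$-module, hence so is $S/R$, and therefore $\mathrm{Supp}_R(S/R)=\mathrm V_R(\mathrm{Ann}_R(S/R))$. An element $r\in R$ lies in $\mathrm{Ann}_R(S/R)$ exactly when $rS\subseteq R$, that is, when $r\in(R:S)=I$; thus $\mathrm{Ann}_R(S/R)=I$ and $\mathrm{Supp}(S/R)=\mathrm V_R(I)$. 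Once this set is known to be finite and contained in $\mathrm{Max}(R)$, we immediately get $\mathrm{MSupp}(S/R)=\mathrm{Supp}(S/R)\cap\mathrm{Max}(R)=\mathrm{Supp}(S/R)$, whence the displayed chain of equalities.

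For finiteness and maximality, choose a maximal chain $R=R_0\subset\cdots\subset R_n=S$ with each $R_i\subset R_{i+1}$ minimal. Since $R\subseteq S$ is integral, so is each $R_i\subseteq R_{i+1}$, so each $R_i\subset R_{i+1}$ is an integral (hence module-finite) minimal extension by Theorem \ref{crucial}, and Theorem \ref{minimal} gives that its crucial ideal $\mathcal C(R_i,R_{i+1})=(R_i:R_{i+1})$ is maximal in $R_i$. By Lemma \ref{1.9}, $\mathrm{Supp}(S/R)=\{\mathcal C(R_i,R_{i+1})\cap R\mid 0\le i\le n-1\}$, a finite set; and because $R\subseteq R_i$ is integral, the contraction to $R$ of the maximal ideal $\mathcal C(R_i,R_{i+1})$ of $R_i$ is maximal in $R$. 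Hence $\mathrm V_R(I)=\mathrm{Supp}(S/R)$ is finite and contained in $\mathrm{Max}(R)$, finishing the first two assertions.

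For the last assertion, let $Q\in\mathrm{Spec}(S)$ with $P:=Q\cap R\in\mathrm D_R(I)$, i.e. $I\not\subseteq P$. Then $P\notin\mathrm V_R(I)=\mathrm{Supp}(S/R)$, so $(S/R)_P=0$, that is $S_P=R_P$. Consequently the fiber at $P$ is $\kappa(P)\otimes_RS=S_P/PS_P=R_P/PR_P=\kappa(P)$, a field, so $Q$ is the unique prime of $S$ over $P$ and $\kappa_S(Q)=\mathrm{Frac}(S_P/QS_P)=R_P/PR_P=\kappa_R(P)$, the natural map $\kappa_R(P)\to\kappa_S(Q)$ being the identity, hence an isomorphism.

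I do not expect a serious obstacle. The only points that need care are that a minimal extension inside an integral extension must be of the finite type handled by Theorem \ref{minimal} (the flat-epimorphism alternative in Theorem \ref{crucial} is ruled out because an integral flat epimorphism is an isomorphism), and the standard going-up fact that maximal ideals of $R_i$ contract to maximal ideals of $R$ along the integral extension $R\subseteq R_i$; the remaining computations are routine localizations.
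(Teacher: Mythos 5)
Your proof is correct. The second half (the residual isomorphism) is exactly the paper's argument: $P\notin\mathrm{Supp}(S/R)$ gives $R_P=S_P$, hence $S_P=S_Q$ and $\kappa(P)\cong\kappa(Q)$. Where you diverge is in the first half. The paper gets finiteness and maximality of $\mathrm V_R(I)$ in one stroke by citing that $R/I$ is an Artinian ring for an integral FCP extension (Theorem 4.2 of \cite{DPP2}), so that $\mathrm{Spec}(R/I)$ is a finite set of maximal ideals; combined with module-finiteness of $S$ this yields $\mathrm V_R(I)=\mathrm{Supp}(S/R)=\mathrm{MSupp}(S/R)$ immediately. You instead take a maximal chain of minimal extensions, invoke Lemma \ref{1.9} to write $\mathrm{Supp}(S/R)$ as the finite set of contracted crucial ideals, rule out the flat-epimorphism case so that Theorem \ref{minimal} makes each crucial ideal maximal in $R_i$, and use lying-over to contract to maximal ideals of $R$; together with $\mathrm{Ann}_R(S/R)=(R:S)=I$ this gives the same conclusion. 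Your route is more elementary in that it stays inside the minimal-extension machinery already set up in the paper and avoids the external Artinian-quotient theorem, at the cost of being longer; the paper's route is shorter and also records the structural fact that $R/I$ is Artinian, which is reused elsewhere. Both arguments are complete; the only points needing the care you already gave them are that an integral minimal subextension is finite (so Theorem \ref{minimal} applies) and that maximal ideals contract to maximal ideals along integral extensions.
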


\begin{proof} Since $R\subset S$ is an integral FCP extension, $R/I$ is an Artinian ring \cite[Theorem 4.2]{DPP2} and the extension is finite; so that,  $\mathrm{V}_R(I)=\mathrm{Supp}(S/R)=\mathrm{MSupp}(S/R)$, where the last equation is valid because $R/I$ is  Artinian. 

Let $Q\in\mathrm{Spec}(S)$ such that $P:=R\cap Q\in\mathrm D(I)$. Since $P\not\in\mathrm{Supp}(S/R)$, we have that $R_P=S_P=S_Q$, whence $\kappa(Q\cap R)\cong\kappa(Q)$.
\end{proof}

 \begin{lemma}\label{SUP1} 
Let $R\subseteq S$ be an FCP integral t-closed extension. Then, for each $M\in\mathrm{MSupp}(S/R)$, there exists a unique $N\in\mathrm{Max}(S)$ lying above $M$. Moreover, $(R_M:S_M)=MS_N=NS_N$ and $\kappa(N)=S_N/(R_M:S_M)$. We denote by ($\dag$) this last statement for a  later use.
\end{lemma}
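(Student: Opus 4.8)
The plan is to reduce everything to the local case and then run an induction along a maximal chain of minimal extensions, using part (4) of Proposition~\ref{1.31} to control the type of each link. First I would fix $M\in\mathrm{MSupp}(S/R)$ and pass to $R_M\subseteq S_M$; by Proposition~\ref{SUP} this remains an FCP integral extension, it is still $t$-closed (the $t$-closed property localizes, since the defining polynomial conditions localize and $R_M=({}_S^tR)_M$), and $M R_M$ is its unique maximal ideal of $R_M$ lying in the support of $S_M/R_M$. So without loss of generality $R$ is local with maximal ideal $M$, and I must show there is a unique $N\in\mathrm{Max}(S)$, that $(R:S)=MS_N=NS_N$, and $\kappa(N)=S_N/(R:S)$.

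Next I would produce a maximal chain $R=R_0\subset R_1\subset\cdots\subset R_n=S$ of minimal extensions; since $R\subseteq S$ is integral and $t$-closed, Proposition~\ref{1.31}(4) tells us every link $R_i\subset R_{i+1}$ is inert. By Theorem~\ref{minimal}(a), in the inert case the crucial maximal ideal $M_i:=\mathcal C(R_i,R_{i+1})=(R_i:R_{i+1})$ is maximal in both $R_i$ and $R_{i+1}$, and $R_{i+1}$ has a unique maximal ideal lying over $M_i$, with a minimal residual field extension. Since $R$ is local, $R_0$ has a unique maximal ideal $M_0=M$; I claim, by induction on $i$, that each $R_i$ is local. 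Indeed, if $R_i$ is local with maximal ideal $\mathfrak m_i$, then because $R_i\subset R_{i+1}$ is integral and crucial at a maximal ideal of $R_i$, that crucial ideal must be $\mathfrak m_i$; the inert case then gives a unique maximal ideal of $R_{i+1}$ over $\mathfrak m_i$, and since $R_i\subseteq R_{i+1}$ is integral every maximal ideal of $R_{i+1}$ contracts to a maximal ideal of $R_i$, hence to $\mathfrak m_i$, so $R_{i+1}$ is local too. Thus $S=R_n$ is local; call its maximal ideal $N$. This gives the existence and uniqueness of $N\in\mathrm{Max}(S)$ over $M$ (and $S_N=S$).

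Then I would identify the conductor. Since each link is inert with $M_i=(R_i:R_{i+1})\in\mathrm{Max}(R_i)$ and $R_i$ is local, we have $M_i=\mathfrak m_i$, and the inert case of Theorem~\ref{minimal} also gives $\mathfrak m_i = \mathfrak m_{i+1}\cap R_i$ (the crucial ideal is maximal in $R_{i+1}$ and equals the contraction); so $\mathfrak m_0=\mathfrak m_1\cap R_0 = \cdots = \mathfrak m_n\cap R_0$, i.e. $N\cap R=M$. Now $M\subseteq (R:S)$ because $MS=M$ is an ideal of $S$ contained in $R$: more precisely, by induction $\mathfrak m_i R_{i+1}=\mathfrak m_{i+1}$ in the inert case (the extension of the maximal ideal is the maximal ideal upstairs, as $R_i/\mathfrak m_i\to R_{i+1}/\mathfrak m_{i+1}$ is a field extension and $\mathfrak m_i R_{i+1}$ is a maximal ideal of $R_{i+1}$ contained in $\mathfrak m_{i+1}$), so $M S = N$, and $N=MS\subseteq MR\subseteq R$ forces $N\subseteq R$, whence $N\subseteq (R:S)$. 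Conversely $(R:S)$ is a proper ideal of $S$, hence contained in $N$. Therefore $(R:S)=N$, and since $S$ is local this reads $(R:S)=NS=MS$; unlocalizing, $(R_M:S_M)=MS_N=NS_N$. Finally $S/(R:S)=S/N=\kappa(N)$, giving ($\dag$).

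The main obstacle I anticipate is the bookkeeping in the induction that simultaneously propagates three facts along the chain — that each $R_i$ is local, that the crucial ideal at step $i$ is exactly the contraction of $\mathfrak m_{i+1}$, and that $\mathfrak m_i R_{i+1}=\mathfrak m_{i+1}$ — since a slip in any one of these breaks the conductor computation. A secondary point requiring care is the reduction step: one must check that $t$-closedness survives localization and that $R_M=({}_S^tR)_M$ so that ($\dag$) is genuinely about the local pieces of the globally $t$-closed extension; both follow from the polynomial characterization in Definition~\ref{closure} commuting with localization, but it is worth stating.
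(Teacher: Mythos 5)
Your proof is correct, but it reaches the conclusion by a more self-contained route than the paper. The paper also reduces to the local case, and then simply quotes known results: it cites \cite{DPP2} (Proposition 4.10 there) for the fact that a local integral FCP t-closed extension satisfies $(R_M:S_M)=MR_M\in\mathrm{Max}(S_M)$, Proposition \ref{1.31} for the bijectivity of $\mathrm{Spec}(S)\to\mathrm{Spec}(R)$ (hence the uniqueness of $N$), and \cite{Bki A1} for $S_M=S_N$; the identities $(R_M:S_M)=MS_N=NS_N$ and $\kappa(N)=S_N/(R_M:S_M)$ then drop out immediately. You instead re-derive those quoted facts by decomposing the local extension into a maximal chain of minimal extensions, using Proposition \ref{1.31}(4) to force every link to be inert, and propagating along the chain, via Theorem \ref{minimal}(a), that each ring is local and that the maximal ideal does not move (since in the inert case the conductor of each link is the common maximal ideal, it is an ideal of $S$ contained in $R$ and must equal $(R_M:S_M)$). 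Your version buys independence from the external reference at the cost of a longer induction; the paper's is essentially a three-line argument by citation. Two small points in your write-up, neither of which affects validity: the reduction step (that FCP and t-closedness pass to $R_M\subset S_M$) is not Proposition \ref{SUP} and does not follow merely from ``the polynomial conditions localize'' --- it is the compatibility of the t-closure with localization, which is exactly the kind of fact the paper imports from \cite{DPP2} and the t-closure literature; and the displayed inclusion ``$N=MS\subseteq MR\subseteq R$'' is stated backwards --- the correct observation, which your induction already supplies, is that $M$ is an ideal of $S$ (being the conductor of every link), so $MS=M\subseteq R$ and hence $N=(R_M:S_M)$.
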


\begin{proof} 
Let $M\in\mathrm{MSupp}(S/R)$. Since $R\subset S$ is an FCP t-closed extension, so is $R_M\subset S_M$ and $(R_M:S_M)=MR_M\in\mathrm{Max}(S_M)$ by \cite[Proposition 4.10]{DPP2}; so that, $MR_M=MS_M$. Moreover, there exists a unique $N\in\mathrm{Max}(S)$ lying above $M$ by Proposition \ref{1.31} and $S_M=S_N$ according to \cite[Proposition 2, page 40]{Bki A1}. These facts combine to yield that $I:=(R_M:S_M)=MS_N=NS_N$. In particular, $\kappa(M)=R_M/I$ and $\kappa(N)=S_N/NS_N=S_N/I$.
\end{proof}

 \section{Co-integral closures} 

Let $R\subset S$ be an almost-Pr\"ufer FCP extension. Since $\widetilde R$ is the least $T\in[R,S]$ such that $T\subseteq S$ is integral (\cite[Proposition 4.17]{Pic 5}), we can ask if, for any FCP extension $R\subset S$, there exists a least $T\in[R,S]$ such that $T\subseteq S$ is integral. The answer is yes, as we are going to show. But this statement  may not hold for any arbitrary ring extension. To see this we will use for instance \cite[Example 2.1]{DPPS} where the following situation holds. Let $p$ be a prime number, $k$ a field with $\mathrm{c}(k)=p$, and $n$ a prime number such that $n>p$. Let $X$ be an indeterminate over $k$, and set $K:=k(X),\ F:=k(X^n)$ and $L:=(X^n+X^p)$. Then, we get that $k=F\cap L\subset K=FL$ is not algebraic, and then not integral, while $F\subset K$ and $L\subset K$ are minimal field extensions, and then integral. Actually intersecting two minimal integral extension whose targets are $S$ is a first test for the validity of the result we have in view. 

\begin{proposition}\label{0.15} Let $R\subset S$ be an FCP extension and $T,U\in[R,S]$ be such that $T\subset S$ and $U\subset S$ are minimal integral extensions with $T\neq U$. Then, $T\cap U\subset S$ is integral with $T\cap U\neq T,U$. 
\end{proposition}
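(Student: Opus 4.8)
The plan is to reduce to the local case and then use the classification of minimal integral extensions (Theorem \ref{minimal}). First I would observe that $T\cup U$ generates $S$ as a ring, i.e. $TU = S$: since $T\subset S$ is minimal there is no proper intermediate ring strictly between $T$ and $S$, so $TU$, which contains $T$ and is contained in $S$, equals $T$ or $S$; if it equalled $T$ then $U\subseteq T$, and then $U\subset S$ being minimal and $T\subset S$ being minimal with $U\subseteq T\subseteq S$ would force $U = T$, contradicting $T\neq U$. So $TU = S$. Dually $T\cap U\subseteq T$ and $T\cap U\subseteq U$ are proper (again if $T\cap U = T$ then $T\subseteq U$ hence $T = U$). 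Thus it remains only to show $T\cap U\subseteq S$ is integral; the properness $T\cap U\neq T, U$ is already in hand.

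For integrality, I would localize. Both $T\subset S$ and $U\subset S$ are $M_T$-crucial and $M_U$-crucial respectively for maximal ideals $M_T = (T:S)$, $M_U = (U:S)$ (Theorem \ref{minimal}). At a maximal ideal $M$ of $R$ not in $\mathrm{MSupp}(S/T)\cup\mathrm{MSupp}(S/U)$ we get $T_M = S_M = U_M$, so $(T\cap U)_M = S_M$ and there is nothing to prove. At a maximal ideal $M$ lying under exactly one of $M_T, M_U$ — say $T_M = S_M$ but $U_M\subsetneq S_M$ — we get $(T\cap U)_M = U_M$ (intersecting $S_M$ with $U_M$), and $U_M\subseteq S_M$ is integral (a localization of the integral extension $U\subseteq S$), so we are fine. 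The only genuinely delicate fiber is where $M$ lies under both crucial ideals. There $T_M\subsetneq S_M$ and $U_M\subsetneq S_M$ are both minimal integral, with $T_M\neq U_M$ (they can't be equal since $T\neq U$ and the extension $R\subset S$ — being... well, we don't know it's a $\mathcal B$-extension, but locally distinct proper subrings of $S_M$ suffice here). So after replacing $R,S$ by $R_M, S_M$ we are reduced to: $R$ local, $T, U\in[R,S]$ with $T\subset S$ and $U\subset S$ minimal integral, $T\neq U$, $TU = S$; show $T\cap U\subseteq S$ integral.

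In this local setting I expect the main obstacle to be the case analysis over the three types of minimal integral extension in Theorem \ref{minimal}. Since $T\subset S$ is minimal integral with crucial ideal $M_T = (T:S)$, the quotient $S/M_T$ is an Artinian ring of length $2$ over the field $T/M_T$ (and $M_T$ is maximal in $T$). Similarly for $U$. Write $\mathfrak{a} = (T:S)\cap(U:S)$; then $\mathfrak{a}$ kills $S/T$ and $S/U$, hence $S/(T\cap U)\hookrightarrow S/T\times S/U$, so $\mathfrak a\cdot(S/(T\cap U)) = 0$, i.e. $\mathfrak a\subseteq (T\cap U : S)$. Since $R$ is local with maximal ideal $\mathfrak m$, and $M_T, M_U$ both lie over $\mathfrak m$, the ideal $\mathfrak a$ contains a power of $\mathfrak m S$ once we know $S$ is integral over $R$... actually more directly: $S$ is a finitely generated $R$-module (FCP integral locally), $T\cap U$ is an $R$-submodule containing $R$, and $S/(T\cap U)$ is annihilated by $\mathfrak a\supseteq M_T M_U\cap R\supseteq$ (something $\mathfrak m$-primary), so $S$ is module-finite over $T\cap U$, hence integral. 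The cleanest route, which I would actually write, is: $\mathfrak a = (T:S)(U:S)$-ish argument shows $(T\cap U:S)$ is $\mathfrak m_{T\cap U}$-primary where $\mathfrak m_{T\cap U}$ is the maximal ideal of the local ring $T\cap U$ (local because it sits between the local $R$ and is integral over... once we know integrality — circular). So the honest argument: use that $S$ is a finite $R$-module, $R\subseteq T\cap U$, so $T\cap U$ is a finite $R$-module, so $T\cap U\subseteq S$ is a finite, hence integral, ring extension. Indeed $S$ is integral over $R$ in the local case because $T\subseteq S$ is integral and $R\subseteq T$ is minimal integral (from Theorem \ref{minimal}, $M_T\in\mathrm{Max}(R)$ gives $R\subset T$ integral), so $R\subseteq S$ is integral, hence $R\subseteq S$ is module-finite (FCP integral), hence the $R$-submodule $T\cap U$ is module-finite over the Noetherian-enough... — or simply: a submodule of a finite module over a ring need not be finite, but here $R\subseteq T\cap U\subseteq S$ and $S$ finite over $R$ with $R$ the bottom of an FCP tower forces $T\cap U$ finitely generated as an $R$-algebra (FCP), hence $R\subseteq T\cap U$ finite (integral + f.g. algebra), hence $T\cap U\subseteq S$ finite, hence integral. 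That is the whole point, and the local reduction is only needed to guarantee $R\subseteq S$ (equivalently $R\subseteq T$) is integral; globally one instead argues locally as above and concludes $(T\cap U)\subseteq S$ is integral because it is so at every maximal ideal of $R$ in $\mathrm{Supp}(S/(T\cap U))$.
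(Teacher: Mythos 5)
Your reduction steps (that $TU=S$, that $T\cap U\neq T,U$, that localization commutes with the finite intersection, and that at maximal ideals lying under at most one of the crucial ideals the statement is immediate) are fine, but the final step — the only place where integrality of $T\cap U\subseteq S$ is actually proved — rests on a false claim. You assert that in the local case ``$R\subseteq T$ is minimal integral (from Theorem \ref{minimal}, $M_T\in\mathrm{Max}(R)$ gives $R\subset T$ integral)'', hence $R\subseteq S$ is integral, hence module-finite, hence $T\cap U\subseteq S$ is finite. This misreads Theorem \ref{minimal}: it concerns the minimal extension $T\subset S$ itself, and says $(T:S)\in\mathrm{Max}(T)$ — it says nothing about $R$, and nothing in the hypotheses makes $R\subseteq T$ integral (the extension $R\subset S$ is only FCP, so $R\subseteq T$ may well contain Pr\"ufer minimal, non-integral, steps; see e.g.\ Example \ref{0.23}, where $R\subset R_M$ is Pr\"ufer minimal and $R_M\subset T_M$ is minimal inert). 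This is not a repairable detail: if $R\subseteq S$ were integral, then every $V\in[R,S]$ would trivially satisfy $V\subseteq S$ integral and the proposition would be empty. The whole content of the statement lies in the non-integral case, precisely the difficulty illustrated by the example $k=F\cap L\subset K=FL$ quoted just before the proposition, where the intersection of two minimal (integral) field extensions fails to be algebraic in the absence of FCP. Elements of $S$ are integral over $T$ and over $U$, but nothing you have written makes them integral over $T\cap U$; your intermediate attempt via the annihilator $\mathfrak a$ is, as you yourself note, circular, and the fallback you chose assumes the conclusion away.

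For comparison, the paper's proof does not localize at all; it splits on whether the two conductors coincide. If $M:=(T:S)\neq N:=(U:S)$, it invokes \cite[Proposition 6.6]{DPPS} on intersections of minimal extensions with distinct crucial ideals. If $M=N$, then by \cite[Proposition 5.7]{DPPS} $M$ is a maximal ideal of $V:=T\cap U$ with $M=(V:S)$, so $V/M\subset T/M$ and $V/M\subset U/M$ are FCP field extensions; since any minimal subextension of a field extension is inert, these are integral by \cite[Theorem 4.2]{DPP2}, and because $S=TU$ one gets $V/M\subset S/M$ integral, hence $V\subset S$ integral. If you want to keep your localization framework, you would still have to supply an argument of this kind at the maximal ideals lying under both crucial ideals; the reduction itself buys you nothing there.
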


\begin{proof} Set $V:=T\cap U,\ M:=(T:S)$ and $ N:=(U:S)$.  

Assume first that $M\neq N$. Then, $V\subset S$ is integral according to \cite[Proposition 6.6]{DPPS}, since so are $V\subset T$ and $V\subset U$.

Assume now that $M= N$. It follows by \cite[Proposition 5.7]{DPPS} that $M\in\mathrm{Max}(V)$. But we also have $M\in\mathrm{Max}(T)$ and $M\in\mathrm{Max}(U)$ with $M=(V:S)$. 
It follows  that $V/M\subset T/M$ and $V/M\subset U/M$ are FCP field extensions, and then integral by \cite[Theorem 4.2]{DPP2}, since any minimal subextension of $V/M\subset T/M$ and $V/M\subset U/M$ is inert by Theorem \ref{minimal}. Hence, $V/M\subset S/M$ is an integral extension, and so is $V\subset S$. Obviously, $T\cap U\neq T,U$ since $T\neq U$.
\end{proof}

To get the general result, the following lemmata are needed.

\begin{lemma}\label{0.16} Let $R\subset S$ be an FCP extension and $T,U\in[R,S]$, such that $T\subset S$ is integral and $U\subset S$ is  minimal integral with $U\not\in[T,S]$. Then, there exists $W\in[T\cap U,T]$ such that $W\subset T$ is minimal integral. 
\end{lemma}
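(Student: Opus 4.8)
\textbf{Proof proposal for Lemma \ref{0.16}.}

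The plan is to argue by descent along a maximal chain of minimal extensions inside $T$, using Proposition \ref{0.15} as the base case. Set $V:=T\cap U$. Since $U\not\in[T,S]$, we have $T\not\subseteq U$, hence $V\subsetneq T$; I want to produce a minimal integral step $W\subset T$ with $W\in[V,T]$. First I would record the structural facts: $U\subset S$ is minimal integral, so by Theorem \ref{crucial} it is $M$-crucial for $M:=\mathcal C(U,S)=(U:S)\in\mathrm{Max}(U)$, and $T\subset S$ is integral and FCP, hence module-finite. Because $V\subset T$ is a nontrivial FCP integral extension, $[V,T]$ has a least nontrivial element: pick $W\in[V,T]$ with $V\subset W$ minimal. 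Then $W\subset T$ need not yet be minimal, so the real content is to choose $W$ inside $[V,T]$ so that the minimal step sits at the \emph{top}, i.e. adjacent to $T$. Equivalently, I would take a maximal chain $V=W_0\subset W_1\subset\cdots\subset W_m=T$ of minimal integral extensions (which exists since $V\subset T$ is FCP integral) and set $W:=W_{m-1}$; then $W\subset T$ is minimal integral and $W\in[V,T]=[T\cap U,T]$, which is exactly what is claimed. So at first sight the statement looks almost immediate.

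The subtlety — and I expect this to be the main obstacle — is that the lemma as stated only asserts \emph{existence} of such a $W$, but in the intended application it presumably must interact well with $U$ (the next lemma or the proof of the co-integral closure theorem will surely want $W$ and $U$ to ``fit together'', e.g. $WU$ to be a minimal integral extension of $U$, or $W\cap U = V$). If the statement is read literally, the two-line argument above suffices. But I suspect the honest proof wants to identify $W$ canonically: namely $W$ should be such that $W\subset T$ and $U\subset WU$ are ``the same'' minimal extension in the sense of the crossed square $V\subset W$, $V\subset U$ (this is the standard ``minimal extensions push out'' phenomenon). Concretely, I would consider the extension $U\subset TU$; since $T\subset S$ is integral, $TU\subseteq S$, and $TU\subsetneq S$ is possible or $TU=S$ — in either case $U\subsetneq TU$ because $T\not\subseteq U$. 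Take $U\subset U'$ minimal with $U'\in[U,TU]$, and then set $W:=T\cap U'$. One checks $W\supseteq V$, $W\subseteq T$, and $W\ne T$ (else $T\subseteq U'$, forcing $U'$ to absorb too much); the claim $W\subset T$ is minimal would then follow from the minimality of $U\subset U'$ by a crucial-ideal computation analogous to the two cases in Proposition \ref{0.15}.

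The key steps, in order, would be: (i) reduce to the local case at the crucial maximal ideal, using that integral FCP extensions are $\mathcal B$-extensions, so we may assume $R$ local with $\mathrm{MSupp}(S/R)$ a single point and in particular $\mathcal C(U,S)\cap R$ is the maximal ideal of $R$; (ii) identify $P:=\mathcal C(U,S)=(U:S)$ and split into the two cases $P=(T\cap P\text{-issues})$ exactly as in Proposition \ref{0.15}, i.e. whether $V\subset T$ ``sees'' the same crucial ideal as $U\subset S$ or a different one; (iii) in the ``same ideal'' case pass to residue fields and use that a minimal field extension lying below a bigger field extension always has a conjugate/complementary minimal step at the top, giving $W$; (iv) in the ``different ideal'' case use \cite[Proposition 6.6]{DPPS} as in Proposition \ref{0.15} to transfer integrality and minimality across the intersection; (v) globalize. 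The hard part is step (iii)/(iv): ensuring the chosen $W$ is minimal \emph{over} $T$ rather than merely minimal \emph{under} it — this is where one genuinely uses the FCP hypothesis (Artinian + Noetherian lattice) to slide the minimal step to the correct end of the chain, and where a naive argument would only give a minimal step somewhere in $[V,T]$, not adjacent to $T$.
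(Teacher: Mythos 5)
Your opening ``two-line argument'' has a genuine gap: it treats $V:=T\cap U\subset T$ as ``a nontrivial FCP integral extension'', but integrality of $T\cap U$ inside $T$ (equivalently inside $S$) is not among the hypotheses --- it is exactly the conclusion of Lemma \ref{0.17}, which is proved \emph{from} Lemma \ref{0.16}, so assuming it here is circular. What FCP gives you for free is only that a maximal chain $V=W_0\subset\cdots\subset W_m=T$ exists and consists of minimal steps; but a minimal step in an FCP extension may be a Pr\"ufer minimal extension (a flat epimorphism) rather than integral, so the top step $W_{m-1}\subset T$ need not be minimal \emph{integral}. This is precisely why the statement is not ``almost immediate'': the example recalled at the start of Section 3 (\cite[Example 2.1]{DPPS}, $k=F\cap L\subset K=FL$ non-algebraic with $F\subset K$, $L\subset K$ minimal) shows that intersections with $U$ carry no automatic integrality, and the whole point of the lemma is to manufacture an \emph{integral} minimal step directly below $T$ that contains $T\cap U$. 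You have therefore misdiagnosed the difficulty; it is not about some unstated compatibility of $W$ with $U$.

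Your proposed repair also fails at its first move: since $U\subset S$ is minimal, $[U,S]=\{U,S\}$, and $T\not\subseteq U$ forces $TU=S$; hence the minimal step $U'$ above $U$ inside $[U,TU]=[U,S]$ is $S$ itself, and your $W:=T\cap U'=T\cap S=T$, which gives nothing (there is no contradiction in ``$T\subseteq U'$'' because $U'=S$). The subsequent outline (i)--(v) never sets up a workable substitute. The paper's actual proof runs the induction in the opposite direction: it fixes a maximal chain $S=T_0\supset T_1\supset\cdots\supset T_n=T$ of minimal integral extensions, sets $V_i:=T_i\cap U$, and descends, using Proposition \ref{0.15} at each stage to produce $W_i\in[V_i,T_i]$ with $W_i\subset T_i$ minimal integral, while carrying the auxiliary alternative ($W_i\neq T_{i+1}$, or $W_i=T_{i+1}\neq V_i$) needed to keep the construction alive at the next level; the case $i=n$ yields the desired $W$. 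Without this descent (or an equivalent mechanism that actually produces integrality of the step below $T$), the proposal does not prove the lemma.
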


\begin{proof} Let $\{T_i\}_{i=0}^n\subseteq [R,S]$ be a maximal chain such that $T_0:=S$ and $T_n:=T$; so that, each $T_{i+1}\subset T_i$ is minimal integral. For each $i\in\mathbb N_n$, set $V_i:=T_i\cap U$, giving $V:=V_n=T\cap U$. Obviously, we have $V_{i+1}=T_{i+1}\cap V_i$. We intend to build by induction on $i\in\mathbb N_n$ a set $\{W_i\}_{i=1}^n\subseteq[V,S]$ such that $W_i\in[V_i,T_i]$ with $W_i\subset T_i$ minimal integral for each $i\in\mathbb N_n$. Our induction hypothesis is the following: for $i\in\mathbb N_n$, there exists $W_i\in[V_i,T_i]$ with $W_i\subset T_i$ minimal integral and either $W_i\neq T_{i+1}$ or $W_i=T_{i+1}\neq V_i$ when $i<n$.

For $i=1$, since $T_1\subset S$ and $U\subset S$ are minimal integral, we know by Proposition \ref{0.15} that $V_1\subset T_1$ is an integral FCP extension. Of course, $V_1\neq T_1$ because $T_1\neq U$. In particular, there exists $W_1\in[V_1,T_1]$ with $W_1\subset T_1$ minimal integral. Assume that $W_1=T_2$ leading to $V_2=T_2\cap U=W_1\cap U\supseteq V_1\cap U=V_1$; so that, $V_2=V_1$. If $T_2=V_1$, then $T\subseteq T_2\subseteq U$ leads to a contradiction. It follows  that $T_2\neq V_1$.

Assume that the induction hypothesis holds for some $i\in\mathbb N_{n-1}$; so that, $W_i\subset T_i$ is minimal integral for some $W_i\in[V_i,T_i]$  such that either $W_i\neq T_{i+1}$ or $W_i=T_{i+1}\neq V_i$.  

First, assume that $W_i\neq T_{i+1}$ and set $W'_{i+1}:=T_{i+1}\cap W_i$. We claim that $W'_{i+1}\neq T_{i+1}$. Otherwise, $T_{i+1}\subseteq W _i$, which leads to $T_{i+1}= W_i$, a contradiction. Then, $W'_{i+1}\subset T_{i+1}$ is integral by Proposition \ref{0.15} and there exists $W_{i+1}\in[W'_{i+1},T_{i+1}]$ such that $W_{i+1}\subset T_{i+1}$ is minimal integral. If $i=n-1$, we are done. If $i<n-1$, assume that $W_{i+1}=T_{i+2}$. Then, $V_{i+2}=T_{i+2}\cap U=W_{i+1}\cap U\supseteq V_{i+1}\cap U=V_{i+1}$; so that, $V_{i+2}=V_{i+1}$. If $T_{i+2}=V_{i+1}$, then $T\subseteq T_{i+2}\subseteq U$ leads to a contradiction. It follows  that $T_{i+2}\neq V_{i+1}$. Then, the induction hypothesis holds. We therefore  have the following commutative diagram:
\centerline{$\begin{matrix}
     V_i     & \to &     W_i    &  {} &     \to      & {}   &     T_i       \\
\uparrow &  {} & \uparrow &  {}  &     {}       & {}  &  \uparrow  \\
 V_{i+1}  & \to & W'_{i+1} & \to & W_{i+1} & \to &   T_{i+1}       
\end{matrix}$}
Now, assume that $W_i=T_{i+1}\supset V_i$. It follows that $V_i\subseteq W_i\cap U=T_{i+1}\cap U=V_{i+1}\subseteq V_i$, which gives $V_{i+1}=V_i$ and $V_{i+1}\subset T_{i+1}$. Then, there exists $W_{i+1}\in[V_{i+1},T_{i+1}]$ such that $W_{i+1}\subset T_{i+1}$ is minimal integral. If $i= n-1$, we are done. If $i<n-2$, assume that $W_{i+1}=T_{i+2}=V_{i+1}$, then $T\subseteq T_{i+2}\subseteq U$ leads to a contradiction. It follows that $T_{i+2}\neq V_{i+1}$. Then, the induction hypothesis holds for $i+1$ and then for any $i\in\mathbb N_n$. In particular, there exists $W_n\in[V_n,T_n]\setminus\{T_i\}_{i=0}^n$ such that $W_n\subset T_n$ is minimal integral, that is there exists $W\in[T\cap U,T]$ such that $W\subset T$ is minimal integral.
\end{proof}

\begin{lemma}\label{0.17} Let $R\subset S$ be an FCP extension and $T,U\in[R,S]$, such that $T\subset S$ is integral and $U\subset S$ is  minimal integral, with $U\not\in[T,S]$. Then, $T\cap U\subset S$ is an integral extension. 
\end{lemma}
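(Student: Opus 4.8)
The plan is to leverage Lemma \ref{0.16} to reduce the statement to a finite induction over a maximal chain of minimal integral extensions between $T\cap U$ and $T$. Write $V:=T\cap U$. By Lemma \ref{0.16}, since $T\subset S$ is integral and $U\subset S$ is minimal integral with $U\not\in[T,S]$, there exists $W\in[V,T]$ with $W\subset T$ minimal integral. The key observation is that the hypotheses of Lemma \ref{0.16} are \emph{stable} when we replace $T$ by such a $W$: indeed $W\subseteq T\subset S$ shows $W\subset S$ is integral, $U\subset S$ is still minimal integral, and we must check $U\not\in[W,S]$ — which holds because $U\in[W,S]$ together with $W\subseteq T$ would force $V=T\cap U\subseteq W$ and hence $T\cap U = W\cap U$; combined with $W\subseteq T$ this gives $W=V$, but then $T\subseteq U$ (as $V\subseteq U$), contradicting $U\not\in[T,S]$. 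Wait — more carefully: $W = V$ does not immediately give $T \subseteq U$; rather one argues that if $U \in [W,S]$ then $W \cap U = W$, and since $W \supseteq V = T \cap U$ we'd get $T \cap U \subseteq W \subsetneq T$, so $T \cap U = W \cap U$; I would instead simply track the chain length directly as follows.

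Concretely, I would argue by induction on $\ell[V,T]$, which is finite since $R\subset S$ is FCP. If $\ell[V,T]=0$ then $V=T$, so $T=T\cap U\subseteq U$, which says $U\in[T,S]$, contradicting the hypothesis; hence the base case is vacuous, or rather $\ell[V,T]\geq 1$ always. If $\ell[V,T]=1$, then $V\subset T$ is itself minimal integral, and we are done. For the inductive step, suppose $\ell[V,T]\geq 2$. Apply Lemma \ref{0.16} to get $W\in[V,T]$ with $W\subset T$ minimal integral; then $V=T\cap U\subseteq W\subset T$ and $\ell[V,W]=\ell[V,T]-1 < \ell[V,T]$ (using additivity of length along $V\subset W\subset T$, which is available in the FCP setting). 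Now I claim $W\cap U = V$: clearly $V=T\cap U\subseteq W\cap U$, and conversely $W\cap U\subseteq T\cap U=V$. So $W\cap U=V$, i.e. $T\cap U = W\cap U$ with the roles of $T$ now played by $W$. To invoke the induction hypothesis on the pair $(W,U)$ I need $U\not\in[W,S]$: if $U\in[W,S]$ then $W\subseteq U$, so $W=W\cap U=V$, forcing $\ell[V,W]=0$, i.e. $\ell[V,T]=1$, contrary to $\ell[V,T]\geq 2$. Hence $W\subset S$ integral, $U\subset S$ minimal integral, $U\not\in[W,S]$, and $\ell[W\cap U, W] = \ell[V,W] < \ell[V,T]$, so by induction $W\cap U=V=T\cap U\subset S$ is integral, which is exactly the conclusion.

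The main obstacle I anticipate is purely bookkeeping: making sure the pair $(W,U)$ genuinely satisfies all hypotheses of Lemma \ref{0.16} (in particular $U\not\in[W,S]$, handled above via the length bound) and that length is strictly decreasing, so that the induction is well-founded. Everything else — integrality of $W\subset S$ from $W\subseteq T$, the elementary intersection identity $W\cap U=T\cap U$ — is immediate. One should also note $V\neq T$ and, if desired, $T\cap U\neq T,U$ follows from $U\not\in[T,S]$ and $V\subsetneq T$; but the statement as phrased only asserts that $T\cap U\subset S$ is integral, so the length induction above suffices.
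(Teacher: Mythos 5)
Your proof is correct and follows essentially the same route as the paper: both arguments iterate Lemma \ref{0.16} to descend from $T$ to $T\cap U$ through minimal integral steps, the paper by building the descending chain $T=W_0\supset W_1\supset\cdots$ explicitly and letting the FCP property force it to terminate at $T\cap U$, you by induction on $\ell[T\cap U,T]$, with the same treatment of the degenerate case $U\in[W,S]$. Two small repairs to your write-up: the equality $\ell[V,W]=\ell[V,T]-1$ need not hold (FCP lattices need not be catenarian, and no additivity of length is available), but all you need is $\ell[V,W]\leq\ell[V,T]-1$, which is immediate because any chain in $[V,W]$ extends by appending $T$; and in the base case $\ell[V,T]=1$ minimality of $V\subset T$ alone does not give integrality --- apply Lemma \ref{0.16} and observe that the $W$ it produces lies in $[V,T]=\{V,T\}$ with $W\neq T$, so $W=V$ and $V\subset T$ is minimal integral rather than Pr\"ufer minimal.
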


\begin{proof} Set $V:=T\cap U$. We are going to show by induction that there exists a maximal finite chain $\{W_j\}_{j=0}^m$ such that $W_0:=T$ and $W_m:=V$, where $W_{j+1}\subset W_j$ is minimal integral. Our induction hypothesis is the following: for $k\in\{0,\ldots,m-1\}$, where $m$ is some positive integer such that $m\leq\ell[T\cap U,T]$, there exists a maximal finite chain $\{W_j\}_{j=0}^k\subseteq[V,T]$ such that $W_0:=T$ and $W_{j+1}\subset W_j$ is minimal integral for any $j\in\{0,\ldots,k\}$. The induction hypothesis holds for $k=0$ by Lemma \ref{0.16}. Assume that the induction hypothesis holds for some $k\in\{0,\ldots,m-1\}$. Set $T':=W_k\in[V,T]$; so that, $T'\subset S$ is an integral extension. Moreover, $V\subseteq T'\cap U\subseteq T\cap U=V$; so that, $T'\cap U=V$. 
 If $U\in[T',S]$, then $T'\cap U=T'=V$, and $V\subset S$ is integral. Assume now that $U\not\in[T',S]$.
We apply Lemma \ref{0.16} to the extensions $T'\subset S$ which is integral and $U\subset S$ which is minimal integral. 
Then, there exists $W_{k+1}\in[V,T']\subseteq[V,T]$ such that $W_{k+1}\subset T'=W_k$ is minimal integral. It follows that  the induction hypothesis holds for $k+1$, and we have a decreasing maximal chain in $[V,T]$ which stops at some $W_m=V$ since $R\subset S$ has FCP. In particular, $V\subset T$ is an integral extension, and so is $V\subset S$. 
\end{proof}

\begin{proposition}\label{0.18} Let $R\subset S$ be an FCP extension and $T,U\in[R,S]$ such that $T\subset S$ and $U\subset S$ are integral. Then, $T\cap U\subset S$ is  integral. 
\end{proposition}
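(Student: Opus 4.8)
The plan is to peel off one minimal step of $U\subset S$ at a time and invoke Lemma \ref{0.17}, inducting on $\ell[U,S]$, which is finite because $R\subset S$ has FCP. The preliminary remark I would record is that every $V\in[U,S]$ is integral over $U$ (being a subring of $S$, which is integral over $U$), so in any maximal chain of $[U,S]$ each step is \emph{minimal integral}: a minimal extension is integral or a flat epimorphism (Theorem \ref{crucial}), and a Pr\"ufer integral extension is an isomorphism. Hence, when $\ell[U,S]\geq 1$, I can fix $U_1\in[U,S]$ with $U\subset U_1$ minimal integral and $\ell[U_1,S]<\ell[U,S]$ (take the first step of a maximal chain of $[U,S]$).

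The base case $\ell[U,S]=0$ is immediate, since then $U=S$ and $T\cap U=T\subset S$ is integral. For the inductive step, apply the induction hypothesis to the pair $(T,U_1)$: both $T\subset S$ and $U_1\subset S$ are integral and $\ell[U_1,S]<\ell[U,S]$, so $T':=T\cap U_1\subset S$ is integral. Since $U\subseteq U_1$, one has $T\cap U=T'\cap U$, so it suffices to prove that $T'\cap U\subset S$ is integral. Here I would change the ambient ring to $U_1$: as $T'=T\cap U_1\subseteq U_1$, the ring $T'$ lies in $[R,U_1]$; the extension $R\subset U_1$ is FCP; $T'\subset U_1$ is integral (because $S$ is integral over $T'$ and $U_1\subseteq S$); and $U\subset U_1$ is minimal integral. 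If $T'\subseteq U$, that is, $U\in[T',U_1]$, then $T'\cap U=T'\subset S$ is integral and we are done. Otherwise $U\notin[T',U_1]$, and Lemma \ref{0.17} applied to the FCP extension $R\subset U_1$ with the subrings $T'$ and $U$ gives that $T'\cap U\subset U_1$ is integral; composing with the integral extension $U_1\subset S$ then shows that $T'\cap U=T\cap U\subset S$ is integral.

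The step I expect to require the most care is this change of ambient ring in the inductive step: Lemma \ref{0.17} is being used with $U_1$ in the role of $S$, which is legitimate because its hypotheses only mention an abstract FCP extension together with two of its subrings, and $R\subset U_1$ inherits FCP from $R\subset S$ (chains in $[R,U_1]$ are chains in $[R,S]$). No separate treatment of degenerate situations is needed: if $T'=U_1$ then $T\cap U=U_1\cap U=U$, and $U\subset U_1\subset S$ is a composite of integral extensions, hence integral — a case already subsumed under the dichotomy above.
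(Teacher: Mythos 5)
Your proof is correct and follows essentially the paper's own route: the paper likewise fixes a maximal chain between $U$ and $S$, intersects $T$ with its successive terms, and applies Lemma \ref{0.17} at each stage inside the smaller ambient ring (the extensions $V_i\subseteq U_i$ and $U_{i+1}\subset U_i$), treating the degenerate containment case separately, exactly as you do. Your induction on $\ell[U,S]$ is just the unrolled form of the paper's induction along the chain, so there is nothing substantively different to flag.
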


\begin{proof} Set $V:=T\cap U$. If $V\in\{T,U\}$, we have the result. Now, assume that $V\neq T,U$. Let $\{U_i\}_{i=0}^n$ be a maximal chain such that $U_0:=S$ and $U_n:=U$. Then, $U_{i+1}\subset U_i$ is minimal integral for each $i\in\{0,\ldots,n\}$. Set $V_i:=T\cap U_i$ for each $i\in\{0,\ldots,n\}$. We are going to prove by induction on $i$, that $V_i\subseteq S$ is integral for any $i\in\{0,\ldots,n\}$. The induction hypothesis holds obviously for $i=0$ and is Lemma \ref{0.17} for $i=1$ 
 if $U_1\not\in[T,S]$. If $U_1\in[T,S]$, then $V_1=T$; so that, $V_1\subset S$ is  integral.
Assume that the induction hypothesis holds for some $i\in\{0,\ldots,n-1\}$, that is $V_i\subseteq S$ is integral. We have $V_{i+1}=T\cap U_{i+1}=T\cap U_i\cap U_{i+1}=V_i\cap U_{i+1}$. Since $V_i\subseteq S$ is integral, so is $V_i\subseteq U_i$. Moreover, $U_{i+1}\subset U_i$ is minimal integral. Using again Lemma \ref{0.17} for the extensions $V_i\subseteq U_i$ and $U_{i+1}\subset U_i$  if $U_{i+1}\not\in[V_i,U_i]$,  
we get that $V_{i+1}\subset V_i$ is integral, and so is $V_{i+1}\subset S$. 
  If $U_{i+1}\in[V_i,U_i]$, then $V_{i+1}=U_{i+1}\cap V_i=V_i$, and then the result also holds.
 The induction hypothesis holds  for $i+1$, and then for any $i\in\{0,\ldots,n\}$, and in particular for $i=n$; so that, $V\subset S$ is integral.
\end{proof}

The previous results combine to yield that, for an FCP extension $R\subset S$, we can introduce a new closure, namely  the {\it co-integral closure} as it is shown in the following Theorem.
 
\begin{theorem}\label{0.19} Let $R\subset S$ be an FCP extension. There exists a least $T\in[R,S]$ such that $T\subseteq S$ is integral. We call such $T$ the co-integral closure of $R\subset S$ and denote it by $\underline R$. Moreover, $\underline R=\cap[U\in [R,S]\mid U\subseteq S$ integral$\}$  and $\widetilde{R}\subseteq \underline R$. 
\end{theorem}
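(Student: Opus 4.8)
The plan is to realise $\underline R$ as the least element of the family $\Sigma:=\{U\in[R,S]\mid U\subseteq S\text{ is integral}\}$, using only the FCP hypothesis together with Proposition \ref{0.18}.

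First I would note that $\Sigma\neq\emptyset$, since $S\in\Sigma$, and that $\Sigma$ is stable under finite intersections: if $U,U'\in\Sigma$ then $U\cap U'\subseteq S$ is integral by Proposition \ref{0.18}, so $U\cap U'\in\Sigma$. Because $R\subset S$ has FCP, the lattice $[R,S]$ is Artinian, so $\Sigma$ admits a minimal element; fix one and call it $\underline R$. For an arbitrary $U\in\Sigma$ the element $\underline R\cap U$ again lies in $\Sigma$ and satisfies $\underline R\cap U\subseteq\underline R$, so minimality forces $\underline R\cap U=\underline R$, i.e.\ $\underline R\subseteq U$. Hence $\underline R$ is in fact the least element of $\Sigma$ (in particular it is unique, independent of the minimal element initially chosen), which is the first assertion and justifies the name ``co-integral closure''. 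The identity $\underline R=\bigcap\{U\in[R,S]\mid U\subseteq S\text{ integral}\}$ then follows at once: $\underline R$ is contained in every member of $\Sigma$, hence in their intersection, while $\underline R$ itself belongs to $\Sigma$, so the intersection is contained in $\underline R$.

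For $\widetilde R\subseteq\underline R$ I would prove the stronger statement that $\widetilde R\subseteq U$ for every $U\in\Sigma$, and then take $U=\underline R$. Fix $U\in\Sigma$ and consider the compositum $U\widetilde R\in[U,S]$. On one hand $U\subseteq U\widetilde R$ is integral, as a subextension of the integral extension $U\subseteq S$. On the other hand $R\subseteq\widetilde R$ is Pr\"ufer, and a Pr\"ufer extension stays Pr\"ufer after enlarging the base ring inside a common overring, so $U\subseteq U\widetilde R$ is Pr\"ufer (see \cite{KZ}). Since a Pr\"ufer integral extension is an isomorphism (recalled in the Introduction), $U\widetilde R=U$, i.e.\ $\widetilde R\subseteq U$; with $U=\underline R$ this gives $\widetilde R\subseteq\underline R$.

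In this argument the only input from outside the paper is the permanence property used in the last step, namely that the Pr\"ufer extension $R\subseteq\widetilde R$ remains Pr\"ufer after composing with an arbitrary $U\in[R,S]$; everything else is a short order-theoretic consequence of Proposition \ref{0.18} and the Artinian property of $[R,S]$, so I expect the existence part to be routine and this permanence point (equivalently, locating the precise reference for it) to be the only thing requiring care. As a sanity check, the statement is compatible with the almost-Pr\"ufer case, where $\widetilde R\in\Sigma$ forces $\underline R=\widetilde R$, matching \cite[Proposition 4.17]{Pic 5}.
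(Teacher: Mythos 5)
Your existence argument (the family $\Sigma$ is nonempty, stable under intersection by Proposition \ref{0.18}, and the Artinian property of $[R,S]$ yields a minimal, hence least, element) is essentially identical to the paper's, and the identification of $\underline R$ with the intersection is fine. The problem is the last step. The permanence property you invoke --- that a Pr\"ufer extension $R\subseteq\widetilde R$ remains Pr\"ufer after enlarging the base to an arbitrary ring $U$ inside a common overring, i.e.\ that $U\subseteq U\widetilde R$ is Pr\"ufer --- is not a theorem of \cite{KZ} and is false in general. For example $\mathbb{Z}\subseteq\mathbb{Z}[1/2]$ is Pr\"ufer, but with $U:=\mathbb{Z}[u]$ ($u$ an indeterminate) the extension $\mathbb{Z}[u]\subseteq U\cdot\mathbb{Z}[1/2]=\mathbb{Z}[1/2][u]$ is not Pr\"ufer: the intermediate ring $T:=\mathbb{Z}[u,u^{2}/4]$ is not integrally closed in $\mathbb{Z}[1/2][u]$, since $u/2$ is a root of the monic polynomial $Z^{2}-u^{2}/4\in T[Z]$ while $u/2\notin T$ (every element of $T$ has its degree-one coefficient in $\mathbb{Z}$); so the pair is not normal. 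What \cite[Corollary 5.11, page 53]{KZ} does give is that the compositum of two Pr\"ufer extensions of the \emph{same} base is Pr\"ufer over that base; it does not permit changing the base ring, which is exactly what your step does.

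The instance you actually need ($U\in[R,S]$ with $U\subseteq S$ integral, $R\subset S$ FCP) may well hold, but it is not a quotable permanence fact; establishing something of this kind is in substance what the paper's proof of $\widetilde R\subseteq\underline R$ does. The paper takes $V$ to be the integral closure of $R$ in $\underline R$, sets $W:=\widetilde RV$, shows $V\subseteq W$ is Pr\"ufer by localizing the almost-Pr\"ufer extension $R\subseteq W$ at primes of $R$ (via \cite[Proposition 4.16]{Pic 5}), then applies \cite[Corollary 5.11, page 53]{KZ} to the two Pr\"ufer extensions $V\subseteq W$ and $V\subseteq\underline R$ of the common base $V$, and finally uses that $\underline R\subseteq W\underline R$ is both Pr\"ufer and integral to get $\underline R=W\underline R\supseteq\widetilde R$. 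So the concern you flagged is indeed fatal as written: the inclusion $\widetilde R\subseteq\underline R$ cannot be reduced to a known base-change property of Pr\"ufer extensions and requires an argument of this localization/almost-Pr\"ufer type.
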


\begin{proof} Set $\mathcal E:=\{T\in[R,S]\mid T\subseteq S$ integral$\}$. Then $S\in\mathcal E$, and, since $[R,S]$ is an Artinian lattice, $\mathcal E$ admits a minimal element. We claim that this minimal element is unique. Indeed, assume that there exist $T,U$ two distinct minimal elements of $\mathcal E$; so that, $T\neq U$ where $T\subset S$ and $U\subset S$ are integral extensions. Then, $T\cap U\subset S$ is integral according to Proposition \ref{0.18}, a contradiction with the minimality of $T$. It follows that the minimal element of $\mathcal E$, that we denote $\underline R$, is the least $T\in[R,S]$ such that $T\subseteq S$ is integral. In particular, $\underline R=\cap[U,\in[R,S]\mid U\subseteq S$ integral$\}$  and any $T\in[\underline R,S]$ is such that $T\subseteq S$ is integral.

Now, we show that $\widetilde{R}\subseteq\underline R$. Let $V$ be the integral closure of $R$ in $\underline R$ and set $W:=\widetilde{R}V$. We have the following commutative diagram:
\centerline{$\begin{matrix}
\widetilde{R} & \to & W=\widetilde{R}V & \to &        S          \\
   \uparrow    &  {} &        \uparrow        &  {} &   \uparrow    \\
         R         & \to &              V             & \to & \underline R       
\end{matrix}$}
Since $R\subseteq V$ is integral, so is $\widetilde{R}\subseteq W$ and $R\subseteq W$ is almost-Pr\"ufer. According to \cite[Proposition 4.16]{Pic 5}, $R_P\subseteq W_P$ is either integral $(*)$ or Pr\"ufer $(**)$ for each $P\in\mathrm{Spec}(R)$. In case $(*)$, we get $R_P=(\widetilde{R})_P$; so that, $V_P=W_P$. In case $(**)$, we get $R_P=V_P$; so that, $(\widetilde{R})_P=W_P$, leading to $V_P\subseteq W_P$ is Pr\"ufer. Then, $V\subseteq W$ is Pr\"ufer, as is $V\subseteq\underline R$, and so is  $V\subseteq W\underline R$ by \cite[Corollary 5.11, page 53]{KZ}. Since $V\subseteq\underline R\subseteq W\underline R$ holds, we deduce that $\underline R\subseteq W\underline R$ is Pr\"ufer. But, $W\underline R\in[\underline R,S]$ implies that $\underline R\subseteq W\underline R$ is integral, giving $\underline R=W\underline R$. To end, $\widetilde{R} \subseteq W\subseteq W\underline R= \underline R$.
\end{proof}

\begin{corollary}\label{0.20} Let $R\subset S$ be an FCP extension. Let $T$ be  the co-integral closure of $R\subset S$ and $U$ be the integral closure of $R$ in $T$. Then, $U\subseteq S$ is almost-Pr\"ufer, and $T$ is the Pr\"ufer hull of $U\subseteq S$.
\end{corollary}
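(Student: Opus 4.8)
The plan is to reduce the whole statement to one observation, namely that $U\subseteq T$ is a Pr\"ufer extension; once this is in hand, both conclusions fall out of the elementary fact (recorded in the Introduction) that a Pr\"ufer integral extension is an isomorphism. So I would not expect a genuinely hard step here: the corollary is a short consequence of Theorem \ref{0.19} together with standard facts about Pr\"ufer extensions (a subextension of a Pr\"ufer extension is Pr\"ufer, and an extension that is simultaneously integral and Pr\"ufer is trivial).

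First I would record that $U\subseteq T$ is Pr\"ufer. Since $T\in[R,S]$ and $R\subset S$ has FCP, the extension $R\subseteq T$ has FCP, hence is quasi-Pr\"ufer by \cite[Corollary 3.4]{Pic 5}; write $R\subseteq R'\subseteq T$ with $R\subseteq R'$ integral and $R'\subseteq T$ Pr\"ufer. As $R'$ is integral over $R$ we get $R'\subseteq\overline R^{\,T}=U$, so $U\in[R',T]$, and $U\subseteq T$, being a subextension of the Pr\"ufer extension $R'\subseteq T$, is Pr\"ufer. This is precisely the step ``$V\subseteq\underline R$ is Pr\"ufer'' already used in the proof of Theorem \ref{0.19}, now with $V=U$ and $\underline R=T$, so it could also simply be quoted from there.

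Then I would identify the Pr\"ufer hull $\widetilde U$ of $U\subseteq S$. On one hand $U\subseteq T$ is Pr\"ufer, so $T$ is a Pr\"ufer subextension of $U$ in $S$ and hence $T\subseteq\widetilde U$. On the other hand, consider $T\subseteq\widetilde U\subseteq S$: the extension $T\subseteq\widetilde U$ is Pr\"ufer, being a subextension of the Pr\"ufer extension $U\subseteq\widetilde U$, and it is integral, because $T\subseteq S$ is integral by the definition of the co-integral closure and $\widetilde U\in[T,S]$. A Pr\"ufer integral extension is an isomorphism, so $T=\widetilde U$; that is, $T$ is the Pr\"ufer hull of $U\subseteq S$. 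Finally, from $\widetilde U=T$ and the fact that $T\subseteq S$ is integral we get that $\widetilde U\subseteq S$ is integral, which is exactly the statement that $U\subseteq S$ is almost-Pr\"ufer. The only point requiring a touch of care is the repeated use of ``a subextension of a Pr\"ufer extension is Pr\"ufer'' (equivalently, a sub-pair of a normal pair is a normal pair); this is standard (cf. \cite{KZ}) but I would cite it explicitly rather than leave it implicit.
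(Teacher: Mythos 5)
Your proof is correct and follows the same route as the paper: the paper's own argument is the one-line observation that $U\subseteq T$ is Pr\"ufer (a fact already available from the proof of Theorem \ref{0.19}) and $T\subseteq S$ is integral, from which both conclusions follow. You simply make explicit the supporting steps (quasi-Pr\"ufer factorization, subextensions of Pr\"ufer extensions are Pr\"ufer, and Pr\"ufer $+$ integral implies trivial) that the paper leaves implicit.
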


\begin{proof} Since $U\subseteq T$ is Pr\"ufer and $T\subseteq S$ is integral, we get that $U\subseteq S$ is almost-Pr\"ufer, and $T$ is the Pr\"ufer hull of $U\subseteq S$.
\end{proof}

\begin{theorem}\label{0.21} Let $R\subset S$ be an FCP extension. Then, $R\subseteq S$ is almost-Pr\"ufer if and only if $\widetilde{R}=\underline R$.
\end{theorem}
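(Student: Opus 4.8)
The plan is to reduce the equivalence to the two facts already available: that $\widetilde R\subseteq\underline R$ always holds for an FCP extension (Theorem \ref{0.19}), and that $\underline R\subseteq S$ is integral by the very definition of the co-integral closure. Write $\mathcal E:=\{T\in[R,S]\mid T\subseteq S \text{ integral}\}$ as in the proof of Theorem \ref{0.19}, so that $\underline R=\min\mathcal E$.

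For the forward implication, suppose $R\subseteq S$ is almost-Pr\"ufer. By definition this means $\widetilde R\subseteq S$ is integral, i.e.\ $\widetilde R\in\mathcal E$. Since $\underline R$ is the least element of $\mathcal E$, we get $\underline R\subseteq\widetilde R$. Combining this with the inclusion $\widetilde R\subseteq\underline R$ from Theorem \ref{0.19} yields $\widetilde R=\underline R$.

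For the converse, suppose $\widetilde R=\underline R$. Then $\widetilde R\subseteq S$ equals $\underline R\subseteq S$, which is integral by the definition of the co-integral closure; hence $R\subseteq S$ is almost-Pr\"ufer by definition. This completes the argument.

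I do not expect any genuine obstacle here: the statement is essentially a repackaging of Theorem \ref{0.19} together with the definitions of the Pr\"ufer hull, the co-integral closure, and the almost-Pr\"ufer property. The only point deserving a word of care is that ``almost-Pr\"ufer'' is being used in its defining form ($\widetilde R\subseteq S$ integral) rather than through the equivalent characterization $S=\widetilde R\,\overline R$ valid under FIP; since we stay in the FCP setting and only invoke the definition, no extra hypothesis is needed.
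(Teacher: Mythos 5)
Your proof is correct, and it differs from the paper's argument only in how the forward implication is obtained. For ``almost-Pr\"ufer $\Rightarrow$ $\widetilde R=\underline R$'' the paper simply cites an external result (\cite[Proposition 4.17]{Pic 5}), which says that for an almost-Pr\"ufer extension $\widetilde R$ is itself the least $T\in[R,S]$ with $T\subseteq S$ integral, so it coincides with $\underline R$ by uniqueness of the least element. You instead use only the definition of almost-Pr\"ufer ($\widetilde R\in\mathcal E$), the minimality of $\underline R$ in $\mathcal E$ to get $\underline R\subseteq\widetilde R$, and then the inclusion $\widetilde R\subseteq\underline R$ established in Theorem \ref{0.19} to force equality. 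Your route is more self-contained, staying inside the present paper, but it leans on the inclusion $\widetilde R\subseteq\underline R$, which is the nontrivial part of Theorem \ref{0.19}; the paper's citation makes that inclusion unnecessary for this particular implication. The converse direction is essentially identical in both treatments: $\underline R\subseteq S$ is integral by definition of the co-integral closure, so $\widetilde R=\underline R$ gives $\widetilde R\subseteq S$ integral, which is exactly the definition of almost-Pr\"ufer (and, as you note, no appeal to the FIP characterization $S=\widetilde R\,\overline R$ is needed). No gaps.
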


\begin{proof} Assume that $\widetilde{R}=\underline R$. Then, $R\subseteq\widetilde{R}$ is Pr\"ufer and $\widetilde{R}=\underline R\subseteq S$ is integral; so that, $R\subseteq S$ is almost-Pr\"ufer.

Conversely, assume that $R\subseteq S$ is almost-Pr\"ufer. According to \cite[Proposition 4.17]{Pic 5}, $\widetilde{R}$ is the least $T\in[R,S]$ such that $T\subseteq S$ is integral, which implies that $\widetilde{R}= \underline R $.
\end{proof}

The previous Theorem shows that for an FCP extension $R\subseteq S$ which is not almost-Pr\"ufer, then $\widetilde{R}\subset\underline R $. In particular, $R\subset\underline R $ is not Pr\"ufer, and then not integrally closed, because an integrally closed FCP extension is Pr\"ufer. This situation holds in Example \ref{0.23}.

If $R\subseteq S$ is an almost-Pr\"ufer extension, the closures $\overline R$ and $\underline R$ are linked as we can see in the following Proposition which improves \cite[Theorem 6.13]{Pic 15},  since $\widetilde{R}= \underline R $. 

\begin{proposition}\label{0.22} If $R\subset S$ is an almost-Pr\"ufer FCP extension, there are order-isomorphisms: 
\begin{enumerate}
\item $\varphi:[R,\overline R]\to[\underline R,S]$ defined by $\varphi(T):=T\underline R$ for any $T\in[R,\overline R]$, and whose inverse $\psi:[\underline R,S]\to[R,\overline R]$ is defined by $\psi(U):=U\cap\overline R$ for any $U\in[\underline R,S]$.

\item $\theta:[R,\widetilde R]\to[\overline R,S]$ defined by $\theta(T):=T\overline R$ for any $T\in[R,\widetilde R]$, and whose inverse $\rho:[\overline R,S]\to[R,\widetilde R]$ is defined by $\rho(U):=U\cap\widetilde R$ for any $U\in[\overline R,S]$.

\item $\gamma:[R,S]\to[R,\overline R]\times[R,\widetilde R]$ defined by $\gamma(W):=(W\cap\overline R,W\cap\widetilde R)$ for any $W\in[R,S]$, and whose inverse $\delta:[R,\overline R]\times[R,\widetilde R]\to[R,S]$ is defined by $\delta((W,W')):=WW'$ for any $(W,W')\in[R,\overline R]\times[R,\widetilde R]$.
\end{enumerate}
\end{proposition}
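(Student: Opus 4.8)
The plan is to reduce all three claims to the behaviour of $R\subset S$ at the maximal ideals of $R$, where the almost-Pr\"ufer hypothesis collapses the situation completely. First I would use Theorem \ref{0.21} to rewrite $\underline R$ as $\widetilde R$, so that the three statements only involve $\overline R$ and $\widetilde R$. Each of the six maps $\varphi,\psi,\theta,\rho,\gamma,\delta$ is visibly order-preserving and visibly lands in the poset claimed for it (e.g. $U\cap\overline R\in[R,\overline R]$ since $R\subseteq\widetilde R\subseteq U$), so the whole content is that the six composites are identities. Every such assertion is an equality of two $R$-submodules of $S$ obtained from the fixed rings $\overline R,\widetilde R,S$ and the variable subalgebra by the operations of compositum and finite intersection, both of which commute with localization; since an $R$-submodule of $S$ is determined by its localizations at the elements of $\mathrm{Max}(R)$, it suffices to verify each identity after localizing at an arbitrary $M\in\mathrm{Max}(R)$.

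The key step is the local structure. Since $R\subset S$ is almost-Pr\"ufer and FCP, \cite[Proposition 4.16]{Pic 5} gives, for each $M\in\mathrm{Max}(R)$, that $R_M\subseteq S_M$ is \emph{either} integral \emph{or} Pr\"ufer. If it is integral, then $\overline R_M$, being the integral closure of $R_M$ in $S_M$, equals $S_M$; and $R_M\subseteq\widetilde R_M$ is both Pr\"ufer (localization of the Pr\"ufer extension $R\subseteq\widetilde R$) and integral (a subextension of $R_M\subseteq S_M$), hence $\widetilde R_M=R_M$ because a Pr\"ufer integral extension is an isomorphism. If it is Pr\"ufer, then $R_M$ is integrally closed in $S_M$ (a Pr\"ufer extension is a normal pair), so $\overline R_M=R_M$; while $\widetilde R_M\subseteq S_M$ is both Pr\"ufer (a subextension of the normal pair $R_M\subseteq S_M$) and integral (localization of $\widetilde R\subseteq S$, integral since $R\subset S$ is almost-Pr\"ufer), hence $\widetilde R_M=S_M$. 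Thus at every $M$ one of $\overline R_M,\widetilde R_M$ equals $R_M$ and the other equals $S_M$; in particular $(\overline R\cap\widetilde R)_M=R_M$ and $(\overline R\widetilde R)_M=S_M$ for all $M$, so $\overline R\cap\widetilde R=R$ and $\overline R\widetilde R=S$: $\overline R$ and $\widetilde R$ are mutual complements in $[R,S]$.

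It then remains to check the six identities, and here there is essentially nothing to do: after localizing at $M$, the fact that one of $\overline R_M,\widetilde R_M$ is trivial forces the identity. For instance, given $(W,W')\in[R,\overline R]\times[R,\widetilde R]$ one has $W_M\in[R_M,\overline R_M]$ and $W'_M\in[R_M,\widetilde R_M]$; in the integral case $\widetilde R_M=R_M$ gives $W'_M=R_M$ and $(WW'\cap\overline R)_M=W_MW'_M\cap S_M=W_M$, and in the Pr\"ufer case $\overline R_M=R_M$ gives $W_M=R_M$ and $(WW'\cap\overline R)_M=R_MW'_M\cap R_M=R_M=W_M$; hence $WW'\cap\overline R=W$, and symmetrically $WW'\cap\widetilde R=W'$, so $\gamma\circ\delta=\mathrm{id}$. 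The identity $\delta\circ\gamma=\mathrm{id}$, i.e. $(W\cap\overline R)(W\cap\widetilde R)=W$, and the four identities for $(\varphi,\psi)$ and $(\theta,\rho)$ have exactly the same shape and reduce locally to trivialities such as $W_MR_M=W_M$ or $W_M\cap S_M=W_M$. So I do not expect a genuine obstacle; the only substantive input is the local dichotomy of \cite[Proposition 4.16]{Pic 5}, the remainder being the standard bookkeeping of localization together with ``a Pr\"ufer integral extension is an isomorphism'' and ``a Pr\"ufer extension is a normal pair'', both already recalled in the paper. (Alternatively, one could derive the three isomorphisms from the fact that $R\subset S$ splits at $\overline R$ and at $\widetilde R$ and invoke \cite[Theorem 6.13]{Pic 15}, which the present proposition refines.)
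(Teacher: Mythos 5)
Your argument is correct, but it does not follow the paper's route. The paper proves (1) and (2) by quoting the splitting results of \cite[Theorem 6.13]{Pic 15} for the maps $\varphi,\theta,\delta$ being order-isomorphisms and then identifying the inverses via \cite[Proposition 4.21]{Pic 5} and \cite[Corollary 4.3]{Pic 5}; only in part (3) does it localize, using the partition $\mathrm{MSupp}(S/R)=\mathrm{MSupp}(\overline R/R)\sqcup\mathrm{MSupp}(\widetilde R/R)$ from \cite[Proposition 4.18]{Pic 5}. You instead run a single uniform localization argument: after replacing $\underline R$ by $\widetilde R$ via Theorem \ref{0.21}, you invoke the dichotomy of \cite[Proposition 4.16]{Pic 5} (at each $M\in\mathrm{Max}(R)$ the extension $R_M\subseteq S_M$ is integral or Pr\"ufer), deduce that $\{\overline R_M,\widetilde R_M\}=\{R_M,S_M\}$ using ``integral closure commutes with localization'', ``a Pr\"ufer integral extension is trivial'' and the normal-pair stability facts, and then check all six composite identities locally, where they degenerate. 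This is sound: subalgebras of $S$ are determined by their localizations at maximal ideals of $R$, and both compositum and finite intersection commute with localization, so the reduction is legitimate; your local case analysis is the same dichotomy the paper exploits in its part (3). What your approach buys is self-containedness and uniformity — you reprove rather than cite the relevant pieces of \cite[Theorem 6.13]{Pic 15} and \cite[Proposition 4.21]{Pic 5}, and all three statements are handled by one mechanism; what the paper's approach buys is brevity and an explicit link to the splitting/complement machinery ($\widetilde R=(\overline R)^o$) that it reuses elsewhere, which is exactly the alternative you mention in your closing parenthesis.
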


\begin{proof} Since $R\subset S$ is an almost-Pr\"ufer FCP extension, \cite[Proposition 4.3]{Pic 15} says that $R\subset S$ splits at $\widetilde R$ and $\overline R$ with $\widetilde R=(\overline R)^o$ 
 and Theorem \ref{0.21} asserts that $\underline R=\widetilde R$.

(1) According to \cite[Theorem 6.13 (4)]{Pic 15}, the map $\varphi:[R,\overline R]\to[\underline R,S]$ is an order-isomorphism. Moreover, $\psi:[\underline R,S]\to[R,\overline R]$ 
defined by $\psi(U):=U\cap \overline R$
 clearly exists and preserves order. Let $T\in[R,\overline R]$ and $U\in[\underline R,S]$. Then, 
 $T\cap\widetilde{R}\subseteq\overline R\cap\widetilde{R}=R$ and $S=\widetilde R\overline R\subseteq U\overline R$. It follows that $T=T\underline R\cap\overline R=\psi[\varphi(T)]$ and $U=(U\cap\overline R)\underline R=\varphi[\psi(U)]$, a consequence of \cite[Proposition 4.21]{Pic 5}. Then $\psi$ is an order-isomorphism such that $\psi={\varphi}^{-1}$.

(2) Similarly $\theta:[R,\widetilde R]\to[\overline R,S]$ is an order-isomorphism by \cite[Theorem 6.13 (3) ]{Pic 15} and $\rho:[\overline R,S]\to[R,\widetilde R]$ defined by $\rho(U):=U\cap \widetilde R$ exists and preserves order. Let $T\in[R,\widetilde R]$ and $U\in[\overline R,S]$. Then, $T\cap\overline{R}\subseteq\overline R\cap\widetilde{R}=R$ and $S=\widetilde R\overline R\subseteq U\widetilde R$. It follows that $U=(U\cap\widetilde R)\overline R=\theta[\rho(U)]$, a consequence of \cite[Proposition 4.21]{Pic 5}. Now, \cite[Corollary 4.3]{Pic 5} implies that $T=T\overline R\cap\widetilde R=\rho[\theta(T)]$.
 Then  $\rho$ is an  order-isomorphism such that $\rho={\theta}^{-1}$.

(3) According to \cite[Theorem 6.13 (5)]{Pic 15}, $\delta':[R,\widetilde R]\times[R,\overline R]\to[R,S]$ defined by $\delta'((W,W')):=WW'$ for any $(W,W')\in[R,\widetilde R]\times[R,\overline R]$ is an order-isomorphism, and so is obviously $\delta:[R,\overline R]\times[R,\widetilde R]\to[R,S]$ defined by $\delta((W,W')):=WW'$ for any $(W,W')\in[R,\overline R]\times[R,\widetilde R]$. Moreover, the map $\gamma:[R,S]\to[R,\overline R]\times[R,\widetilde R]$ defined by $\gamma(W):=(W\cap\overline R,W\cap\widetilde R)$ for any $W\in[R,S]$ exists and preserves order. 

Let $W\in [R,S]$. Then, $U:=\delta[\gamma(W)]=(W\cap\overline R)(W\cap\widetilde R)\ (*)$. Taking into account \cite[Proposition 4.18]{Pic 5} and localizing $(*)$ at any $M\in\mathrm{MSupp}(S/R)=\mathrm{MSupp}(\overline R/R)\cup\mathrm{MSupp}(\widetilde R/R)$, with $\mathrm{MSupp}(\overline R/R)\cap\mathrm{MSupp}(\widetilde R/R)=\emptyset$, we get the following:

If $M\in\mathrm{MSupp}(\overline R/R)$, then $R_M=(\widetilde R)_M$ and $S_M=(\overline R)_M$; so that, $U_M=W_M$. If $M\in\mathrm{MSupp}(\widetilde R/R)$, then $R_M=(\overline R)_M$ and $S_M=(\widetilde R)_M$; so that, $U_M=W_M$. It follows that $U=W$ and $\delta[\gamma(W)]=W$ for any $W\in [R,S]$. Then, $\delta\circ\gamma$ is the identity on $[R,S]$. Since $\delta$ is an order-isomorphism, so is $\gamma=\delta^{-1}$. 
 \end{proof}

In  the context of algebraic orders, we built an  FIP extension $R\subset K$ which is not almost-Pr\"ufer \cite[Example 6.15]{Pic 15}, but where we exhibit  its almost-Pr\"ufer closure. As the co-integral closure was not still defined, we recall this example here and  show that  $K$ is the co-integral closure of the extension.

\begin{example}\label{0.23} \cite[Example 6.15]{Pic 15} Let $K$ be an algebraic number field, $T'$ be its ring of algebraic integers, and $R'$ be an algebraic order of $K$ such that $R'\subset T'$ is a minimal inert extension. Set $M':=(R':T')\in\mathrm{Max}(R')$; so that, $M'\in\mathrm{Max}(T')$. Let $N'\in\mathrm{Max}(R'),\ M'\neq N'$. Set $\Sigma:=R'\setminus(M'\cup N')$, which is a multiplicatively closed subset. Then, $R:=R'_{\Sigma}$ is a semilocal ring with two maximal ideals $M:=M'_{\Sigma}$ and $N:=N'_{\Sigma}$. Moreover $T:=T'_{\Sigma}$ is also a semilocal ring with two maximal ideals, and such that $R\subset T$ is a minimal inert extension with $M:=M'_{\Sigma}=(R:T)\in\mathrm{Max}(T)$. 
The other maximal ideal of $T$ is $P:=NT$. 
 In particular, $T$ is a Pr\"ufer domain with quotient field $K$,  
  $T\subset T_M$ is minimal Pr\"ufer with $P=\mathcal{C}(T,T_M)$ and 
 $\{N\}=\mathrm{MSupp}_R(T_M/T)$.  
 Moreover, $T$ is the integral closure of $R\subset K$, and $R\subset K$ is an FCP extension.
There is a commutative diagram with 
\noindent $[R,K]=\{R,T,R_M,T_M,T_P,K\}$:
\centerline{$\begin{matrix}
 {} &       {}       & R_M &      {}       &   {}   &      {}       & {} \\
 {} & \nearrow &    {}   & \searrow &   {}   &      {}       & {} \\
 R &      {}       &    {}   &       {}      & T_M &      \to      & K \\
 {} & \searrow &   {}   & \nearrow &   {}    & \nearrow & {} \\
 {} &      {}       &   T   &       \to      & T_P  &      {}       & {}
\end{matrix}$}
where $R\subset T_M$ is almost-Pr\"ufer with $R_M\subset T_M$  minimal inert as $R\subset T$, and $R\subset R_M$ is Pr\"ufer minimal as $T\subset T_M,\ T_M\subset K,\ T\subset T_P$ and $T_P\subset K$. 

When considering the extension $R\subset K$, obviously, there does not exist some $V\in[R,K[$ such that $V\subset K$ is integral, which implies that $K$ is the co-integral closure of $R\subset K$. And $R_M$ is the Pr\"ufer hull of $R\subset K$, and also of $R\subset T_M$.
 Then, as it is said after Theorem \ref{0.21}, the co-integral closure and  the Pr\"ufer hull do not always coincide. But, in the present example, as $R\subset T_M$ is almost-Pr\"ufer, $R_M$ is also the co-integral closure of $R\subset T_M$.
\end{example} 

\begin{example}\label{0.24}
Let $R\subseteq S$ be an extension. Each $T\in[R,S]$ such that $T\subseteq S$ is integrally closed contains $\overline R$. Dually, things are not so simple. For an extension $R\subseteq S$, we set $[R,S]_{IC}:=\{T \in[R,S]\mid T\cap\overline R=R\}$, {\it i.e.} $T\in [R,S]_{IC}$ if and only if $R\subseteq T$ is integrally closed. In particular, $\widetilde R\in [R,S]_{IC}$. Clearly $R$ belongs to $[R,S]_{IC}$ and this set is $\cup$-inductive in the poset $([R,S];\subseteq )$. By Zorn Lemma, it contains maximal elements. It also contains $T \in[R,S]$ such that  $T\setminus R$ is a mcs by \cite[Th\'eor\`eme 1(d)]{SAM}. 
 
Let $\mathcal M$ be a maximal element of $[R,S]_{IC}$ and $s\in S\setminus\mathcal M$. Then $\mathcal M\subset\mathcal M[s]$ cannot be integrally closed because otherwise $R\subseteq\mathcal M[s]$ would be integrally closed. It follows that there is some $p(s)\in\mathcal M[s]\setminus\mathcal M$ which is integral over $\mathcal M$, and then $s$ is algebraic over $\mathcal M$. Therefore, $S$ is algebraic over $\mathcal M$. If we choose some maximal $\mathcal M$, containing $\widetilde R$, we get a tower $R\subseteq\widetilde R\subseteq\mathcal M\subseteq S$, where the last extension is algebraic. In particular, for each $s\in S$, there is some $m\in\mathcal M$ such that $ms$ is integral over $\mathcal M$.
 
There exist some extensions $R\subseteq S$ such that $[R,S]_{IC}$ has a greatest element, for example if the extension is chained. Other examples are quasi-Pr\"ufer extensions like FCP extensions, because such extensions are integrally closed if and only if they are Pr\"ufer and subextensions of quasi-Pr\"ufer extensions are quasi-Pr\"ufer \cite[Corollary 3.3]{Pic 5}. In this case the greatest element of $[R,S]_{IC}$ is $\widetilde R$. Actually, $\widetilde R\subseteq S$, is quasi-Pr\"ufer, whence a $\mathcal P$-extension; that is, each element $s\in S$ is a zero of a polynomial $p(X)$ over $\widetilde R$, whose content is $\widetilde R$. In particular $S$ is algebraic over  $\widetilde R$.  
 
Another example is given by ``completely" distributive extensions $R\subseteq S$ (extensions such that for each $V\in[R,S]$ and for any family $\{W_i\}_{i\in I}$ of elements of $[R,S]$, we have $(\prod_{i\in I}W_i)\cap V=\prod_{i\in I}(W_i\cap V)$). 
 The greatest element of $[R,S]_{IC}$ is the product of all elements of $[R,S]_{IC}$. Denote by $\{T_i | i\in I\}$ the elements of $[R,S]_{IC}$, by $U$ their product and let $T$ be one of them. Then it is enough to show that $U$ is integrally closed over $R$, because $T\subseteq U$. But we have $U\cap\overline R=\prod_{i\in I}(T_i\cap\overline R)=R$, which proves that $U$ is integrally closed over $R$.
  
Note that the same proof holds if $R\subseteq S$ is distributive and  has FCP (whence has FIP by \cite[Lemma 2.10]{Pic 10}).
 
If $[R,S]_{IC}=\{T_1,\ldots,T_n\}$, we thus get that $\widetilde R=T_1\cdots T_n$.
\end{example}
 
We can also consider for an extension $R\subseteq S$, the set of all $T\in [R,S]$ such that $R\subseteq T$ is seminormal (resp.; t-closed, u-closed). 
An element $T\in [R,S]$ is such that $R\subseteq T$ is seminormal (resp.; t-closed, u-closed) if and only if ${}^+_SR\cap T=R$ (resp.; ${}^t_SR\cap T;{}^u_SR\cap T=R$). Then most of the preceding considerations hold. 

\section{Radicial  and separable  ring extensions}

We first introduce some notation. If $\Pi$ is a property of field extensions, we say that a ring extension is a $\kappa$-$\Pi$-extension if all its residual field extensions verify $\Pi$. In the sequel of the section, we will consider the separable and radicial properties, where in this paper a purely inseparable field extension is called {\it radicial}. 

 \begin{example} \label{CAT} An FCP t-closed extension $R\subset S$ is $\kappa$-catenarian if and only if $R\subset S$ is catenarian. Indeed, according to \cite[Proposition 4.3]{Pic 12}, $R\subset S$ is catenarian if and only if $R_M\subset S_M$ is catenarian for each $M\in\mathrm{MSupp}(S/R)$. Since $R\subset S$ is an FCP t-closed extension, we infer from  property ($\dag$) of Lemma \ref{SUP1} that for each $M\in\mathrm{MSupp}(S/R)$, there exists a unique $N\in\mathrm{Max}(S)$ lying above $M$. Moreover, $(R_M:S_M)=MR_M=NS_N=NS_M$ and $\kappa(N)=S_N/(R_M:S_M)$. Set  $I:=(R_M:S_M)$.  In particular, $\kappa(M)=R_M/I$ and $\kappa(N)=S_N/I$. Then, \cite[Corollary 3.7]{Pic 12} shows that $R_M\subset S_M$ is catenarian  for each $M\in\mathrm{MSupp}(S/R)\Leftrightarrow R_M/I\subset S_M/I$ is catenarian for each $M\in\mathrm{MSupp}(S/R)\Leftrightarrow\kappa(M)\subset\kappa(N)$ is catenarian for each $M\in\mathrm{MSupp}(S/R)\Leftrightarrow\kappa(N\cap R)\subseteq\kappa(N)$ is catenarian for each $N\in\mathrm{Max}(S)$ (because of the bijection $\mathrm{Spec}(S)\to\mathrm{Spec}(R)$) $\Leftrightarrow R\subset S$ is $\kappa$-catenarian.
\end{example}

Kubota introduced the radicial closure of a ring extension, focussing his results on integral ring extensions \cite{KU}. Later on Manaresi  reintroduced this notion and named it weak normalization \cite{MA}. We keep the notation of Manaresi (but not her terminology): for a ring extension $R\subseteq S$, that we will suppose integral for the rest of the Section, we denote by ${}^*_SR$ the radicial closure of $R$ in $S$. We recall from \cite{MA} or \cite{KU}, that ${}^*_SR:=\{x\in S\mid x\otimes 1-1\otimes x\in\mathrm{Nil}(S\otimes_RS)\}$. Moreover, ${}^*_SR$ is the  greatest subextension $R\subseteq T$ of $R\subseteq S$, such that $R\subseteq T$ is a radicial extension; that is, the ring morphism $R\to T$ is universally an $i$-morphism, or equivalently, an $i$-morphism which is $\kappa$-radicial \cite[Definition 3.7.2, p. 248]{EGA} and \cite[Proposition 1]{KU}. 
 For instance, a subintegral extension is radicial. 
 
Yanagihara proved that a ring extension $R\subseteq S$ is radicial if and only if $S$ is the filtered union of all $U\in[R,S]$ such that $R\subseteq U$ is a finite tower of finitely many subextensions of the type $T\subseteq T[x]$, with $x\in S$ and $T\in[R,S]$, such that either $x^p\in T$ and $px\in T$ for some prime integer $p$ or $x^2, x^3 \in T$ \cite[Theorem 2]{YA}. 

 Let $R\subset S$ be an integral ring extension. In \cite{Pic 14}, the second author says that $R\subset S$ is {\it weakly infra-integral} if it is $\kappa$-radicial.  
   She defined the {\it strong t-closure}, that is here termed the $\kappa$-radicial closure, of a ring extension $R\subseteq S$ as the greatest subalgebra $V:={}^\circ_SR\in[R,S]$ such that $R\subseteq V$ is $\kappa$-radicial \cite[Definition 3.1]{Pic 14}. 
   In particular, ${}^*_RS\subseteq {}^\circ_SR$. 

Let $f:R \to S$ be a ring morphism and its co-diagonal ring morphism $\bigtriangledown_f:S\otimes_RS \to S$.
Denote by $I$ the kernel of the surjective ring morphism $\bigtriangledown_f$. If $f$ is of finite type as an algebra, then $I$ is an ideal of finite type and  $I=I^2$ if and only if $I$ is generated by an  idempotent. 
We recall that a ring morphism $f:R\to S$ is called {\it formally unramified} if $\Omega_R(S):=I/I^2$, the $S$-module of differentials of $f$, is zero and {\it unramified} if it is formally unramified and of finite type.  We see that a ring morphism of finite type $f:R\to S$ is unramified  if and only if $I$ is generated by an idempotent.  In this case,   $f$ is called separable by some authors because   $\bigtriangledown_f$ defines $S$ as an  $S\otimes_RS$-projective module, \cite[Proposition 9, p. 32, Chapitre III]{RAY}.  We will not use this terminology.
 But in this paper we will say that an integral extension $R\subset S$ is {\it $\kappa$-separable} if its residual extensions are separable, in which case $R\subseteq U$ and $U\subseteq S$ are $\kappa$-separable for any $U\in [R,S]$. To check that an integral FCP extension $R\subseteq S$ with conductor $I$ is $\kappa$-separable it is enough to verify that the extensions $\kappa(Q\cap R)\to\kappa (Q)$ are separable for each maximal ideal $Q\in\mathrm{V}_S(I)$ by Proposition \ref{SUP}. Note that $R\subseteq U$ is $\kappa$-separable for any $U\in[R,{}^t_SR]$. Note also that a $\kappa$-radicial and $\kappa$-separable extension is infra-integral.
 
The following Proposition belongs to the algebraic geometry folklore, but we can not give any reference. So we give an outline  of its proof.

\begin{proposition}\label{SEP} A finite type extension $R\subseteq S$  is a ring epimorphism if and only if it is radicial and  
unramified. 
Therefore, an integral FCP extension is trivial when it is radicial and unramified.
\end{proposition}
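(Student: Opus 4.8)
The plan is to prove the equivalence ``epimorphism $\iff$ radicial and unramified'' for a finite type extension $R\subseteq S$, and then to deduce the triviality statement for integral FCP extensions. I would begin with the easy direction: if $R\subseteq S$ is a ring epimorphism, then the codiagonal $\bigtriangledown_f\colon S\otimes_R S\to S$ is an isomorphism, so its kernel $I$ is zero; in particular $I$ is generated by the idempotent $0$, which by the discussion preceding the statement means $f$ is unramified (it is of finite type by hypothesis). Moreover an epimorphism is injective on spectra and remains so after any base change $R\to R'$ (since $S\otimes_R R'$ is again an $R'$-epimorphism), and the residual extensions of an epimorphism are isomorphisms; hence $f$ is universally an $i$-morphism, i.e.\ radicial. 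This uses only standard facts about epimorphisms (e.g.\ Lazard--Silver), which I would cite rather than reprove.

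For the converse, suppose $R\subseteq S$ is finite type, radicial and unramified. Unramified means the kernel $I$ of $\bigtriangledown_f$ is generated by an idempotent $e\in S\otimes_R S$, so $S\otimes_R S\cong S\times (S\otimes_R S)/(1-e)$ and $\mathrm{Spec}(S\otimes_R S)$ is the disjoint union of the image of $\bigtriangledown_f^\sharp\colon \mathrm{Spec}(S)\hookrightarrow \mathrm{Spec}(S\otimes_R S)$ (the ``diagonal'', which is open and closed) and a complementary piece. On the other hand, radicial means the two projections $\mathrm{Spec}(S\otimes_R S)\to\mathrm{Spec}(S)$ agree and $\mathrm{Spec}(S)\to\mathrm{Spec}(R)$ is injective after every base change; in particular the diagonal $\mathrm{Spec}(S)\to\mathrm{Spec}(S\otimes_R S)$ is a bijection on points (a radicial morphism has ``trivial'' fibered square at the level of topological spaces), because $S\otimes_R S$ is radicial over $S$ via either projection and the two composites with $\bigtriangledown_f^\sharp$ are the identity. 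Combining: the diagonal is simultaneously a closed immersion (from unramifiedness, $I$ idempotent-generated, so $S=(S\otimes_R S)/I$) and surjective on spectra, which forces $I\subseteq \mathrm{Nil}(S\otimes_R S)$; but $I=(e)$ with $e$ idempotent and $I$ nil forces $e=0$, hence $I=0$, i.e.\ $\bigtriangledown_f$ is an isomorphism, which is exactly the statement that $R\to S$ is an epimorphism.

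For the final sentence: let $R\subseteq S$ be an integral FCP extension that is radicial and unramified. An FCP extension is finitely generated, so the Proposition applies and $R\subseteq S$ is an epimorphism. But an integral epimorphism of rings is surjective (an integral epimorphism is bijective on spectra with trivial residue extensions and zero conductor in a suitable sense; concretely, if $R\subseteq S$ is integral and an epimorphism then each $s\in S$ satisfies a monic polynomial over $R$ and the epimorphism condition forces $s\in R$ — equivalently, invoke that an integral flat epimorphism, or more simply an integral epimorphism, is an isomorphism, cf.\ the remark after the definition of Pr\"ufer extensions in the introduction combined with the fact that an epimorphism that is integral is a Pr\"ufer extension). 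Hence $R=S$, i.e.\ the extension is trivial.

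The main obstacle will be the converse direction, specifically making rigorous the passage ``diagonal closed and surjective on $\mathrm{Spec}$ $\implies$ $I$ nil $\implies$ $e=0$''. One must be careful that radicial only gives surjectivity of the diagonal on the \emph{topological} space $\mathrm{Spec}(S\otimes_R S)$ (i.e.\ $V(I)=\mathrm{Spec}(S\otimes_R S)$), not a scheme-theoretic statement; the bridge is precisely that $V(I)=\mathrm{Spec}(S\otimes_R S)$ means $I\subseteq\mathrm{Nil}(S\otimes_R S)$, and then an idempotent lying in the nilradical must vanish. I would also be slightly careful that ``radicial'' in the sense used here (universally injective on spectra, equivalently injective with radicial residue extensions, equivalently $\bigtriangledown_f^\sharp$ is a topological bijection onto $\mathrm{Spec}(S\otimes_R S)$) is exactly the characterization recorded from \cite{EGA} and \cite{KU} before the statement, so that the two projections $S\otimes_R S\rightrightarrows S$ being radicial, together with $\bigtriangledown_f$ splitting them, delivers the needed surjectivity — this is the standard fact that a morphism is radicial iff its diagonal is surjective, which I would cite from \cite{EGA}.
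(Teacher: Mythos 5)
Your proof of the equivalence is correct and is essentially the paper's argument: both directions come down to the kernel $I$ of $\bigtriangledown_f$, with the forward direction reading off radiciality and unramifiedness from $I=0$ (via Lazard's characterization of epimorphisms), and the converse observing that unramifiedness makes $I$ generated by an idempotent while radiciality forces $I\subseteq\mathrm{Nil}(S\otimes_RS)$, so the idempotent vanishes and $\bigtriangledown_f$ is an isomorphism. The only difference is cosmetic: the paper gets $I\subseteq\mathrm{Nil}(S\otimes_RS)$ from the elementwise description of radicial extensions ($x\otimes 1-1\otimes x$ nilpotent for all $x\in S$, and these elements generate $I$), whereas you pass through the EGA characterization ``radicial iff the diagonal is surjective on spectra''; both are legitimate and buy the same thing.

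The weak point is your justification of the last assertion. The fact you need is true, but neither argument you sketch establishes it: saying that integrality plus the epimorphism condition ``forces $s\in R$'' is an assertion, not a proof, and the alternative route via ``an epimorphism that is integral is a Pr\"ufer extension'' is circular --- Pr\"ufer means that $R\subseteq T$ is a \emph{flat} epimorphism for every intermediate $T$, and flatness is exactly what you do not have before knowing $R=S$; the remark in the introduction applies to Pr\"ufer integral extensions, not to arbitrary integral epimorphisms. The clean repair is the one the paper uses: an integral FCP extension is module finite (not merely of finite type), and a module-finite ring epimorphism is surjective (Lazard, Proposition 1.7, p.~111); alternatively, run the support/Nakayama argument yourself using $S\otimes_R(S/R)=0$ together with lying over. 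With that citation or argument inserted, your proof is complete.
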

\begin{proof} One implication is clear because an epimorphism is  an $i$-extension and the residual extensions are isomorphisms \cite[Proposition 1.5, page 109]{L}. Now if $R\subseteq S$ is radicial and unramified, then the kernel of $\bigtriangledown:S\otimes_RS\to S$ is generated by an idempotent which is nilpotent. Therefore $\bigtriangledown$ is an isomorphism and the extension is an epimorphism \cite[Proposition 1.5, p.109]{L}.
The last assertion is a consequence of the following facts: each subextension of an integral FCP extension is (module) finite and a module finite ring epimorphism  is surjective \cite[Proposition 1.7, p.111]{L}.
\end{proof} 

We have the following results that may not hold if the extension is not FCP:

\begin{proposition}\label{RAM} Let $R\subseteq S$ be an integral FCP extension. The following statements are equivalent:

\begin{enumerate}

\item The extension is unramified;

\item For all $Q\in\mathrm{Spec}(S)$, the ring morphism $R_{R\cap Q}\to S_Q$ is  formally unramified;

\item $QS_Q =PS_Q$ and $\kappa(P)\to\kappa (Q)$ is a finite separable field extension  for each $Q\in \mathrm{Spec}(S)$ and $P:=Q\cap R$; 

\item The extension is $\kappa$-separable and for each $Q\in\mathrm{Spec}(S)$ and $P:=Q\cap R$, the ideal $PS_P$ of $S_P$ is semiprime;

\item $\kappa(P)\to\kappa(P)\otimes_RS$ is unramified for each $P\in \mathrm{Spec}(R)$.

\end{enumerate} 

It follows that an unramified extension is universally $\kappa$-separable.
\end{proposition}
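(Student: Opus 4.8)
The plan is to prove the cycle $(1)\Rightarrow(2)\Rightarrow(3)\Rightarrow(4)\Rightarrow(5)\Rightarrow(1)$, reducing everything to the local-at-$Q$ picture and then, since $R\subseteq S$ is integral FCP, to the residual extension $\kappa(P)\to\kappa(Q)$ with $P=Q\cap R$. The basic tool is that $\Omega_R(S)=0$ localizes: for the finite-type map $f\colon R\to S$ with kernel $I$ of $\bigtriangledown_f$, one has $(\Omega_R(S))_Q=\Omega_{R_P}(S_Q)$, so unramifiedness is a local property on $\mathrm{Spec}(S)$. For $(1)\Rightarrow(2)$ this is immediate, remembering only that ``formally unramified'' drops the finite-type hypothesis. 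For $(2)\Rightarrow(3)$ I would invoke the standard description of unramified local homomorphisms of local rings: $R_P\to S_Q$ is unramified iff $PS_Q=QS_Q$ (i.e. $\mathfrak m_{R_P}S_Q=\mathfrak m_{S_Q}$) and $\kappa(P)\to\kappa(Q)$ is a finite separable field extension; since $S_Q$ is a localization of the finite $R$-algebra $S$, the finite-type hypothesis is automatic, so ``formally unramified'' and ``unramified'' coincide here. For $(3)\Rightarrow(4)$: the separability of $\kappa(P)\to\kappa(Q)$ for all $Q$ above $P$ is exactly $\kappa$-separability (for each fixed $P$ there are finitely many such $Q$, and $S_P/PS_P$ is a finite $\kappa(P)$-algebra); and $QS_Q=PS_Q$ for every $Q\supseteq P$ forces $PS_P$ to be equal to the intersection of the maximal ideals of $S_P$ lying over $P$ — so $PS_P$ is a radical (semiprime) ideal of $S_P$, because $S_P$ is semilocal (module-finite over $R_P$) and $PS_P$ is contained in its Jacobson radical.

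For $(4)\Rightarrow(5)$, fix $P\in\mathrm{Spec}(R)$ and pass to the fiber ring $A:=\kappa(P)\otimes_RS=S_P/PS_P$, a finite $\kappa(P)$-algebra. By hypothesis $PS_P$ is semiprime, so $A$ is a finite reduced $\kappa(P)$-algebra, hence a finite product of finite field extensions $\kappa(Q_j)$ of $\kappa(P)$ (the $Q_j$ being the primes over $P$); by $\kappa$-separability each factor is separable, so $A$ is a finite étale $\kappa(P)$-algebra, i.e. $\kappa(P)\to A$ is unramified. Finally $(5)\Rightarrow(1)$ is the ``unramified is detected on fibers'' criterion: $\Omega_R(S)$ is a finite $S$-module (because $I$ is finitely generated, $S$ being finite over $R$), and $\Omega_R(S)\otimes_R\kappa(P)=\Omega_{\kappa(P)}(A)=0$ for every $P$ by (5), so $\mathrm{Supp}_S(\Omega_R(S))\cap(\text{fiber over }P)=\emptyset$ for all $P$, whence $\Omega_R(S)=0$; since $R\subseteq S$ is finite, it is of finite type, so it is unramified. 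For the closing sentence (``an unramified extension is universally $\kappa$-separable'') I would argue that for any ring map $R\to R'$, the base change $R'\to R'\otimes_RS=:S'$ is again unramified (unramifiedness is stable under base change, via $\Omega_{R'}(S')=\Omega_R(S)\otimes_RR'$ and preservation of finite type), and unramified implies $\kappa$-separable by the residual-field part of criterion $(3)$ applied to $R'\to S'$; hence every base change is $\kappa$-separable, which is the definition of universally $\kappa$-separable.

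The main obstacle will be $(4)\Rightarrow(5)$, specifically the passage from ``$PS_P$ semiprime + residual extensions separable'' to ``the whole fiber algebra $\kappa(P)\otimes_RS$ is étale over $\kappa(P)$'': one must be careful that separability of each residual field extension $\kappa(P)\to\kappa(Q_j)$, together with reducedness of the fiber, genuinely yields a \emph{finite} reduced \emph{separable} $\kappa(P)$-algebra, i.e. that no inseparable or non-reduced phenomenon hides in the idempotent decomposition. This is where the FCP hypothesis is essential (it forces $S$ finite over $R$, so every fiber is a finite-dimensional $\kappa(P)$-algebra and Artinian, and $\mathrm{V}_S(I)\subseteq\mathrm{Max}(S)$ by Proposition \ref{SUP}, controlling where ramification can occur); I expect the clean way to organize the argument is to treat $P\in\mathrm D_R(I)$ and $P\in\mathrm V_R(I)$ separately, the former being vacuous since $R_P=S_P$ there by Proposition \ref{SUP}, and the latter being the finite Artinian-algebra computation just described.
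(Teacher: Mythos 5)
Your proposal is correct and uses essentially the same ingredients as the paper: the local characterization of unramified maps ($PS_Q=QS_Q$ together with a finite separable residue field extension, which the paper cites from Raynaud), the fiberwise criterion via $\Omega$ (cited from Iversen) together with base-change stability, and the identification of a semiprime $PS_P$ with the intersection of the finitely many maximal ideals of $S_P$ lying over $P$. The only real difference is organizational: you arrange a self-contained cycle $(1)\Rightarrow(2)\Rightarrow(3)\Rightarrow(4)\Rightarrow(5)\Rightarrow(1)$ and sketch proofs of the standard facts yourself, whereas the paper establishes $(1)\Leftrightarrow(2)$, $(1)\Leftrightarrow(3)$ and $(1)\Leftrightarrow(5)$ by citation and argues directly only for $(3)\Leftrightarrow(4)$ via the reduced (\'etale) fiber.
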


\begin{proof} (1) $\Leftrightarrow$ (2) is \cite[Exercise 1.14 (vi)]{I}. To prove that (1) $\Leftrightarrow$ (4), we use \cite[Exercise (1), p.38]{RAY} which states that a ring morphism of finite type $R\to S$ is unramified if and only if $QS_Q=PS_Q$ and $\kappa(P)\to\kappa(Q)$ is a finite separable field extension for each $Q\in\mathrm{Spec}(S)$ and $P:=Q\cap R$, which proves (1) $\Leftrightarrow$ (3). We next prove that (3) $\Leftrightarrow$ (4). Suppose that the extension is unramified, we can assume that $R$ is local with maximal ideal $P$. Then $R/P\subseteq S/PS$ is \'etale over the field $R/P$. It follows that $S/PS$ is reduced, and $PS$ is semiprime by \cite[Proposition 1, page 74]{RAY}.
 To prove the converse, if $PS$ is semiprime, we observe that $PS$ is the intersection of the (finitely many) maximal ideals of $S$ lying over $P$, because $R\subseteq S$ is finite. It is now easy to show that $PS_Q= QS_Q$.

Now (1) $\Leftrightarrow$ (5) because of \cite[Property 2.2.vi, page 8]{I} and the fact that the unramified property is stable under any base change. 
\end{proof}

\begin{corollary}\label{RAM1} Let $R\subseteq S$ be an integral FCP t-closed extension. Then, $R\subseteq S$ is unramified if and only if $R\subseteq S$ is $\kappa$-separable.
\end{corollary}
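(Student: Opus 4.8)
The plan is to deduce this equivalence directly from Proposition \ref{RAM}, using the structural information that Proposition \ref{1.31} and Lemma \ref{SUP1} give for a $t$-closed integral FCP extension. First I would recall that the implication ``unramified $\Rightarrow$ $\kappa$-separable'' is already contained in Proposition \ref{RAM} (for instance via (1) $\Rightarrow$ (3), or the closing remark that an unramified extension is universally $\kappa$-separable), so no work is needed there and this direction requires no $t$-closedness hypothesis. The substantive content is the converse: assuming $R\subseteq S$ is $\kappa$-separable, $t$-closed, integral and FCP, I must produce one of the equivalent conditions of Proposition \ref{RAM}, and the natural target is condition (3): for every $Q\in\mathrm{Spec}(S)$ with $P:=Q\cap R$, we have $QS_Q=PS_Q$ and $\kappa(P)\to\kappa(Q)$ is a finite separable field extension.

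The separability of $\kappa(P)\to\kappa(Q)$ for all $Q$ is exactly the $\kappa$-separable hypothesis (finiteness is automatic since an integral FCP extension is module-finite). So the only thing to check is $QS_Q=PS_Q$ for every $Q\in\mathrm{Spec}(S)$. Here I would split according to whether $P\in\mathrm{Supp}(S/R)$. If $P\notin\mathrm{Supp}(S/R)$, then $R_P=S_P=S_Q$ by Proposition \ref{SUP}, so trivially $PS_Q=QS_Q$. If $P\in\mathrm{Supp}(S/R)=\mathrm{MSupp}(S/R)$ (Proposition \ref{SUP}), then $P$ is maximal, and by Lemma \ref{SUP1} there is a unique $N\in\mathrm{Max}(S)$ over $P$ — so necessarily $Q=N$ — and property ($\dag$) gives $(R_P:S_P)=PS_N=NS_N$, i.e. $PS_Q=PS_N=NS_N=QS_Q$. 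This establishes (3), hence the extension is unramified by Proposition \ref{RAM}.

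The only mild subtlety — and the step I expect to be the place one must be careful — is the passage ``$P\in\mathrm{Supp}(S/R)$ $\Rightarrow$ $Q$ is the unique maximal ideal of $S$ over $P$, hence $QS_Q=NS_N$''. This uses that $P$ is maximal (so every prime of $S$ over $P$ is maximal, $S$ being integral over $R$) together with the uniqueness clause of Lemma \ref{SUP1}; once one notes $S_Q=S_N$ for the unique such $N$ (as in the proof of Lemma \ref{SUP1}, via \cite{Bki A1}), the identity $PS_Q=QS_Q$ is immediate from ($\dag$). Everything else is bookkeeping. Thus the corollary follows by citing Proposition \ref{RAM}, Proposition \ref{SUP} and Lemma \ref{SUP1}, with a one-line case split on $\mathrm{Supp}(S/R)$.
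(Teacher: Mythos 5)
Your proof is correct and follows essentially the same route as the paper: reduce to condition (3) of Proposition \ref{RAM} and obtain $QS_Q=PS_Q$ from property ($\dag$) of Lemma \ref{SUP1}, the separability of the residual extensions being exactly the $\kappa$-separable hypothesis. Your explicit case split via Proposition \ref{SUP} for primes outside $\mathrm{Supp}(S/R)$ is a detail the paper leaves implicit, but it is the same argument.
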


\begin{proof} According to Proposition \ref{RAM}, $R\subseteq S$ is unramified if and only if $QS_Q =PS_Q$ and $\kappa(P)\to\kappa (Q)$ is a finite separable field extension  for each $Q\in \mathrm{Spec}(S)$ and $P:=Q\cap R$.  But the condition ($\dag$) of Lemma \ref{SUP1} shows that $QS_Q=PS_Q$ for each $Q\in\mathrm{Spec}(S)$ and $P:=Q\cap R$. Then, $R\subseteq S$ is unramified $\Leftrightarrow\kappa(P)\to\kappa (Q)$ is a finite separable field extension for each $Q\in\mathrm{Spec}(S)$ and $P:=Q\cap R\Leftrightarrow  R\subseteq S$ is $\kappa$-separable.
\end{proof}

The following facts are known. If $R\subset S\subset T$ is an unramified tower of extension, then $T\subset S$ is unramified and a tower of unramified extensions is unramified. But $R\subset S$ does not need to be unramified, except if $R$ is a field \cite[Lemme 3.4, p.11]{I}. However, we can provide the following result that can be used on a tower of finitely many minimal extensions, the last one being pure. Recall that a ring morphism $f:R\to S$ is called {\it pure} if $R'\to R'\otimes_RS$ is injective for each ring morphism $R\to R'$ (or equivalently, $M\to M\otimes_R S$ is injective for each $R$-module $M$).  
If $f$ is pure, then $f^{-1}(IS)=I$ for each ideal $I$ of $R$; so that, $f$ is injective. Moreover, a pure ring morphism $R\to S$ verifies the condition (O), {\em i.e.} an $R$-module $M$ is zero if $M\otimes_RS = 0$.

\begin{proposition} \label{1.111}Let $R\subset  S \subset T$  be an unramified FCP extension, such that $S\subset T$ is pure. Then $R\subset S$ is unramified.
\end{proposition}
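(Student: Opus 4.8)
The plan is to reduce the statement to a question about residual field extensions and differential modules, localizing at primes of $S$. Since all the extensions are FCP and integral, I would first invoke Proposition \ref{RAM}: the extension $R \subset T$ is unramified, hence $\kappa$-separable, and for each prime $Q$ of $T$ lying over $P := Q \cap R$ we have $QT_Q = PT_Q$ together with $\kappa(P) \to \kappa(Q)$ a finite separable field extension. The goal is to deduce the analogous two conditions for $R \subset S$: for each prime $Q'$ of $S$ with $P := Q' \cap R$, that $Q'S_{Q'} = PS_{Q'}$ and that $\kappa(P) \to \kappa(Q')$ is finite separable.

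\medskip

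First I would handle the residual extensions. Fix a prime $Q'$ of $S$ and choose a prime $Q$ of $T$ lying over $Q'$ (possible since $S \subset T$ is integral, so lying over holds). Purity of $S \subset T$ gives that $Q' S$-modules detect nonvanishing after $\otimes_S T$; more concretely, since a pure morphism verifies condition (O) and $f^{-1}(IS) = I$ for ideals, the fiber $\kappa(Q') \to \kappa(Q') \otimes_S T$ is a faithfully-behaved extension of the field $\kappa(Q')$, and $\kappa(Q)$ is a residue field of it. Now $\kappa(P) \to \kappa(Q)$ factors as $\kappa(P) \to \kappa(Q') \to \kappa(Q)$, and the composite being finite and separable forces $\kappa(P) \to \kappa(Q')$ to be finite and separable (a subextension of a finite separable extension is finite separable). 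So the $\kappa$-separable condition for $R \subset S$ is automatic. By the reduction noted before Proposition \ref{SEP} (via Proposition \ref{SUP}), it suffices to check this at maximal ideals in $\mathrm{V}_S(I)$, but the argument just given works for every prime.

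\medskip

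The main obstacle is the ramification-index condition $Q'S_{Q'} = PS_{Q'}$, i.e.\ that $PS_{Q'}$ is the maximal ideal of the local ring $S_{Q'}$ (equivalently $PS_P$ is semiprime / radical in $S_P$, using part (4) of Proposition \ref{RAM}). Here I would localize at $P$, reducing to $R$ local with maximal ideal $P$, so that $R/P \subseteq S/PS \subseteq T/PT$ and $R/P \subseteq T/PT$ is étale (in particular $T/PT$ is reduced, being a finite product of finite separable field extensions of $R/P$). The purity hypothesis passes to $S/PS \subseteq T/PT$: base change of a pure morphism along $R \to R/P$ (really along $S \to S/PS$) is pure. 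Then I claim $S/PS$ is reduced: if $x \in S/PS$ is nilpotent, its image in $T/PT$ is nilpotent, hence zero, and purity (condition (O) applied to the submodule generated by $x$, or directly injectivity of $S/PS \to T/PT$ which follows from purity) forces $x = 0$. Hence $S/PS$ is reduced, so $PS$ is semiprime in $S$ after localization, which is exactly condition (4') of Proposition \ref{RAM} — combined with the $\kappa$-separability already established, Proposition \ref{RAM} then yields that $R \subset S$ is unramified. The delicate point to get right is that purity is inherited by the relevant base changes and that "reduced target + pure subring $\Rightarrow$ reduced subring," which is where the hypothesis that $S \subset T$ (rather than merely $R \subset T$) is pure does the real work.
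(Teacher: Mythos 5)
Your proof is correct and follows essentially the same route as the paper: both arguments come down to the observation that purity of $S\subset T$ makes the fiber map $\kappa(P)\otimes_RS\to\kappa(P)\otimes_RT$ injective, and then conclude with Proposition \ref{RAM}. The only real difference is in how the conclusion is drawn from that injection: the paper cites Iversen's lemma that over a field a subextension of an unramified extension is unramified and applies Proposition \ref{RAM}(5), whereas you verify Proposition \ref{RAM}(4) by hand -- the residual separability coming from lying over (integrality) plus the fact that a subextension of a finite separable field extension is finite separable, which indeed needs no purity, and the semiprimeness of $PS_P$ from the pure (hence injective) embedding of $S_P/PS_P$ into the reduced fiber $T_P/PT_P$ -- so your version is a slightly more self-contained unpacking of the same mechanism.
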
  

\begin{proof}Consider a prime ideal $P$ of $R$ and the fiber morphism $\kappa (P) \to \kappa (P)\otimes_RS \to \kappa(P)\otimes_R T$. The last morphism is injective because deduced from the pure morphism $S\subset T$ by the base change $S\to \kappa (P)\otimes_RS$. Now it is enough to use Proposition \ref{RAM}(5), because over a field a subextension of an unramified extension is unramified \cite[Lemma 3.4, p.11]{I}.
\end{proof}

The next result gives a necessary and sufficient condition in order that    the above result holds.

\begin{proposition} \label{1.11} Let $R\subseteq S$ be an integral seminormal FCP extension and $T\in[R,S]$. Then $R\subseteq S$ is unramified if and only if $R\subseteq T$ and $T\subseteq S$ are unramified.
\end{proposition}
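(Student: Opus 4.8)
The plan is to treat the two implications separately; the substance is entirely in the forward direction. For the ``if'' direction I would invoke the standard fact, recalled just before Proposition \ref{1.111}, that a tower of unramified extensions is unramified. For the ``only if'' direction, assume $R\subseteq S$ is unramified. Then $T\subseteq S$ is unramified, since the top of an unramified tower is unramified (also recalled above); so it remains to prove that $R\subseteq T$ is unramified. As $R\subseteq T$ is an integral FCP extension, I would verify condition (4) of Proposition \ref{RAM} for it: that $R\subseteq T$ is $\kappa$-separable, and that $PT_P$ is a semiprime ideal of $T_P$ for every $P\in\mathrm{Spec}(R)$.

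The $\kappa$-separability of $R\subseteq T$ is routine: given $Q'\in\mathrm{Spec}(T)$ with $P:=Q'\cap R$, pick $Q\in\mathrm{Spec}(S)$ lying over $Q'$; then $\kappa(P)\subseteq\kappa(Q')\subseteq\kappa(Q)$, and $\kappa(P)\subseteq\kappa(Q)$ is finite separable because the unramified extension $R\subseteq S$ is $\kappa$-separable (Proposition \ref{RAM}); hence the subextension $\kappa(P)\subseteq\kappa(Q')$ is separable.

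For the semiprimeness of $PT_P$, first note that if $P\notin\mathrm{Supp}(S/R)$ then $R_P=S_P$, hence $R_P=T_P$, and $PT_P=PR_P$ is the maximal ideal of the local ring $R_P$, which is prime. Now suppose $P\in\mathrm{Supp}(S/R)$; then $P\in\mathrm{Max}(R)$ by Proposition \ref{SUP}. This is where seminormality is used: by Proposition \ref{1.91}, $(R:S)=\bigcap_{i=1}^{n}N_i$ with $N_i\in\mathrm{Max}(S)$, and since $(R:S)\subseteq R$, intersecting with $R$ gives $(R:S)=\bigcap_{i=1}^{n}P_i$ as ideals of $R$, where $P_i:=N_i\cap R\in\mathrm{Max}(R)$; moreover $P$ is one of the $P_i$, since $P\in\mathrm{MSupp}(S/R)=\mathrm V_R((R:S))$. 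Localizing at $P$, and using that the conductor of a finite extension commutes with localization, I obtain $(R_P:S_P)=(R:S)_P=\bigcap_i(P_i)_P=PR_P$, because $(P_i)_P=R_P$ whenever $P_i\neq P$. Hence $PR_P$ is an ideal of $S_P$, so $PS_P=PR_P$, and since $PR_P\subseteq R_P\subseteq T_P\subseteq S_P$ we also get $PT_P=PR_P$. In particular $PS_P\cap T_P=PR_P=PT_P$, so $T_P/PT_P$ embeds into $S_P/PS_P$; the latter is reduced because $R\subseteq S$ is unramified, so $PS_P$ is semiprime by Proposition \ref{RAM}(4). Therefore $T_P/PT_P$ is reduced, i.e.\ $PT_P$ is semiprime, and Proposition \ref{RAM} then yields that $R\subseteq T$ is unramified.

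The key point --- and the only place the hypothesis is genuinely needed --- is the identity $(R_P:S_P)=PR_P$ obtained from seminormality via Proposition \ref{1.91}: it forces $PS_P=PR_P=PT_P$, which is exactly what makes $T_P/PT_P\hookrightarrow S_P/PS_P$ injective. Without seminormality the conductor $(R_P:S_P)$ can be a proper subideal of $PR_P$ (as in the ramified case), the induced map on closed fibres need not be injective, and $PT_P$ may fail to be semiprime even when $PS_P$ is.
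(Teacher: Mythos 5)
Your proof is correct and takes essentially the same route as the paper: the easy direction and the unramifiedness of $T\subseteq S$ come from the standard stability properties, and the substantive implication is reduced to Proposition \ref{RAM}(4), with $\kappa$-separability obtained from field theory and the semiprime condition from the seminormality criterion of Proposition \ref{1.91}. The only (harmless) divergence is the last step: the paper applies Proposition \ref{1.91} to the subextension $R\subseteq T$ (which inherits seminormality) after localizing, so that $(R_P:T_P)=PR_P$ is an intersection of maximal ideals of $T_P$ and $PT_P$ is semiprime with no further use of the unramified hypothesis, whereas you apply it to $R\subseteq S$, obtain $PS_P=PR_P=PT_P$, and transfer reducedness through the embedding $T_P/PT_P\hookrightarrow S_P/PS_P$, thereby invoking unramifiedness of $R\subseteq S$ a second time.
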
 
\begin{proof} We need only to prove that if $R\subseteq S$  is unramified then so is $R\subseteq T$. Indeed the unramified property is stable under composition and stable under left division, without any other hypothesis by \cite[Properties 2.2.ii and 2.2.v, p. 8]{I}. Let $Q\in\mathrm{Spec}(T),\ P:=Q\cap R$ and let $M\in\mathrm{Spec}(S)$ be lying over $Q$; so that, $P=R\cap M$. Since $\kappa(P)\to\kappa(M)$ is a separable field extension, so is $\kappa(P)\to\kappa(Q)$. Then, according to Proposition \ref{RAM}, to show that $R\subseteq T$ is unramified, it is enough to show that $PT_P$ is semiprime in $T_P$, that is an intersection of prime ideals of $T_P$. But $R_P\subseteq T_P$ is still a seminormal FCP integral extension with $R_P$ a local ring. It follows that we may assume that $(R,P)$ is a local ring, and we have to show that $PT$ is semiprime in $T$. Set $C:=(R:T)$, which is an intersection of maximal ideals of $T$ and $R$, according to Proposition \ref{1.91}; so that, $C=P$. To conclude, $R\subseteq T$ is unramified.
\end{proof} 

By \cite[Lemma 2.2]{PICET}, if $I$ is the conductor ideal of an extension $R\subseteq S$, then $R\subseteq S$ is unramified if and only if $R/I\subseteq S//I$ is unramified. It follows that an inert minimal extension $R\subset S$ with conductor $M$ is unramified if and only if the field extension $R/M\subset S/M$ is separable, in which case the extension $R\subseteq S$ is called {\it separable inert}. 

\begin{remark} \label{1.12} (1) In the previous Proposition, the condition that $R\subset S$ is seminormal is necessary. Indeed, assume that $R\subset S$ is an integral FCP extension which is not seminormal. Then, $R\neq{}^+_SR$, and there exists $T\in[R,{}^+_SR]$ such that $R\subset T$ is minimal ramified. 
 It follows that $P:=(R:T)\in\mathrm{Max}(R)$. Let $Q\in\mathrm{Max}(T)$ be the (unique because of Theorem \ref{minimal}) maximal ideal of $T$ lying above $P$ and let $M\in\mathrm{Max}(S)$ be lying above $P$, and then above $Q$. Even if $R\subset S$ is unramified; so that, $\kappa(P)\to\kappa (M)$ is   separable by Proposition \ref{RAM}, and then $\kappa(P)\to\kappa(Q)$ is   separable, $R\subset T$ is not unramified. Indeed,  $R_P\subset T_P$ is still minimal ramified with $PR_P=PT_P=PT_Q\subset QT_P=QT_Q$ because $T_P=T_Q$ 
 by \cite[Proposition 2, page 40]{Bki A1}. 
  
(2) The situation of (1) occurs in the following example: Let $(R,M)$ be a SPIR such that $M^2=0$. Set $S:=R^2$ and $T:={}^+_SR=R+(M\times M)$ by \cite[Proposition 2.8]{Pic 9}. Then, $R\subset T$ is minimal ramified and $T\subset S$ is minimal decomposed because $\mathrm{Max}(S)=\{R\times M,M\times R\}$; so that, $R\subset S$ is infra-integral, with residual extensions which are isomorphisms, and then separable. To show that $R\subset S$ is unramified, using Proposition \ref{RAM} (4), it is enough to show that $MS$ is semiprime in $S$, which is obvious since $MS=M\times M=(M\times R)\cap(R\times M)$. Then, $R\subset S$ is unramified although $R\subset T$ is not.

(3) But there exist unramified extensions $R\subset U$, that cannot be seminormal, with $T,S\in]R,U[$ such that $R\subset S,\ S\subset U,\ T\subset U$ are unramified and $R\subset T$ is not unramified.
   
We keep the hypotheses of the preceding example. Let $R\subset V$ be a minimal inert separable extension and set $M:=Rt$. It is always possible to build such an extension. It is enough to choose the SPIR $(R,M)$ such that $k:=R/M$ has a minimal separable field extension $K:=k[x]=k[X]/(f(X))$ and to set $V:=R[X]/(tX,g(X)))$, where $f(X)=\bar g(X)\in k[X]$ is the monic polynomial whose coefficients are the classes of coefficients of $g(X)$. Then, $R\subset V$ is minimal inert separable. Now, set $U:=V\times R$. According to \cite[Proposition III.4]{DMPP}, $S\subset U$ is also minimal inert separable (with $(S:U)=M\times R$, giving the type of the minimal extension); so that, $R\subset U$ is unramified, as the composite of two unramified extensions $R\subset S$ and $S\subset U$. However, we still have $R\subset T$ not unramified and $T\subset U$ unramified, since so are $T\subset S$ and $S\subset U$.
\end{remark}

Looking at the preceding example, we may ask under what conditions an unramified extension is a tower of only unramified minimal extensions. We begin by characterizing minimal unramified extensions.

\begin{proposition} \label{1.131} Let $R\subset S$ be a minimal extension. Then, $R\subset S$ is unramified if and only if $R\subset S$ is either decomposed or separable inert or minimal Pr\"ufer.
\end{proposition}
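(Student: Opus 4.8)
The statement to prove is Proposition \ref{1.131}: for a minimal extension $R\subset S$, being unramified is equivalent to being decomposed, separable inert, or minimal Pr\"ufer. By Theorem \ref{crucial}, a minimal extension is either integral (hence one of inert, decomposed, ramified by Theorem \ref{minimal}) or minimal Pr\"ufer (a flat epimorphism). So the plan is to go through these four mutually exclusive types and decide, in each case, whether the extension is unramified. The tool for the integral cases is Proposition \ref{RAM}, especially criterion (4): an integral FCP extension is unramified iff it is $\kappa$-separable and $PS_P$ is semiprime in $S_P$ for each $P\in\mathrm{Spec}(R)$, $P=Q\cap R$. For the flat epimorphism case I would instead invoke Proposition \ref{SEP}, or directly the fact that a flat epimorphism is unramified (its residual extensions are isomorphisms, in particular separable, and by purity/flatness $QS_Q = PS_Q$, or even more simply $R_P\to S_Q$ is an isomorphism).

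\textbf{Key steps.} First, the minimal Pr\"ufer case: a Pr\"ufer minimal extension is a flat epimorphism, so for each $Q\in\mathrm{Spec}(S)$ with $P:=Q\cap R$ the induced map $R_P\to S_Q$ is an isomorphism; in particular $PS_Q = QS_Q$ and $\kappa(P)\to\kappa(Q)$ is an isomorphism, hence finite separable. By Proposition \ref{RAM}(3) (or more directly since an epimorphism of finite type is unramified by Proposition \ref{SEP}), $R\subset S$ is unramified. Second, the decomposed case: here all residual extensions are isomorphisms (Definition \ref{1.3}-type situation — decomposed means $R/M\to T/M_i$ are isomorphisms), so $R\subset S$ is $\kappa$-separable; and $MS = M_1\cap M_2$ is the intersection of the two maximal ideals of $S$ over $M$, hence semiprime, while away from $M$ the extension is trivial, so $PS_P$ is semiprime for every $P$. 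By Proposition \ref{RAM}(4), $R\subset S$ is unramified. (Alternatively one can cite the example in Remark \ref{1.12}(2) style computation: $MS = (M\times R)\cap(R\times M)$ pattern.) Third, the inert case: by the remark just before the proposition, citing \cite[Lemma 2.2]{PICET}, an inert minimal $R\subset S$ with conductor $M$ is unramified iff $R/M\subset S/M$ is a separable field extension, i.e. iff it is separable inert; so inert contributes exactly the separable inert ones, and the non-separable inert ones are not unramified. Fourth, the ramified case: I would show a minimal ramified extension is never unramified. Indeed, with $M:=(R:T)$ and $M'$ the unique maximal ideal of $T$ over $M$ with $M'^2\subseteq M\subset M'$, one computes $MT_{M} = MT$ and $M\subset M'$ with $M'/M$ a nonzero nilpotent ideal of $T/M = T_M/MT_M$, so $MT$ is not semiprime (equivalently $T/MT$ is not reduced). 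By Proposition \ref{RAM}(4), $R\subset T$ is not unramified. This exhausts all types and, combining, gives the stated equivalence.

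\textbf{Main obstacle.} The genuinely substantive step is the ramified case — showing it is \emph{never} unramified — and the bookkeeping needed to phrase the semiprimeness criterion of Proposition \ref{RAM}(4) correctly after localizing (one must check $PS_P$ semiprime for \emph{all} $P\in\mathrm{Spec}(R)$, not just the crucial ideal, but for $P$ not equal to the crucial maximal ideal the extension is locally trivial so this is automatic by Proposition \ref{SUP}). The ramified obstruction is essentially the standard ``nonreduced fiber'' phenomenon: in the ramified case the fiber $\kappa(M)\otimes_R T = T/MT$ contains the nonzero nilpotent $M'/M$ with $(M'/M)^2 = 0$, which I would exhibit concretely using the description $T = R[q]$ with $q^2\in M$ from Theorem \ref{minimal}(c): then $q\notin MT$ but $q^2\in M\subseteq MT$, so $MT$ is not a radical ideal. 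Everything else is a direct appeal to results already in the excerpt (Theorem \ref{crucial}, Theorem \ref{minimal}, Proposition \ref{RAM}, Proposition \ref{SEP}, and the separable-inert remark).
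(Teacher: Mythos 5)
Your proof is correct and follows essentially the same route as the paper: a case analysis over the four mutually exclusive types of minimal extensions, ruling out the ramified case by the non-reduced fiber $(R/M)[X]/(X^2)$, handling the decomposed and (separable) inert cases via the conductor/residual criteria, and the Pr\"ufer case via the epimorphism argument of Proposition \ref{SEP}. Your use of Proposition \ref{RAM}(4) with the explicit semiprimeness computations is just a slightly more hands-on version of the paper's reduction modulo the conductor.
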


\begin{proof} We first observe  that a ramified minimal extension $R\subseteq S$ with conductor $M$ a maximal ideal is luckily not  unramified, otherwise $R/M\to(R/M)[X]/(X^2 )$ would be \'etale 
 and then $(R/M)[X]/(X^2 )$ reduced.

Now a decomposed extension is modulo its conductor ideal of the form $k\subseteq k\times k$, where $k$ is a field, whence is unramified.    

A minimal inert separable extension is unramified, as we said before Remark \ref{1.12}. 

At last, a minimal Pr\"ufer extension is unramified because it is an epimorphism by Proposition \ref{SEP}.
\end{proof}

\begin{proposition} \label{1.132} Let $R\subset S$ be an FCP integral extension. The following statements are equivalent:
\begin{enumerate}

\item $R\subset S$ is a seminormal   $\kappa$-separable  extension;

\item $R\subset S$ is a tower of finitely many decomposed minimal extensions and inert separable   extensions.
\end{enumerate} 

If these conditions hold, then $R\subset S$ is unramified.
\end{proposition}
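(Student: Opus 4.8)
The plan is to prove the equivalence (1) $\Leftrightarrow$ (2) by reducing to the local case and invoking Proposition \ref{1.31}, which classifies seminormal and infra-integral extensions in terms of the types of minimal steps in a maximal chain. The key preliminary observation is that a $\kappa$-separable seminormal extension is automatically infra-integral when it is also $\kappa$-radicial; more relevantly, I would first note that in a maximal chain of minimal extensions, the seminormal hypothesis forces (by Proposition \ref{1.31}(5)) every step to be decomposed or inert, so the only question is which inert steps occur and whether they are separable.

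First I would reduce to the local case: an integral FCP extension is a $\mathcal B$-extension, so $R\subset S$ is seminormal (resp.\ $\kappa$-separable) if and only if $R_M\subset S_M$ is seminormal (resp.\ $\kappa$-separable) for each $M\in\mathrm{MSupp}(S/R)$, and being a tower of decomposed and separable inert minimal extensions is likewise a local property via the $\mathcal B$-extension order isomorphism. Then assume $(R,M)$ is local. For the implication (1) $\Rightarrow$ (2): since $R\subset S$ is seminormal, take a maximal chain $R=R_0\subset\cdots\subset R_n=S$ of minimal extensions; by Proposition \ref{1.31}(5) each $R_i\subset R_{i+1}$ is decomposed or inert. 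For an inert step $R_i\subset R_{i+1}$ with conductor a maximal ideal $M_i$ of $R_i$, the residual extension $R_i/M_i\subset R_{i+1}/M_i$ is a residual field extension of $R\subset S$ (up to the identifications coming from the bijection on spectra in the seminormal-modulo-decomposed part), hence separable by the $\kappa$-separable hypothesis, so the inert step is separable inert. This gives (2). For (2) $\Rightarrow$ (1): a tower of decomposed and separable inert minimal extensions is seminormal by Proposition \ref{1.31}(5) (each step is decomposed or inert), and it is $\kappa$-separable because decomposed steps have trivial residual extensions and separable inert steps have separable residual extensions, and separability of residual field extensions is preserved under composition; one must check the residual extensions of the composite are the composites of those of the steps, which follows from the structure of minimal extensions and Lemma \ref{1.9}.

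The final assertion, that these conditions imply $R\subset S$ is unramified, follows from Proposition \ref{1.132}'s condition (2) together with Proposition \ref{1.131}: each decomposed minimal extension is unramified, each separable inert minimal extension is unramified, and a tower of unramified extensions is unramified (as recalled in the text before Proposition \ref{1.111}).

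The main obstacle I anticipate is the careful bookkeeping in identifying the residual field extensions of the composite $R\subset S$ with the separable inert steps in the chain, since the decomposed steps ``split'' primes and one must track which maximal ideal of $S$ sits over $M$ through the whole tower; the cleanest route is probably to handle the seminormal-infra-integral part first (Proposition \ref{1.31}(2), where $\mathrm{Spec}$ is not bijective) and the $t$-closed inert part separately, rather than mixing them. Alternatively, one can localize further at each maximal ideal of $S$ and use property ($\dag$) of Lemma \ref{SUP1} on the $t$-closure to pin down the residual extension directly, which sidesteps the chain-tracking entirely.
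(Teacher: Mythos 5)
Your proposal is correct and follows essentially the same route as the paper: Proposition \ref{1.31}(5) handles the seminormal/decomposed-or-inert part, the $\kappa$-separable condition is matched with the separable-inert steps by tracking residual field extensions along a maximal chain (using that separability in a tower of field extensions holds iff it holds at each step and passes to intermediate extensions), and the unramified conclusion comes from Proposition \ref{1.131} plus stability of unramifiedness under composition. Your preliminary reduction to the local case is a harmless but unnecessary detour; the paper argues directly on a global maximal chain.
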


\begin{proof}  $R\subset S$ is a seminormal extension if and only if 
  $R\subset S$ is a tower of decomposed minimal extensions and inert    extensions by Proposition  \ref{1.31}, which proves a part of the equivalence. 
  
Assume that the previous conditions hold. Let $R_0:=R\subset\ldots\subset R_i\subset\ldots\subset R_n:=S$ be a maximal chain where $R_i\subset R_{i+1}$ is minimal for each $i\in\{0,\ldots,n-1\}$. Let $Q\in\mathrm {Spec}(S)$. Set $P:=Q\cap R$ and $P_i:=Q\cap R_i$ for each $i\in\{0,\ldots,n-1\}$; so that, $P_i=P_{i+1}\cap R_i$. Then, $\kappa_{R_i}(P_i)\to\kappa_{R_{i+1}}(P_{i+1})$ is either an isomorphism or a minimal field extension. In this last case, $\kappa_{R_i}(P_i)\to\kappa_{R_{i+1}}(P_{i+1})$ is either radicial or separable. It follows that $\kappa_R(P)\to \kappa_S(Q)$ is separable if and only if $\kappa_{R_i}(P_i)\to \kappa_{R_{i+1}}(P_{i+1})$ is either an isomorphism or a minimal separable field extension for each $i\in\{0,\ldots,n-1\}$ if and only if $R_i\subset R_{i+1}$ is either a decomposed minimal extension or an inert separable  minimal extension for each $i\in\{0,\ldots,n-1\}$.

The last result comes from Propositions  \ref{1.131} and \ref{1.11}. 
\end{proof}

We will define the unramified closure of an FCP integral extension in Section  6. 
 We are now going to show the existence of the {\it $\kappa$-separable closure} of an FCP integral ring extension $R\subseteq S$ as the greatest subalgebra $V:={}^\square_SR\in[R,S]$ such that $R\subseteq V$ is $\kappa$-separable.

\begin{remark}\label{TRANS} Contrary to the unramified context, we can note that the $\kappa$-separable and $\kappa$-radicial  properties verify, 
   due to Field Theory, and without any additional hypotheses, the following statement. Let $R\subset S\subset T$ be a tower of integral FCP ring extensions, then $R\subset T$ has the $\kappa $-separable (or the $\kappa$-radicial) property if and only if so do $R\subset S$ and $S \subset T$.   
   \end{remark}
 
\begin{lemma} \label{1.19} If $R\subset S$ is an integral FCP t-closed extension, there exists a greatest subalgebra $V:={}^\square_SR\in[R,S]$  such that $R\subseteq V$ is $\kappa$-separable.
\end{lemma}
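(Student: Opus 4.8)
I would prove existence of a greatest $\kappa$-separable subextension by showing that the class of $\kappa$-separable subextensions of a fixed integral FCP $t$-closed extension $R\subseteq S$ is closed under compositum (products) inside $[R,S]$; since $[R,S]$ is Noetherian, a maximal element of this class then exists, and closure under products forces it to be unique and to contain every member, hence to be the greatest. So the crux is: if $R\subseteq T$ and $R\subseteq T'$ are $\kappa$-separable with $T,T'\in[R,S]$, then $R\subseteq TT'$ is $\kappa$-separable.

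\textbf{Reducing to residue fields.} First I would localize: by Proposition \ref{SUP1}, property $(\dag)$, for each $M\in\mathrm{MSupp}(S/R)$ there is a unique $N\in\mathrm{Max}(S)$ over $M$, and $S_M=S_N$ with $(R_M:S_M)=MS_N=NS_N$, so $\kappa(M)=R_M/(R_M:S_M)$ and $\kappa(N)=S_N/(R_M:S_M)$. Since any $U\in[R,S]$ with $R\subseteq S$ $t$-closed is again $t$-closed over its $t$-closure situation (more precisely, $T\subseteq S$ remains $t$-closed), the same picture applies to every intermediate extension, and by Proposition \ref{SUP} it suffices to check separability of residue field extensions at the finitely many maximal ideals in $\mathrm{V}_S(I)$. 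Thus the $\kappa$-separability of $R\subseteq T$ amounts, for each $M\in\mathrm{MSupp}(S/R)$ with corresponding $N_T\in\mathrm{Max}(T)$ over $M$, to separability of the field extension $\kappa_R(M)\subseteq\kappa_T(N_T)$; and because $S_M=S_N$ and the conductor is a maximal ideal of $S_N$, the residue field of $T$ at its prime over $M$ sits inside $\kappa_S(N)$ as an intermediate field of $\kappa_R(M)\subseteq\kappa_S(N)$.

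\textbf{The compositum step.} With this dictionary, the whole question transfers to Field Theory: inside the finite extension $\kappa_R(M)\subseteq\kappa_S(N)$, the residue fields of $T$ and $T'$ are intermediate fields $F$ and $F'$, each separable over $\kappa_R(M)$, and I must identify the residue field of the compositum $TT'$ at its prime over $M$ with the field compositum $FF'$ (using that $[R,S]\to\prod[R_{M_i},S_{M_i}]$ is an order isomorphism for integral FCP extensions — the $\mathcal B$-extension property — so products are computed locally, and localization commutes with taking residue fields). Then $FF'$ is separable over $\kappa_R(M)$ because the compositum of separable subextensions of a common extension is separable. This is the main obstacle: carefully justifying that $\kappa_{TT'}(\text{prime over }M)=\kappa_T(N_T)\cdot\kappa_{T'}(N_{T'})$ as subfields of $\kappa_S(N)$. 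I would handle it by passing to the local, conductor-quotiented situation where $R/(R:S)\subseteq S/(R:S)$ is a finite field extension $\kappa_R(M)\subseteq\kappa_S(N)$ and every intermediate ring $U/(R:U)$ is just an intermediate field, and the product of rings corresponds to the product of fields; alternatively, one can invoke Remark \ref{TRANS}, which records precisely that $\kappa$-separability satisfies the tower property without extra hypotheses, and combine it with the observation that $R\subseteq TT'$ is built by a tower $R\subseteq T\subseteq TT'$ where $T\subseteq TT'$ is, locally, a field extension generated by the separable-over-$\kappa_R(M)$ elements coming from $T'$, hence separable over $T$'s residue field.

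\textbf{Conclusion.} Once closure under products is established, let $V$ be a maximal element of $\{U\in[R,S]\mid R\subseteq U\ \kappa\text{-separable}\}$ (which is nonempty since $R$ itself belongs, and has maximal elements since $[R,S]$ is Noetherian). For any other $\kappa$-separable $R\subseteq U$, the compositum $UV$ is $\kappa$-separable and contains $V$, so $UV=V$ by maximality, i.e.\ $U\subseteq V$. Hence $V={}^\square_SR$ is the greatest such subalgebra, and it is unique. I expect the field-theoretic compositum identification to be the only genuinely delicate point; everything else is bookkeeping with the $\mathcal B$-extension structure and the conductor description from Lemma \ref{SUP1}.
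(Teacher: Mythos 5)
Your proposal is correct. The delicate point you flag — that in the relevant local situation the residue field of $TT'$ is the compositum $(T/M)(T'/M)$ inside $S/M$ — does go through: by $(\dag)$ of Lemma \ref{SUP1}, after localizing at $M\in\mathrm{MSupp}(S/R)$ the conductor $MS_M=MR_M$ is the common maximal ideal of $S_M$ and of every intermediate ring, so each $T_M/MS_M$ is an intermediate field of the algebraic extension $\kappa(M)\subseteq\kappa(N)$, products of subalgebras localize, and $(T_MT'_M)/MS_M$ is exactly the field compositum; primes outside $\mathrm{Supp}(S/R)$ are harmless by Proposition \ref{SUP}. Where you differ from the paper is the existence mechanism. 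The paper does not argue via compositum-stability and maximal elements at all: in the local case it uses the order isomorphism $[R,S]\to[R/M,S/M]$, $T\mapsto T/M$, to pull back the separable closure of the field extension $R/M\subseteq S/M$, and then glues over the finitely many $M_i$ via the $\mathcal B$-extension bijection $[R,S]\to\prod[R_{M_i},S_{M_i}]$; the greatest element is thus produced explicitly, and this explicit local description (that ${}^\square_SR$ reduces to the separable closure of each residual extension) is what Theorem \ref{1.20} and Corollary \ref{1.21} later invoke. Your route buys a more lattice-theoretic, slightly more general-feeling argument (only ``compositum of separable subextensions is separable'' plus ACC on $[R,S]$), but it identifies ${}^\square_SR$ less directly; note also that once your local dictionary is in place you could skip the compositum step entirely and, as the paper does, simply take the preimage of the separable closure of $\kappa(M)\subseteq\kappa(N)$, which is visibly $\kappa$-separable and contains every $\kappa$-separable subextension. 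Your fallback via Remark \ref{TRANS} (tower property) together with the observation that $T\subseteq TT'$ is residually generated by elements separable over $\kappa(M)$ is also sound.
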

\begin{proof} We begin to assume that $(R,M)$ is a local ring, and so is $(S,M)$ by \cite[Proposition 4.10]{DPP2}. Since $M=(R:S)$, there is an order preserving  bijection $[R,S]\to[R/M,S/M]$ defined by $T\mapsto T/M$ according to \cite[Lemma II.3]{DMPP}. But $R/M\subset S/M$ is an algebraic field extension, and admits a separable closure $W$ which is the greatest subextension of $R/M\subset S/M$  such that $R/M\subseteq W$ is a separable algebraic field extension. Then, the previous bijection shows that there exists a greatest subalgebra $V:={}^\square_SR\in[R,S]$  such that $R\subseteq V$ is $\kappa$-separable.

Assume now that $R\subset S$ is an arbitrary integral FCP t-closed extension and set $\mathrm{MSupp}(S/R):=\{M_1,\ldots,M_n\}$. Define $\varphi:[R,S]\to\prod_{i=1}^n[R_{M_i},S_{M_i}]$ by $\varphi(T)=(T_{M_1},\ldots,T_{M_n})$. By \cite[Theorem 3.6]{DPP2}, $\varphi$ is a bijection. Let $M_i\in\mathrm{MSupp}(S/R)$. Then $R_{M_i}\subset S_{M_i}$ is an integral FCP t-closed extension, where $(R_{M_i},M_iR_{M_i})$ a local ring. The first part of the proof shows that there exists a greatest subalgebra $V_i\in[R_{M_i},S_{M_i}]$ such that $R_{M_i}\subseteq V_i$ is $\kappa$-separable. Let $V$ be the unique element of $[R,S]$ such that $V_{M_i}=V_i$ for each $i\in\mathbb N_n$. Moreover, $R\subset S$ is an i-extension, being t-closed by Proposition \ref{1.31}. Let $N_i$ be the unique maximal ideal of $S$ lying above $M_i$; so that, $S_{M_i}\cong S_{N_i}$. Then, $\kappa(M_i)=R/M_i\cong R_{M_i}/M_iR_{M_i}$ and $\kappa(N_i)=S/N_i\cong S_{M_i}/M_iR_{M_i}$ because $M_iR_{M_i}=(R_{M_i}:S_{M_i})$. For each $i\in\mathbb N_n$, set $P_i:=N_i\cap V\in\mathrm{Max}(V)$. Then, we have the fields extensions $\kappa(M_i)\subseteq\kappa(P_i)\subseteq\kappa(N_i)$, with $\kappa(P_i)=V/P_i$ the greatest subalgebra of $[\kappa(M_i),\kappa(N_i)]$ such that $\kappa(M_i)\subseteq\kappa(P_i)$ is separable. Then, there exists a $\kappa$-separable closure $V$ of $R\subset S$.
\end{proof}

\begin{theorem} \label{1.20} An integral FCP extension $R\subset S$ admits a greatest subalgebra $V:={}^\square_SR\in[R,S]$ such that $R\subseteq V$ is $\kappa$-separable. Moreover, any subextension $U$ of $S$ is $\kappa$-separable over $R$ if and only if $U\in[R,V]$ and any $Z\in[V,S]$ is such that $Z\subseteq S$ is $\kappa$-radicial. 
\end{theorem}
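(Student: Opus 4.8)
The plan is to reduce the general integral FCP case to the $t$-closed case already treated in Lemma \ref{1.19}, by interposing the $t$-closure ${}^t_SR$ and exploiting the transitivity of the $\kappa$-separable property recorded in Remark \ref{TRANS}. First I would note that $R\subseteq {}^t_SR$ is infra-integral, hence $\kappa$-radicial \emph{and} $\kappa$-separable simultaneously (all its residual extensions being isomorphisms), so by Remark \ref{TRANS} a subextension $U\in[R,S]$ is $\kappa$-separable over $R$ if and only if ${}^t_SR\subseteq {}^t_SR\cdot U$ is $\kappa$-separable and ${}^t_SR\cdot U\cap$(something) behaves well; more cleanly, the point is that $R\subseteq U$ is $\kappa$-separable iff $R\subseteq U$ is $\kappa$-separable, and since $R\subseteq {}^t_SR$ is already $\kappa$-separable, the candidate for ${}^\square_SR$ must contain ${}^t_SR$. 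So I would apply Lemma \ref{1.19} to the integral FCP $t$-closed extension ${}^t_SR\subseteq S$ to obtain a greatest $V\in[{}^t_SR,S]$ with ${}^t_SR\subseteq V$ being $\kappa$-separable, and then set ${}^\square_SR:=V$.

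Next I would verify that this $V$ does the job for $R\subseteq S$. That $R\subseteq V$ is $\kappa$-separable follows from Remark \ref{TRANS} applied to the tower $R\subseteq {}^t_SR\subseteq V$, since both steps are $\kappa$-separable. For maximality: if $U\in[R,S]$ has $R\subseteq U$ $\kappa$-separable, then by Remark \ref{TRANS} so is ${}^t_SR\subseteq {}^t_SR\cdot U$ (applied to $R\subseteq {}^t_SR\subseteq {}^t_SR\cdot U$, using that the composite $R\subseteq {}^t_SR\cdot U$ is $\kappa$-separable — which holds because $R\subseteq U$ is $\kappa$-separable and $R\subseteq {}^t_SR$ is, and $\kappa$-separability of a composite... ) — here is the genuine subtlety: I need that $R\subseteq {}^t_SR\cdot U$ is $\kappa$-separable, not just $R\subseteq U$ and $R\subseteq {}^t_SR$. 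I would handle this by localizing at each $M\in\mathrm{MSupp}(S/R)$, reducing (via the $\mathcal B$-extension property and the order-preserving bijection $[R,S]\to[R/M,S/M]$ of \cite[Lemma II.3]{DMPP} in the local $t$-closed model) to the field-extension statement that in a tower $k\subseteq k'\subseteq L$ with $k\subseteq k'$ purely inseparable-free... Actually the clean route: residual extensions of ${}^t_SR\cdot U$ over ${}^t_SR$ are, at each prime, generated over the (possibly non-trivial) residual extension coming from $U$, and separability of field extensions is preserved under such composita inside a common overfield. So ${}^t_SR\cdot U\in[{}^t_SR,V]$ by the maximality of $V$ in Lemma \ref{1.19}, whence $U\subseteq {}^t_SR\cdot U\subseteq V$, i.e. $U\in[R,V]$.

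Finally, for the last clause: given $Z\in[V,S]$ I must show $Z\subseteq S$ is $\kappa$-radicial. I would again localize at each $M\in\mathrm{MSupp}(S/R)$ and pass to the $t$-closed local model, where by \cite[Lemma II.3]{DMPP} the lattice $[{}^t_SR,S]$ at $M$ matches $[k,L]$ for the residual algebraic field extension $k\subseteq L$, and $V$ corresponds to the separable closure $k^{\mathrm{sep}}$ inside $L$; then $Z$ corresponds to an intermediate field containing $k^{\mathrm{sep}}$, over which $L$ is purely inseparable, i.e. radicial — this is exactly the classical field-theory statement. Reassembling over all $M\in\mathrm{MSupp}(S/R)$ via the bijection $\varphi$ of \cite[Theorem 3.6]{DPP2} and noting (Proposition \ref{SUP}) that residual extensions away from the conductor are isomorphisms, we conclude $Z\subseteq S$ is $\kappa$-radicial.

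\textbf{Main obstacle.} The delicate point is the maximality argument, specifically proving that $R\subseteq {}^t_SR\cdot U$ is $\kappa$-separable whenever $R\subseteq U$ is: transitivity as stated in Remark \ref{TRANS} is for a \emph{tower}, and ${}^t_SR\cdot U$ is a compositum, not a term of a tower over $U$. I expect this to be resolved by the standard fact that, inside a fixed field, the compositum of a separable extension with any extension is separable over that extension — but it must be threaded carefully through the localization-at-$M$ reduction and the residual field extensions, since the residual extensions of $U$ over $R$ need not be trivial. Everything else (existence of $V$, the $Z$-clause) is a routine transfer of field theory through Lemma \ref{1.19} and the $\mathcal B$-extension machinery.
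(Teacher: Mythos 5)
Your proposal is correct and follows the paper's skeleton: interpose the $t$-closure $T:={}^t_SR$, apply Lemma \ref{1.19} to the $t$-closed extension $T\subseteq S$ to get $V$, get $\kappa$-separability of $R\subseteq V$ from the fact that $R\subseteq T$ has residual isomorphisms, and settle maximality by pushing an arbitrary $\kappa$-separable $U$ into $V$ via the compositum $TU$. Where you genuinely diverge is precisely at the step you flag as the obstacle. The paper proves that $U\subseteq TU$ is \emph{infra-integral}: setting $W:=U\cap T$, it takes the directed family of ct-elementary (t-elementary) subextensions exhibiting $W\subseteq T$ as infra-integral and multiplies each step by $U$, so that $U\subseteq TU$ is t-integral; then $R\subseteq TU$ is $\kappa$-separable by transitivity and left division gives $T\subseteq TU$ $\kappa$-separable, whence $TU\subseteq V$. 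You instead argue directly on residue fields: for $Q'\in\mathrm{Spec}(TU)$ with $N:=Q'\cap T$, $Q_U:=Q'\cap U$, $P:=Q'\cap R$, the field $\kappa(Q')$ is the compositum of $\kappa(N)$ and $\kappa(Q_U)$ (since $TU$ is ring-generated by $T$ and $U$), and $\kappa(Q_U)$ separable over $\kappa(P)\subseteq\kappa(N)$ forces the compositum to be separable over $\kappa(N)$; so $T\subseteq TU$ is $\kappa$-separable and Lemma \ref{1.19} gives $TU\subseteq V$. This is a valid and more elementary route: it avoids the elementwise (ct-elementary) machinery of anodality/infra-integrality that the paper invokes, at the cost of proving slightly less ($\kappa$-separability of $T\subseteq TU$ rather than infra-integrality of $U\subseteq TU$), which is all you need. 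Two small clean-ups: the localization-at-$M$ detour you hedge about is unnecessary for this step --- the compositum argument works prime by prime as just described --- and you should record the easy converse direction that every $U\in[R,V]$ is $\kappa$-separable over $R$ (immediate, since its residual extensions are intermediate fields of separable extensions), as the statement is an equivalence.
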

\begin{proof} Let $T:={}^t_SR$ be the t-closure of $R\subset S$. Since $T\subseteq S$ is t-closed, it admits a $\kappa$-separable closure $V$ by the previous lemma. Let $P\in\mathrm{Max}(R),\ Q\in\mathrm{Max}(V)$ lying over $P$ and $N:=Q\cap T$ which is also lying over $P$. Then, $\kappa(P)\cong\kappa(N)$ because $R\subseteq T$ is infra-integral and $\kappa(N)\subseteq\kappa(Q)$ is separable by definition of $V$. It follows that $\kappa(P)\subseteq\kappa(Q)$ is separable, and $R\subseteq V$ is $\kappa$-separable. Obviously, any subextension of $V$ is $\kappa$-separable over $R$. Let $Z\in[V,S]$ and let $M\in \mathrm{Max}(Z)$ be lying above $Q$, which is also lying over $P$. Then, $\kappa(Q)\subseteq\kappa(M)$ is radicial by definition of $\kappa(Q)$; so that, $Z\subseteq S$ is $\kappa$-radicial.

Let $U\in[R,S]$ be such that $R\subset U$ is $\kappa$-separable. If  $U\in[T,S]$, then $T\subseteq U$ is $\kappa$-separable; so that, $U\subseteq V$. Assume that $U\not\in[T,S]$. We claim that $U\in [R,V]$. Set $W:=U\cap T\subset T$, which is the t-closure of $R\subset U$. We have the following commutative diagram:
\centerline{$\begin{matrix}
 R   & \to &       W           & \to &         U         &  {}  & {} \\
 {}   &  {}  & \downarrow &  {}  & \downarrow &  {}  & {} \\
 {}   & {}  &         T           & \to &       TU         & \to & S
\end{matrix}$} 
Since $W\subset T$ is infra-integral, there exists a directed set $\{W_i\}_{i\in I}$ of $W$-subalgebras of $T$ such that $W\subset W_i$ is ct-elementary and $T=\cup_{i\in I}W_i$ by \cite[Definition 1.2]{Pic 0}. It  obviously follows  that  there exists a directed set $\{U_i\}_{i\in I}$ of $U$-subalgebras of $TU$ such that $U\subset U_i$ is ct-elementary and $TU=\cup_{i\in I}U_i$. Indeed, if $A\subset B$ is a t-elementary subextension of $[W,T]$,   $UA\subset UB$  is a t-elementary subextension of $[U,TU]$. Therefore, $U\subseteq TU$ is infra-integral, and then $\kappa$-separable by an above  remark. 
 Hence $R\subset UT$ is $\kappa$-separable, and so is $T\subseteq UT$; so that, $U\subseteq UT\subseteq V$, and $U\in[R,V]$. 
\end{proof}

\begin{corollary} \label{1.21} Let $R\subset S$ be an integral FCP extension. Then, ${}^\square_SR\subseteq S$ is radicial and ${}^t_SR={}^\square_SR\cap{}^\circ_SR$. 
\end{corollary}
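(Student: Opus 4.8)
The plan is to read off both assertions from Theorem~\ref{1.20}, the defining maximality properties of the three closures, and the structure of $t$-closed extensions given by Proposition~\ref{1.31}. Throughout write $V:={}^\square_SR$ and $T:={}^t_SR$, and first record that $T\subseteq V$: since $R\subseteq T$ is infra-integral, all its residual extensions are isomorphisms, hence separable, so $R\subseteq T$ is $\kappa$-separable and the maximality of the $\kappa$-separable closure forces $T\in[R,V]$.

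For the claim that $V\subseteq S$ is radicial, I would apply Theorem~\ref{1.20} with $Z:=V$ to get that $V\subseteq S$ is $\kappa$-radicial, and then show that $V\subseteq S$ is moreover an $i$-extension; the characterization recalled from \cite{KU} and \cite{EGA} (a radicial extension is exactly an $i$-morphism that is $\kappa$-radicial) then concludes, the finite-type and integrality hypotheses being met because $V\subseteq S$ is a subextension of the integral FCP extension $R\subseteq S$. To obtain the $i$-extension property, I use that $T\subseteq S$ is $t$-closed: by Proposition~\ref{1.31}(4) and its last sentence the natural map $\mathrm{Spec}(S)\to\mathrm{Spec}(T)$ is bijective, and since it factors through $\mathrm{Spec}(V)$ the map $\mathrm{Spec}(S)\to\mathrm{Spec}(V)$ is injective.

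For the equality $T={}^\square_SR\cap{}^\circ_SR$, the inclusion $T\subseteq{}^\square_SR\cap{}^\circ_SR$ is immediate: $R\subseteq T$ is infra-integral, hence simultaneously $\kappa$-separable and $\kappa$-radicial, so by maximality $T$ lies in both ${}^\square_SR$ and ${}^\circ_SR$. For the reverse inclusion, set $W:={}^\square_SR\cap{}^\circ_SR$; by Remark~\ref{TRANS} the $\kappa$-separable (resp. $\kappa$-radicial) property of $R\subseteq{}^\square_SR$ (resp. $R\subseteq{}^\circ_SR$) restricts to the subextension $R\subseteq W$, so every residual extension of $R\subseteq W$ is at once separable and purely inseparable, hence an isomorphism; thus $R\subseteq W$ is infra-integral, and the maximality of the $t$-closure gives $W\subseteq T$, whence equality.

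The step I expect to be the real point is the $i$-extension claim in the first assertion: being $\kappa$-radicial is not enough to be radicial (decomposed minimal extensions are $\kappa$-radicial but never $i$-extensions), so it is essential that $V$ sits above the $t$-closure $T$ and that $T\subseteq S$, being $t$-closed, is a tower of inert, in particular of $i$-, minimal extensions; this is exactly what rigidifies $\mathrm{Spec}(S)\to\mathrm{Spec}(V)$ into an injection. Everything else is bookkeeping with the maximality of the closures and with Remark~\ref{TRANS}.
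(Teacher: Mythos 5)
Your proof is correct and follows essentially the same route as the paper: both reduce radiciality of ${}^\square_SR\subseteq S$ to the recalled characterization (an $i$-morphism which is $\kappa$-radicial), using the $\kappa$-radicial conclusion of Theorem \ref{1.20} together with the bijectivity of $\mathrm{Spec}(S)\to\mathrm{Spec}({}^t_SR)$ supplied by t-closedness and Proposition \ref{1.31}, and both obtain ${}^t_SR={}^\square_SR\cap{}^\circ_SR$ by noting that the residual extensions of the intersection are simultaneously separable and radicial, hence isomorphisms, so the intersection is infra-integral. The only cosmetic differences are that the paper gets bijectivity of $\mathrm{Spec}(S)\to\mathrm{Spec}({}^\square_SR)$ directly (via ${}^\square_SR={}^\square_S({}^t_SR)$ and the construction of Lemma \ref{1.19}) where you settle for injectivity by factoring through $\mathrm{Spec}({}^t_SR)$, and that it proves the reverse inclusion by contradiction rather than by appealing to the maximality of the t-closure.
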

\begin{proof} Set $T:={}^t_SR,\ V:={}^\square_SR$ and $U:={}^\circ_SR$. Let $Q\in\mathrm{Max}(S),\ P:=Q\cap T$ and $M:=Q\cap R=P\cap R$. Since $R\subseteq T$ is infra-integral, we have $\kappa(M)\cong\kappa(P)$. But we also have $V={}^\square_ST$ by the proof of Theorem \ref{1.20}. Hence, $N:=Q\cap V$ is the unique maximal ideal of $V$ lying over $P$. Moreover, $V_P/PT_P=\kappa(N)$ is the separable closure of the field extension $\kappa(P)\subseteq\kappa(Q)$ as a consequence of the construction made in Lemma \ref{1.19}; so that, $\kappa(N)\subseteq\kappa(Q)$ is radicial. At last, $\mathrm{Spec}(S)\to\mathrm{Spec}(V)$ is bijective since $V\subseteq S$ is t-closed by Proposition \ref{1.31}, whence $V\subseteq S$ is radicial.

In particular, $T\subseteq U\cap V$ since the residual extensions of $R\subseteq T$ are isomorphisms. Assume that $T\subset U\cap V$. Then, the residual extensions of $T\subset U\cap V$ are both radicial and separable fields extensions, that is isomorphisms; so that, $T\subset U\cap V$ is infra-integral, a contradiction. To conclude, $T= U\cap V$.
\end{proof}

\section { u-closures of  integral FCP extensions}

In this section, we work in the context of FCP integral extensions, where 
 many results of \cite{Pic 0} about anodality  will be easier  to handle.

\begin{proposition}\label{1.016} An integral FCP extension $R\subseteq S$ is u-closed if and only if $R/I\subseteq S/I$ is u-closed for any ideal $I$ shared by $R$ and $S$.
\end{proposition}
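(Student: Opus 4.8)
The plan is to prove the equivalence by reducing it to the local-global structure of the $u$-closure together with the elementwise description of $u$-closedness. First I would dispose of the easy direction: if $R\subseteq S$ is $u$-closed and $I$ is an ideal common to $R$ and $S$, then given $\bar b\in S/I$ with $p_1(\bar b),\bar b\,p_1(\bar b)\in R/I$, I would lift $\bar b$ to some $b\in S$ and observe that $b^2-b$ and $b^3-b^2$ lie in $R+I=R$ (since $I\subseteq R$), so $b\in R$ by $u$-closedness, hence $\bar b\in R/I$; thus $R/I\subseteq S/I$ is $u$-closed. Note that for this direction one does not even need FCP.

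For the converse, the natural approach is contrapositive: suppose $R\subseteq S$ is not $u$-closed and produce an ideal $I$ shared by $R$ and $S$ with $R/I\subseteq S/I$ not $u$-closed. The canonical candidate for $I$ is the conductor $(R:S)$, which is automatically an ideal of both $R$ and $S$, and since $R\subseteq S$ is integral FCP, $I$ is a common ideal with $R/I$ Artinian (by \cite[Theorem 4.2]{DPP2}) and the extension $R/I\subseteq S/I$ still FCP integral. If $R\neq {}^u_SR$, there is $T\in[R,{}^u_SR]$ with $R\subset T$ a $u$-elementary (in particular ramified, by Proposition \ref{1.31} since a $u$-integral extension is infra-integral) minimal extension, say $T=R[b]$ with $b^2-rb,\ b^3-rb^2\in R$ and $b\notin R$; one wants to show the image of $b$ in $S/I$ still witnesses non-$u$-closedness, i.e. that $b\notin R+I$. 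Here the crucial maximal ideal $M=\mathcal C(R,T)=(R:T)$ lies in $\mathrm{MSupp}(S/R)=\mathrm V_R(I)$ by Lemma \ref{1.9}, so $I\subseteq M$ and, localizing at $M$, the ramified minimal extension $R_M\subset T_M$ has conductor $MR_M$; since $I\subseteq M$ one checks $b\notin R_M+IR_M$ because $R_M+IR_M=R_M$ would force $b\in R_M$ but $R_M\subset T_M$ is a proper (ramified) extension and $b\notin R$. Working modulo $I$ globally then gives that $R/I\subseteq S/I$ is not $u$-closed.

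Alternatively, and perhaps more cleanly, I would use the explicit formula for the $u$-closure in the FCP setting: ${}^u_SR$ localizes well (by the $\mathcal B$-extension property, \cite[Theorem 3.6]{DPP2}, an integral FCP extension is a $\mathcal B$-extension), and ${}^u_{S/I}(R/I)$ should equal $({}^u_SR)/I$ whenever $I\subseteq (R:S)\subseteq(R:{}^u_SR)$, so $R/I\subseteq S/I$ is $u$-closed iff $({}^u_SR)/I = R/I$ iff ${}^u_SR=R$ (using $I\subseteq R$ to cancel). Establishing ${}^u_{S/I}(R/I)=({}^u_SR)/I$ is where one invokes that $u$-elementariness of $A\subset B$ is inherited by $A/J\subset B/J$ for $J\subseteq A\cap B$ and lifts back when $J\subseteq\mathrm{conductor}$, together with the fact that ${}^u_SR$ contains $R+I$ trivially.

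The main obstacle is the converse direction, specifically controlling what happens to a $u$-elementary minimal step $R\subset R[b]$ after passing mod $I$: one must rule out that $b$ becomes redundant, i.e. that $b\in R+I$. The reason this works is that $I=(R:S)$ is contained in the crucial ideal $M$ of every minimal step inside an integral FCP extension (Lemma \ref{1.9} plus Proposition \ref{SUP}: $\mathrm V_R(I)=\mathrm{Supp}(S/R)$), so quotienting by $I$ cannot collapse any minimal step lying in $\mathrm{Supp}(S/R)$; since every minimal step of $R\subset {}^u_SR$ does lie there, non-triviality of ${}^u_SR/R$ survives. I expect the write-up to localize at the relevant $M\in\mathrm{MSupp}(S/R)$, reduce to the local case where $(R:S)$ is $M$-primary (or use Lemma \ref{SUP1}-type control when things are nicer), apply Theorem \ref{minimal}(c) to see the ramified generator $b$ with $b^2\in M\subseteq$ the image of the conductor, and conclude that the class of $b$ in $S/I$ still satisfies $p_1,\,b\,p_1\in R/I$ while $b\notin R/I$.
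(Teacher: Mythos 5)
Your forward direction is exactly the intended argument (lift $\bar b$ to $b\in S$, note $b^2-b,\,b^3-b^2\in R+I=R$ because a shared ideal satisfies $I\subseteq R$, and apply $u$-closedness), and as you say it needs neither FCP nor integrality; the paper simply records the whole statement as obvious. For the converse, however, you have missed how trivial it is, and this is why your "main obstacle" is vacuous: since any shared ideal $I$ is contained in $R$, the preimage of $R/I$ in $S$ is $R+I=R$, so if $b\in S\setminus R$ witnesses non-$u$-closedness of $R\subseteq S$ (such a $b$ exists by the very definition of $u$-closed; no minimal step, crucial ideal, localization, Lemma \ref{1.9}, Proposition \ref{SUP} or Theorem \ref{minimal}(c) is needed), then its class $\bar b$ automatically lies outside $R/I$ and still satisfies $p_1(\bar b),\bar b\,p_1(\bar b)\in R/I$. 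Even more directly, $I=0$ is a shared ideal, so the hypothesis ``$R/I\subseteq S/I$ is $u$-closed for any shared $I$'' already contains the conclusion. Two further cautions: your parenthetical claim that the first minimal step inside $[R,{}^u_SR]$ is $u$-elementary and ramified is internally inconsistent — by Proposition \ref{1.316} a minimal $u$-integral extension is decomposed, and a ramified minimal extension is never $u$-elementary — though this does not affect the conclusion since the witness $b$ comes straight from the definition; and your ``cleaner'' alternative via $({}^u_SR)/I={}^u_{S/I}(R/I)$ is Proposition \ref{1.312}(2) of the paper, whose proof uses the present proposition, so invoking it here would be circular.
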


\begin{proof} Obvious.
\end{proof}

The next result does not hold if $ R\subset S$ is not FCP.

\begin{proposition}\label{1.311} Let $R\subset S$ be an integral FCP extension. 
Then the following conditions are equivalent:
\begin{enumerate}
\item $R\subset S$ is u-closed;

\item  $R\subset S$ is an i-extension;

\item  ${}_S^+R= {}_S^tR$. 
\end{enumerate}

If these conditions are satisfied, any minimal subextension of $R\subset S$ is either ramified or inert.  
\end{proposition}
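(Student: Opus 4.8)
The three conditions concern an integral FCP extension $R\subset S$, so I can lean heavily on the structure theory for minimal integral extensions (Theorem \ref{minimal}) and on Proposition \ref{1.31}, which translates the closure properties in the canonical decomposition into statements about the type of the minimal steps in any maximal chain. The plan is to prove $(1)\Rightarrow(2)\Rightarrow(3)\Rightarrow(1)$, and then deduce the final assertion about minimal subextensions.

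For $(1)\Rightarrow(2)$: suppose $R\subset S$ is u-closed. Recall that ${}_S^uR\subseteq{}_S^tR$ always, and here ${}_S^uR=R$. I would argue that a u-closed integral FCP extension has ${}_S^tR={}_S^uR=R$, hence $R\subset S$ is t-closed; indeed the u-closure is the largest u-integral (= anodal-integral) subextension, and the point is that over an FCP integral extension the t-closure is built from u-integral and seminormal pieces in a way that forces ${}_S^tR=R$ once the u-closure is trivial — more concretely, I expect to use Theorem \ref{1.313} (the t-closure of an integral FCP extension is the composite of its u-closure and its seminormalization) together with the fact that a u-closed extension has trivial seminormalization as well (a u-elementary extension is t-elementary, and an s-elementary extension is also u-elementary-ish — I need to check that a u-closed integral FCP extension is seminormal). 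Granting ${}_S^tR=R$, Proposition \ref{1.31}(4) gives that $\mathrm{Spec}(S)\to\mathrm{Spec}(R)$ is bijective, in particular injective, so $R\subset S$ is an i-extension. Alternatively, and perhaps more cleanly, I localize at each $M\in\mathrm{MSupp}(S/R)$, reduce to $(R,M)$ local, and use that a u-closed minimal extension cannot be ramified or decomposed (a ramified or decomposed step produces a u-elementary, hence non-trivial u-integral, subextension); so every minimal step in a maximal chain is inert, whence $R\subset S$ is t-closed and the spectral map is bijective.

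For $(2)\Rightarrow(3)$: if $R\subset S$ is an i-extension, then so is $R\subset{}_S^tR$, and ${}_S^+R\subseteq{}_S^tR$ is then an infra-integral i-extension, i.e. subintegral; but ${}_S^+R$ is already the seminormalization, the largest subintegral subextension, so ${}_S^+R={}_S^tR$. For $(3)\Rightarrow(1)$: if ${}_S^+R={}_S^tR$, denote this common ring $T$. Then $R\subseteq T$ is subintegral (hence radicial, in particular a u-integral piece is allowed) while $T\subseteq S$ is t-closed, and I want ${}_S^uR=R$. Using Theorem \ref{1.313}, ${}_S^tR$ is the composite of ${}_S^uR$ and ${}_S^+R$; since ${}_S^uR\subseteq{}_S^tR={}_S^+R$ and ${}_S^uR\cap{}_S^+R$ sits inside both closures, and a subextension that is simultaneously u-integral and subintegral must be both anodal-type and ramified-type on minimal steps — but a ramified minimal extension is never u-closed and a u-elementary minimal extension is never ramified (clear from the normal forms $q^2\in M$ versus $q^2-q\in M$ in Theorem \ref{minimal}) — the intersection is forced to be $R$, so ${}_S^uR\subseteq{}_S^+R$ forces ${}_S^uR=R$, i.e. $R\subset S$ is u-closed.

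The main obstacle is the bookkeeping around which minimal-extension types are compatible with the u-closed (anodal) property: I need the precise facts that (i) a minimal ramified extension is not u-closed, (ii) a minimal decomposed extension is not u-closed, and (iii) a minimal inert extension is u-closed, and that u-closedness of $R\subset S$ propagates down a maximal chain of minimal extensions in the FCP integral setting (an induction along a maximal chain, using that u-closed is inherited by the bottom step by a Ferrand–Olivier-type argument as in the proof of Proposition \ref{1.34}). Once these are in place, the final assertion is immediate: under the equivalent conditions $R\subset S$ is t-closed (by Proposition \ref{1.31}, since it is an i-extension with ${}_S^+R={}_S^tR={}_S^uR$... more directly, it is an i-extension), so by Proposition \ref{1.31}(4) each minimal step of any maximal chain is inert — but I should be careful: $R\subset S$ being an i-extension alone only rules out decomposed steps, so "ramified or inert" is exactly the right conclusion (a decomposed minimal extension is not an i-extension since $M$ splits into two primes of $T$). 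Thus any minimal subextension of an i-extension is ramified or inert, which is the last sentence. I would close by remarking, as the paper does for other results, that the FCP hypothesis is essential because the equivalences (2)$\Leftrightarrow$(3) and the finiteness of the canonical decomposition both fail without it.
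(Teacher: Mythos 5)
Your cycle $(1)\Rightarrow(2)\Rightarrow(3)\Rightarrow(1)$ has two genuine gaps, both stemming from a confusion between u-closed and u-integral/t-closed. In $(1)\Rightarrow(2)$ you assert that a u-closed integral FCP extension is seminormal, indeed t-closed (equivalently, in your alternative route, that every minimal step of a maximal chain must be inert because ``a ramified \ldots step produces a u-elementary subextension''). This is false: a minimal ramified extension $R\subset R[q]$ with $q^2\in M$ is u-closed (Corollary \ref{1.315}; note $q^2-q\in R$ would force $q\in R$), but it is subintegral, hence neither seminormal nor t-closed. This is exactly why the conclusion of the proposition allows ramified steps. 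The true content of $(1)\Rightarrow(2)$ is that u-closedness alone forces $\mathrm{Spec}(S)\to\mathrm{Spec}(R)$ to be injective; the paper obtains this by passing to $R/I\subseteq S/I$ with $I=(R:S)$, where $R/I$ is Artinian, and quoting \cite[Proposition 2.35]{Pic 0} (and the converse $(2)\Rightarrow(1)$ from \cite[Theorem 2.27]{Pic 0}). You have no substitute for these facts, and your proposed shortcut proves something false.

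In $(3)\Rightarrow(1)$ there are two further problems. First, you invoke Theorem \ref{1.313}, but in the paper its proof rests on Proposition \ref{1.311}, so this is circular as the material is organized; you would have to reprove it independently. Second, even granting ${}_S^uR\subseteq{}_S^tR={}_S^+R$, your argument that $R\subseteq{}_S^uR$ must be trivial because u-integral and subintegral extensions are incompatible ``on minimal steps'' does not work: a u-integral FCP extension can perfectly well contain a minimal ramified subextension (the paper's Remark \ref{1.317}(1), $R\subset R^2$ for $R$ Artinian non-reduced, is such an example), and your auxiliary claim that ``a ramified minimal extension is never u-closed'' is again false. The fact ${}_S^uR\cap{}_S^+R=R$ is true but is proved later (Proposition \ref{1.422}) via unramifiedness and radiciality, not by type-checking minimal steps. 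The painless repair is the paper's route: $(3)\Rightarrow(2)$ follows at once from Proposition \ref{1.31} (Spec is bijective along both $R\subseteq{}_S^+R$ and ${}_S^tR\subseteq S$), and then $(2)\Leftrightarrow(1)$ is the quoted result of \cite{Pic 0}. Your $(2)\Rightarrow(3)$ and your treatment of the final assertion (an integral i-extension admits no decomposed minimal subextension, so only ramified or inert steps occur) are correct.
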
 

\begin{proof} 
(1) $\Rightarrow$ (2) Set $I:=(R:S)$. Proposition \ref{1.016} shows that $R/I\subseteq S/I$ is u-closed. Moreover, $R/I$ is an Artinian, and so a Noetherian ring by \cite[Theorem 4.2]{DPP2}. Then, by \cite[Proposition 2.35]{Pic 0}, $\mathrm{Spec}(S/I)\to\mathrm{Spec}(R/I)$ is bijective. It follows that $\mathrm{Spec}(S)\to\mathrm{Spec}(R)$ is bijective.

(2) $\Rightarrow$ (1) Assume that $\mathrm{Spec}(S)\to\mathrm{Spec}(R)$ is bijective. Then \cite[Theorem 2.27]{Pic 0} asserts that $R\subset S$ is u-closed.

(2) $\Rightarrow$ (3) ${}_S^+R\neq{}_S^tR$ implies that $\mathrm{Spec}({}_S^tR)\to\mathrm{Spec}({}_S^+R)$ is not bijective according to Theorem \ref{minimal} since there is a minimal decomposed subextension of ${}_S^+R\subset{}_S^tR$.

(3) $\Rightarrow$ (2) If ${}_S^+R={}_S^tR$, then $\mathrm{Spec}(S)\to\mathrm{Spec}(R)$ is bijective since so are $\mathrm{Spec}(S)\to\mathrm{Spec}({}_S^tR)$ and $\mathrm{Spec}({}_S^+R)\to\mathrm{Spec}(R)$ by Proposition~\ref{1.31}.

The last assertion is an easy consequence of (3). 
\end{proof}

\begin{corollary}\label{1.315} A minimal extension $R\subset S$ is u-closed if and only if $R\subset S$ is either ramified or inert or minimal Pr\"ufer.  
\end{corollary}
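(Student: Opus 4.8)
The plan is to run through the classification of minimal extensions. By Theorems \ref{crucial} and \ref{minimal}, a minimal extension $R\subset S$ is of exactly one of four mutually exclusive types: minimal Pr\"ufer (a flat epimorphism), or integral of one of the types inert, decomposed, or ramified. I would show that three of the four options in the statement — ramified, inert, minimal Pr\"ufer — each force u-closedness, and that the remaining (decomposed) case fails it; this settles both implications at once, since a u-closed minimal extension cannot be decomposed and hence must be one of the other three types.

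For the three integral types, Proposition \ref{1.311} applies, since a minimal integral extension is trivially an integral FCP extension. If $R\subset S$ is inert, then its unique one-step maximal chain consists of an inert minimal extension, so $R\subset S$ is t-closed by Proposition \ref{1.31}(4), hence u-closed because ${}_S^uR\subseteq{}_S^tR$. If $R\subset S$ is ramified, Proposition \ref{1.31}(1) shows it is subintegral, so ${}_S^+R=S$ and therefore ${}_S^+R={}_S^tR$ (both being $S$); then the implication (3)$\Rightarrow$(1) of Proposition \ref{1.311} gives that $R\subset S$ is u-closed. If $R\subset S$ is decomposed, Theorem \ref{minimal}(b) provides two distinct maximal ideals of $S$ lying over the conductor $(R:S)$, so $\mathrm{Spec}(S)\to\mathrm{Spec}(R)$ is not injective; thus $R\subset S$ is not an i-extension, and the equivalence (1)$\Leftrightarrow$(2) of Proposition \ref{1.311} shows it is not u-closed.

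The only case outside the scope of Proposition \ref{1.311} — and hence the one point needing separate treatment — is the minimal Pr\"ufer case, where the extension is not integral. Here I would observe that a minimal Pr\"ufer extension is (trivially) an FCP Pr\"ufer extension, hence t-closed by Proposition \ref{1.34}, and therefore u-closed, again because ${}_S^uR\subseteq{}_S^tR$. Assembling the four cases yields the asserted equivalence. I do not expect a genuine obstacle here: the argument is bookkeeping over the four types of minimal extension, and the only subtlety is that the integrality hypothesis in Proposition \ref{1.311} obliges one to handle the Pr\"ufer case via Proposition \ref{1.34} rather than via the i-extension criterion.
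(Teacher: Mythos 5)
Your proof is correct and follows essentially the same route as the paper: a case analysis over the four mutually exclusive types of minimal extensions, with the three integral cases settled via Proposition \ref{1.311} and the minimal Pr\"ufer case via Proposition \ref{1.34}. The only (immaterial) difference is that the paper invokes the i-extension criterion of Proposition \ref{1.311} uniformly for the integral types, whereas you route the inert and ramified cases through t-closedness and subintegrality instead.
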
 

\begin{proof} The equivalence in the integral case is a consequence of Proposition ~\ref{1.311} since $\mathrm{Spec}(S)\to\mathrm{Spec}(R)$ is bijective if and only if $R\subset S$ is either ramified or inert. Assume that $R\subset S$ is a minimal Pr\"ufer extension. Then $R\subset S$ is u-closed since t-closed by Proposition ~\ref{1.34}.
\end{proof}

\begin{proposition}\label{1.017} Let $R\subseteq S$ be a u-closed integral FCP extension. Then $T\subseteq U$ is u-closed for any $T,U\in[R,S]$ such that $T\subseteq U$.
\end{proposition}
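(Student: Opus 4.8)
The plan is to reduce to the minimal case and then invoke the local characterization of $u$-closedness. First I would use Proposition \ref{1.311}: since $R\subseteq S$ is an integral FCP extension that is $u$-closed, it is an $i$-extension, so $\mathrm{Spec}(S)\to\mathrm{Spec}(R)$ is bijective. Given $T,U\in[R,S]$ with $T\subseteq U$, both $R\subseteq T$ and $T\subseteq U$ are again integral FCP extensions. The natural maps $\mathrm{Spec}(S)\to\mathrm{Spec}(U)\to\mathrm{Spec}(T)\to\mathrm{Spec}(R)$ compose to a bijection (by going-up for integral extensions each is surjective, and the composite is injective), hence each of them is bijective; in particular $\mathrm{Spec}(U)\to\mathrm{Spec}(T)$ is bijective, so $T\subseteq U$ is an $i$-extension. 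Applying Proposition \ref{1.311} in the reverse direction (the equivalence (2) $\Leftrightarrow$ (1) for the integral FCP extension $T\subseteq U$) then yields that $T\subseteq U$ is $u$-closed.

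Concretely, the key steps in order are: (i) observe $T\subseteq U$ is an integral FCP extension (a subextension of an FCP extension is FCP by the tower-of-minimal-extensions structure, and integrality is inherited); (ii) from $u$-closedness of $R\subseteq S$ deduce via Proposition \ref{1.311} that $\mathrm{Spec}(S)\to\mathrm{Spec}(R)$ is bijective; (iii) factor this map through $\mathrm{Spec}(U)$ and $\mathrm{Spec}(T)$ and argue that injectivity of the composite forces injectivity of each factor, while surjectivity is automatic by going-up, so $\mathrm{Spec}(U)\to\mathrm{Spec}(T)$ is bijective; (iv) conclude by the implication (2) $\Rightarrow$ (1) of Proposition \ref{1.311} applied to $T\subseteq U$.

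I do not expect a serious obstacle here; the statement is essentially a packaging of Proposition \ref{1.311}. The only point requiring a word of care is step (iii): one must note that $\mathrm{Spec}(U)\to\mathrm{Spec}(T)$ is surjective because $T\subseteq U$ is integral (lying-over), and injective because it is a factor of the injective composite $\mathrm{Spec}(S)\to\mathrm{Spec}(T)$ — this last composite being injective since $\mathrm{Spec}(S)\to\mathrm{Spec}(R)$ is. An alternative, purely internal route avoiding spectra: since $R\subseteq S$ is $u$-closed, ${}^+_SR={}^t_SR$ by Proposition \ref{1.311}(3); one checks ${}^+_UT=U\cap{}^+_SU=U\cap{}^t_SU={}^t_UT$ using that seminormalization and $t$-closure are computed inside $S$ and behave well under passing to subextensions, and then reapplies Proposition \ref{1.311}. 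Either way the argument is short.
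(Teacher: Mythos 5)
Your argument is correct and is essentially the paper's own proof: both deduce from Proposition \ref{1.311} that $\mathrm{Spec}(S)\to\mathrm{Spec}(R)$ is bijective, then transfer this to $\mathrm{Spec}(U)\to\mathrm{Spec}(T)$ via lying-over for the integral extensions in the tower (the paper phrases this as a contradiction with two maximal ideals of $U$ over the same ideal of $T$, you argue directly that surjective factors of an injective composite are bijective), and finally apply Proposition \ref{1.311} to $T\subseteq U$. The alternative ``internal'' route you sketch at the end is not needed and, as written, would require knowing $T\subseteq S$ is u-closed, which is a case of the statement itself; the spectral argument is the right one.
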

\begin{proof} We use Proposition \ref{1.311}. Assume that $\mathrm{Spec}(U)\to\mathrm{Spec}(T)$ is not bijective. There exist $N_1,N_2\in\mathrm{Max}(U),\ N_1\neq N_2$, such that $N_1\cap T=N_2\cap T$. In particular, $N_1\cap R=N_2\cap R$. Since, $R\subseteq S$ is an FCP integral extension, so are $T\subset U$  and $U\subset S$. In particular, there exist $M_1,M_2\in\mathrm{Max}(S)$ such that $N_1=M_1\cap U$ and $N_2=M_2\cap U$. It follows that $M_1\cap R=M_2\cap R$ with $M_1\neq M_2$ since $N_1\neq N_2$, a contradiction with $\mathrm{Spec}(S)\to\mathrm{Spec}(R)$ bijective. Then, $\mathrm{Spec}(U)\to\mathrm{Spec}(T)$ is bijective and $T\subseteq U$ is u-closed by Proposition \ref{1.311}.
\end{proof} 

\begin{proposition}\label{1.312} Let $R\subset S$ be an integral FCP extension. 
\begin{enumerate}
\item $({}_S^uR)_M={}_{S_M}^uR_M$ for any $M\in\mathrm{MSupp}(S/R)$.

\item $({}_S^uR)/I={}_{(S/I)}^u(R/I)$ for any ideal $I$ shared by $R$ and $S$.
\end{enumerate}
\end{proposition}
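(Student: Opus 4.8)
The plan is to reduce everything to the characterization of the $u$-closure as the greatest subextension over which the extension is $u$-integral, together with the facts that both localization and passage to a quotient by a shared ideal commute with forming towers of $u$-elementary extensions and preserve the $u$-closed property. For part (1), fix $M\in\mathrm{MSupp}(S/R)$ and set $B:={}_S^uR$. First I would observe that localization at $M$ sends the tower witnessing $R\subseteq B$ being $u$-integral to a tower witnessing $R_M\subseteq B_M$ being $u$-integral (a $u$-elementary step $T=R[b]$ with $p_1(b),bp_1(b)\in R$ localizes to $T_M=R_M[b/1]$ with the same relations over $R_M$; directed unions are preserved by localization). Hence $B_M\subseteq {}_{S_M}^uR_M$. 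Conversely, $B\subseteq S$ is $u$-closed, so by Proposition \ref{1.311} $\mathrm{Spec}(S)\to\mathrm{Spec}(B)$ is injective, whence so is $\mathrm{Spec}(S_M)\to\mathrm{Spec}(B_M)$; applying Proposition \ref{1.311} again (to the integral FCP extension $B_M\subseteq S_M$) gives that $B_M\subseteq S_M$ is $u$-closed, and therefore ${}_{S_M}^uR_M\subseteq B_M$. Combining the two inclusions yields $(\,{}_S^uR)_M={}_{S_M}^uR_M$.

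For part (2), let $I$ be an ideal shared by $R$ and $S$, and again put $B:={}_S^uR$. Since $I\subseteq R\subseteq B\subseteq S$ and $I$ is an ideal of $S$, it is in particular an ideal of $B$, so $B/I$ makes sense and lies in $[R/I,S/I]$. The same two-sided argument applies: reducing a $u$-elementary step $T=R[b]$ modulo $I$ gives $T/I=(R/I)[\bar b]$ with $p_1(\bar b),\bar b\,p_1(\bar b)\in R/I$, and directed unions survive the surjection, so $R/I\subseteq B/I$ is $u$-integral, giving $B/I\subseteq {}_{(S/I)}^u(R/I)$. For the reverse inclusion, $B\subseteq S$ is $u$-closed, so Proposition \ref{1.016} gives that $B/I\subseteq S/I$ is $u$-closed (the proposition is stated for the extension $R\subseteq S$, but it applies verbatim to the integral FCP extension $B\subseteq S$ with the shared ideal $I$); hence ${}_{(S/I)}^u(R/I)\subseteq B/I$. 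The two inclusions give the claimed equality.

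The only mild subtlety — and the place I would be most careful — is checking that the $u$-closure of the localized (resp. quotient) extension actually contains the localization (resp. reduction) of $B$, i.e. the \emph{easy} inclusions above, since these rest on $u$-integrality being inherited by $\{S_i\}$ rather than just on $u$-closedness; but this is immediate from the elementwise form of the $u$-elementary condition $p_1(b),bp_1(b)\in R$, which is preserved by any ring homomorphism out of $R$, in particular by $R\to R_M$ and $R\to R/I$. Everything else is a direct appeal to Propositions \ref{1.311}, \ref{1.016}, \ref{1.017} and to the behaviour of $\mathrm{Spec}$ under localization and under reduction modulo an ideal.
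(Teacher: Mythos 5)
Your proof is correct, but for the harder half of each part it takes a genuinely different route from the paper. The inclusions ${}_{S_M}^uR_M\subseteq({}_S^uR)_M$ and ${}_{(S/I)}^u(R/I)\subseteq({}_S^uR)/I$ are obtained exactly as in the paper: ${}_S^uR\subseteq S$ is u-closed, this descends to the localization via the Spec argument and Proposition \ref{1.311}, respectively to the quotient via Proposition \ref{1.016}, and one invokes minimality of the u-closure. For the reverse inclusions the paper never uses the description of ${}_S^uR$ as the greatest u-integral subextension: in (1) it lifts the family $({}_{S_M}^uR_M)_M$ to a single $U'\in[R,S]$ via the $\mathcal B$-extension property \cite[Theorem 3.6]{DPP2}, deduces that $U'\subseteq S$ is u-closed from the local--global principle \cite[Proposition 1.16]{Pic 0}, and concludes ${}_S^uR\subseteq U'$; in (2) it lifts ${}_{(S/I)}^u(R/I)$ to some $U''\in[R,S]$ and argues with bijectivity of the Spec maps and Proposition \ref{1.311} to get ${}_S^uR\subseteq U''\subseteq{}_S^uR$. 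You instead use the dual characterization recorded in Definition \ref{closure} (${}_S^uR$ is the largest $B'\in[R,S]$ with $R\subseteq B'$ u-integral) together with the elementwise remark that the conditions $p_1(b),bp_1(b)\in R$ are preserved by $R\to R_M$ and $R\to R/I$, so that $R_M\subseteq({}_S^uR)_M$ and $R/I\subseteq({}_S^uR)/I$ are u-integral and hence contained in the corresponding u-closures. Your route is more elementary and avoids both lifting steps (and these ``easy'' inclusions require neither FCP nor integrality), at the price of leaning on the u-integral half of Definition \ref{closure}, i.e.\ on \cite{Pic 0}; the paper's route stays entirely within the ``least u-closed'' description but pays for it with \cite[Theorem 3.6]{DPP2} and \cite[Proposition 1.16]{Pic 0}. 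The appeal to Proposition \ref{1.017} at the end of your argument is not actually needed.
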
 

\begin{proof} Set $U:={}_S^uR$.

(1) Let $M\in\mathrm{MSupp}(S/R)$. We first show  that $U_M\subseteq S_M$ is u-closed. Using Proposition \ref{1.311}, it is enough to show that $\mathrm{Spec}(S_M)\to\mathrm{Spec}(U_M)$ is bijective, which is obvious since so is $\mathrm{Spec}(S)\to\mathrm{Spec}(U)$ because $U\subseteq S$ is u-closed. Then, ${}_{S_M}^uR_M\subseteq U_M\ (*)$. 
 
Now, since $R\subseteq S$ is an integral FCP extension, \cite[Theorem 3.6]{DPP2} shows that there exists $U'\in [R,S]$ such that $U'_M={}_{S_M}^uR_M$ for any $M\in\mathrm{MSupp}(S/R)$. In particular, $U'_M\subseteq S_M$ is u-closed for any $M\in\mathrm{MSupp}(S/R)$. Therefore, $U'\subseteq S$ is u-closed by \cite[Proposition 1.16]{Pic 0}. Then, $U\subseteq U'$, which implies $U_M\subseteq U'_M={}_{S_M}^uR_M$ for any $M\in\mathrm{MSupp}(S/R)$; so that,  $U_M=({}_S^uR)_M={}_{S_M}^uR_M$ for any $M\in\mathrm{MSupp}(S/R)$ by $(*)$. 

(2) Let $I$ be an ideal shared by $R$ and $S$. Then $I$ is also an ideal of $U$. Since $U\subseteq S$ is u-closed, so is $U/I\subseteq S/I$ by Proposition \ref{1.016}; so that, ${}_{(S/I)}^u(R/I)\subseteq U/I$. But $\mathrm{Spec}(S/I)\to\mathrm{Spec}({}_{(S/I)}^u(R/I))$ is bijective, with ${}_{(S/I)}^u(R/I)\in [R/I,S/I]$. It follows that there exists $U''\in[R,S]$ such that $U''/I={}_{(S/I)}^u(R/I)$, with $U''/I\subseteq U/I$, giving $U''\subseteq U\ (**)$. Moreover, $\mathrm{Spec}(S/I)\to\mathrm{Spec}(U''/I)$ is bijective, and so is $\mathrm{Spec}(S)\to\mathrm{Spec}(U'')$. At last, Proposition \ref{1.311} shows that $U''\subset S$ is u-closed, which implies $U\subseteq U''$, and, to conclude, $U=U''$ by $(**)$, that is $U/I={}_{(S/I)}^u(R/I)$.
\end{proof} 

\begin{corollary}\label{1.3120} Let $R\subset S$ be an integral FCP extension. The following conditions are equivalent
\begin{enumerate}
\item $R\subset S$ is u-closed (resp.; u-integral);

\item $R_M\subset S_M$ is u-closed (resp.; u-integral) for any $M\in\mathrm{MSupp}(S/R)$;

\item $R/I\subset S/I$ is u-closed (resp.; u-integral) for any ideal $I$ shared by $R$ and $S$.
\end{enumerate}
\end{corollary}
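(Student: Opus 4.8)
The plan is to establish \ref{1.3120} by combining the two preceding propositions (\ref{1.312} for the $u$-closure and \ref{1.016} for the $u$-closed property) with the characterization of the $\mathcal B$-extension structure of integral FCP extensions (\cite[Theorem 3.6]{DPP2}). The statement for the $u$-closed case is essentially an immediate corollary; the extra content is the $u$-integral case, which requires relating $u$-integrality of $R\subset S$ to $u$-integrality of its localizations and its residual rings modulo a shared ideal.

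First I would treat the $u$-closed case. The equivalence (1) $\Leftrightarrow$ (3) for ``$u$-closed'' is exactly Proposition \ref{1.016}. For (1) $\Leftrightarrow$ (2), observe that by Proposition \ref{1.312}(1) we have $({}_S^uR)_M={}_{S_M}^uR_M$ for each $M\in\mathrm{MSupp}(S/R)$. Hence $R\subset S$ is $u$-closed, i.e. ${}_S^uR=R$, if and only if $({}_S^uR)_M=R_M$ for each $M\in\mathrm{MSupp}(S/R)$ (using that $R\subset S$ is a $\mathcal B$-extension, so an element of $[R,S]$ is determined by its localizations at the $M_i$), if and only if ${}_{S_M}^uR_M=R_M$ for each such $M$, i.e. $R_M\subset S_M$ is $u$-closed for each $M\in\mathrm{MSupp}(S/R)$.

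Next I would handle the $u$-integral case. Here ${}_S^uR$ is the greatest $T\in[R,S]$ such that $R\subseteq T$ is $u$-integral, so $R\subset S$ is $u$-integral precisely when ${}_S^uR=S$. By Proposition \ref{1.312}(1) again, ${}_S^uR=S$ iff $({}_S^uR)_M={}_{S_M}^uR_M=S_M$ for every $M\in\mathrm{MSupp}(S/R)$ (again invoking the $\mathcal B$-extension property so that equality of subalgebras can be checked locally at the finitely many relevant maximal ideals, with the localizations agreeing trivially off $\mathrm{MSupp}(S/R)$), which is exactly the assertion that $R_M\subset S_M$ is $u$-integral for each $M\in\mathrm{MSupp}(S/R)$; this gives (1) $\Leftrightarrow$ (2). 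Similarly, Proposition \ref{1.312}(2) gives $({}_S^uR)/I={}_{(S/I)}^u(R/I)$ for any ideal $I$ shared by $R$ and $S$, and taking $I=(R:S)$ (or any shared ideal) one reads off that ${}_S^uR=S$ iff ${}_{(S/I)}^u(R/I)=S/I$, i.e. (1) $\Leftrightarrow$ (3) in the $u$-integral case; the case of a general shared ideal follows since $S/I$ is a quotient of $S$ and $u$-integrality passes to such quotients.

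The main obstacle, such as it is, lies in being careful about the $\mathcal B$-extension bookkeeping: one must check that equality of two elements of $[R,S]$ (namely ${}_S^uR$ and $R$, or ${}_S^uR$ and $S$) can indeed be detected after localizing at the finitely many maximal ideals in $\mathrm{MSupp}(S/R)$, and that the localizations at primes outside this support contribute nothing (there $R_P=S_P$, so all three candidate subalgebras coincide). Once this is granted, everything reduces to Propositions \ref{1.016} and \ref{1.312}, and the proof is short. I would write it as: apply Proposition \ref{1.312} to reduce to the local and residual statements, then use that ${}_S^uR$ is simultaneously the smallest $u$-closed overring and the largest $u$-integral subextension to translate ``$R\subset S$ is $u$-closed'' into ``${}_S^uR=R$'' and ``$R\subset S$ is $u$-integral'' into ``${}_S^uR=S$'', and finally localize/quotient.
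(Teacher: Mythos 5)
Your proposal is correct and follows essentially the same route as the paper: both reduce everything to Proposition \ref{1.312} (together with Proposition \ref{1.016}), translate ``u-closed'' into ${}_S^uR=R$ and ``u-integral'' into ${}_S^uR=S$, and then detect these equalities after localization at $\mathrm{MSupp}(S/R)$ or passage modulo a shared ideal. Your extra care about the $\mathcal B$-extension bookkeeping is exactly the point the paper leaves implicit, so there is nothing to change.
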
 

\begin{proof} We use  Proposition \ref{1.312}. Then $({}_S^uR)_M={}_{S_M}^uR_M$ for any $M\in\mathrm{MSupp}(S/R)$ and $({}_S^uR)/I={}_{(S/I)}^u(R/I)$ for any ideal $I$ shared by $R$ and $S$. The different equivalences are obvious since $R\subset S$ is u-closed (resp.; u-integral) if and only if ${}_S^uR=R$ (resp.; ${}_S^uR=S$), with similar equalities for $R_M\subset S_M$ and $R/I\subset S/I$.
\end{proof} 

We recall \cite[Definition 2.11]{Pic 0} that a ring morphism $f:A\to B$ is said to be {\it locally an epimorphism} if $A_{f^{-1}(Q)}\to B_Q$ is an epimorphism for every $Q\in\mathrm{Spec}(B)$.

The next result gives a characterization of $u$-integral extensions in the FCP context  by some property of ring morphisms as it is the case for subintegral and infra-integral extensions.  In an arbitrary context this result may not hold.

\begin{theorem}\label{1.320} An integral FCP extension $R\subset S$ is u-integral if and only if $R\subset S$ is locally an epimorphism, in which   case $R\subset S$ is unramified.
\end{theorem}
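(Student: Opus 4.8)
The plan is to prove both implications, together with the last clause, through the criterion: for an integral FCP extension $R\subset S$, being locally an epimorphism is equivalent to $S_Q/PS_Q=\kappa(P)$ for every $Q\in\mathrm{Spec}(S)$ (with $P:=Q\cap R$), i.e.\ to $QS_Q=PS_Q$ together with $\kappa(Q)=\kappa(P)$ for all such $Q$. That $R_P\to S_Q$ epi forces $S_Q/PS_Q=\kappa(P)$ comes from base change along $R_P\to\kappa(P)$: since $R\subset S$ is module finite, $S_Q/PS_Q$ is a nonzero finite $\kappa(P)$-algebra which is an epimorphic image of the field $\kappa(P)$, and $(S_Q/PS_Q)\otimes_{\kappa(P)}(S_Q/PS_Q)\cong S_Q/PS_Q$ then forces it to be $\kappa(P)$. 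Granting the criterion, $R\subset S$ is infra-integral (all residual extensions are isomorphisms) and, by Proposition \ref{RAM}(3), unramified, so the ``in which case'' assertion will be automatic once the equivalence with $u$-integrality is established.

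For $u$-integral $\Rightarrow$ locally an epimorphism, recall that an FCP $u$-integral extension is $cu$-elementary, hence a finite tower of $u$-elementary steps $A\subseteq A[b]$ with $p_1(b),bp_1(b)\in A$; since ``locally an epimorphism'' is stable under composition, it suffices to treat one step. Given a prime $Q''$ of $A[b]$ with $P'':=Q''\cap A$, localize $A$ at $P''$ to assume $A$ local with maximal ideal $P''$: either $b\in A$ (trivial), or $p_1(b)=b^2-b\in P''$ (otherwise $b=(b^2-b)^{-1}b(b^2-b)\in A$), and then $A[b]_{Q''}/P''A[b]_{Q''}$ is a localization of a quotient of $\kappa(P'')[X]/(X^2-X)\cong\kappa(P'')\times\kappa(P'')$, hence equals $\kappa(P'')$. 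Module finiteness of $A\subseteq A[b]$ and Nakayama give $A\twoheadrightarrow A[b]_{Q''}$, so the step is locally an epimorphism (and, via the reduced fibres and Proposition \ref{RAM}(4), unramified). Composing along the tower, $R\subset S$ is locally an epimorphism and unramified.

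For locally an epimorphism $\Rightarrow$ $u$-integral, the key is to reduce to a module finite algebra over an Artinian local ring. Put $I:=(R:S)$; being locally an epimorphism is inherited by $R/I\subseteq S/I$ (quotients of epimorphisms are epimorphisms) and by each $R_M\subseteq S_M$, $M\in\mathrm{MSupp}(S/R)$, while $u$-integrality localizes and passes to/from $R/I\subseteq S/I$ by Corollary \ref{1.3120}, and $R/I$ is Artinian; hence we may assume $(R,M)$ is Artinian local. Then $S$ is Artinian, $S\cong\prod_{j=1}^m S_{Q_j}$ over its maximal ideals $Q_1,\dots,Q_m$ (all over $M$), and each $R\to S_{Q_j}$ is an integral epimorphism, so $S_{Q_j}/MS_{Q_j}=\kappa(M)$ by the computation in the first paragraph and Nakayama yields $R\twoheadrightarrow S_{Q_j}$, say $S_{Q_j}\cong R/J_j$ with $J_j\subseteq M$. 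Thus $S\cong\prod_j R/J_j$ with $R\hookrightarrow S$ diagonal, so $S$ is generated over $R$ by the orthogonal idempotents $\varepsilon_j:=(0,\dots,1,\dots,0)$, since $R+\sum_j R\varepsilon_j=\prod_j R/J_j$. As adjoining an idempotent is $u$-elementary ($p_1(\varepsilon)=\varepsilon^2-\varepsilon=0$ and $\varepsilon p_1(\varepsilon)=0$ lie in the base ring), the chain $R\subseteq R[\varepsilon_1]\subseteq R[\varepsilon_1,\varepsilon_2]\subseteq\cdots\subseteq S$ exhibits $R\subset S$ as $cu$-elementary, hence $u$-integral.

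The step I expect to be the real obstacle is choosing this reduction correctly. One should resist arguing by producing a minimal $u$-elementary bottom step: Remark \ref{1.12}(2) gives a $u$-integral (even $u$-elementary) extension whose only proper subextension is a ramified minimal one, which is not $u$-elementary; the $u$-elementary step that works (adjoining an idempotent) is typically not minimal, and it only becomes visible after killing the conductor and localizing down to an Artinian local base, where ``locally an epimorphism'' plus module finiteness (Nakayama) forces $S$ to be a finite product of quotients of $R$. All of this leans on the FCP hypothesis — module finiteness, Artinian-ness of $R/(R:S)$, and Corollary \ref{1.3120} — which is precisely why the statement may fail without FCP, as the paper cautions.
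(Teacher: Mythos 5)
Your converse direction (locally an epimorphism $\Rightarrow$ $u$-integral) is sound, and it is a genuinely different, self-contained argument from the paper's, which simply invokes \cite[Remark 2.34 (1)]{Pic 0}: after killing the conductor and localizing, $S$ is a finite product of its localizations at maximal ideals, each of these is a direct factor of the finite $R$-module $S$, so there Nakayama is legitimate and the idempotent tower does exhibit a cu-elementary extension. The \emph{forward} direction, however, has a genuine gap. For a $u$-elementary step $A\subseteq A[b]$ and a prime $Q''$ of $A[b]$ you claim ``module finiteness of $A\subseteq A[b]$ and Nakayama give $A\twoheadrightarrow A[b]_{Q''}$''. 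But $A[b]_{Q''}$ is a localization at a prime of the \emph{top} ring; it need not be a finitely generated $A$-module, so Nakayama does not apply, and the asserted surjectivity is false in general. Concretely, with $\mathrm{c}(k)\neq 2$ take $A:=k[t^2-1,\,t(t^2-1)]\subset k[t]=A[b]$, $b:=(t+1)/2$; then $b^2-b=\frac{t^2-1}{4}\in A$ and $b^3-b^2=\frac{(t^2-1)+t(t^2-1)}{8}\in A$, so this is a $u$-elementary (indeed minimal decomposed, FCP) extension with conductor $P:=(t^2-1)k[t]$. For $Q'':=(t-1)k[t]$ the fibre of $A_P\to k[t]_{(t-1)}$ is $\kappa(P)$, exactly as you compute, yet $1/(t+1)\in k[t]_{(t-1)}$ is not even integral over $A_P$ (if $1/(t+1)=f/g$ with $f\in k[t]$, $g\in A\setminus P$, then $(t+1)\mid g$, so $g(1)=g(-1)=0$ and $g\in P$); hence $k[t]_{(t-1)}$ is neither module-finite over $A_P$ nor a surjective image of it. The map \emph{is} an epimorphism, as the theorem predicts, but not for the reason you give: your fibre computation only establishes the necessary conditions $Q''A[b]_{Q''}=P''A[b]_{Q''}$ and $\kappa(Q'')=\kappa(P'')$, and the reverse half of the ``criterion'' you announce at the outset is never proved.

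The gap can be closed without citing \cite[Corollary 2.23]{Pic 0} (which is what the paper does for this implication) by checking the epimorphism condition directly on one step. In the nontrivial case $b^2-b=b(b-1)$ lies in $Q''$, so exactly one of $b$, $b-1$ is a unit of $A[b]_{Q''}$. If $g,h$ are ring maps out of $A[b]_{Q''}$ agreeing on $A$, set $u:=g(b)$, $v:=h(b)$; the relations $u^2-u=v^2-v$ and $u^3-u^2=v^3-v^2$ give $(u-v)(u+v-1)=0$ and $(u-v)u(u-1)=0$, and invertibility of $u,v$ (resp.\ of $u-1,v-1$) forces $u=v$, hence $g=h$; so each $u$-elementary step, and by composition the whole extension, is locally an epimorphism. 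With that repair your derivation of the ``in which case unramified'' clause (trivial fibres plus Proposition \ref{RAM}) goes through.
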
 

\begin{proof} Assume first that $R\subset S$ is u-integral. Then, $R\subset S$ is locally an epimorphism by \cite[Corollary 2.23]{Pic 0}. Conversely, assume that $R\subset S$ is locally an epimorphism. Since 
${}_S^uR\subseteq S$ is u-closed, Proposition \ref{1.311} asserts that $\mathrm {Spec}(S)\to\mathrm{Spec}({}_S^uR)$ is bijective. Moreover, $R\subset S$ is a finite injective locally epimorphism. According to \cite[Remark 2.34 (1)]{Pic 0},  $R\subset S$ is cu-elementary, and, in particular, u-integral. The last statement holds by Proposition \ref{RAM}(2) 
 and \cite[Exercise 1.14, p.7]{I}.
\end{proof} 

The seminormalization, the t-closure and the u-closure are linked as it is shown in the following Theorem. 

\begin{theorem}\label{1.313} If $R\subset S$ is an integral FCP extension, then $({}_S^uR)({}_S^+R)={}_S^tR$ holds. In particular, such an extension is t-closed if and only if it is seminormal and u-closed (anodal).
\end{theorem}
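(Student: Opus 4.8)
The plan is to reduce to the local case and then to work modulo the conductor, converting the statement into a fact about towers of minimal extensions via Proposition \ref{1.31}. First I would invoke Corollary \ref{1.3120} together with Proposition \ref{1.312}: the three closures ${}_S^uR$, ${}_S^+R$ and ${}_S^tR$ all localize correctly, that is $({}_S^uR)_M = {}_{S_M}^u R_M$ and similarly for $+$ and $t$ (the latter because an integral FCP extension is a $\mathcal{B}$-extension, so the seminormalization and $t$-closure are determined locally as well), and since products of subextensions also localize, it suffices to prove $({}_S^uR)({}_S^+R) = {}_S^tR$ after localizing at each $M \in \mathrm{MSupp}(S/R)$. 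Hence we may assume $(R,M)$ is local, and then $(R/I, M/I) \hookrightarrow S/I$ with $I := (R:S)$; using Proposition \ref{1.016} and Proposition \ref{1.312}(2) again, we may further assume $I = (R:S)$ is handled, i.e. reduce to the situation where everything is visible on a maximal chain of minimal extensions.

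The heart of the argument is then the following. Take a maximal chain $R = R_0 \subset R_1 \subset \cdots \subset R_n = {}_S^tR$ of minimal extensions inside the $t$-closure. By Proposition \ref{1.31}(3), each $R_i \subset R_{i+1}$ is either decomposed or ramified. I would argue that, up to reorganizing the chain, the ramified steps can be pushed to the bottom and the decomposed steps to the top: the subintegral (all-ramified) part is exactly ${}_S^+R$ by Proposition \ref{1.31}(1), and what remains above ${}_S^+R$ inside ${}_S^tR$ is seminormal infra-integral, hence all-decomposed by Proposition \ref{1.31}(2). Dually, I want to identify ${}_S^uR$ as the largest subextension obtained by a tower whose minimal steps are ramified or inert (Corollary \ref{1.315}); inside the $t$-closure there are no inert steps, so ${}_S^uR$ inside ${}_S^tR$ is precisely the all-ramified part as well — but one must be careful, because $u$-closure is about $i$-extensions (Proposition \ref{1.311}), and inside ${}_S^tR$ an extension is an $i$-extension iff it is subintegral (by Proposition \ref{1.31}, since decomposed steps break injectivity of $\mathrm{Spec}$). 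This gives ${}_S^uR \cap {}_S^tR \supseteq$ the subintegral part; combined with ${}_S^uR \subseteq {}_S^tR$ (stated in the excerpt after Definition \ref{closure}) this should pin down ${}_S^uR = {}_S^+R$ when $R \subset S$ is infra-integral, and in general ${}_S^u({}_S^tR/\text{base})$ accordingly.

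To get the product formula I would then show: (i) ${}_S^uR \cdot {}_S^+R \subseteq {}_S^tR$, which is immediate since both factors lie in ${}_S^tR$ and ${}_S^tR$ is a ring; (ii) for the reverse inclusion, set $W := {}_S^uR \cdot {}_S^+R$ and observe that ${}_S^tR / W$ (working inside the $t$-closure) is seminormal — because $W \supseteq {}_S^+R$ kills all ramified steps — and is also an $i$-extension over $W$ — because $W \supseteq {}_S^uR$, and here I would use Proposition \ref{1.311} applied to the integral FCP extension $W \subseteq {}_S^tR$: it is $u$-closed, hence an $i$-extension iff ${}_{{}_S^tR}^+ W = {}_{{}_S^tR}^t W$, and being a subextension of the infra-integral ${}_S^tR$ it is infra-integral, so ${}_{{}_S^tR}^t W = {}_S^tR$; meanwhile being seminormal forces ${}_{{}_S^tR}^+W = W$. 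Thus $W = {}_S^tR$. For the final sentence, specialize: $R \subset S$ is $t$-closed iff ${}_S^tR = R$ iff ${}_S^uR = {}_S^+R = R$, i.e. iff $R \subset S$ is both seminormal and $u$-closed.

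I expect the main obstacle to be step (ii) — more precisely, justifying cleanly that $W \subseteq {}_S^tR$ is \emph{simultaneously} seminormal and an $i$-extension, and then concluding $W = {}_S^tR$. The subtlety is that "seminormal $+$ $i$-extension" forces triviality only for integral FCP extensions (via Proposition \ref{1.31}: no decomposed steps since it is an $i$-extension, no ramified or inert steps since it is seminormal — wait, seminormal allows inert; but here $W \subseteq {}_S^tR$ is infra-integral, which excludes inert, so indeed all minimal steps would have to be simultaneously forbidden, forcing $W = {}_S^tR$). Making the "all steps forbidden" reasoning rigorous — as opposed to hand-waving about reorganizing chains — is where I would spend the most care, and it may be cleanest to phrase it as: $W \subseteq {}_S^tR$ is infra-integral and seminormal, hence ${}_{({}_S^tR)}^t W = {}_S^tR$ and ${}_{({}_S^tR)}^+ W = W$, while $W$ being $u$-closed in ${}_S^tR$ gives ${}_{({}_S^tR)}^+ W = {}_{({}_S^tR)}^t W$ by Proposition \ref{1.311}, whence $W = {}_S^tR$.
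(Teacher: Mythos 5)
Your final argument is correct and has the same skeleton as the paper's proof: set $W:=({}_S^uR)({}_S^+R)$, note $W\subseteq{}_S^tR$ since both factors lie in the $t$-closure, and get the reverse inclusion by exploiting that $W$ contains both the seminormalization and the $u$-closure. Where you diverge is in the closing step: the paper shows that $W\subseteq S$ is seminormal and $u$-closed (hence locally $u$-closed by Corollary \ref{1.3120}) and then invokes the external criterion \cite[Proposition 3.17]{Pic 1} to conclude that $W\subseteq S$ is $t$-closed, so ${}_S^tR\subseteq W$; you instead stay inside the $t$-closure, observing that $W\subseteq{}_S^tR$ is infra-integral, seminormal and $u$-closed, and conclude $W={}_S^tR$ from the paper's own Proposition \ref{1.311} ($u$-closed $\Leftrightarrow$ ${}^+={}^t$). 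Your route is a bit more self-contained (no appeal to \cite{Pic 1}), at the small cost of the easy extra observation that $W\subseteq{}_S^tR$ is infra-integral; the ``upward'' facts both proofs rely on ($T\supseteq{}_S^+R$ forces seminormality of $T$ in the top ring, $T\supseteq{}_S^uR$ forces $u$-closedness) are justified in the FCP integral setting by the maximal-chain argument via Proposition \ref{1.31} and by Proposition \ref{1.017}, and your derivation of the ``in particular'' statement directly from the product formula is fine. Two caveats, neither of which affects the argument you actually run in your last two paragraphs: the localization reduction at the start is unnecessary, and some exploratory claims in your middle paragraph are false --- ${}_S^uR$ is not ``the all-ramified part'', and ${}_S^uR={}_S^+R$ fails for infra-integral extensions (by Theorem \ref{1.423} and Remark \ref{1.317}, ${}_S^uR$ is then the co-subintegral closure $S^+_R$, which in general differs from ${}_S^+R$); since steps (i)--(ii) never use these claims, the proof stands.
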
 
\begin{proof} 
Since $\mathrm{Spec}(S)\to\mathrm{Spec}({}_S^tR)$ is bijective, Proposition \ref{1.311} shows that ${}_S^tR\subset S$ is u-closed; so that, ${}_S^uR\subseteq{}_S^tR$. 
We already know that ${}_S^+R\subseteq{}_S^tR$ by Definition \ref{1.3}, whence  $({}_S^uR)({}_S^+R)\subseteq{}_S^tR$ holds. 

Now ${}_S^uR{}\subseteq({}_S^uR)({}_S^+R)$ and ${}_S^+R\subseteq({}_S^uR)({}_S^+R)$ combine to yield that $({}_S^uR)({}_S^+R)\subseteq S$ is both seminormal and u-closed, and then locally u-closed by Corollary \ref{1.3120}. Hence, $({}_S^uR)({}_S^+R)\subseteq S$ is t-closed, according to \cite[Proposition 3.17]{Pic 1}, leading to ${}_S^tR\subseteq({}_S^uR)({}_S^+R)$, and then to $({}_S^uR)({}_S^+R)={}_S^tR$.

The last statement is \cite[Proposition 3.17]{Pic 1} since u-closedness is equivalent to local u-closedness by Corollary \ref{1.3120}.
\end{proof} 

\begin{theorem}\label{1.319} Let $R\subset S$ be an integral FCP extension. The following conditions are equivalent: 
\begin{enumerate}
\item $R\subset S$ is  u-integral  and seminormal;

\item $R\subset S$ is seminormal infra-integral;

\item $|\mathrm{V}_S((R:S))|=|\mathrm{V}_R((R:S))|+\ell[R,S]$;

\item ${}_S^+R=R$ and ${}_S^tR=S$.

\item $(R:S)$ is an irredundant intersection of $\ell[R,S]+n$ maximal ideals of $S$ and an intersection of $n$ maximal ideals of $R$ for some positive integer $n$. 
\end{enumerate}
 If these conditions hold, then $n=|\mathrm{V}_R((R:S))|$.
\end{theorem}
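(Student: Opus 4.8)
The plan is to set $I:=(R:S)$ throughout and exploit the fact that for an integral FCP extension, $I$ is an ideal shared by $R$ and $S$, $R/I$ is Artinian, and $\mathrm{V}_R(I)=\mathrm{MSupp}(S/R)$ by Proposition \ref{SUP}, so all the sets in (3) and (5) are finite. I would organize the equivalences as a cycle, say (1) $\Leftrightarrow$ (2) $\Leftrightarrow$ (4) handled first by the closure machinery of Section 5, then (2) $\Leftrightarrow$ (3) $\Leftrightarrow$ (5) by a counting argument reducing to the seminormal infra-integral case via minimal extensions.

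For (1) $\Leftrightarrow$ (2): by Proposition \ref{1.36} an infra-integral extension is exactly a $t$-integral extension, i.e. ${}_S^tR=S$; and by Theorem \ref{1.313} an extension with ${}_S^+R=$ seminormal part is $t$-closed iff seminormal and $u$-closed, dually here ${}_S^tR=({}_S^uR)({}_S^+R)$. So if $R\subset S$ is seminormal then ${}_S^+R=R$, whence ${}_S^tR={}_S^uR$; thus $R\subset S$ is infra-integral ($\,{}_S^tR=S$) iff ${}_S^uR=S$, i.e. $u$-integral. This also gives (2) $\Leftrightarrow$ (4) essentially for free: ${}_S^+R=R$ is seminormality and ${}_S^tR=S$ is infra-integrality. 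For the harder direction, if $R\subset S$ is $u$-integral and seminormal, Theorem \ref{1.313} forces ${}_S^tR=({}_S^uR)({}_S^+R)=S\cdot R=S$, so it is infra-integral; combined with seminormality this is (2). (Alternatively, invoke Proposition \ref{1.31}(2): seminormal infra-integral means every minimal step in a maximal chain is decomposed.)

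For the counting equivalences (2) $\Leftrightarrow$ (3) $\Leftrightarrow$ (5): assume (2), pick a maximal chain $R=R_0\subset\cdots\subset R_m=S$ with $m=\ell[R,S]$ (such a chain of length $\ell[R,S]$ exists by \cite[Theorem 4.11]{DPP3}); by Proposition \ref{1.31}(2) each $R_j\subset R_{j+1}$ is minimal decomposed, and by Proposition \ref{1.91} each is such that its conductor is an intersection of two maximal ideals of $R_{j+1}$ equal to one maximal ideal of $R_j$. Tracking $\mathrm{V}_{R_{j+1}}(I_j)$ where $I_j=(R_j:R_{j+1})$ and using that the decomposed case splits one maximal ideal of $R_j$ into exactly two of $R_{j+1}$ while fixing all others lying over points outside the crucial ideal, one sees $|\mathrm{V}_{S}(I)| = |\mathrm{V}_R(I)| + m$. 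Here one must be careful that $\mathrm{V}_{R_{j+1}}(I)$ — the fibre of the \emph{full} conductor — behaves compatibly; the clean way is to reduce mod $I$ so everything is a finite decomposed extension of Artinian rings and $\mathrm{V}(I)$ counts maximal ideals, then induct on $m$, using at the minimal step that a decomposed extension adds exactly one maximal ideal to the total count while an inert or ramified step would add zero. This gives (3), and simultaneously (5) with $n=|\mathrm{V}_R(I)|$ since by Proposition \ref{1.91} seminormality of $R\subset S$ is equivalent to $I$ being an irredundant intersection of $|\mathrm{V}_S(I)|$ maximal ideals of $S$, and $I=\cap\mathrm{V}_R(I)$ holds because $R/I$ is reduced Artinian (it embeds in the reduced ring $S/I$). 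Conversely, from (3) or (5) one runs the counting backwards: any minimal step that is ramified or inert fails to increase $|\mathrm{V}(I)|$, so the length-$\ell[R,S]$ chain must consist entirely of decomposed steps, giving (2) by Proposition \ref{1.31}(2); and the irredundant-intersection form in (5) forces seminormality of $R\subset S$ via Proposition \ref{1.91} while the count forces infra-integrality.

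The main obstacle I anticipate is the bookkeeping in the counting step: one needs the increment $|\mathrm{V}_{R_{j+1}}(I)| - |\mathrm{V}_{R_j}(I)|$ to be exactly $1$ at a decomposed minimal step and $\le 0$ at an inert or ramified step, and this requires knowing that the two new maximal ideals of $R_{j+1}$ produced by a decomposed step both lie in $\mathrm{V}(I)$ while no maximal ideal of $R_{j+1}$ outside $\mathrm{V}(I_j)$-related primes gets duplicated — i.e. controlling the fibres of $\mathrm{Spec}(R_{j+1})\to\mathrm{Spec}(R_j)$ relative to the global conductor $I$ rather than the step conductor $I_j$. Reducing everything modulo $I$ at the outset (legitimate since $I\subseteq I_j\subseteq\cdots$ is shared by all the $R_j$) turns this into a transparent statement about finite extensions of Artinian rings and lengths, which is where I would spend the real effort.
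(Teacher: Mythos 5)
Your proposal is correct and takes essentially the same route as the paper: (1)$\Leftrightarrow$(2)$\Leftrightarrow$(4) via Proposition \ref{1.36} and Theorem \ref{1.313}, and the counting equivalences (2)$\Leftrightarrow$(3)$\Leftrightarrow$(5) via Propositions \ref{1.31} and \ref{1.91} applied to a maximal chain of minimal extensions. Your explicit bookkeeping (a decomposed step adds exactly one maximal ideal above the conductor, an inert or ramified step adds none, after reducing modulo $(R:S)$) simply spells out what the paper compresses into ``(2) $\Leftrightarrow$ (3) because of Proposition \ref{1.31}.''
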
 

\begin{proof} 
 Set $n:=|\mathrm{V}_R((R:S))|<\infty$ by Proposition \ref{SUP}.

(1) $\Rightarrow$ (2) Since $R\subset S$ is u-integral, then $R\subset S$ is infra-integral by Definition \ref{1.33} and Proposition \ref{1.36}.

(2) $\Rightarrow$ (1) Since $R\subset S$ is infra-integral, we have $S={}_S^tR$ and since $R\subset S$ is seminormal, we have $R={}_S^+R$. Then, Theorem \ref{1.313} leads to ${}_S^uR=({}_S^uR)R=({}_S^uR)({}_S^+R)={}_S^tR=S$; so that, $R\subset S$ is  u-integral.

(2) $\Leftrightarrow$ (3) because of Proposition  \ref{1.31}.

(2) $\Leftrightarrow$ (4) is obvious according to the definitions of ${}_S^+R$ and ${}_S^tR$.

(2) $\Rightarrow$ (5) Since $R\subset S$ is seminormal FCP, $(R:S)=\cap_{i=1}^mM_i$, where $M_i\in\mathrm{Max}(S)$ for each $i\in\mathbb N_m$ by Proposition \ref{1.91}; so that, $m=|\mathrm{V}_S((R:S))| =|\mathrm{V}_R((R:S))|+\ell[R,S]$ by (3). But, we also have $(R:S)=\cap_{j=1}^nP_j$, where the $P_j$'s are the intersections of the $M_i$'s with $R$,  with perhaps  more than one $M_i$ for one $P_j$. Then,  $m=n+\ell[R,S]$, giving (5). 

(5) $\Rightarrow$ (3) Assume that $(R:S)=\cap_{i=1}^mM_i$, where $M_i\in\mathrm{Max}(S)$ for each $i\in\mathbb N_m$, with $m=\ell[R,S]+n$ and that $(R:S)=\cap_{j=1}^nP_j$, where the $P_j\in\mathrm{Max}(R)$ for each $j\in\mathbb N_n$. Then, $|\mathrm{V}_S((R:S))|=|\mathrm{V}_R ((R:S))|+\ell[R,S]$, giving (3).
\end{proof} 

The following Proposition was proved in \cite[Theorem 4.8]{Pic 0} in a different context.

\begin{proposition}\label{1.316} \cite[Theorem 4.8]{Pic 0} A minimal extension $R\subset S$ is u-integral if and only if $R\subset S$ is  decomposed. 
\end{proposition}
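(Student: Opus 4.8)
The plan is to characterize the minimal $u$-integral extensions directly via the three types of minimal integral extensions (the flat epimorphism case being irrelevant since a $u$-integral extension is integral). Recall that $R \subset S$ is $u$-integral means $S = {}_S^u R$, i.e. the $u$-closure of $R$ in $S$ is all of $S$; equivalently, by the general theory, $R \subset S$ is $cu$-elementary for a minimal extension, hence $u$-elementary, i.e. $S = R[b]$ with $p_1(b), bp_1(b) \in R$ where $p_1(X) = X^2 - X$. So the statement reduces to: among inert, decomposed, and ramified minimal integral extensions, exactly the decomposed ones are $u$-integral.

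First I would handle the forward direction. Suppose $R \subset S$ is minimal and $u$-integral. Being $u$-integral it is in particular $t$-integral, hence infra-integral by Proposition~\ref{1.36}, so by Proposition~\ref{1.31}(3) the minimal extension is either decomposed or ramified; it suffices to rule out the ramified case. If $R \subset S$ were ramified, then by Theorem~\ref{minimal}(c) there is $M' \in \mathrm{Max}(S)$ with $M'^2 \subseteq M := (R:S) \subset M'$ and $[S/M : R/M] = 2$, and $\mathrm{Spec}(S) \to \mathrm{Spec}(R)$ is bijective. But a $u$-integral extension being $u$-elementary gives $S = R[b]$ with $b^2 - b, b^3 - b^2 \in R$; I would then show (by the elementwise/structural analysis of $u$-elementary extensions, or equivalently via Theorem~\ref{1.320}: a $u$-integral extension is locally an epimorphism, hence $QS_Q = PS_Q$ for the primes) that this forces $MS_M = M'S_M$, contradicting $M'^2 \subseteq M \subset M'$ in the local ramified situation (where $M'S_M \neq M S_M$ strictly). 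Alternatively, and more cleanly, I would invoke that a $u$-integral extension is unramified by Theorem~\ref{1.320} while a ramified minimal extension is never unramified (as noted in the proof of Proposition~\ref{1.131}, since modulo the conductor one would get $(R/M)[X]/(X^2)$ étale hence reduced, absurd). Either route eliminates the ramified case, leaving only decomposed.

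For the converse, suppose $R \subset S$ is decomposed. By Theorem~\ref{minimal}(b) there is $q \in S \setminus R$ with $S = R[q]$ and $q^2 - q \in M = (R:S)$. Then $p_1(q) = q^2 - q \in M \subseteq R$ and $q\,p_1(q) = q^3 - q^2 = q(q^2-q) \in qM \subseteq qR[q] \cdot \ldots$; more carefully, $q(q^2-q) = (q^2-q)q$ and since $q^2 - q \in M$ and $M$ is an ideal of $S$ contained in $R$, we have $q(q^2-q) \in M \subseteq R$. Thus $S = R[q]$ with $p_1(q), q\,p_1(q) \in R$, so $R \subset S$ is $u$-elementary, hence $u$-integral. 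This gives the equivalence.

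The main obstacle I anticipate is the forward direction — specifically, ruling out the ramified case cleanly. The slickest argument is to quote Theorem~\ref{1.320} (u-integral $\Rightarrow$ unramified) together with the observation that ramified minimal extensions are never unramified; the delicate point is making sure the cited results apply to a single minimal extension (they do, as minimal integral extensions are FCP). An alternative self-contained route — showing directly from $S = R[b]$ with $b^2 - b, b^3 - b^2 \in R$ that $\mathrm{Spec}(S) \to \mathrm{Spec}(R)$ cannot be bijective, or that the fiber structure is that of $k \times k$ rather than $k[X]/(X^2)$ — requires a short computation identifying $S/MS$ and noting $b^2 = b$ in it forces a nontrivial idempotent, hence the decomposed rather than ramified fiber; I would include this computation as the fallback in case a cleaner citation is unavailable. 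Everything else is routine.
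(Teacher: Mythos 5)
Your proof is correct, and the easy direction (decomposed $\Rightarrow$ u-integral, via the generator $q$ with $q^2-q\in M=(R:S)$ and $qM\subseteq M$) is exactly the paper's argument. For the converse you take a genuinely different route: you eliminate the inert case through infra-integrality (Proposition \ref{1.36} plus Proposition \ref{1.31}(3)) and the ramified case through Theorem \ref{1.320} (u-integral $\Rightarrow$ unramified) combined with the observation, from the proof of Proposition \ref{1.131}, that a minimal ramified extension is never unramified; there is no circularity, since Theorem \ref{1.320} precedes this proposition and its proof does not use it. The paper instead disposes of both non-decomposed cases at once and with lighter machinery: a minimal ramified or inert extension is an $i$-extension, hence u-closed by Proposition \ref{1.311}, and u-closed together with u-integral forces $R={}_S^uR=S$, a contradiction. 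So the paper's key lemma is Proposition \ref{1.311}, while yours is Theorem \ref{1.320}; your version buys a link with the unramified viewpoint of Section 4, at the cost of invoking a deeper theorem where a one-line spectral argument suffices. One small caveat on your ``self-contained fallback'': after reducing modulo the conductor, $b^2-b$ lands in $R$, so its image in $S/M\cong (R/M)[X]/(X^2)$ is a possibly nonzero constant $c$ rather than $0$; the computation still closes (if $c\neq 0$ then $\bar b\in R/M$ directly, and if $c=0$ then $\bar b$ is idempotent in a local ring, hence $0$ or $1$, so again $\bar b\in R/M$, contradicting $[S/M:R/M]=2$ since $S=R[b]$), but as sketched (``$b^2=b$ forces a nontrivial idempotent'') it is slightly off and would need this adjustment.
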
 
\begin{proof} 
 Let $R\subset S$ be a minimal extension and set $M:=\mathcal{C}(R,S)$.

If $R\subset S$ is decomposed, there exists $q\in S$ such that $S=R[q],\ q^2-q\in M\subset R$ and $Mq\subseteq M$; so that, $q^3-q^2\in R$. Then,  $R\subset S$   is u-integral.

Conversely, assume that $R\subset S$ is u-integral. 
By assumption, $R\subset S$ is a minimal integral extension. If $R\subset S$ is either minimal ramified or minimal inert, then $R\subset S$ is an i-extension; so that, $R\subset S$ is u-closed, which implies $R=S$, a contradiction. Then, $R\subset S$ is  decomposed.
\end{proof} 
\begin{remark}\label{1.317} Let $R\subset S$ be an integral FCP extension with ${}_S^+R,{}_S^tR\neq R,S$. 
 According to Proposition \ref{1.31}, a minimal subextension of $R\subset{}_S^+R$ is ramified and a minimal subextension of ${}_S^+R\subset S$ is either decomposed or inert. 
 Similarly, any minimal subextension of $R\subset{}_S^tR$ is either ramified or decomposed and any minimal subextension of ${}_S^tR\subset S$ is inert. Contrary to these cases, we have seen in Proposition \ref{1.311} that any minimal subextension of an u-closed extension is either ramified or inert. But a minimal ramified extension can also be a subextension of an u-integral extension as we can see in the following examples. We have not the same behavior with the u-closure where we may have two minimal extensions of the same type contained and containing the u-closure. Let $R\subset S$ be an integral FCP extension with u-closure ${}_S^uR$. We can see in the following examples that there exists a minimal ramified subextension of $R\subset{}_S^uR$ and a minimal ramified subextension of ${}_S^uR\subset S$

(1) Let $R$ be an Artinian non reduced ring. Then, $R\subset R^2$ is an FCP infra-integral and not seminormal extension by \cite[Proposition 1.4]{Pic 9}. It follows that there exists $U\in[R,R^2]$ such that $R\subset U$ is minimal ramified. Moreover, $R\subset R^2$ is u-integral by \cite[Example 2.32]{Pic 0}. This example also shows that a subextension of an u-integral extension is not necessarily u-integral. Indeed, $R\subset{}_{R^2}^+R$ is u-closed since subintegral by Proposition \ref{1.311}. 

 (2) We continue to work  with the situation of (1),
with $R:=$

\noindent $(\mathbb Z/2\mathbb Z)[T]/(T^2)=(\mathbb Z/2\mathbb Z)[t]$, where $t$ is the class of $T$ in $R$. Then, $R$ is an Artinian local non reduced ring. Let $M:=Rt$ be its maximal ideal. It follows that $R\subset R^2$ is u-integral. According to  \cite[Proposition 2.8]{Pic 9}, $R+(M\times M)={}_{R^2}^+R$, with $R\subset R+(M\times M)$ minimal ramified. Indeed, setting $e_1:=(1,0)$, we have $R+(M\times M)=R[te_1]$, with $(te_1)^2=0\in R$ and $Mte_1=Rt^2e_1=0$. And $R+(M\times M)\subset R^2$ is minimal decomposed since $|\mathrm{Max}(R^2)|=2$. Then, $\ell [R,R^2]=2$. Set $S:=R\times R[x]$, where $R[x]:=R[X]/(X^2,tX)$ and $x$ is the class of $X$ in $R[X]/(X^2,tX)$. Since $x^2=0$ and $Mx=Rtx=0$, we get that $R\subset R[x]$ is minimal ramified, as $R^2\subset S$, by \cite[Proposition III.4]{DMPP}; so that, $\ell[R,S]=3$. We claim that $R^2= {}_S^uR$. Set $U:={}_S^uR$. Then, $U\neq R$ since $R\subset S$ is infra-integral and not subintegral, because $|\mathrm{Max}(R^2)|=2$. For a similar reason, $U\neq S$ because $R^2\subset S$ is subintegral. Then, $R\subset U\subseteq R^2\subset S$, which implies that $\ell[R,U]\in\{1,2\}$. If $\ell[R,U]=2$, then $U=R^2$, and we are done. Assume that $\ell[R,U]=1$; so that, $R\subset U$ is minimal, necessarily decomposed, since u-integral and $U\subset R^2$ is minimal ramified. Considering the extension $R\subset R^2$ and using \cite[Lemma 17]{DPP4}, we get that $MR^2=M\times M$ is not a radical ideal of $R^2$, a contradiction. Then, $\ell[R,U]= 2$ and $U=R^2={}_S^uR$. This example exhibits a subintegral minimal ramified subextension $R\subset{}_{R^2}^+R$ of $R\subset{}_S^uR=R^2 $ as well as a  subintegral minimal ramified subextension ${}_S^uR\subset S$ of (itself) ${}_S^uR\subset S$ as we can see in the following commutative diagram:
\centerline{$\begin{matrix}
  {}  & {}   &        {}           &       {}       &   {}_S^+R       &      {}       & {} \\
  {}  & {}   &        {}           & \nearrow &         {}            & \searrow & {} \\
 R   & \to & {}_{R^2}^+R &      {}        &         {}            &       {}      & S \\
 {}   & {}  &        {}            & \searrow &         {}            & \nearrow & {} \\
 {}   & {}  &        {}            &      {}       & {}_S^uR=R^2 &       {}       & {}
\end{matrix}$} 
\end{remark} 

In the preceding Remark we can observe that $R\subset{}_S^+R\cap{}_S^uR={}_{R^2}^+R$. It may be asked under what conditions $R={}_S^+R\cap{}_S^uR$ holds for an FCP extension $R\subset S$. The answer is given in the following Proposition.

\begin{proposition}\label{1.318} Let $R\subset S$ be an  FCP extension. Then, $R={}_S^+R\cap{}_S^uR$ if and only if $R\subseteq {}_S^uR$ is seminormal.
\end{proposition}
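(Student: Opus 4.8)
The plan is to establish, for every FCP extension $R\subset S$, the general identity
\[
{}^+_{{}^u_SR}R={}^+_SR\cap{}^u_SR ,
\]
from which the Proposition is immediate: by definition $R\subseteq{}^u_SR$ is seminormal exactly when its seminormalization ${}^+_{{}^u_SR}R$ equals $R$, and by the identity this happens precisely when ${}^+_SR\cap{}^u_SR=R$.

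First I would prove ${}^+_{{}^u_SR}R\subseteq{}^+_SR\cap{}^u_SR$. Write $B:={}^+_{{}^u_SR}R$. By definition of the seminormalization, $R\subseteq B$ is subintegral, and $B$ lies in $[R,{}^u_SR]\subseteq[R,S]$; since ${}^+_SR$ is the greatest subintegral subextension of $R\subset S$, this forces $B\subseteq{}^+_SR$, while $B\subseteq{}^u_SR$ is clear. For the reverse inclusion, set $T:={}^+_SR\cap{}^u_SR$, so that $R\subseteq T\subseteq{}^+_SR$. The key step is to verify that $R\subseteq T$ is subintegral. To do this I would take a maximal chain of $[R,T]$ and, using FCP, extend it to a maximal chain of $[R,{}^+_SR]$; Proposition \ref{1.31}(1) then shows that every step of the latter chain is ramified, hence so is every step of its initial segment from $R$ to $T$, and a second application of Proposition \ref{1.31}(1) yields that $R\subseteq T$ is subintegral. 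Since moreover $T\subseteq{}^u_SR$, the extension $R\subseteq T$ is a subintegral subextension of $R\subseteq{}^u_SR$, whence $T\subseteq{}^+_{{}^u_SR}R$. This establishes the identity and therefore the Proposition.

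The only point requiring any care is the heredity of subintegrality invoked in the reverse inclusion (that a subextension $R\subseteq T$ of a subintegral extension $R\subseteq{}^+_SR$ is again subintegral); in the FCP setting this is immediate from Proposition \ref{1.31}, so no real obstacle arises, and the argument does not otherwise use the FCP hypothesis beyond what is needed to guarantee that ${}^u_SR$ is well defined.
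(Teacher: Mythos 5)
Your proof is correct and follows essentially the same route as the paper, which sets $U:={}_S^uR$ and uses the identity ${}_S^+R\cap U={}_U^+R$ to conclude; you simply supply the verification of that identity (via maximal chains and Proposition \ref{1.31}(1)) that the paper leaves implicit. No gaps.
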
 
\begin{proof} Set $U:={}_S^uR$. Then, $T:={}_S^+R\cap U={}_U^+R$. It follows that $T=R$ if and only if $R={}_U^+R$ if and only if $R\subseteq U$ is seminormal.
\end{proof} 

The {\it $\omega$-closure} of a ring extension $R\subseteq S$ is the greatest element $U:={}^\omega_SR$ of $[R,S]$ such that $R\subseteq U$ is unramified \cite[Proposition 3.3]{Pic 0}.

\begin{proposition} \label{1.14} Let $R\subseteq S$ be an FCP integral extension. There exists a greatest unramified subextension $R\subseteq  {}^\omega_SR$ of $R\subseteq S$. Then   ${}^\omega_SR\subseteq S$ is $u$-closed; so that, 
 ${}^u_SR\subseteq{}^\omega_SR\subseteq{}^\square_SR$.   
\end{proposition}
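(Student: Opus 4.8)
The plan is to first establish the existence of a greatest unramified subextension, then prove that the complementary extension ${}^\omega_SR \subseteq S$ is $u$-closed, and finally deduce the chain of inclusions. For existence, I would argue exactly as in the proof of Lemma \ref{1.19}: since $R \subseteq S$ is integral FCP it is a $\mathcal{B}$-extension, so $[R,S] \cong \prod_{i=1}^n [R_{M_i},S_{M_i}]$ over $\mathrm{MSupp}(S/R) = \{M_i\}$, and by Corollary \ref{1.3120}-type reasoning plus the local characterization of unramifiedness in Proposition \ref{RAM}, it suffices to produce a greatest unramified subextension locally at each $M_i$. Over a local base the unramified subextensions of a finite extension form a $\cup$-stable set (unramifiedness is stable under composition by \cite[Properties 2.2.ii]{I}, and the FCP lattice is finite), so a greatest one exists; glue the local pieces back via the $\mathcal{B}$-extension bijection. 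Alternatively one may simply cite \cite[Proposition 3.3]{Pic 0} for existence of ${}^\omega_SR$ in general and only use FCP for the sharper conclusions.

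Next, to show ${}^\omega_SR \subseteq S$ is $u$-closed, I would use Proposition \ref{1.311}: for an integral FCP extension, $u$-closed is equivalent to being an $i$-extension, i.e. to $\mathrm{Spec}(S) \to \mathrm{Spec}({}^\omega_SR)$ being bijective. So it is enough to show this spectral map is bijective. Set $U := {}^\omega_SR$ and suppose it is not; then there exist $N_1 \neq N_2$ in $\mathrm{Max}(S)$ with $N_1 \cap U = N_2 \cap U =: N$. Working locally at $N \cap R$ (legitimate since unramifiedness and $u$-closedness localize, by Proposition \ref{RAM}(2) and Corollary \ref{1.3120}), there is a minimal subextension $U \subseteq U'$ inside $S$ which is decomposed — this is where the two primes above $N$ force a decomposed step by Theorem \ref{minimal} and Proposition \ref{1.31}. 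But a decomposed minimal extension is unramified by Proposition \ref{1.131}, so $R \subseteq U'$ would be unramified (composition of unramified extensions), contradicting the maximality of $U = {}^\omega_SR$. Hence the spectral map is bijective and $U \subseteq S$ is $u$-closed.

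Finally, for the chain ${}^u_SR \subseteq {}^\omega_SR \subseteq {}^\square_SR$: the right inclusion follows because an unramified extension is $\kappa$-separable (the "universally $\kappa$-separable" conclusion of Proposition \ref{RAM}), so $R \subseteq {}^\omega_SR$ is $\kappa$-separable and hence contained in the greatest $\kappa$-separable subextension ${}^\square_SR$ by Theorem \ref{1.20}. For the left inclusion, $R \subseteq {}^u_SR$ is $u$-integral, hence locally an epimorphism and in particular unramified by Theorem \ref{1.320}; therefore ${}^u_SR \subseteq {}^\omega_SR$ by maximality of the $\omega$-closure.

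I expect the main obstacle to be the middle step — pinning down that a failure of injectivity of $\mathrm{Spec}(S)\to\mathrm{Spec}(U)$ produces a decomposed minimal step \emph{directly above} $U$ (rather than somewhere higher), so that adjoining it still yields an unramified, and hence larger unramified, subextension. The cleanest route is to reduce to the local case and invoke Proposition \ref{1.311}(2)$\Leftrightarrow$(3) together with the structure of seminormalizations: if $U \subseteq S$ were not an $i$-extension then $U \subsetneq {}_S^+U$ fails to capture the non-injectivity unless there is a decomposed step, and that decomposed step, being unramified by Proposition \ref{1.131}, contradicts maximality. One must be slightly careful that the decomposed step can be chosen inside $[U, {}_S^tU]$ so that compositing with the (unramified) extension $R \subseteq U$ stays unramified; Proposition \ref{1.132} or a direct appeal to stability of unramifiedness under composition handles this.
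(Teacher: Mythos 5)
Your existence step (via \cite[Proposition 3.3]{Pic 0}) and your treatment of the two inclusions are essentially sound; in fact your derivation of ${}^u_SR\subseteq{}^\omega_SR$ by applying Theorem \ref{1.320} to the u-integral extension $R\subseteq{}^u_SR$ is a correct small variant of the paper's, which instead deduces this inclusion from the u-closedness of ${}^\omega_SR\subseteq S$, and the inclusion ${}^\omega_SR\subseteq{}^\square_SR$ via Proposition \ref{RAM} and Theorem \ref{1.20} is exactly the paper's argument. (In your first existence route, beware that FCP does not mean $[R,S]$ is finite; you would need Noetherianity of the lattice together with stability of unramifiedness under compositum, which uses base change as well as composition. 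The fallback citation is what the paper actually does.)

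The genuine gap is the step you yourself flagged: it is false that a failure of injectivity of $\mathrm{Spec}(S)\to\mathrm{Spec}(U)$ produces a decomposed minimal step \emph{directly above} $U$. Take $R:=(\mathbb Z/2\mathbb Z)[t]$ with $t^2=0$ and $S:=R^2$, as in Remarks \ref{1.12}(2) and \ref{1.317}: here $[R,R^2]$ is the three-element chain $R\subset{}^+_{R^2}R=R+(M\times M)\subset R^2$ (intermediate $R$-submodules of $R^2$ containing the diagonal correspond to the ideals $0\subset M\subset R$), the unique minimal step above $R$ is ramified, yet $R\subset R^2$ is not an i-extension; it is even u-integral and unramified. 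So a non-i (equivalently, by Proposition \ref{1.311}, non-u-closed) extension may have only ramified steps at the bottom, and your contradiction ``decomposed step above $U$ is unramified, hence $R\subseteq U'$ is unramified, contradicting maximality'' cannot be launched. Your proposed patches (Proposition \ref{1.132}, or choosing the decomposed step inside $[U,{}^t_SU]$) do not close this, because the decomposed step then sits above intervening ramified steps, and unramifiedness of a tower cannot be tested on its minimal steps (again Remark \ref{1.12}); you would still have to show that $U\subseteq W$ is unramified for some $W$ strictly larger than $U$. The paper's proof produces such a $W$ without any appeal to minimal steps: if $s\in S$ satisfies $s^2-s,\ s^3-s^2\in{}^\omega_SR$, then ${}^\omega_SR\subseteq{}^\omega_SR[s]$ is u-elementary, hence u-integral, hence unramified \emph{as a whole} by Theorem \ref{1.320}; composing with $R\subseteq{}^\omega_SR$ and using maximality of the $\omega$-closure forces $s\in{}^\omega_SR$, which is exactly the elementwise definition of u-closedness. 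If you wish to keep your spectral viewpoint, the repair is to replace ``decomposed minimal step'' by this $W={}^\omega_SR[s]$ — i.e. the paper's argument.
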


\begin{proof} For the first part use \cite[Proposition 3.3]{Pic 0},  taking into account that all subextensions of an FCP extension are of finite type. 
To show that ${}^\omega _SR \subseteq S$ is $u$-closed, consider some $s\in S$, such that $s^2-s, s^3-s^2 \in {}^\omega _SR$, then ${}^\omega _SR\subseteq{}^\omega _SR[s]$ is $u$-integral, whence unramified according to Theorem ~\ref{1.320}, and so is $R\subseteq {}^\omega _SR[s]$. By maximality of the unramified closure, $s$ belongs to ${}^\omega _SR$.  Since ${}^\omega _SR \subseteq S$ is $u$-closed, ${}^u _SR \subseteq {}^\omega _SR$. 
 Now ${}^\omega_SR\subseteq{}^\square_S R$ holds by Proposition \ref{RAM} since $R\subseteq{}^\omega_SR$ is unramified.   
\end{proof}

\begin{corollary} \label{1.16} Let $R\subset S$ be a $\kappa$-radicial FCP extension (for instance, infra-integral). There exists some $\Sigma\in[R,S]$ such that $R\subseteq\Sigma$ is unramified, $\Sigma\subseteq S$ is radicial and $\Sigma = {}^\omega _SR={}^u_SR$. 
\end{corollary}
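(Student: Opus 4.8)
The plan is to show that $\Sigma:={}^\omega_SR$ does the job, with Proposition~\ref{1.14} as the main input. That result already supplies: the $\omega$-closure $\Sigma$ exists, $R\subseteq\Sigma$ is unramified, $\Sigma\subseteq S$ is $u$-closed, and ${}^u_SR\subseteq\Sigma\subseteq{}^\square_SR$. Since $R\subseteq S$ is $\kappa$-radicial (which is automatic when it is infra-integral, the residual extensions then being isomorphisms), Remark~\ref{TRANS} gives that both $R\subseteq\Sigma$ and $\Sigma\subseteq S$ are $\kappa$-radicial. So two things remain to be checked: that $\Sigma\subseteq S$ is radicial, and that $\Sigma={}^u_SR$ (equivalently, since ${}^u_SR\subseteq\Sigma$ is already known, that $R\subseteq\Sigma$ is $u$-integral).

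The first point is immediate: $\Sigma\subseteq S$ is an integral FCP $u$-closed extension, hence an $i$-extension by Proposition~\ref{1.311}, and a $\kappa$-radicial $i$-extension is radicial by the description of radicial extensions (the paragraph introducing ${}^*_SR$, \emph{viz.} universally an $i$-morphism $\Leftrightarrow$ an $i$-morphism which is $\kappa$-radicial).

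For the second point I would reduce modulo the conductor. Put $I:=(R:\Sigma)$, a common ideal of $R$ and $\Sigma$; by Corollary~\ref{1.3120} it suffices to prove that $R/I\subseteq\Sigma/I$ is $u$-integral. Now $R/I$ is Artinian by \cite{DPP2}, hence so is $\Sigma/I$, being module finite over $R/I$; write $\Sigma/I=\prod_{j=1}^{m}E_j$ with each $E_j$ Artinian local with nilpotent maximal ideal $\mathfrak n_j$, and let $\epsilon_1,\dots,\epsilon_m$ be the associated idempotents of $\Sigma/I$. The extension $R/I\subseteq\Sigma/I$ is still unramified --- this is where I use \cite{PICET}, since $I$ is the conductor --- and still $\kappa$-radicial. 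Fix $j$ and set $\overline M_j:=\mathfrak n_j\cap(R/I)$. By Proposition~\ref{RAM} the residual extension $\kappa(\overline M_j)\to E_j/\mathfrak n_j$ is finite separable; being simultaneously purely inseparable, it is an isomorphism, and moreover $\overline M_jE_j=\mathfrak n_j$. Hence $E_j=(R/I)+\mathfrak n_j=(R/I)+\overline M_jE_j$, so with $E_j$ a finite $R/I$-module and $\overline M_j$ nilpotent, Nakayama's lemma forces $R/I\twoheadrightarrow E_j$. Therefore $(R/I)\epsilon_j\cong (R/I)/\ker(R/I\to E_j)\cong E_j$ inside $\Sigma/I$, whence $(R/I)[\epsilon_2,\dots,\epsilon_m]=\Sigma/I$; since each $\epsilon_j$ is idempotent (i.e. $p_1(\epsilon_j)=0$), adjoining the $\epsilon_j$ one at a time exhibits $R/I\subseteq\Sigma/I$ as a finite tower of $u$-elementary extensions (Definition~\ref{1.33}), so it is $u$-integral. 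By Corollary~\ref{1.3120}, $R\subseteq\Sigma$ is $u$-integral, hence $\Sigma\subseteq{}^u_SR$, and with Proposition~\ref{1.14} we conclude $\Sigma={}^\omega_SR={}^u_SR$.

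The hard part is precisely this last step. The seemingly more direct route is via Theorem~\ref{1.320}: for $Q\in\mathrm{Spec}(\Sigma)$ and $P:=Q\cap R$, the hypotheses "unramified" and "$\kappa$-radicial" together force $\kappa(P)\cong\kappa(Q)$ and $Q\Sigma_Q=P\Sigma_Q$ (Proposition~\ref{RAM}), so $R_P\to\Sigma_Q$ is a local homomorphism, an isomorphism on residue fields, with $P\Sigma_Q$ maximal --- which looks like a setting for Nakayama. The obstruction is that $\Sigma_Q$ is a localization, not a quotient, of the module-finite extension $\Sigma_P$ of $R_P$, so it need not be a finite $R_P$-module and Nakayama cannot be applied directly. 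Passing to the conductor first dissolves this difficulty: $\Sigma/I$ is Artinian, so its local factors $E_j$ are finite $R/I$-modules and Nakayama applies. This is why the argument is routed through $R/I\subseteq\Sigma/I$ rather than through individual localizations.
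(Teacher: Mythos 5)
Your proof is correct, but the route to the key equality ${}^\omega_SR={}^u_SR$ is genuinely different from the paper's. The paper argues downward from the inclusion ${}^u_SR\subseteq{}^\omega_SR$ of Proposition \ref{1.14}: the extension ${}^u_SR\subseteq{}^\omega_SR$ is unramified (being the upper part of the unramified extension $R\subseteq{}^\omega_SR$) and radicial (it is $\kappa$-radicial and an $i$-extension, via the $u$-closedness of ${}^u_SR\subseteq S$ and Proposition \ref{1.311}), hence trivial by Proposition \ref{SEP}; that settles the equality in a few lines. You instead argue upward, proving directly that $R\subseteq{}^\omega_SR$ is $u$-integral: reduction modulo the conductor (Corollary \ref{1.3120} together with \cite{PICET}), then Proposition \ref{RAM} plus $\kappa$-radiciality to make the residual maps onto the Artinian local factors $E_j$ isomorphisms with $\overline M_jE_j=\mathfrak n_j$, a Nakayama-type step to get $R/I\twoheadrightarrow E_j$, and finally generation of $\Sigma/I$ over $R/I$ by the idempotents $\epsilon_j$, i.e. a cu-elementary tower. (One small imprecision: $\overline M_j$, a maximal ideal of the Artinian ring $R/I$, need not itself be nilpotent; what is nilpotent is $\overline M_jE_j=\mathfrak n_j$, and that is exactly what your iteration/Nakayama step uses, so the argument stands.) What each buys: the paper's proof is much shorter, leaning on the epimorphism criterion of Proposition \ref{SEP}, whereas yours is more computational but self-contained at the level of Proposition \ref{RAM}, in effect re-deriving in this special case the mechanism behind Proposition \ref{SEP} and Theorem \ref{1.320} and exhibiting an explicit idempotent-generated $u$-integral structure on $R\subseteq{}^\omega_SR$; your treatment of the radiciality of $\Sigma\subseteq S$ (i-extension plus $\kappa$-radicial) coincides with the paper's.
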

\begin{proof} It is enough to observe that the extension ${}^u_SR \subseteq{}^\omega_SR$ is unramified because so is $R\subseteq{}^\omega_SR$, and radicial because $\kappa$-radicial and an $i$-extension by Proposition \ref{1.311}. Moreover, a radicial unramified FCP extension is trivial by Proposition \ref{SEP}. Then, set $\Sigma:={}^\omega _SR={}^u_SR$. 
\end{proof}

\begin{remark} \label{1.17}  Theorem \ref{1.20} shows that for an integral FCP extension $R\subset S$, the extension $R\subseteq{}^\square_SR$ is $\kappa$-separable and ${}^\square_SR\subseteq S$ is $\kappa$-radicial.  
The following example shows that we cannot generalize the above Corollary to arbitrary integral extensions, which means that we cannot replace the $\kappa$-separable property by the unramified property. 
 We use  \cite[Example 4.10]{Pic 6} to get such a situation.

Let $K\subset L$ be a minimal separable field extension of degree 2; so that, there exists $y\in L$ such that $L=K[y]$. Set $S:=L[X]/(X^2)$ and let $x$ be the class of $X$ in $S$. Set $R:=K+Kx$ and $T:=K+Lx$; so that, 
$R$ is a local ring with maximal ideal $M:=Kx$ and $T$ is a local ring with maximal ideal $M':=M+Rxy$. It was proved in \cite[Example 4.10]{Pic 6} that $R\subset T$ is minimal ramified, $T\subset S$ is minimal inert with $M'=(T:S)$ and $T/M'\subset S/M'$ is a minimal separable field extension with $|[R,S]|=3$. Moreover, $R\subset T$ is radicial because an i-extension with residual extensions which are isomorphisms and $T\subset S$ is unramified by Proposition ~\ref{RAM}. Since $|[R,S]|=3$, there does not exist $\Sigma\in[R,S]$ such that $R\subset\Sigma$ is unramified and $\Sigma\subset S$ is radicial. 
\end{remark}

\section{Co-subintegral closure and co-infra-integral closure of an integral FCP extension}

Definition \ref{closure} recalls the definitions of several closures. 
 Let $R\subset S$ be a ring extension. As we did in Section 3 with the co-integral closure, we are going to characterize some closures defined as the least $T\in [R,S]$ such that $T\subseteq S$ satisfies a given property.
 
Let $R\subseteq S$ be an integral FCP extension. In \cite[Lemma 17]{DPP4}, we give a condition under which there exists $T\in[R,S]$ such that $T\subset S$ is  minimal ramified. 

A more general requirement is as follows: Let $R\subseteq S$ be an  FCP extension.
 There exists some $U\in[R,S]$ which is the least $V\in[R,S]$ such that $V\subseteq S$ is a subintegral extension. If such a ring  $U$ exists, we call it the {\it co-subintegral closure} of $R\subseteq S$ and denote it by $S^+_R$ since it is a kind of dual of the seminormalization which is nothing but the subintegral closure ${}^+_SR$  and the largest  $V\in[R,S]$ such that $R\subseteq V$ is subintegral. 

Similarly, we can ask if there exists some $U\in[R,S]$ which is the least $V\in[R,S]$ such that $V\subseteq S$ is an infra-integral extension. If this $U$ exists, we call it the {\it co-infra-integral closure} of $R\subseteq S$ and denote it by $S^t_R$ since it is a kind of dual of the t-closure ${}^t_SR$ (the largest $V\in[R,S]$ such that $R\subseteq V$ is infra-integral). Many of the proofs of this section are similar for the subintegral case and the infra-integral case.

If these co-subintegral closure or co-infra-integral closures exist, we have $\underline R\subseteq S^t_R\subseteq S^+_R\subseteq S$, with $\underline R\subseteq S$ an integral FCP extension. That is the reason why we limit ourselves to consider only integral FCP extensions.

\begin{proposition}\label{1.6} Let $R\subset S$ be an integral FCP extension. The co-subintegral (resp.; co-infra-integral) closure exists in the following cases:
\begin{enumerate}
\item $R\subset S$ is subintegral (resp.; infra-integral). Then, $R=S^+_R$ (resp.; $=S^t_R$).

\item $R\subset S$ is seminormal (resp.; t-closed). Then, $S=S^+_R$ (resp.; $=S^t_R$) and ${}_S^uR={}_S^tR$.
\end{enumerate}
\end{proposition}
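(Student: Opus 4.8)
The plan is to treat each of the two cases separately and, within each case, to handle the subintegral and the infra-integral statements in parallel, since the arguments differ only by replacing ``subintegral'' with ``infra-integral'' and ``$S^+_R$'' with ``$S^t_R$'' throughout. For case (1), suppose $R\subset S$ is subintegral (resp.; infra-integral). Then $R$ itself belongs to $[R,S]$ and $R\subseteq S$ is subintegral (resp.; infra-integral) by hypothesis, so $R$ is trivially the least element $V\in[R,S]$ with $V\subseteq S$ subintegral (resp.; infra-integral). Hence the co-subintegral (resp.; co-infra-integral) closure exists and equals $R$, i.e. $S^+_R=R$ (resp.; $S^t_R=R$). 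This case is immediate from the definitions and requires no further work.

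For case (2), suppose $R\subset S$ is seminormal (resp.; t-closed). I would first argue that $S$ is the \emph{only} element $V\in[R,S]$ for which $V\subseteq S$ is subintegral (resp.; infra-integral). Indeed, let $V\in[R,S]$ with $V\subseteq S$ subintegral (resp.; infra-integral). Since $R\subset S$ is seminormal (resp.; t-closed) and FCP, so is $V\subseteq S$ by the general fact that subextensions of seminormal (resp.; t-closed) extensions are again seminormal (resp.; t-closed); this is recorded just after Definition \ref{closure} (the $x$-closure ${}^x_SV$ of $V$ in $S$ is $V$, i.e. $V\subseteq S$ is $x$-closed, whenever $R\subseteq S$ is $x$-closed). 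But an extension that is simultaneously subintegral (resp.; infra-integral, which is $t$-integral) and seminormal (resp.; $t$-closed) must be trivial: the $s$-closure (resp.; $t$-closure) of $V$ in $S$ equals $V$ because $V\subseteq S$ is $s$-closed (resp.; $t$-closed), and equals $S$ because $V\subseteq S$ is $s$-integral (resp.; $t$-integral), forcing $V=S$. Therefore $\{V\in[R,S]\mid V\subseteq S\text{ subintegral (resp.; infra-integral)}\}=\{S\}$, so $S$ is (vacuously) the least such element and the co-subintegral (resp.; co-infra-integral) closure exists and equals $S$.

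It remains to prove ${}_S^uR={}_S^tR$ in case (2). If $R\subset S$ is t-closed, then ${}_S^tR=S$ and also $R\subset S$ is seminormal (a t-closed extension is seminormal, since by Proposition \ref{1.31} every minimal subextension is inert, hence in particular decomposed-or-inert), so ${}_S^+R=R$; then Theorem \ref{1.313} gives $({}_S^uR)({}_S^+R)={}_S^tR$, i.e. ${}_S^uR={}_S^uR\cdot R={}_S^tR=S$, so in fact ${}_S^uR={}_S^tR=S$. If instead $R\subset S$ is seminormal, then ${}_S^+R=R$, and again Theorem \ref{1.313} yields ${}_S^uR=({}_S^uR)({}_S^+R)={}_S^tR$. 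Thus in both sub-cases of (2) the claimed equality holds. The only place where any genuine argument is needed is the observation that a subextension of a seminormal (resp.; t-closed) FCP extension is seminormal (resp.; t-closed) and that such an extension, if also subintegral (resp.; infra-integral), is trivial; I expect this to be the main (and only modest) obstacle, and it is handled cleanly by the $x$-closure formalism of Definition \ref{closure} together with Proposition \ref{1.31}.
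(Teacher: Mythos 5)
Your existence arguments in both cases are essentially the proof the paper has in mind (the paper's own proof of this proposition is just ``Obvious''): in case (1) $R$ itself is the least element, and in case (2) a proper $V\in[R,S]$ with $V\subseteq S$ subintegral (resp.\ infra-integral) is ruled out because, refining $R\subseteq V\subseteq S$ to a maximal chain of minimal extensions, Proposition \ref{1.31} forces the steps above $V$ to be ramified (resp.\ ramified or decomposed), while seminormality (resp.\ t-closedness) of $R\subset S$ forces every step of that chain to be decomposed or inert (resp.\ inert). One caveat on sourcing: the fact that $V\subseteq S$ inherits s-closedness (resp.\ t-closedness) from $R\subseteq S$ is \emph{not} recorded just after Definition \ref{closure}; in the integral FCP setting it is exactly the chain-refinement argument via Proposition \ref{1.31} that justifies it, so that should be the reference you lean on.

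There is, however, one false step in your treatment of the equality ${}_S^uR={}_S^tR$. When $R\subset S$ is t-closed you assert ${}_S^tR=S$ and conclude ${}_S^uR={}_S^tR=S$. This is wrong: the t-closure ${}_S^tR$ is the greatest $B\in[R,S]$ with $R\subseteq B$ infra-integral (equivalently the least $B$ with $B\subseteq S$ t-closed), so t-closedness of $R\subset S$ gives ${}_S^tR=R$, not $S$ (e.g.\ for a minimal inert extension ${}_S^tR=R$); you appear to be conflating the t-closure ${}^t_SR$ with the co-infra-integral closure $S^t_R$, which is indeed $S$ in this case. The conclusion you need survives: since ${}_S^uR\subseteq{}_S^tR=R$ one gets ${}_S^uR=R={}_S^tR$ directly, or, as in your seminormal sub-case, Theorem \ref{1.313} with ${}_S^+R=R$ gives $({}_S^uR)\cdot R={}_S^tR$, i.e.\ ${}_S^uR={}_S^tR$; simply delete the claim that these rings equal $S$. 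The seminormal sub-case as you wrote it is correct.
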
 
\begin{proof} Obvious.
\end{proof} 

\begin{corollary}\label{1.81} Let $R\subset S$ be an integral FCP extension. Then, $R=({}_S^+R)^+_R=({}_S^+R)^t_R=({}_S^tR)^t_R$. 
\end{corollary}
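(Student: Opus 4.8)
The plan is to apply Proposition \ref{1.6} three times, using the fact that the seminormalization and the $t$-closure of an integral FCP extension have particular positions relative to the subintegral/infra-integral properties.

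First I would prove $R = ({}_S^+R)^+_R$. Here we consider the extension $R \subseteq {}_S^+R$; by Definition \ref{closure}, this extension is the seminormalization of $R$ in ${}_S^+R$, and the $s$-closure is the greatest subintegral extension of $R$, so $R \subseteq {}_S^+R$ is itself subintegral. Applying Proposition \ref{1.6}(1) (with $S$ replaced by ${}_S^+R$) to this subintegral extension gives $({}_S^+R)^+_R = R$.

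Next I would prove $R = ({}_S^tR)^t_R$ by the analogous argument one level up: the extension $R \subseteq {}_S^tR$ is infra-integral, being the $t$-closure of $R$ in $S$ (the greatest infra-integral subextension, by Definition \ref{closure}). So Proposition \ref{1.6}(1), applied with $S$ replaced by ${}_S^tR$, yields $({}_S^tR)^t_R = R$.

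Finally, for $R = ({}_S^+R)^t_R$ I would use that $R \subseteq {}_S^+R$ is subintegral, hence a fortiori infra-integral (subintegral extensions are infra-integral by Definition \ref{1.3}, or equivalently $s$-integral implies $t$-integral by Proposition \ref{1.36} and the remarks following Definition \ref{1.33}). So applying Proposition \ref{1.6}(1) to the infra-integral extension $R \subseteq {}_S^+R$ gives $({}_S^+R)^t_R = R$. I do not anticipate any real obstacle here; the only point requiring a moment's care is that in each case the ambient ring for the co-closure is the relevant closure rather than $S$, and that the closures are taken inside $S$, so ${}_{{}_S^+R}^+R = {}_S^+R$ and ${}_{{}_S^tR}^tR = {}_S^tR$, which is immediate from the universal property in Definition \ref{closure}. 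The whole statement is therefore a direct corollary of Proposition \ref{1.6}, as the word ``Corollary'' indicates.
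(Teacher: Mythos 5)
Your proposal is correct and matches the paper's own proof: the paper likewise observes that $R\subseteq{}_S^+R$ is subintegral (hence infra-integral) and $R\subseteq{}_S^tR$ is infra-integral, and then invokes Proposition \ref{1.6}(1) in each case. The extra remarks about the ambient ring are fine but not needed beyond what the definitions already give.
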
 
\begin{proof} Since $R\subseteq{}_S^+R$ (resp.; ${}_S^tR$) is subintegral (resp.; infra-integral), whence infra-integral, Proposition \ref{1.6} gives the answer in both cases.
\end{proof} 

The following remark shows that such  closures do not always exist.

\begin{remark}\label{1.421} Let $k\subset K$ be a minimal radicial field extension of degree 2; so that, $\mathrm{c}(k)=\mathrm{c}(K)=2$. Let $\alpha\in K$ be such that $K=k[\alpha]$, with $a:=\alpha^2\in k$. Set $S:=K[X]/(X^2)=K[x]=K+Kx$, where $x$ is the class of $X$ in $S$. Then, $K\subset S$ is a minimal ramified extension (and then subintegral and infra-integral) with $x^2=0$. Set $\beta:=\alpha+x\in S$ and $L:=k[\beta]\subseteq S$. It follows that $\beta^2=\alpha^2=a\in k$; so that, $k\subset L$ is a minimal radicial field extension with $L\cong k[T]/(T^2-a)\cong K$. Of course $K\neq L$ and $S=L[x]=L+Lx$, with $L\subset S$ a minimal ramified extension  (and then subintegral and infra-integral).  
As $K\cap L=k$, the extension $k\subset S$ has neither a co-subintegral closure nor a co-infra-integral closure, since there is no $T\in[k,S]$ such that $T\subset S$ is subintegral (resp.; infra-integral) with $T\subseteq K,L$.
\end{remark}

\begin{proposition}\label{1.426} Let $R\subset S$ be an integral FCP extension. 
\begin{enumerate}
\item If the co-subintegral (resp.; co-infra-integral) closure $T$ of $R\subset S$ exists, then $T_M$ is the co-subintegral (resp.; co-infra-integral) closure of $R_M\subset S_M$ for any $M\in \mathrm{MSupp}(S/R)$.

\item Assume that for any $M\in\mathrm{MSupp}(S/R)$, there exists a co-subintegral (resp.; co-infra-integral) closure $T_{(M)}$ of $R_M\subset S_M$. Then, the co-subintegral closure (resp.; co-infra-integral) $T$ of $R\subset S$ exists and satisfies $T_M=T_{(M)}$  for any $M\in \mathrm{MSupp}(S/R)$.
\end{enumerate}
\end{proposition}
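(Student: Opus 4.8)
The plan is to exploit the fact that an integral FCP extension $R\subset S$ is a $\mathcal B$-extension, so that the order isomorphism $\varphi\colon[R,S]\to\prod_{i=1}^n[R_{M_i},S_{M_i}]$, $\varphi(T)=(T_{M_i})_i$, of \cite[Theorem 3.6]{DPP2} is available throughout, together with the observation that the properties ``$V\subseteq S$ is subintegral'' and ``$V\subseteq S$ is infra-integral'' are local on the base, via Proposition \ref{1.426} we are in fact proving (so the two parts feed each other only through the localization machinery already set up earlier). I will treat the subintegral and infra-integral cases in parallel, writing ``x-integral'' for x $\in\{$s,t$\}$ and using Proposition \ref{1.36} to pass between the module-theoretic description and the residual-extension description.

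For part (1), suppose $T=S^+_R$ (resp.\ $S^t_R$) exists. Fix $M\in\mathrm{MSupp}(S/R)$. Since $T\subseteq S$ is subintegral (resp.\ infra-integral) and this property is stable under localization, $T_M\subseteq S_M$ is subintegral (resp.\ infra-integral); thus $T_M$ lies in the candidate set for $R_M\subset S_M$. To see it is the least such, let $U\in[R_M,S_M]$ with $U\subseteq S_M$ subintegral (resp.\ infra-integral). By the $\mathcal B$-extension bijection there is $U'\in[R,S]$ with $U'_{M}=U$ and $U'_{M_j}=S_{M_j}$ for $j\ne i$; then $U'\subseteq S$ is subintegral (resp.\ infra-integral) because it is so after localizing at every $M_j\in\mathrm{MSupp}(S/R)$ (using that $S_{M_j}\subseteq S_{M_j}$ is trivially x-integral and that x-integrality can be checked locally at the members of $\mathrm{MSupp}(S/R)$, the same principle as in Corollary \ref{1.3120}). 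Minimality of $T$ forces $T\subseteq U'$, hence $T_M\subseteq U'_M=U$. So $T_M$ is the co-subintegral (resp.\ co-infra-integral) closure of $R_M\subset S_M$.

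For part (2), assume each $R_M\subset S_M$ has a co-subintegral (resp.\ co-infra-integral) closure $T_{(M)}$, $M\in\mathrm{MSupp}(S/R)$. Use the $\mathcal B$-extension isomorphism to produce the unique $T\in[R,S]$ with $T_{M}=T_{(M)}$ for each $M\in\mathrm{MSupp}(S/R)$ (and $T_P=R_P=S_P$ for $P\notin\mathrm{MSupp}(S/R)$, automatically). Then $T\subseteq S$ is subintegral (resp.\ infra-integral) because it is so after localizing at every member of $\mathrm{MSupp}(S/R)$, and trivially so off that finite set. If $U\in[R,S]$ is any element with $U\subseteq S$ subintegral (resp.\ infra-integral), then for each $M$ the localization $U_M\subseteq S_M$ is subintegral (resp.\ infra-integral), so $T_{(M)}=T_M\subseteq U_M$ by definition of $T_{(M)}$; since the bijection $\varphi$ is an \emph{order} isomorphism, $T_M\subseteq U_M$ for all $M\in\mathrm{MSupp}(S/R)$ gives $T\subseteq U$. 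Hence $T$ is the co-subintegral (resp.\ co-infra-integral) closure of $R\subset S$ and $T_M=T_{(M)}$.

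The only genuine point requiring care — the ``main obstacle'' — is the globalization of x-integrality: one must be sure that $V\subseteq S$ is subintegral (resp.\ infra-integral) as soon as $V_M\subseteq S_M$ is subintegral (resp.\ infra-integral) for every $M\in\mathrm{MSupp}(S/R)$, and conversely. The reverse direction is standard stability under localization; the forward direction follows because $V\subseteq S$ being integral FCP, a residual field extension $\kappa(Q\cap V)\to\kappa(Q)$ at $Q\in\mathrm{Spec}(S)$ with $Q\cap R\notin\mathrm{MSupp}(S/R)$ is an isomorphism by Proposition \ref{SUP} (applied to $R\subseteq S$, hence a fortiori to $V\subseteq S$), and for $Q\cap R\in\mathrm{MSupp}(S/R)$ the residual extension and the relevant $i$-extension condition are read off from the localization at $M=Q\cap R$; combined with Proposition \ref{1.36} and Definition \ref{1.3} this yields the global x-integrality. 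Everything else is a formal transport along the order isomorphism $\varphi$.
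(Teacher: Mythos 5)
Your proof is correct and follows essentially the same route as the paper: both parts use the bijection $\varphi\colon[R,S]\to\prod_i[R_{M_i},S_{M_i}]$ of \cite[Theorem 3.6]{DPP2} to construct the auxiliary element ($W$ in the paper, your $U'$) and the global $T$, together with the fact that subintegrality (resp.\ infra-integrality) localizes and can be checked on $\mathrm{MSupp}(S/R)$. The only difference is that the paper simply cites \cite[Proposition 2.9]{S} and \cite[Proposition 3.6]{Pic 1} for this local-to-global principle, whereas you sketch it directly via Proposition \ref{SUP} and residual extensions — which is a valid justification of the same ingredient.
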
 
\begin{proof} (1) Assume that the co-subintegral (resp.; co-infra-integral) closure $T$ of $R\subset S$ exists; so that, $T$ is the least $U\in[R,S]$ such that $U\subseteq S$ is subintegral (resp.; infra-integral). Then $T_M\subseteq S_M$ is subintegral (resp.; infra-integral) for any $M\in \mathrm{MSupp}(S/R)$ by \cite[Corollary 2.10]{S} (resp.; \cite[ Proposition 1.16]{Pic 1}). Let $V\in[R_M,S_M]$ be such that $V\subseteq S_M$ is subintegral (resp.; infra-integral) for some $M\in \mathrm{MSupp}(S/R)$. Since $R\subset S$ is an integral FCP extension, there exists $W\in[R,S]$ such that $W_M=V$ and $W_{M'}=S_{M'}$ for any $M'\in \mathrm{MSupp}(S/R)\setminus\{M\}$ by \cite[Theorem 3.6]{DPP2}. Then, again by \cite[Proposition 2.9]{S} (resp.; \cite[Proposition 3.6]{Pic 1}), we get that $W\subseteq S$ is  subintegral (resp.; infra-integral); so that, $T\subseteq W$, which leads to $T_M\subseteq W_M=V$. Then $T_M$ is the least $U\in[R_M,S_M]$ such that $U\subseteq S_M$ is subintegral (resp.; infra-integral), which means that $T_M$ is the co-subintegral (resp.; co-infra-integral) closure  of $R_M\subset S_M$.

(2) Conversely, assume that for any $M\in\mathrm{MSupp}(S/R)$, there exists a co-subintegral (resp.; co-infra-integral) closure $T_{(M)}$ of $R_M\subset S_M$; so that, $T_{(M)}\subseteq S_M$ is subintegral (resp.; infra-integral) for any $M\in\mathrm{MSupp}(S/R)$. We deduce from the above reference \cite[Theorem 3.6]{DPP2} that  there exists $T\in[R,S]$ such that $T_M=T_{(M)}$ for any $M\in \mathrm{MSupp}(S/R)$; so that, $T_M\subseteq S_M$ is subintegral (resp.; infra-integral) for any $M\in\mathrm{MSupp}(S/R)$, giving that $T\subseteq S$ is subintegral (resp.; infra-integral) by \cite[Proposition 2.9]{S} (resp.; \cite[Proposition 3.6]{Pic 1}). Now, let $V\in[R,S]$ be such that $V\subseteq S$ is subintegral (resp.; infra-integral). Then, $V_M\subseteq S_M$ is subintegral (resp.; infra-integral) for any $M\in\mathrm{MSupp}(S/R)$. It follows that $T_M=T_{(M)}\subseteq V_M$ for any $M\in\mathrm{MSupp}(S/R)$. To end $T\subseteq V$ shows that $T$ is the co-subintegral (resp.; co-infra-integral) closure of $R\subseteq S$. 
\end{proof} 

 Let $R\subset S$ be an FCP extension.
We recall that $R$ is called {\it unbranched} in $S$ if $\overline R$ is local, in which case all rings in $[R,S]$ are local by \cite[Lemma 3.29]{Pic 11}. We also say that $R\subset S$ is unbranched. A {\it locally unbranched} extension is an extension $R\subset S$ such that $R_P\subset S_P$ is unbranched for each $P\in\mathrm{Spec}(R)$. A locally unbranched integral extension is nothing but an integral $i$-extension.
 
\begin{proposition}\label{1.449} If $R\subset S$ is an integral locally unbranched FCP extension such that the co-subintegral closure  exists,  then $S^+_R= S^t_R$. 
\end{proposition}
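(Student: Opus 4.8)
The plan is to use the hypothesis only in the form recalled just above the statement: a locally unbranched integral extension is the same thing as an integral $i$-extension, so $R\subset S$ is an $i$-extension. Once that is in hand, the whole argument reduces to unwinding Definition \ref{1.3}, which says that among integral extensions the subintegral ones are precisely the infra-integral $i$-extensions.

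First I would record the elementary fact that being an $i$-extension passes to subextensions: if $\mathrm{Spec}(S)\to\mathrm{Spec}(R)$ is injective and $T\in[R,S]$, then this map factors as $\mathrm{Spec}(S)\to\mathrm{Spec}(T)\to\mathrm{Spec}(R)$, so $\mathrm{Spec}(S)\to\mathrm{Spec}(T)$ is injective; hence $T\subseteq S$ is an $i$-extension for every $T\in[R,S]$. Since $T\subseteq S$ is also integral, Definition \ref{1.3} then gives, for every $T\in[R,S]$, that $T\subseteq S$ is subintegral if and only if $T\subseteq S$ is infra-integral. Consequently the two subsets of $[R,S]$
\[
\{T\in[R,S]\mid T\subseteq S\ \text{is subintegral}\}
\quad\text{and}\quad
\{T\in[R,S]\mid T\subseteq S\ \text{is infra-integral}\}
\]
coincide. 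By hypothesis the first one admits a least element, which is by definition $S^+_R$; since the two sets are equal, the second one admits the same least element, so the co-infra-integral closure $S^t_R$ exists and $S^+_R=S^t_R$.

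I do not expect a genuine obstacle here: the content is purely the interplay of the definitions, and the two facts it rests on — that a locally unbranched integral FCP extension is an $i$-extension (stated in the paragraph preceding the proposition) and that ``subintegral $=$ infra-integral $+$ $i$-extension'' (Definition \ref{1.3}) — are both available. If one preferred, the same conclusion could be reached by localizing at the maximal ideals in $\mathrm{MSupp}(S/R)$ through Proposition \ref{1.426} and arguing over each local ring, but this detour is unnecessary.
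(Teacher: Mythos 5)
Your proof is correct and rests on the same key point as the paper's: a locally unbranched integral extension is an $i$-extension, the $i$-property passes to every subextension $T\subseteq S$, and by Definition \ref{1.3} subintegral $=$ infra-integral $+$ $i$-extension, so minimal elements must agree. If anything, your set-equality formulation is slightly tidier, since it also yields the existence of $S^t_R$, which the paper's one-line proof tacitly presupposes.
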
 
\begin{proof} Since $R\subset S$ is locally unbranched, so is $S^t_R\subseteq S^+_R$; so that, $S^t_R\subseteq S$ is subintegral, and then $S^t_R= S^+_R$.
\end{proof} 

\begin{theorem}\label{1.423} An infra-integral FCP extension $R\subset S$ has a  co-subintegral closure and  ${}_S^uR= S^+_R$. 
\end{theorem}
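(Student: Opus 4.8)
The plan is to prove the theorem by identifying the co-subintegral closure with the $u$-closure ${}_S^uR$ directly. The two ingredients I would use are: Proposition \ref{1.311}, which says that for an integral FCP extension ``$u$-closed'' and ``$i$-extension'' are the same condition; and the definition of the $u$-closure as the \emph{smallest} $B\in[R,S]$ such that $B\subseteq S$ is $u$-closed. Since, by Definition \ref{1.3}, being subintegral means being infra-integral and an $i$-extension, these facts should combine to give $S^+_R={}_S^uR$ with essentially no computation.

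First I would check that ${}_S^uR\subseteq S$ is subintegral. Write $T:={}_S^uR$. The extension $T\subseteq S$ is integral and FCP, being a subextension of $R\subseteq S$. It is infra-integral: for each $Q\in\mathrm{Spec}(S)$ the residual extension $\kappa(Q\cap R)\to\kappa(Q)$ of $R\subseteq S$ is an isomorphism and factors as $\kappa(Q\cap R)\to\kappa(Q\cap T)\to\kappa(Q)$, which forces $\kappa(Q\cap T)\to\kappa(Q)$ to be an isomorphism as well, so all residual extensions of $T\subseteq S$ are isomorphisms. And $T\subseteq S$ is $u$-closed by the very definition of the $u$-closure. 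Applying Proposition \ref{1.311} to the integral FCP extension $T\subseteq S$, being $u$-closed it is an $i$-extension; being infra-integral and an $i$-extension, it is subintegral. (The same residual-extension argument shows, more generally, that $V\subseteq S$ is infra-integral for every $V\in[R,S]$, which is worth recording since Section~6 only treats the integral case precisely so that $\underline R\subseteq S$ stays integral.)

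Next I would establish minimality. Let $V\in[R,S]$ be any element with $V\subseteq S$ subintegral. Then $V\subseteq S$ is an $i$-extension, hence $u$-closed by Proposition \ref{1.311} applied to the integral FCP extension $V\subseteq S$; since ${}_S^uR$ is the smallest member of $[R,S]$ whose extension to $S$ is $u$-closed, we obtain ${}_S^uR\subseteq V$. Therefore ${}_S^uR$ is the least $V\in[R,S]$ such that $V\subseteq S$ is subintegral, which is exactly the assertion that the co-subintegral closure exists and that $S^+_R={}_S^uR$.

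I do not expect a serious obstacle: the whole argument is an assembly of cited facts. The two places requiring a moment of care are that Proposition \ref{1.311} is being invoked for the extensions ${}_S^uR\subseteq S$ and $V\subseteq S$ rather than for $R\subseteq S$ itself (legitimate, because integrality and the FCP property are inherited by subextensions), and the elementary observation that infra-integrality of $R\subseteq S$ passes to $T\subseteq S$ for every $T\in[R,S]$, via the factorization of residual field extensions as above. If one prefers, this last point can instead be read off from Proposition \ref{1.31}(3) applied along a maximal chain $R\subset\cdots\subset T\subset\cdots\subset S$, or obtained by pushing out a ct-elementary tower for $R\subseteq S$ by $T$ exactly as in the proof of Theorem \ref{1.20}.
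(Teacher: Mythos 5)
Your proposal is correct and follows essentially the same route as the paper: show ${}_S^uR\subseteq S$ is infra-integral and $u$-closed, hence an $i$-extension by Proposition \ref{1.311} and therefore subintegral, and then use the equivalence ``$u$-closed $\Leftrightarrow$ $i$-extension'' once more to see that ${}_S^uR$ is the least $T\in[R,S]$ with $T\subseteq S$ subintegral. The only difference is that you spell out the transfer of infra-integrality to subextensions via the factorization of residual extensions, which the paper leaves implicit.
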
 
\begin{proof} Since $R\subset S$ is infra-integral, ${}_S^uR\subseteq S$ is infra-integral and u-closed, and then an i-extension, by Proposition \ref{1.311}; so that, ${}_S^uR\subseteq S$ is subintegral. In particular, ${}_S^uR$ is the least element $T\in[R,S]$ such that, $T\subseteq S$ is subintegral since the least element $T\in[R,S]$ such that $T\subseteq S$ is an i-extension.
 Then, ${}_S^uR= S^+_R$.
\end{proof} 

\begin{corollary}\label{1.424} If $R\subset S$ is integral and FCP, then $({}_S^tR)^+_R= {}^u_SR$.
\end{corollary}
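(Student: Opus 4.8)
The plan is to apply Theorem \ref{1.423} to the extension $T \subseteq S$, where $T := {}^t_SR$. First I would observe that $T \subseteq S$ is itself an integral FCP extension (being a subextension of the integral FCP extension $R \subseteq S$, and the class of FCP extensions is stable under passing to subextensions), and moreover it is infra-integral: indeed $T = {}^t_SR$ is the greatest infra-integral subextension of $R \subseteq S$, but here we need that $T \subseteq S$ is infra-integral, which follows because ${}^t_ST = T$ forces... wait — actually the correct reading is that $T \subseteq S$ is infra-integral precisely because $T$ equals the t-closure, so $S$ is t-closed... no. Let me state the real point: since $T = {}^t_SR$, we have ${}^t_SR = T$, hence the t-closure of $R$ in $S$ is $T$, so $T \subseteq S$ has trivial t-closure, i.e. ${}^t_ST = T$; that says $T \subseteq S$ is $t$-closed, not infra-integral. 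So that approach needs care.

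Instead, the approach should run the other way. Apply Theorem \ref{1.423} directly to $R \subseteq S$ only if $R \subseteq S$ is infra-integral, which it need not be. So the genuine plan is: let $T := {}^t_SR$ and consider the tower $R \subseteq T \subseteq S$. The co-subintegral closure $({}^t_SR)^+_R$ is, by definition, the co-subintegral closure of the extension $T \subseteq S$ — here one must check that the subscript/superscript notation $(\,\cdot\,)^+_R$ with base ring $T$ reads as $S^+_T$, the least $V \in [T,S]$ with $V \subseteq S$ subintegral. Now $T \subseteq S$ is $t$-closed, hence by Proposition \ref{1.31}(4) it is an $i$-extension... but a $t$-closed extension is generally far from infra-integral, so Theorem \ref{1.423} does not apply to $T \subseteq S$ either.

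The resolution, and the step I expect to be the crux, is to instead write $S^+_T$ for $T = {}^t_SR$ using Proposition \ref{1.6}(2): since $T \subseteq S$ is $t$-closed, that proposition gives $S^+_T = S$ and simultaneously ${}^u_TS$... no: Proposition \ref{1.6}(2) states that for a $t$-closed extension $T \subseteq S$ one has $S = S^+_T = S^t_T$ together with ${}^u_TS = {}^t_TS$. That would give $({}^t_SR)^+_R = S$, not ${}^u_SR$, which contradicts the claimed corollary unless ${}^u_SR = S$. So the intended statement must be parsed differently: the notation $({}^t_SR)^+_R$ denotes the co-subintegral closure of the extension $R \subseteq {}^t_SR$, i.e. $({}^t_SR)^+_R$. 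And indeed $R \subseteq {}^t_SR$ is an infra-integral FCP extension by the definition of the $t$-closure, so Theorem \ref{1.423} applies verbatim: it has a co-subintegral closure equal to ${}^u_{{}^t_SR}R$. It then remains to identify ${}^u_{{}^t_SR}R$ with ${}^u_SR$, which follows because ${}^u_SR \subseteq {}^t_SR$ (shown in Theorem \ref{1.313}, where $({}^u_SR)({}^+_SR) = {}^t_SR$) and the $u$-closure of $R$ in ${}^t_SR$ equals the $u$-closure of $R$ in $S$ since everything relevant lies in ${}^t_SR$; concretely, ${}^u_{{}^t_SR}R \subseteq {}^u_SR \subseteq {}^t_SR$ with the first containment from ${}^u_{{}^t_SR}R \subseteq S$ being $u$-closed and the reverse from ${}^u_SR \subseteq {}^t_SR$ together with ${}^u_SR \subseteq S$ being $u$-closed restricting to ${}^u_SR \subseteq {}^t_SR$ being $u$-closed by Proposition \ref{1.017}. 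Hence $({}^t_SR)^+_R = {}^u_{{}^t_SR}R = {}^u_SR$, which is the desired equality; the whole proof is two lines once Theorem \ref{1.423} is applied to the infra-integral extension $R \subseteq {}^t_SR$.
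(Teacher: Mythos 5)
Once you settle on the correct reading of $({}^t_SR)^+_R$, your argument is correct and is essentially the paper's: both apply Theorem \ref{1.423} to the infra-integral FCP extension $R\subseteq{}^t_SR$ and then identify the resulting object with ${}^u_SR$ using u-closedness (Propositions \ref{1.311} and \ref{1.017}) together with the minimality defining the u-closures. The one assertion you leave unjustified, namely that ${}^u_{{}^t_SR}R\subseteq S$ is u-closed (which is what actually yields ${}^u_SR\subseteq{}^u_{{}^t_SR}R$; note you swapped which fact gives which inclusion), is immediate: it is the composite of the u-closed extensions ${}^u_{{}^t_SR}R\subseteq{}^t_SR$ and ${}^t_SR\subseteq S$, and a composite of u-closed extensions is u-closed directly from Definition \ref{closure} (or: both pieces are i-extensions, so Proposition \ref{1.311} applies).
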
 
\begin{proof} Set $T:=({}_S^tR)^+_R$, which exists by Theorem \ref{1.423} and $U:={}^u_SR$. Since $T\subseteq{}_S^tR$ is subintegral, it is an i-extension, and so is ${}_S^tR\subseteq S$, since t-closed. Then, $T\subseteq S$ is an i-extension, and then u-closed by Proposition \ref{1.311}; so that, $U\subseteq T\subseteq{}_S^tR$. Therefore, $U\subseteq T$ is infra-integral and  an i-extension, whence  subintegral; so that, $U=T$. 
\end{proof} 

\begin{corollary}\label{1.435} Let $R\subset S$ be an integral FCP extension whose co-infra-integral closure exists. Then  the co-subintegral closure of $R\subset S$ exists and   $S^+_R=S^+_{(S^t_R)}$.
\end{corollary}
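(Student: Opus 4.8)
The plan is to show that the co-subintegral closure of the extension $S^t_R\subseteq S$ is already the co-subintegral closure of $R\subseteq S$. First I would set $T:=S^t_R$; since the co-infra-integral closure is assumed to exist, $T$ is by definition the least element of $[R,S]$ with $T\subseteq S$ infra-integral, and in particular $T\subseteq S$ is an infra-integral extension. Because $[T,S]\subseteq[R,S]$, the extension $T\subseteq S$ still has FCP, so Theorem \ref{1.423} applies to it and yields a co-subintegral closure $W:=S^+_{(S^t_R)}$ of $T\subseteq S$ (which that theorem also identifies with ${}^u_ST$, although this identification will not be needed here).

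Next I would check that $W$ is the least $V\in[R,S]$ such that $V\subseteq S$ is subintegral. That $W\subseteq S$ is subintegral is built into its construction, and $W\in[R,S]$. For minimality, let $V\in[R,S]$ be arbitrary with $V\subseteq S$ subintegral. A subintegral extension is infra-integral by Definition \ref{1.3}, so $V\subseteq S$ is infra-integral; the minimality of $T$ as co-infra-integral closure then gives $T\subseteq V$, i.e.\ $V\in[T,S]$. Since $V\subseteq S$ is subintegral and $V\in[T,S]$, the minimality of $W$ as co-subintegral closure of $T\subseteq S$ forces $W\subseteq V$. Hence $W$ is a lower bound for $\{V\in[R,S]\mid V\subseteq S\text{ subintegral}\}$, so this set has a least element, namely $W$; that is, $S^+_R$ exists and $S^+_R=W=S^+_{(S^t_R)}$.

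I do not anticipate a genuine obstacle: the whole argument is a transitivity-of-minimality observation. The only point that needs care is bookkeeping about the direction of the extremal conditions — here $S^t_R$ and $S^+_R$ are co-closures (smallest subextensions mapping \emph{onto} $S$ with a prescribed property), so the inequalities run opposite to the canonical decomposition $R\subseteq{}^+_SR\subseteq{}^t_SR$, and one must invoke the implication ``subintegral $\Rightarrow$ infra-integral'' in exactly the right place, namely to push $S^t_R$ beneath every subextension that is subintegral over $S$, which is precisely what lets the problem be relocated into the smaller lattice $[S^t_R,S]$ where Theorem \ref{1.423} is available.
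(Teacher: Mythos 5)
Your argument is correct and coincides with the paper's proof: both set $T:=S^t_R$, invoke Theorem \ref{1.423} for the infra-integral extension $T\subseteq S$ to get its co-subintegral closure, and then use ``subintegral $\Rightarrow$ infra-integral'' to place every subintegral $V\in[R,S]$ above $T$, so that minimality inside $[T,S]$ yields minimality inside $[R,S]$. No differences worth noting.
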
 
\begin{proof} Set $T:=S^t_R$. Then $T\subseteq S$ is infra-integral and admits a co-subintegral closure $U:=S^+_T$ according to Theorem \ref{1.423}. Let $V\in[R,S]$ be such that $V\subseteq S$ is subintegral, whence infra-integral. It follows that $U=S^+_T\subseteq V$. Then $U$ is also the co-subintegral closure of $R\subseteq S$. 
\end{proof} 

\begin{proposition} \label{1.18} An integral FCP extension  $R\subset S$,   which has a co-subintegral closure distinct from $S$ is not unramified.
\end{proposition}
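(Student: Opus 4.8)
The plan is to argue by contradiction: suppose $R\subset S$ is an integral FCP extension that is unramified and has a co-subintegral closure $T:=S^+_R$ with $T\neq S$. I want to use the fact that $T\subsetneq S$ together with $T\subseteq S$ being subintegral (by the very definition of the co-subintegral closure) to produce a minimal ramified subextension inside $[T,S]$, and then show that such a minimal ramified extension inside the unramified extension $R\subset S$ is impossible.

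First I would localize. By Proposition~\ref{1.426}(1), if $T=S^+_R$ exists then $T_M$ is the co-subintegral closure of $R_M\subset S_M$ for each $M\in\mathrm{MSupp}(S/R)$, and since $T\neq S$ there is some $M$ with $T_M\neq S_M$; moreover $R_M\subset S_M$ is still unramified (the unramified property localizes, e.g.\ via Proposition~\ref{RAM}). So I may assume $(R,M)$ is local. Now $T\subseteq S$ is subintegral and proper, so by Proposition~\ref{1.31}(1) a maximal chain from $T$ to $S$ consists of minimal ramified extensions; in particular there exists $T'\in[T,S[$ with $T'\subset S'$ minimal ramified for some $S'\in\,]T',S]$ — more precisely, picking the bottom step, there is $T'\in[T,S[$ and a minimal ramified extension $T'\subset W$ with $W\in[T',S]$. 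Let $P:=(T':W)\in\mathrm{Max}(T')$ be its crucial maximal ideal (Theorem~\ref{minimal}(c)).

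Next I would pull this minimal ramified step up against the conductor computation. The key obstruction is the same phenomenon exploited in Remark~\ref{1.12}(1) and in Proposition~\ref{1.131}: a minimal ramified extension is never unramified, because modulo its conductor it is of the form $k\hookrightarrow k[X]/(X^2)$, and an unramified (étale) extension of a field is reduced, whereas $k[X]/(X^2)$ is not. To turn this into a contradiction with $R\subset S$ unramified, I use that the unramified property is stable under "left division'': by \cite[Properties 2.2.ii and 2.2.v, p.~8]{I}, since $R\subseteq S$ is unramified and factors as $R\subseteq T'\subseteq W\subseteq S$, the subextension $T'\subseteq W$ is unramified. But $T'\subset W$ is minimal ramified, contradicting Proposition~\ref{1.131} (equivalently, the elementary field-theoretic remark above). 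This contradiction shows that no proper subintegral subextension of $S$ can sit above $R$ inside an unramified extension, hence $S^+_R$, if it exists, must equal $S$; that is, an unramified integral FCP extension with a co-subintegral closure distinct from $S$ cannot occur.

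I expect the only delicate point to be justifying that $T'\subseteq W$ really is a subextension of the unramified extension $R\subseteq S$ in the sense required by the stability-under-left-division result of \cite{I} — i.e.\ that unramifiedness of $R\subseteq S$ descends to $T'\subseteq W$ for an intermediate factorization $R\subseteq T'\subseteq W\subseteq S$. This is exactly \cite[Property 2.2.v, p.~8]{I} (stability under left division: if $g\circ f$ is unramified then $g$ is unramified), applied to $T'\subseteq W\subseteq S$ to first get $W\subseteq S$ irrelevant, and more to the point applied to $R\subseteq T'\subseteq W$ — but here one needs $R\subseteq W$ unramified, which follows from $R\subseteq S$ unramified and the same left-division property applied to $W\subseteq S$. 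So the chain of reductions is: $R\subseteq S$ unramified $\Rightarrow$ $R\subseteq W$ unramified (left division along $W\subseteq S$) $\Rightarrow$ $T'\subseteq W$ unramified (left division along $R\subseteq T'$), contradicting that $T'\subset W$ is minimal ramified. Everything else is bookkeeping with the canonical decomposition and Proposition~\ref{1.31}.
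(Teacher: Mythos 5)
Your overall strategy (produce a minimal ramified link above the co-subintegral closure and contradict unramifiedness) is the paper's strategy, but the way you execute it contains a genuine error: you pick the \emph{bottom} step $T'\subset W$ of a maximal chain from $S^+_R$ to $S$ (so in general $W\neq S$), and to conclude that $T'\subseteq W$ is unramified you claim ``$R\subseteq S$ unramified $\Rightarrow R\subseteq W$ unramified (left division along $W\subseteq S$)''. This is left division applied in the wrong direction. The left-division property of \cite[2.2.v]{I} says: if the composite $R\to S$ is unramified and it factors as $R\to W\to S$, then the \emph{second} factor $W\subseteq S$ is unramified; it says nothing about the first factor $R\subseteq W$. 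Indeed the paper stresses exactly this failure just before Proposition \ref{1.111} and exhibits a counterexample in Remark \ref{1.12}(1)--(2): there $R\subset S$ is unramified while the intermediate extension $R\subset{}^+_SR$ is minimal ramified, hence not unramified. So the first implication in your ``chain of reductions'' is false in general, and with it the contradiction you derive; the subsequent step ($T'\subseteq W$ unramified by left division along $R\subseteq T'$) is then unsupported as well. Your own closing paragraph correctly identifies this as the delicate point, but the justification you offer there does not repair it.

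The fix is small and brings you back to the paper's one-line proof: instead of the bottom link, take the \emph{top} link of a maximal chain from $S^+_R$ to $S$. Since $S^+_R\subset S$ is proper and subintegral, Proposition \ref{1.31}(1) gives some $T\in[S^+_R,S[$ with $T\subset S$ minimal ramified. Now left division applies in the correct direction to the factorization $R\subseteq T\subseteq S$: because $R\subseteq S$ is unramified, $T\subseteq S$ is unramified, contradicting that a minimal ramified extension is never unramified (Proposition \ref{1.131}, or your $k\hookrightarrow k[X]/(X^2)$ remark). The preliminary localization you perform is harmless but unnecessary once the minimal ramified step is chosen with target $S$.
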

\begin{proof} Assume that $R\subset S$ is unramified and admits a co-subintegral closure distinct from $S$. Then, there exists $T\in[R,S]$ such that $T\subset S$ is minimal ramified, hence not unramified, a contradiction. 
\end{proof}

\begin{proposition}\label{1.5} Let $R\subset S$ be an integral FCP extension.
\begin{enumerate}
\item Assume that  the co-infra-integral closure $S^t_R$ exists. Then, $S^t_R\subseteq T$ and $T\subseteq S$ are infra-integral for any $T\in[S^t_R,S]$. 
\item Assume that  the co-subintegral $S^+_R$ closure exists. Then, $S^+_R\subseteq T$ and $T\subseteq S$ are subintegral for any $T\in[S^+_R,S]$.
\end{enumerate}
\end{proposition}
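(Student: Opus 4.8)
The plan is to deduce both parts directly from the definitions: by Definition \ref{1.3}, an integral extension is infra-integral exactly when all of its residual field extensions are isomorphisms, and subintegral when it is moreover an $i$-extension. The existence hypotheses are used only to make the notation meaningful: if $S^t_R$ (resp.\ $S^+_R$) exists then, by its very definition as a least element, $S^t_R\subseteq S$ is infra-integral (resp.\ $S^+_R\subseteq S$ is subintegral), and this is the only property of the ambient extension the argument needs. Observe also that for $T\in[S^t_R,S]$ (resp.\ $[S^+_R,S]$) the extensions $S^t_R\subseteq T$ and $T\subseteq S$ are integral, being subextensions of the integral extension $R\subset S$.

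The core is an elementary observation that I would isolate and prove first, a sandwich principle for residual extensions: \emph{if $A\subseteq B$ is an integral extension all of whose residual field extensions are isomorphisms and $C\in[A,B]$, then the same holds for $A\subseteq C$ and for $C\subseteq B$.} For $A\subseteq C$, take $Q\in\mathrm{Spec}(C)$, set $P:=Q\cap A$, and pick $N\in\mathrm{Spec}(B)$ lying over $Q$ (lying over holds since $C\subseteq B$ is integral). In the tower of residual field extensions $\kappa(P)\subseteq\kappa(Q)\subseteq\kappa(N)$, the composite $\kappa(P)\to\kappa(N)$ is an isomorphism by hypothesis, so every map in the tower is one, and in particular $\kappa(P)\to\kappa(Q)$ is an isomorphism. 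For $C\subseteq B$, let $N$ range over $\mathrm{Spec}(B)$ and set $Q:=N\cap C$, $P:=N\cap A$; the same tower shows that $\kappa(Q)\to\kappa(N)$ is an isomorphism. Applying this with $(A,B)=(S^t_R,S)$ gives part (1) at once.

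For part (2) I would run the same two diagrams once more to transport the $i$-extension property: if $A\subseteq B$ is an $i$-extension and $C\in[A,B]$, then two distinct primes of $C$ over one prime of $A$ would lift (via lying over for $C\subseteq B$) to two distinct primes of $B$ over one prime of $A$, while two distinct primes of $B$ over one prime of $C$ already lie over a single prime of $A$; either way the injectivity of $\mathrm{Spec}(B)\to\mathrm{Spec}(A)$ is contradicted. Hence $A\subseteq C$ and $C\subseteq B$ are $i$-extensions, and combining this with the preceding paragraph and taking $(A,B)=(S^+_R,S)$ shows that $S^+_R\subseteq T$ and $T\subseteq S$ are subintegral.

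I do not anticipate a genuine obstacle: once the sandwich principle is recorded the argument is purely formal, the only point meriting a word of care being the appeal to lying over (for the integral extension $C\subseteq B$) to produce, for a given prime of $C$, a prime of $B$ above it. Should one prefer a chain-theoretic route, one could instead refine $S^t_R\subset T\subset S$ (resp.\ $S^+_R\subset T\subset S$) to a maximal chain of minimal extensions and invoke Proposition \ref{1.31} to check that every link of the two induced sub-chains in $[S^t_R,T]$ and $[T,S]$ is decomposed or ramified (resp.\ ramified); but that is longer and requires chain bookkeeping that the residual-extension argument sidesteps.
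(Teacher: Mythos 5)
Your proof is correct, and it takes a genuinely different route from the paper's. You work straight from Definition \ref{1.3}: the sandwich principle for residual extensions (a composite of field extensions that is an isomorphism forces both factors to be isomorphisms, with lying over for the integral extension $C\subseteq B$ supplying, for each prime of $C$, a prime of $B$ above it) transports the infra-integral property to $S^t_R\subseteq T$ and $T\subseteq S$, and the analogous two-case argument transports injectivity of the spectral map, which together with the first part gives the subintegral case. The paper instead argues exactly along the chain-theoretic route you mention at the end: it refines $S^t_R\subseteq T\subseteq S$ (resp.\ $S^+_R\subseteq T\subseteq S$) by a maximal chain of minimal extensions, notes that juxtaposing a maximal chain of $[U,T]$ with one of $[T,S]$ yields a maximal chain of $[U,S]$, and applies both directions of Proposition \ref{1.31} to read off that every link is decomposed or ramified (resp.\ ramified). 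The trade-off: the paper's argument is a few lines once Proposition \ref{1.31} is available, but it leans on FCP (existence of finite maximal chains of minimal subextensions); your argument is purely formal, uses only integrality and lying over, and therefore proves the statement for arbitrary integral extensions, with the FCP hypothesis serving only to place the result in the context where the co-closures were defined.
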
 
\begin{proof} We prove the two parts together. Assume that the co-infra-integral (resp.; co-subintegral) closure exists and set $U:=S^t_R$ (resp.; $U:=S^+_R$). According to Proposition \ref{1.31}, any maximal chain of $ [U,S]$, defined by $U=U_0\subset\cdots\subset U_i\subset\cdots\subset U _n= S$, where each $U_i\subset U_{i+1}$ is minimal is such that each $U_i\subset U_{i+1}$ is either ramified or decomposed (resp.; ramified). Let $T\in [U,S]$. Any maximal chain of $[U,T]$ followed by a maximal chain of $[T,S]$ defines a maximal chain of $ [U,S]$. Then, using again Proposition \ref{1.31}, we get that $U\subseteq T$ and $T\subseteq S$ are infra-integral (resp.; subintegral).  
\end{proof} 

 The following Propositions show the links between the co-subintegral closure, the co-infra-integral closure, the $\omega$-closure, the u-closure 
     and the $\kappa$-radicial closure.

\begin{proposition}\label{1.422} Let $R\subset S$ be  an integral FCP extension. Then 
${}_S^uR={}^\omega_SR\cap{}^t_SR={}^\omega_SR\cap{}^\circ_SR,\ {}^+_RS={}^*_RS\cap{}^\square_SR$ and  ${}^*_RS\cap{}^\omega_SR=R$.

The following statements hold:
\begin{enumerate}
\item ${}_S^uR={}^\omega_SR$ if and only if $R\subseteq{}^\omega_SR$ is infra-integral;

\item If the co-subintegral closure exists, then ${}_S^uR\subseteq S^+_R\cap{}^t_SR$.
\end{enumerate}
\end{proposition}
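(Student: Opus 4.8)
I would prove the collection of identities in several independent pieces, using in each case the standard characterization that $u$-closed integral FCP extensions are exactly the $i$-extensions (Proposition \ref{1.311}), together with the transitivity properties of the relevant closures (Remark \ref{TRANS}) and the fact, from Proposition \ref{1.14}, that $R\subseteq{}^\omega_SR$ is unramified with ${}^\omega_SR\subseteq S$ being $u$-closed.

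First I would establish ${}_S^uR={}^\omega_SR\cap{}^t_SR$. The inclusion $\subseteq$ is immediate: ${}_S^uR\subseteq{}^t_SR$ since $u$-integral implies $t$-integral (Definition \ref{1.33}), and ${}_S^uR\subseteq{}^\omega_SR$ because $R\subseteq{}_S^uR$ is $u$-integral, hence unramified by Theorem \ref{1.320}. For the reverse inclusion, set $W:={}^\omega_SR\cap{}^t_SR$. Then $R\subseteq W$ is infra-integral (being a subextension of $R\subseteq{}^t_SR$) and unramified (being a subextension of $R\subseteq{}^\omega_SR$, using that unramifiedness is stable under left division, \cite[Property 2.2.v]{I}, or Proposition \ref{1.11}'s ingredients). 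An infra-integral unramified extension has residual extensions that are simultaneously isomorphisms and (being a subextension of an unramified extension over a field) unramified; in particular it is an $i$-extension — here I would invoke that an infra-integral unramified FCP extension is locally an epimorphism, hence $u$-integral by Theorem \ref{1.320}. Therefore $W\subseteq{}_S^uR$. Replacing ${}^t_SR$ by ${}^\circ_SR$ in this argument gives ${}_S^uR={}^\omega_SR\cap{}^\circ_SR$ as well, using Corollary \ref{1.21}'s observation that ${}_S^uR\subseteq{}_S^tR\subseteq{}^\circ_SR$ and that a $\kappa$-radicial unramified extension is $u$-integral.

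Next, for ${}^+_RS={}^*_RS\cap{}^\square_SR$ (note ${}^+_RS={}^+_SR$, the seminormalization): the inclusion ${}^+_SR\subseteq{}^*_RS$ holds since subintegral extensions are radicial, and ${}^+_SR\subseteq{}^\square_SR$ since $R\subseteq U$ is $\kappa$-separable for any $U\in[R,{}^t_SR]$ (remarked before Proposition \ref{SEP}) and ${}^+_SR\subseteq{}^t_SR$. For $\supseteq$, set $Y:={}^*_RS\cap{}^\square_SR$; then $R\subseteq Y$ is both $\kappa$-radicial (subextension of ${}^*_RS$) and $\kappa$-separable (subextension of ${}^\square_SR$), hence its residual extensions are isomorphisms, so $R\subseteq Y$ is infra-integral; being also $\kappa$-radicial and an $i$-extension (radicial implies $i$-extension), it is subintegral, so $Y\subseteq{}^+_SR$. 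The identity ${}^*_RS\cap{}^\omega_SR=R$ follows from Corollary \ref{1.16}'s proof: ${}^*_RS\cap{}^\omega_SR$ is radicial over $R$ and unramified over $R$, and a radicial unramified FCP extension is trivial by Proposition \ref{SEP}.

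Finally, statement (1): if ${}_S^uR={}^\omega_SR$ then $R\subseteq{}^\omega_SR=\,{}_S^uR$ is $u$-integral, hence infra-integral; conversely if $R\subseteq{}^\omega_SR$ is infra-integral, then it is also unramified, so by the same argument as above (infra-integral $+$ unramified $\Rightarrow$ $u$-integral via Theorem \ref{1.320}) we get ${}^\omega_SR\subseteq{}_S^uR$, and combined with the general inclusion ${}_S^uR\subseteq{}^\omega_SR$ this gives equality. Statement (2): if the co-subintegral closure $S^+_R$ exists, then $R\subseteq S^+_R$ — wait, rather $S^+_R\subseteq S$ is subintegral, so $S^+_R\subseteq S$ is an $i$-extension and $u$-closed by Proposition \ref{1.311}; hence ${}_S^uR\subseteq S^+_R$. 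Also ${}_S^uR\subseteq{}^t_SR$ always. So ${}_S^uR\subseteq S^+_R\cap{}^t_SR$. The main obstacle throughout is the repeated verification that ``infra-integral plus unramified (or plus $\kappa$-radicial $i$-extension) forces $u$-integral''; this is the one nontrivial input, and it rests on Theorem \ref{1.320} together with the characterization of minimal unramified extensions in Proposition \ref{1.131} — every minimal subextension of such an extension must be decomposed, whence the extension is $u$-integral by Proposition \ref{1.316} and induction.
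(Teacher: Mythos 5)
Your easy inclusions, your proof of ${}^+_SR={}^*_SR\cap{}^\square_SR$, and item (2) are sound, but the reverse inclusions for ${}_S^uR={}^\omega_SR\cap{}^t_SR$, for ${}_S^uR={}^\omega_SR\cap{}^\circ_SR$ and for ${}^*_SR\cap{}^\omega_SR=R$ rest on two claims that are not available. First, you assert that $R\subseteq W$ (and likewise $R\subseteq{}^*_SR\cap{}^\omega_SR$) is unramified ``being a subextension of $R\subseteq{}^\omega_SR$''. Unramifiedness does not descend to the bottom part of a factorization: the left-division property \cite[Property 2.2.v]{I} gives that $W\subseteq{}^\omega_SR$ is unramified, not that $R\subseteq W$ is; Proposition \ref{1.11} yields the bottom part only under the extra hypothesis that the ambient extension is seminormal; and Remark \ref{1.12} is exactly a counterexample without that hypothesis ($R\subset S=R^2$ unramified while $R\subset{}^+_SR$ is minimal ramified, hence not unramified). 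This is in sharp contrast with the $\kappa$-radicial and $\kappa$-separable properties, which do pass to bottom subextensions by field theory (Remark \ref{TRANS}) --- which is precisely why your argument for ${}^+_SR={}^*_SR\cap{}^\square_SR$ works and the others do not. Second, the engine ``infra-integral $+$ unramified $\Rightarrow$ locally an epimorphism, hence $u$-integral by Theorem \ref{1.320}'' is only asserted, and the mechanism you sketch for it at the end (``every minimal subextension of such an extension must be decomposed, whence $u$-integral by Proposition \ref{1.316} and induction'') is refuted by the paper's own example: $R\subset R^2$ of Remarks \ref{1.12}(2) and \ref{1.317}(1) is infra-integral, unramified and $u$-integral, yet its first minimal step $R\subset{}^+_{R^2}R$ is ramified. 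Consequently the first and third displayed identities are not proved, and the converse implication in your item (1) (which again invokes this engine) inherits the same gap; the forward direction of (1) is fine.

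For comparison, the paper never tries to show that $R\subseteq{}^\omega_SR\cap{}^t_SR$ is unramified. Starting from the easy inclusion ${}_S^uR\subseteq{}^\omega_SR\cap{}^t_SR$, it examines a putative minimal step above ${}_S^uR$ inside the intersection: inside ${}^t_SR$ such a step must be ramified, because ${}_S^uR\subseteq{}^t_SR$ is infra-integral and u-closed (Propositions \ref{1.31} and \ref{1.311}), whereas inside ${}^\omega_SR$ it is argued, via Proposition \ref{1.131} applied to the unramified extension ${}_S^uR\subseteq{}^\omega_SR$, that only decomposed or separable inert steps can occur; the clash of types forces equality, and the remaining identities then follow formally from Corollary \ref{1.21} and \cite[Corollary 3.6]{Pic 14}, with (1) and (2) read off at the end. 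Note that the delicate point --- controlling the type of a minimal step sitting under an unramified extension whose bottom ring is the $u$-closure --- is exactly where your argument breaks down; any repair must exploit that extra structure rather than a descent of unramifiedness to subextensions, which is false in general.
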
 
\begin{proof} We already know that ${}_S^uR\subseteq{}^t_SR$ by \cite[Definition 1.5 and Proposition 1.6]{Pic 0} and ${}_S^uR\subseteq{}^\omega_SR\subseteq{}^\square_SR$ by Proposition \ref{1.14}; so that, ${}_S^uR\subseteq{}^\omega_SR\cap{}^t_SR$. Since ${}_S^uR\subseteq{}^t_SR$ is both u-closed and infra-integral, any minimal subextension of ${}_S^uR\subseteq{}^t_SR$ is ramified by Proposition \ref{1.311}. Since ${}_S^uR\subseteq{}^\omega_SR$ is unramified, any minimal subextension of ${}_S^uR\subseteq{}^\omega_SR$ is either decomposed or inert separable by Proposition \ref{1.131}. It follows that ${}_S^uR={}^\omega_SR\cap{}^t_SR$. 

Since ${}^t_SR={}^\square_SR\cap{}^\circ_SR$ by Corollary \ref{1.21}, we get that ${}_S^uR={}^\omega_SR\cap{}^\square_SR\cap{}^\circ_SR$. But, ${}^\omega_SR\subseteq{}^\square_SR$ gives ${}_S^uR={}^\omega_SR\cap{}^\circ_SR$. Now, ${}^*_RS\cap{}^\square_SR={}^*_RS\cap{}^\circ_SR\cap{}^\square_SR$ because ${}^*_RS\subseteq{}^\circ_SR$, which leads to ${}^*_RS\cap{}^\square_SR={}^*_RS\cap{}^t_SR$, since we have ${}^t_SR={}^\circ_SR\cap{}^\square_SR$. It follows that ${}^*_RS\cap{}^\square_SR={}^+_RS$ because ${}^*_RS\cap{}^t_SR={}^+_RS$ according to \cite[Corollary 3.6]{Pic 14}. To end, ${}^*_RS\cap{}^\omega_SR={}^*_RS\cap{}^\circ_SR\cap{}^\omega_SR\cap{}^\square_SR={}^*_RS\cap{}^\square_SR\cap{}^\omega_SR\cap{}^t_SR={}_S^uR\cap{}_S^+R$ by the previous equalities. But ${}_S^uR\cap{}_S^+R=R$ according to Proposition \ref{SEP} because $R\subseteq{}_S^uR\cap{}_S^+R$ is both subintegral, and then radicial, and u-integral, then also unramified by Theorem \ref{1.320}. 
    
In particular, ${}_S^uR={}^\omega_SR$ if and only if ${}^\omega_SR\subseteq{}^t_SR$ if and only if $R\subseteq {}^\omega_SR$ is infra-integral.

Moreover, if  the co-subintegral closure exists, $S^+_R\subseteq S$ is subintegral, and then u-closed by Proposition \ref{1.311}. Then, ${}_S^uR\subseteq S^+_R$, which leads to ${}_S^uR\subseteq S^+_R\cap{}^t_SR$. 
 \end{proof}

Let $R\subset S$ be an integral FCP extension (such that the co-infra-integral closure exists for the first diagram). We have the following commutative diagrams:

\centerline{$\begin{matrix}
{} &      {}        & {}_S^+R &      \to      &    {}_S^tR        &       {}       & {}  \\
{} & \nearrow &      {}       & \nearrow &          {}            & \searrow & {}  \\
R &     \to       & {}_S^uR &     \to       & {}^\omega_SR &      \to      & S \\
{} & \searrow &      {}       & \searrow &           {}            & \nearrow & {} \\
{} &     {}        &   S^t_R   &      \to      &      S^+_R         &       {}      & {}
\end{matrix}$ 
and
$\begin{matrix}
{} &      {}       & {}_S^+R &      \to  & {}_S^*R & \to & {}^\circ_SR & {} & {}  \\
{} & \nearrow &      {} & \searrow & {} & \nearrow & {} & \searrow &        {} \\
R &     {}      &       {}       &     {}        &     {}^t_SR        &      {} & {} & {} & S \\
{} & \searrow &     {} & \nearrow & {}  & \searrow & {} & \nearrow &        {} \\
{} &       {} & {}_S^uR & \to & {}^\omega_SR & \to & {}^\square_SR & {} & {}
\end{matrix}$}

\begin{proposition}\label{1.47} Let $R\subset S$ be an integral FCP extension.

\begin{enumerate}
\item Assume that the co-subintegral (resp.; co-infra-integral) closure exists. Then, $S=(S^+_R)({}^+_SR)$ (resp.; $=(S^t_R)({}^t_SR)$). Moreover, $S^+_R\cap{}^+_SR={}^+_{S^+_R}R$ (resp.; $S^t_R\cap{}^t_SR={}^t_{S^t_R}R$).

\item Assume that the co-infra-integral closure exists; so that, the co-subintegral  closure exists. Then, $({}_S^uR)(S^t_R)=S^+_R$.  
\end{enumerate}
\end{proposition}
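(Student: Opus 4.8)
The plan is to handle the two parts in order, treating the subintegral and infra-integral cases in parallel since the arguments are formally identical (using Proposition \ref{1.31} to read everything off the types of minimal subextensions, and Propositions \ref{1.36}, \ref{1.5}, Corollary \ref{1.81}).

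For part (1), set $U:=S^+_R$ (resp.\ $U:=S^t_R$) and $V:={}^+_SR$ (resp.\ $V:={}^t_SR$); I want $UV=S$ and $U\cap V={}^+_UR$ (resp.\ ${}^t_UR$). For the first equality, the cleanest route is to argue that $UV\subseteq S$ is both subintegral (resp.\ infra-integral) \emph{and} seminormal (resp.\ t-closed), hence trivial. It is subintegral (resp.\ infra-integral) because $U\subseteq S$ is, and $U\subseteq UV$ is a ``push-up'' of the subintegral (resp.\ infra-integral) extension $U\subseteq S$, so $U\subseteq UV$ is subintegral (resp.\ infra-integral) by \cite[Proposition 2.9]{S} (resp.\ \cite[Proposition 3.6]{Pic 1}) — actually more simply, $UV\in[U,S]$ and by Proposition \ref{1.5} every element of $[U,S]$ is subintegral (resp.\ infra-integral) over $U$, but here I want $UV\subseteq S$ subintegral; that follows since $UV\subseteq S$ is a subextension of the subintegral extension... more carefully: $U\subseteq S$ subintegral and $UV\in[U,S]$ forces $UV\subseteq S$ subintegral by the analogue of ``s-integral is stable under left division,'' i.e.\ Proposition \ref{1.5}(2) applied to $S^+_R$ gives $T\subseteq S$ subintegral for all $T\in[S^+_R,S]$, in particular for $T=UV$. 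Symmetrically, $UV\subseteq S$ is seminormal (resp.\ t-closed): since $V={}^+_SR$ is the seminormalization, $V\subseteq S$ is seminormal, and $UV\in[V,S]$, so $UV\subseteq S$ is seminormal because a subextension $[W,S]$ of a seminormal (resp.\ t-closed) extension $V\subseteq S$ with $W\in[V,S]$ is seminormal (resp.\ t-closed) — this is the ``right-division'' property, which for seminormality/t-closedness holds since ${}^+_WR\subseteq{}^+_SR\cap W = V\cap W = W$ as $V\subseteq W$ (and similarly with ${}^t$). A subintegral seminormal extension is trivial (a subintegral extension has $R={}^+_SR$ by definition, and if it is also seminormal... wait: subintegral means $R\subseteq S$ equals its own seminormalization from below, but "seminormal" here means $R\subseteq S$ is closed; combined, $S\subseteq S$, i.e.\ the extension is an isomorphism because a subintegral extension has ${}_S^+R = S$ while seminormality forces ${}_S^+R=R$). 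Hence $UV=S$. For the intersection formula, $U\cap V\in[R,U]$ and $R\subseteq U\cap V\subseteq V$ shows $R\subseteq U\cap V$ is subintegral (resp.\ infra-integral) as a subextension of $R\subseteq V={}^+_SR$ (resp.\ ${}^t_SR$); and it is the \emph{largest} such in $[R,U]$ because any $T\in[R,U]$ with $R\subseteq T$ subintegral (resp.\ infra-integral) lies in $V={}^+_SR$ (resp.\ ${}^t_SR$) by maximality of the latter, so $T\subseteq U\cap V$. Thus $U\cap V={}^+_UR$ (resp.\ ${}^t_UR$).

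For part (2), assume the co-infra-integral closure exists (so by Corollary \ref{1.435} the co-subintegral closure exists too), and set $W:={}_S^uR$, $A:=S^t_R$, $B:=S^+_R$. I want $WA=B$. One inclusion: $W\subseteq B$ by Proposition \ref{1.422}(2) (indeed $B=S^+_R\subseteq S$ is subintegral, hence u-closed by Proposition \ref{1.311}, so $W={}_S^uR\subseteq B$), and $A=S^t_R\subseteq S^+_R=B$ trivially; hence $WA\subseteq B$. For the reverse, I would show $WA\subseteq S$ is subintegral (which forces $B=S^+_R\subseteq WA$ by minimality of $S^+_R$). Now $A\subseteq S$ is infra-integral (def.\ of co-infra-integral closure), so $WA\subseteq S$ is infra-integral as an element of $[A,S]$ by Proposition \ref{1.5}(1). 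To upgrade ``infra-integral'' to ``subintegral,'' it suffices to show $WA\subseteq S$ is an $i$-extension, equivalently u-closed (Proposition \ref{1.311}). Here is where I expect the main obstacle: one must show $\mathrm{Spec}(S)\to\mathrm{Spec}(WA)$ is bijective. The idea is that $A=S^t_R\subseteq S$ being infra-integral does \emph{not} make it an $i$-extension, but the decomposed minimal steps in $[A,S]$ get ``killed'' after pushing up by $W={}_S^uR$; concretely, $W\subseteq S$ is u-closed so $\mathrm{Spec}(S)\to\mathrm{Spec}(W)$ is bijective (Proposition \ref{1.311}), hence $\mathrm{Spec}(S)\to\mathrm{Spec}(WA)$ is bijective because $W\subseteq WA\subseteq S$ and the composite $\mathrm{Spec}(S)\to\mathrm{Spec}(W)$ factors through it and is injective, forcing $\mathrm{Spec}(S)\to\mathrm{Spec}(WA)$ injective, while surjectivity is automatic for integral extensions (lying over). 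Therefore $WA\subseteq S$ is an $i$-extension, hence u-closed, hence subintegral (being infra-integral and an $i$-extension). Minimality of $S^+_R$ gives $B\subseteq WA$, so $WA=B=S^+_R$, as desired. The one point I would double-check carefully is that $W={}_S^uR\subseteq WA$ — this holds since $W\subseteq WA$ trivially — and that the factorization argument for injectivity of spectral maps is valid, i.e.\ if $g\circ f$ is injective then $f$ is injective applied to $f:\mathrm{Spec}(S)\to\mathrm{Spec}(WA)$ and $g:\mathrm{Spec}(WA)\to\mathrm{Spec}(W)$; that is elementary.

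Overall the main obstacle is part (2): identifying the right reason that $WA$ becomes subintegral over $S$, namely that pushing the infra-integral extension $S^t_R\subseteq S$ up by the $i$-extension-producing factor $W={}_S^uR$ turns it into an $i$-extension; everything else is bookkeeping with Propositions \ref{1.31}, \ref{1.311}, \ref{1.5} and the definitions of the various (co-)closures.
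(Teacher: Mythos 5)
Your proposal is correct in substance. Part (1) is essentially the paper's own argument: the product $(S^+_R)({}^+_SR)$ lies in $[S^+_R,S]$, hence is subintegral over by Proposition \ref{1.5}, and lies in $[{}^+_SR,S]$, hence is seminormal over by Proposition \ref{1.31} (juxtaposing a maximal chain of $[{}^+_SR,(S^+_R)({}^+_SR)]$ with one of $[(S^+_R)({}^+_SR),S]$), so it equals $S$; the intersection formula then follows from maximality of the seminormalization, as you say. One caveat: your inline justification of the ``right-division'' step is garbled --- the inclusion ${}^+_WR\subseteq{}^+_SR\cap W$ concerns the seminormalization of $R$ in $W$ and says nothing about seminormality of $W\subseteq S$ (also $V\cap W=V$, not $W$, when $V\subseteq W$); the correct justification is exactly the Proposition \ref{1.31} chain argument you announce at the outset, so this is a local slip rather than a gap. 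In part (2) you take a genuinely different and arguably cleaner route than the paper: you prove directly that $({}_S^uR)(S^t_R)\subseteq S$ is infra-integral (Proposition \ref{1.5}) and an $i$-extension (injectivity of $\mathrm{Spec}(S)\to\mathrm{Spec}(({}_S^uR)(S^t_R))$ from the bijectivity of $\mathrm{Spec}(S)\to\mathrm{Spec}({}_S^uR)$), hence subintegral, which forces $S^+_R\subseteq({}_S^uR)(S^t_R)$; note you could shorten this by citing Proposition \ref{1.017} (subextensions of a u-closed extension are u-closed) together with Proposition \ref{1.311}. The paper instead argues by contradiction: if the inclusion $({}_S^uR)(S^t_R)\subseteq S^+_R$ were strict, a minimal step $V\subset S^+_R$ would be ramified or decomposed (being inside the infra-integral extension $S^t_R\subseteq S$) and ramified or inert (being inside the u-closed extension ${}_S^uR\subseteq S$, via Corollary \ref{1.315}), hence ramified, making $V\subset S$ subintegral and contradicting minimality of $S^+_R$. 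Your direct argument avoids the case analysis on minimal types at the cost of the (elementary) spectral factorization; both are valid and of comparable length.
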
 
\begin{proof} (1) Set $U:=(S^+_R)({}^+_SR)\subseteq S$. Using Proposition \ref{1.5}, we get that $U\subseteq S$ is subintegral and by Proposition \ref{1.31}, we get that $U\subseteq S$ is seminormal; so that, $U=S$. A similar argument shows the equality for the infra-integral case. The two last equalities come from the properties of the seminormalization and the t-closure.

(2) Assume that the co-infra-integral closure exists. According to Corollary  \ref{1.435}, the co-subintegral closure exists and satisfies $S^t_R\subseteq S^+_R$. We know by Proposition \ref{1.422} that ${}_S^uR\subseteq S^+_R$. Then, $({}_S^uR)( S^t_R)\subseteq S^+_R$. Assume that  $({}_S^uR)( S^t_R)\subset S^+_R$ and let $V\in[({}_S^uR)( S^t_R),  S^+_R]$ be such that $V\subset S^+_R$ is minimal. Since $S^t_R\subseteq S$ is infra-integral, $V\subset S^+_R$ is either ramified or decomposed. Since ${}_S^uR\subseteq S$ is u-closed, $V\subset S^+_R$ is either ramified or inert by Corollary \ref{1.315}. Then, $V\subset S^+_R$ is  ramified, and $V\subset S$ is subintegral, a contradiction with the definition of $S^+_R$. To conclude, $({}_S^uR)( S^t_R)=  S^+_R$. \end{proof} 

Thanks to Proposition \ref{1.47}, we are able to give a characterization of extensions whose closures and co-closures play a similar role as in almost-Pr\"ufer extensions.

\begin{corollary}\label{1.501} Let $R\subset S$ be an integral FCP extension. 
\begin{enumerate}
\item Assume that the co-subintegral closure $S^+_R$ of $R\subset S$ exists. Then, the following conditions are equivalent:
\begin{enumerate}
\item$R\subseteq S^+_R$ is seminormal,
\item $R={}_S^+R\cap S^+_R$,
\item $(R:S^+_R)$ is an intersection of finitely many maximal ideals of $S^+_R$.
\end{enumerate}
\item Assume that the co-infra-integral closure $S^t_R$ of $R\subset S$ exists. Then, the following conditions are equivalent:
\begin{enumerate}
\item $R\subseteq S^t_R$ is t-closed,
\item $R={}_S^tR\cap S^t_R$,
\item $(R:S^t_R)$ is an intersection of  finitely many maximal ideals of $S^t_R$ and $|\mathrm{V}_{S^t_R}((R:S^t_R))|=|\mathrm{V}_R((R:S^t_R))|$.
\end{enumerate}
\end{enumerate}  
\end{corollary}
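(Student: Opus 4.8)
The plan is to prove each of the two parts by the same pattern, exploiting the fact that the co-subintegral (resp.\ co-infra-integral) closure is itself the \emph{smallest} element $T$ of $[R,S]$ with $T\subseteq S$ subintegral (resp.\ infra-integral), and that by Proposition~\ref{1.6} this closure equals $S$ exactly when $R\subseteq S$ is already seminormal (resp.\ $t$-closed). In part (1) I would first show (a)$\Rightarrow$(b): if $R\subseteq S^+_R$ is seminormal, then ${}^+_SR\cap S^+_R$ is the seminormalization of $R$ in $S^+_R$ (the $s$-closure restricted to the subextension $R\subseteq S^+_R$), hence equals $R$. For (b)$\Rightarrow$(a): ${}^+_SR\cap S^+_R={}^+_{S^+_R}R$ is a general identity (indeed it is recorded in Proposition~\ref{1.47}(1)), so $R={}^+_{S^+_R}R$ says precisely that $R\subseteq S^+_R$ is seminormal. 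The equivalence (a)$\Leftrightarrow$(c) is then immediate from Proposition~\ref{1.91} applied to the integral FCP extension $R\subseteq S^+_R$: an integral FCP extension is seminormal if and only if its conductor is an intersection of finitely many maximal ideals of the top ring.

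For part (2) the same skeleton works with ``seminormal'' replaced by ``$t$-closed'', ``${}^+_SR$'' by ``${}^t_SR$'', ``$S^+_R$'' by ``$S^t_R$'', and ``$s$-closure'' by ``$t$-closure''. The implication (a)$\Leftrightarrow$(b) again rests on the identity $S^t_R\cap{}^t_SR={}^t_{S^t_R}R$ from Proposition~\ref{1.47}(1): $R={}^t_{S^t_R}R$ is by definition the statement that $R\subseteq S^t_R$ is $t$-closed. For (a)$\Leftrightarrow$(c), I would invoke the characterization of $t$-closedness in the integral FCP setting. Rather than Proposition~\ref{1.91} (which characterizes seminormality), the relevant fact is Theorem~\ref{1.319}: the condition that $(R:S^t_R)$ be an irredundant intersection of maximal ideals of $S^t_R$ together with the numerical equality $|\mathrm{V}_{S^t_R}((R:S^t_R))|=|\mathrm{V}_R((R:S^t_R))|$ should be assembled from items (3) and (5) of that theorem. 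Indeed, a seminormal infra-integral extension with $\ell=0$ is trivial, so the point is that $R\subseteq S^t_R$ is $t$-closed iff it is seminormal and $u$-closed; the seminormal part gives the conductor as an intersection of maximal ideals of $S^t_R$ (Proposition~\ref{1.91}), and the $u$-closed/$i$-extension part forces $\mathrm{Spec}(S^t_R)\to\mathrm{Spec}(R)$ bijective, hence $|\mathrm{V}_{S^t_R}(C)|=|\mathrm{V}_R(C)|$ for $C:=(R:S^t_R)$.

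The main obstacle I anticipate is the precise bookkeeping in (2)(a)$\Leftrightarrow$(c): one must show that the two-part numerical/ideal-theoretic condition in (c) is \emph{equivalent} to $t$-closedness of $R\subseteq S^t_R$, not merely implied by it. For the forward direction, $t$-closed $\Rightarrow$ seminormal gives condition (c)'s intersection statement via Proposition~\ref{1.91}, and $t$-closed $\Rightarrow$ $i$-extension (Proposition~\ref{1.31}, item on bijectivity of $\mathrm{Spec}$) gives the cardinality equality. For the converse one argues: the conductor being an intersection of maximal ideals of $S^t_R$ makes $R\subseteq S^t_R$ seminormal (Proposition~\ref{1.91}); if it were not $t$-closed, then by Theorem~\ref{1.313} it would fail to be $u$-closed, so ${}^u_{S^t_R}R\subsetneq S^t_R$, and there would be a decomposed minimal subextension somewhere in $R\subseteq S^t_R$ (Proposition~\ref{1.31}), which strictly increases $|\mathrm{V}_{(\cdot)}(C)|$ as one climbs, contradicting $|\mathrm{V}_{S^t_R}(C)|=|\mathrm{V}_R(C)|$ via Lemma~\ref{1.9}. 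Writing this contradiction cleanly, keeping track of which conductor is used at each stage of the tower, is the delicate part; everything else is a direct application of the cited results.
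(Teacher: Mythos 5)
Your proposal is correct and follows essentially the same route as the paper: (a)$\Leftrightarrow$(b) in both parts via the identities of Proposition~\ref{1.47}, (1)(a)$\Leftrightarrow$(c) via Proposition~\ref{1.91}, and (2)(a)$\Leftrightarrow$(c) via ``t-closed = seminormal + u-closed'' (Theorem~\ref{1.313}) together with ``u-closed = $i$-extension'' (Proposition~\ref{1.311}) and Proposition~\ref{1.91} for the conductor condition. The only cosmetic difference is in the converse of (2)(c)$\Rightarrow$(a), where your tower-climbing contradiction through a decomposed minimal step can be replaced by the shorter observation that a failure of injectivity of $\mathrm{Spec}(S^t_R)\to\mathrm{Spec}(R)$ puts two maximal ideals of $S^t_R$ containing $(R:S^t_R)$ over a single element of $\mathrm{V}_R((R:S^t_R))$, directly contradicting the cardinality equality.
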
 
\begin{proof} (a) $\Leftrightarrow$ (b) in (1) and (2) by Proposition  \ref{1.47}.

\noindent (1) (a) $\Leftrightarrow$ (c)  by Proposition  \ref{1.91}.

\noindent (2) (a) $\Leftrightarrow$ (c) by Proposition \ref{1.91}, Theorem \ref{1.313} and Proposition \ref{1.311}.\end{proof}

\begin{proposition}\label{1.51} Let $R\subset S$ be a locally unbranched integral FCP extension. If the co-infra-integral closure $S^t_R$ exists, then, $S^t_R=S^+_R$. 
\end{proposition}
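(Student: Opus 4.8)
The statement asserts that for a locally unbranched integral FCP extension $R \subset S$, if the co-infra-integral closure $S^t_R$ exists, then it coincides with the co-subintegral closure $S^+_R$. The plan is to reduce the problem to the basic fact that over a locally unbranched (equivalently, $i$-extension) situation, being infra-integral already forces being subintegral. First I would invoke Corollary \ref{1.435}: since the co-infra-integral closure $S^t_R$ exists, the co-subintegral closure $S^+_R$ of $R \subset S$ also exists, and moreover $S^t_R \subseteq S^+_R$. So both objects are available and the inclusion $S^t_R \subseteq S^+_R$ is free; the content is the reverse inclusion $S^+_R \subseteq S^t_R$.

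The key observation is that $S^t_R \subseteq S$ is infra-integral by definition, and $R \subset S$ is locally unbranched, hence $S^t_R \subset S$ is locally unbranched as well (a subextension of a locally unbranched integral FCP extension is again locally unbranched, since localizing at primes of $S^t_R$ just refines the passage to primes of $R$; equivalently, $R \subset S$ being an integral $i$-extension forces $S^t_R \subset S$ to be an integral $i$-extension). An integral $i$-extension which is infra-integral is subintegral — this is exactly the interplay recorded in Proposition \ref{1.36} together with the remark that a locally unbranched integral extension is an integral $i$-extension (stated in the paragraph preceding Proposition \ref{1.449}): infra-integrality gives all residual extensions are isomorphisms, and the $i$-extension property then upgrades this to subintegrality (Definition \ref{1.3}). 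Therefore $S^t_R \subseteq S$ is subintegral.

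It then follows that $S^t_R$ lies in the set $\{V \in [R,S] \mid V \subseteq S \text{ subintegral}\}$, whose least element is $S^+_R$ by definition of the co-subintegral closure. Hence $S^+_R \subseteq S^t_R$. Combined with $S^t_R \subseteq S^+_R$ from Corollary \ref{1.435}, we conclude $S^t_R = S^+_R$. This is essentially the argument already used in Proposition \ref{1.449}, only now we do not need to assume the co-subintegral closure exists a priori, because Corollary \ref{1.435} produces it for us.

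I do not anticipate a serious obstacle here; the only point requiring a little care is the claim that local unbranchedness passes from $R \subset S$ to $S^t_R \subset S$ — but this is immediate once one rephrases it as the $i$-extension property (the map $\mathrm{Spec}(S) \to \mathrm{Spec}(R)$ injective forces $\mathrm{Spec}(S) \to \mathrm{Spec}(S^t_R)$ injective, since the latter map composed with $\mathrm{Spec}(S^t_R) \to \mathrm{Spec}(R)$ is the former), and then Proposition \ref{1.31}(1) or the definitions directly give that an infra-integral $i$-extension is subintegral. So the proof is short: apply Corollary \ref{1.435}, observe $S^t_R \subset S$ is an infra-integral $i$-extension hence subintegral, and conclude by minimality of $S^+_R$.
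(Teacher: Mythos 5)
Your proof is correct and follows essentially the same route as the paper: Corollary \ref{1.435} gives the existence of $S^+_R$ and the inclusion $S^t_R\subseteq S^+_R$, and then the $i$-extension property upgrades the infra-integral extension $S^t_R\subseteq S$ to a subintegral one, so minimality of $S^+_R$ yields equality. The only (harmless) difference is that the paper first reduces to the local case via Proposition \ref{1.426} before using unbranched $\Rightarrow$ $i$-extension, whereas you invoke directly the stated equivalence between locally unbranched integral extensions and integral $i$-extensions, which makes the localization step unnecessary.
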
 
\begin{proof} We know that $S^+_R$ exists by Corollary \ref{1.435} and satisfies $S^t_R\subseteq S^+_R$. In order to prove the equality of these rings, it is enough to prove it locally thanks to Proposition \ref{1.426}. Then we may assume that $R$ is local. But the fact that $R\subset S$ is unbranched implies that $R\subset S$ is an $i$-extension, as $S^t_R\subseteq S$, which is then subintegral. Then, $S^t_R=S^+_R$.
\end{proof} 

In \cite[Proposition 4.7]{Pic 12}, we proved that an FCP infra-integral extension is catenarian.

\begin{theorem}\label{1.425} Let $R\subset S$ be an integral FCP extension. Set $T:=\cap[U\in[R,S]\mid U\subseteq S$ is subintegral$]$. Then the following conditions are equivalent:
\begin{enumerate}
\item The co-subintegral closure  of $R\subset S$ exists;

\item $T\subseteq S$ is subintegral;

\item $T\subseteq S$ is  catenarian.
\end{enumerate}
If these conditions hold, then $T$ is the co-subintegral closure  of $R\subset S$.
\end{theorem}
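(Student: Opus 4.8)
The plan is to prove the cycle $(1)\Rightarrow(2)\Rightarrow(3)\Rightarrow(1)$, and along the way to identify $T$ as the co-subintegral closure whenever the conditions hold. The implication $(1)\Rightarrow(2)$ is essentially bookkeeping: if the co-subintegral closure $S^+_R$ exists, then it lies in every $U\in[R,S]$ with $U\subseteq S$ subintegral, hence $S^+_R\subseteq T$; conversely $S^+_R$ itself is one of the $U$'s being intersected, so $T\subseteq S^+_R$. Thus $T=S^+_R$, and in particular $T\subseteq S$ is subintegral. For $(2)\Rightarrow(3)$, I would invoke the result cited just before the statement, namely \cite[Proposition 4.7]{Pic 12}: an FCP infra-integral extension is catenarian; a subintegral extension is infra-integral, so $T\subseteq S$ catenarian follows immediately. (Here I should note that $T\subseteq S$ is automatically FCP as a subextension of the FCP extension $R\subset S$.)

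The real content is $(3)\Rightarrow(1)$: assuming only that $T\subseteq S$ is catenarian, I must show that $T\subseteq S$ is subintegral, for then $T$ is the least element $U$ with $U\subseteq S$ subintegral (it is a lower bound by construction, and it is itself in the set), i.e. $T=S^+_R$. The strategy is to run through a maximal chain of minimal extensions from $T$ to $S$ and show every step is ramified, using Proposition \ref{1.31}(1). Since $T$ is an intersection of rings $U$ each having $U\subseteq S$ subintegral, and since for any two such $U_1,U_2$ the ring $U_1\cap U_2$ still has $U_1\cap U_2\subseteq S$ subintegral (this should follow by the same kind of induction-over-minimal-extensions argument used in Section 3 for the co-integral closure, now exploiting that subintegrality localizes and that intersecting with a ramified minimal subextension stays ramified), one gets that $T\subseteq S$ is subintegral when the intersection is ``attained''. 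The catenarity hypothesis is exactly what forces attainment: if some step $T_i\subset T_{i+1}$ in a maximal chain $T=T_0\subset\cdots\subset T_n=S$ were decomposed or inert rather than ramified, then one could produce a proper subextension $T'\in[T,S[$ with $T'\subseteq S$ still subintegral, contradicting minimality of $T$, unless the chain lengths were forced to disagree — and catenarity rules exactly this out by comparing $\ell[T,S]$ computed along a ramified-only chain (which exists, through $S^+_T$ reaching $S$ only if $T^t_S=S$, cf. Proposition \ref{1.423} and Corollary \ref{1.81}) against one through a would-be non-ramified step.

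The main obstacle I anticipate is the passage from ``$T$ is an infimum of a family of subintegral-from-above rings'' to ``$T$ itself is subintegral from above'', which is precisely where the analogue in Remark \ref{1.421} fails in the non-catenarian case; so the proof must use catenarity in an essential, not cosmetic, way. I expect the argument to go: let $V:=S^t_R$-style candidate, namely take any minimal chain from $T$ to $S$; by Proposition \ref{1.426} one may localize and assume $(R,M)$ local, hence $\overline R$ and all rings in $[T,S]$ share nice structure. Then one shows that the hypotheses provide, for each maximal ideal, a ramified tower, and the catenarian condition equalizes all maximal chain lengths in $[T,S]$, forcing every maximal chain to consist of ramified minimal steps (since at least one such all-ramified chain exists, reaching from $T$ to the subintegral closure of $T$ in $S$, and that closure must be $S$ by the infimum property combined with length equality). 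Concluding, Proposition \ref{1.31}(1) gives $T\subseteq S$ subintegral, and then $T$ is the co-subintegral closure, completing the cycle. I would also record the ``resp.'' variant for the co-infra-integral closure in a parallel statement, but as the excerpt flags, that case is more involved and would be handled separately.
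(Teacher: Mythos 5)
Your treatment of $(1)\Rightarrow(2)$ and $(2)\Rightarrow(3)$ is correct and is exactly the paper's argument (identification $T=S^+_R$ when the closure exists, then \cite[Proposition 4.7]{Pic 12}). The gap is in $(3)\Rightarrow(1)$. First, the auxiliary claim you lean on --- that $U_1\cap U_2\subseteq S$ is subintegral whenever $U_1\subseteq S$ and $U_2\subseteq S$ are, provable ``by the same induction as in Section 3'' --- is false in general: Lemma \ref{1.431} (the situation of Remark \ref{1.421}) gives two minimal ramified extensions $K\subset S$, $L\subset S$ with $K\cap L=k$ and $k\subset S$ not subintegral. You acknowledge this, but the way you then invoke catenarity does not repair it: the step ``one could produce a proper subextension $T'\in[T,S[$ with $T'\subseteq S$ subintegral, contradicting minimality of $T$'' is a logical error, since $T$ is only the intersection (a lower bound) of the family, not a minimal member of it; exhibiting one larger ring that is subintegral below $S$ contradicts nothing. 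The subsequent length-comparison sketch never turns into an argument, and the appeal to Theorem \ref{1.423} presupposes that $T\subseteq S$ is infra-integral, which is not available under hypothesis (3).

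What actually makes $(3)\Rightarrow(1)$ work in the paper is a different mechanism that your plan never reaches. Since ${}_S^uR\subseteq U$ for every $U$ in the family, ${}_S^uR\subseteq T$, so $T\subseteq S$ is u-closed, hence an $i$-extension (Proposition \ref{1.311}), i.e.\ locally unbranched. One reduces to the local case by Proposition \ref{1.426} (catenarity localizes by \cite[Proposition 4.3]{Pic 12}), and then the decisive input is \cite[Theorem 4.13]{Pic 12}: for a catenarian unbranched extension, $[T_M,S_M]=[T_M,{}^t_{S_M}T_M]\cup[{}^t_{S_M}T_M,S_M]$. This dichotomy shows that if some $V\in[T_M,S_M]$ has $V\subset S_M$ subintegral, then $V$ cannot lie above the t-closure (subintegral plus t-closed forces $V=S_M$), so it lies below it, forcing ${}^t_{S_M}T_M=S_M$ and hence $T_M\subseteq S_M$ subintegral; otherwise $S_M$ itself is the local co-subintegral closure. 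Either way the co-subintegral closure of $T_M\subset S_M$ exists for every $M$, hence that of $T\subset S$ and of $R\subset S$ exists. Without the u-closedness observation and the catenarian--unbranched lattice decomposition, your chain-by-chain ``every step must be ramified'' strategy has no engine, so as written the implication $(3)\Rightarrow(1)$ is not proved.
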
 
\begin{proof} Since ${}_S^uR\subseteq U$ for any $U\in[R,S]$ such that $ U\subseteq S$ is subintegral, we get that ${}_S^uR\subseteq T$; so that, $T\subseteq S$ is u-closed, and then an i-extension by Proposition \ref{1.311}.

We prove that (1) $\Rightarrow $ (2), and more precisely, that $T$ is the co-subintegral closure  of $R\subset S$. Indeed, let $T'$ be the co-subintegral closure of $R\subset S$; so that, $T\subseteq T'$. But $T'\subseteq U$ for any $U\in[R,S]$ such that $ U\subseteq S$ is subintegral. Then $T'\subseteq T$ leads to $T'=T$. 

(2) $\Rightarrow$ (3) by \cite[Proposition 4.7]{Pic 12}.

(3) $\Rightarrow$ (1) 
 If $T=S$, there is no $U\in[R,S]$ such that $U\subset S$ is subintegral; so that, $S$ is the co-subintegral closure of $R\subset S$. 

Assume now that $T\neq S$.
Let $U\in[R,S]$ be such that $U\subset S$ is subintegral. Then, $T\subseteq U$; so that, the co-subintegral closure of $R\subset S$, if it exists, is the co-subintegral closure of $T\subseteq S$, and the converse also holds. It follows that, to show the existence of the co-subintegral closure of $R\subset S$, it is enough to show the existence of the co-subintegral closure of $T\subset S$. Using Proposition \ref{1.426}, we are reduced to show that for any $M\in\mathrm{MSupp}_T(S/T)$, there exists a co-subintegral closure of $T_M\subset S_M$. Since $T\subset S$ is catenarian, so is $T_M\subset S_M$ for any $M\in\mathrm{MSupp}(S/T)$ by \cite[Proposition 4.3]{Pic 12}. Let $M\in\mathrm{MSupp}(S/T)$. Then $T_M\subset S_M$ is catenarian and unbranched, and \cite[Theorem 4.13]{Pic 12} asserts that $[T_M,S_M]=[T_M,{}^t_{S_M}T_M]\cup[{}^t_{S_M}T_M,S_M]$. Assume that there exists some $V\in[T_M,S_M]$ such that $V\subset S_M$ is subintegral. We deduce that either $V\subset{}^t_{S_M}T_M\ (*)$ or ${}^t_{S_M}T_M\subseteq V\ (**)$. In case $(*)$, we have ${}^t_{S_M}T_M=S_M$ since $V\subset S_M$ is subintegral and $T_M\subset S_M$ is subintegral; so that, $T_M$ is the co-subintegral closure of $T_M\subset S_M$. But, in case $(**)$, we also have $V\subset S_M$ t-closed; so that, $V=S_M$, a contradiction. If there does not exist some $V\in[T_M,S_M]$  such that $V\subset S_M$ is subintegral, $S_M$ is the co-subintegral closure of $T_M\subset S_M$. In any case, there exists a co-subintegral closure of $T_M\subset S_M$; so that, there exists a co-subintegral closure of $T\subset S$, and also of $R\subset S$. 
\end{proof}

\begin{corollary}\label{1.439} Let $R\subset S$ be an integral FCP extension. If ${}_S^uR\subseteq S$ is catenarian, then the co-subintegral closure  of $R\subset S$ exists. 
In particular, these conditions hold if $R\subset S$ is  catenarian.
 
\end{corollary}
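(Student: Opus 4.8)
The plan is to reduce the statement to Theorem \ref{1.425}. With $T:=\cap[U\in[R,S]\mid U\subseteq S$ is subintegral$]$ as in that theorem, the first thing to record is that ${}_S^uR\subseteq T$: each $U$ occurring in the intersection is, being subintegral, an $i$-extension, hence $u$-closed by Proposition \ref{1.311}, so ${}_S^uR\subseteq U$ by the defining property of the $u$-closure; intersecting over all such $U$ gives ${}_S^uR\subseteq T$. Thus we obtain a tower ${}_S^uR\subseteq T\subseteq S$ of integral FCP extensions, of which ${}_S^uR\subseteq S$ is catenarian by hypothesis.

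The second step is the elementary observation that an upper subextension of a catenarian FCP extension is catenarian: if $A\subseteq B\subseteq C$ are FCP extensions and $A\subseteq C$ is catenarian, then $B\subseteq C$ is catenarian. Fix a maximal chain $\mathcal D$ of $[A,B]$ and two maximal chains $\mathcal E,\mathcal E'$ of $[B,C]$, of lengths $p$ and $q$ respectively. Since a maximal chain is precisely a juxtaposition of finitely many minimal extensions, the concatenation of $\mathcal D$ with $\mathcal E$ (resp.\ with $\mathcal E'$) is a maximal chain of $[A,C]$, of length $\ell(\mathcal D)+p$ (resp.\ $\ell(\mathcal D)+q$); catenarity of $A\subseteq C$ forces $p=q$, so every maximal chain of $[B,C]$ has the same length. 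Applying this with $A={}_S^uR$, $B=T$, $C=S$ shows $T\subseteq S$ is catenarian, and Theorem \ref{1.425}, implication (3) $\Rightarrow$ (1), yields that the co-subintegral closure of $R\subset S$ exists (and equals $T$). For the final assertion, if $R\subset S$ is catenarian then the same observation with $A=R$, $B=T$, $C=S$ gives $T\subseteq S$ catenarian directly, so the co-subintegral closure exists; equivalently, it shows ${}_S^uR\subseteq S$ is catenarian, bringing us back to the first part.

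I do not expect a genuine obstacle here: the whole argument is a short deduction from Theorem \ref{1.425} together with the bookkeeping of the second step. The only point requiring care is that ``maximal chain'' and ``catenarian'' are taken in the sense fixed in the Introduction, so that concatenating maximal chains of $[A,B]$ and $[B,C]$ really produces a maximal chain of $[A,C]$; if preferred, the stability of the catenarian property under passage to an upper subextension may simply be cited from \cite{Pic 12} instead of re-proved.
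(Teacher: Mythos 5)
Your proposal is correct and follows essentially the same route as the paper: it uses the ring $T$ of Theorem \ref{1.425}, the inclusion ${}_S^uR\subseteq T$, the transfer of catenarity to the upper subextension $T\subseteq S$, and then the implication (3) $\Rightarrow$ (1) of that theorem. The only difference is that you spell out the chain-concatenation argument for why an upper subextension of a catenarian extension is catenarian, a step the paper leaves implicit.
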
 
\begin{proof} We use the ring $T:=\cap[U\in[R,S]\mid U\subseteq S$ is subintegral$]$ defined in Theorem \ref{1.425}. It was proved in this Theorem that ${}_S^uR\subseteq T$; so that, $T\subseteq S$ is catenarian if so is ${}_S^uR\subseteq S$. Then, Theorem \ref{1.425} shows the existence of the co-subintegral closure  of $R\subset S$.
\end{proof}

This Corollary shows in another way than in Theorem \ref{1.423} that an FCP infra-integral extension admits a co-subintegral closure.

Although we have  similar statements for the subintegral case (Theorem \ref{1.425}) and the infra-integral case (Theorem \ref{1.628}), this last case needs some lemmata  before getting the result. To this aim, we begin by considering an FCP extension $R\subset S$ with $T,U\in[R,S],\ T\neq U$ such that $T\subset S$ and $U\subset S$ are both minimal non-inert and look at the behavior of $T\cap U\subset S$.

\begin{lemma}\label{1.430} Let $R\subset S$ be an integral FCP extension. Assume that there exist $T,U\in[R,S],\ T\neq U$ such that $T\subset S$ and $U\subset S$ are both minimal non-inert with the same crucial maximal ideal $M$. Set $V:=T\cap U$. The following properties hold:
\begin{enumerate}
\item $T\subset S$ and $U\subset S$ are  minimal of the same type.

\item $M\in\mathrm{Max}(V)$ and $M=(V:S)$.

\item $V\subset T$ and $V\subset U$ are t-closed finite extensions.

\item If $T\subset S$ and $U\subset S$ are both ramified, then $R\subset S$  admits  neither a co-subintegral closure nor a co-infra-integral closure.
 \end{enumerate} 
\end{lemma}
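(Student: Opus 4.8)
The plan is to take the four items in order, using only Theorem \ref{minimal}, Proposition \ref{1.31}, Proposition \ref{1.5} and the fact that every subextension of an integral FCP extension is module-finite. I begin with bookkeeping used throughout: $M=(T:S)=(U:S)$ is an ideal of $S$ lying in both $T$ and $U$, so $M\subseteq V$ and $M$ is a common ideal of $V$, $T$, $U$ and $S$, with $M\in\mathrm{Max}(T)\cap\mathrm{Max}(U)$ by Theorem \ref{minimal}; moreover $V\neq T$ and $V\neq U$, since $V=T$ would give $T\subseteq U$, whence $U\in\{T,S\}$ by minimality of $T\subset S$, both excluded. For (1), note that the type of a minimal non-inert extension $A\subset B$ with conductor $M$ is read off from the ring $B/M$ alone: by Theorem \ref{minimal} it is decomposed exactly when $B/M$ is reduced (then $B/M$ is a product of two fields) and ramified exactly when $B/M$ is non-reduced (then $B/M$ is local with a non-zero square-zero maximal ideal). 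As $S/M$ is one fixed ring and both $T\subset S$ and $U\subset S$ are non-inert, they are either both decomposed or both ramified. For (2), from $M\subseteq V\subseteq T$ with $T/M$ a field that is module-finite, hence integral, over the subring $V/M$, together with the elementary fact that a domain carrying an integral field extension is itself a field, we get that $V/M$ is a field, that is $M\in\mathrm{Max}(V)$; then $M\subseteq(V:S)\subseteq(T:S)=M$ yields $(V:S)=M$.

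For (3), finiteness of $V\subset T$ (and of $V\subset U$) is immediate. For t-closedness I would invoke Proposition \ref{1.31}(4) for the integral extension $V\subset T$, so it suffices to show that in a maximal chain $V=V_0\subset\cdots\subset V_k=T$ of minimal subextensions every step $V_j\subset V_{j+1}$ is inert. Since $M\subseteq V_j$ and $M$ is an ideal of $S\supseteq V_{j+1}$, we have $MV_{j+1}\subseteq M\subseteq V_j$, so $M\subseteq(V_j:V_{j+1})=\mathcal C(V_j,V_{j+1})$; and $M$ is maximal in $V_j$ by the argument of (2) applied to $V_j/M\subseteq T/M$, so $\mathcal C(V_j,V_{j+1})=M$. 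Now $V_{j+1}/M$ is a domain --- a subring of the field $T/M$ --- that is finite over the field $V/M$, hence itself a field. If $V_j\subset V_{j+1}$ were ramified we could write $V_{j+1}=V_j[q]$ with $q\notin V_j$ and $q^2\in M$, so $\bar q$ would be a non-zero nilpotent of the field $V_{j+1}/M$, which is absurd; if it were decomposed, $q^2-q\in M$ would give $\bar q(\bar q-1)=0$ in that field, forcing $q\in M\subseteq V_j$ or $q-1\in M\subseteq V_j$, i.e.\ $q\in V_j$, again absurd. Thus every step is inert, $V\subset T$ is t-closed, and symmetrically $V\subset U$ is t-closed.

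For (4), assume $T\subset S$ and $U\subset S$ are both ramified; then each is subintegral by Proposition \ref{1.31}(1), hence infra-integral. Suppose $R\subset S$ had a co-subintegral (resp.\ co-infra-integral) closure $C$; since $T$ and $U$ then lie in the family defining $C$, we get $C\subseteq T$ and $C\subseteq U$, hence $C\subseteq V$ and $V\in[C,S]$. By Proposition \ref{1.5} the extension $V\subseteq S$, and so its subextension $V\subseteq T$, would be subintegral (resp.\ infra-integral). But by (3) the extension $V\subset T$ is t-closed with $V\neq T$, and a t-closed infra-integral extension has equal source and target. This contradiction shows that $R\subset S$ admits neither a co-subintegral nor a co-infra-integral closure.

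The step I expect to be the main obstacle is the t-closedness in (3): identifying the crucial ideal of each minimal step of $V\subset T$ with $M$, and then combining the explicit generators furnished by Theorem \ref{minimal} with the observation that $V_{j+1}/M$ is a field in order to rule out the ramified and decomposed cases. The remaining assertions are conductor bookkeeping or direct applications of Propositions \ref{1.31} and \ref{1.5} and Theorem \ref{minimal}.
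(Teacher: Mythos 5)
Your proof is correct and reaches the four assertions along the same skeleton as the paper (conductor bookkeeping, $M\in\mathrm{Max}(V)$, t-closedness of $V\subset T$ and $V\subset U$, then the contradiction with the co-closures), but it is noticeably more self-contained. Where the paper gets (1) together with $M\in\mathrm{Max}(V)$ by citing \cite[Proposition 5.7]{DPPS}, you read the type off the reduced/non-reduced dichotomy for $S/M$ furnished by Theorem \ref{minimal} and obtain maximality of $M$ in $V$ from the elementary fact that a domain admitting an integral extension which is a field is itself a field. For (3) the paper passes to the finite field extension $V/M\subset T/M$ and quotes that field extensions are t-closed and that t-closedness lifts along the conductor (\cite[Lemme 3.10 and Th\'eor\`eme 3.15]{Pic 1}), whereas you show directly that each minimal step of a maximal chain of $[V,T]$ has conductor $M$ and can be neither ramified nor decomposed because $V_{j+1}/M$ is a field, and then invoke Proposition \ref{1.31}(4); this buys independence from \cite{Pic 1} at the cost of a slightly longer chain argument. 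In (4) you treat the co-infra-integral case in parallel with the co-subintegral one via Proposition \ref{1.5}, while the paper deduces it from Corollary \ref{1.435}; both are valid. The only step you leave implicit is that $V\subseteq T$ inherits subintegrality (resp.\ infra-integrality) from $V\subseteq S$; this is standard and follows, for instance, by refining a maximal chain of $[V,S]$ through $T$ and applying Proposition \ref{1.31}, so it is not a genuine gap.
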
 
\begin{proof} (1) According to \cite[Proposition 5.7]{DPPS}, $T\subset S$ and $U\subset S$ are minimal extensions of the same type and $M\in\mathrm{Max}(V)$. 

(2)  Since $M$ is also an ideal of $S$, it follows that $M=(V:S)$.

(3) We get that $M\in\mathrm{Max}(V),\ \mathrm{Max}(T)$ and $M\in\mathrm{Max}(U)$; so that, $V/M\subset T/M$ and $V/M\subset U/M$ are finite fields extensions, and then t-closed extensions by \cite[Lemme 3.10]{Pic 1}, and so are $V\subset T$ and $V\subset U$ by \cite[Th\'eor\`eme 3.15]{Pic 1}. In particular, $V\subset S$ is not infra-integral.

(4) Assume that $R\subset S$ admits a co-subintegral closure $S^+_R$. If $T\subset S$ and $U\subset S$ are both minimal ramified, $T,U\in[S^+_R,S]$ implies that $V=T\cap U\in[S^+_R,S]$, a contradiction by (3) since $S^+_R\subset S$ is subintegral. It follows that the co-infra-integral closure also does not exists by  Corollary \ref{1.435}.
\end{proof}

\begin{lemma}\label{1.467} Let $R\subset S$ be an integral  catenarian FCP extension. Let  $T,U\in[R,S],\ T\neq U$ be such that $T\subset S$ and $U\subset S$ are minimal non-inert. Then $T\cap U\subset S$ is infra-integral and $\ell[T\cap U,S]=2$. 
\end{lemma}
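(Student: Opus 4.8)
The plan is to reduce immediately to the local case and then invoke the structure of catenarian unbranched extensions. First I would localize: since $R \subset S$ is catenarian FCP, so is $R_M \subset S_M$ for each $M \in \mathrm{MSupp}(S/R)$ by \cite[Proposition 4.3]{Pic 12}, and the properties ``$T \cap U \subset S$ infra-integral'' and ``$\ell[T \cap U, S] = 2$'' can both be checked after localizing at the single crucial maximal ideal in play. Indeed, by Theorem \ref{crucial} a minimal extension is crucial, so $T \subset S$ and $U \subset S$ are $M_1$-crucial and $M_2$-crucial respectively for maximal ideals $M_1, M_2$ of (images in) $S$; away from $M_1 \cap R$ and $M_2 \cap R$ nothing happens. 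So I would split into the case $M_1 \cap R \neq M_2 \cap R$ and the case $M_1 \cap R = M_2 \cap R$.

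If the two crucial ideals contract to distinct primes of $R$, then localizing at either one kills one of the two minimal steps, so $(T \cap U)_P = T_P$ or $U_P$ for each relevant $P$, and $T \cap U \subset S$ is just (locally) one minimal non-inert extension after the other; minimal non-inert extensions are decomposed or ramified, hence infra-integral by Proposition \ref{1.31}, and a tower of two such is infra-integral with length $2$. More carefully: by Proposition \ref{0.15} (applied in the integral setting, noting minimal non-inert $\Rightarrow$ integral), $T \cap U \subset T$ and $T \cap U \subset U$ are minimal, so $\ell[T \cap U, S] = 2$; and each of the four minimal steps in the square $T \cap U \subset T, U \subset S$ is non-inert — the steps $T \cap U \subset T$ and $T \cap U \subset U$ inherit the non-inert type from the opposite side by the crucial-ideal bookkeeping — hence by Proposition \ref{1.31}(3) the composite is infra-integral.

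If $M_1 \cap R = M_2 \cap R =: M$, then since minimal extensions are crucial and $S$ is a shared localization target, $M_1, M_2 \in \mathrm{Max}(S)$ both lie over $M$; I would apply Lemma \ref{1.430}. But wait — Lemma \ref{1.430} assumes $T \subset S$ and $U \subset S$ have the \emph{same} crucial maximal ideal of $S$, and concludes $V := T \cap U \subset S$ is \emph{not} infra-integral (part (3) gives $V \subset T, V \subset U$ t-closed). So in the present lemma, the case of equal crucial ideals of $S$ must be excluded by the hypothesis that $T \cap U \subset S$ \emph{is} to be infra-integral — rather, I expect the intended reading is $M_1 \neq M_2$ as ideals of $S$ (they may still contract to the same $M$ of $R$, which is the genuinely new situation here). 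Then by Theorem \ref{minimal}(b),(c), $M_i = (T:S)$ resp. $(U:S)$ are not maximal in $S$ in the decomposed case but the relevant maximal ideals over $M$ differ; here $V \subset T$ and $V \subset U$ are still minimal by \cite[Proposition 6.6]{DPPS} (distinct conductors), hence $\ell[V,S] = 2$, and one checks the four steps are non-inert, so Proposition \ref{1.31}(3) again gives infra-integrality.

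The main obstacle will be the case $M_1 \cap R = M_2 \cap R$ with $M_1 \neq M_2$ in $S$: I must verify that the two ``transposed'' minimal steps $V \subset T$ and $V \subset U$ are again non-inert (not inert), which requires tracking the Ferrand–Olivier type of a minimal extension under the pullback along $V \hookrightarrow T$, $V \hookrightarrow U$. The catenarian hypothesis is presumably used to guarantee $\ell[V,S] = 2$ exactly (ruling out longer chains through $V$) — though with $V \subset T$, $V \subset U$ already minimal this is automatic, so catenarity may instead be invoked to place this lemma in the right context for the subsequent Theorem \ref{1.628}. I would handle the type-tracking by going modulo the common conductor and using Theorem \ref{minimal}'s $q$-element descriptions ($q^2 - q \in M$ or $q^2 \in M$), which are manifestly preserved when passing from $S$ down to $V = T \cap U$ since the defining relations live in $R \subseteq V$.
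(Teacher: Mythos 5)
There is a genuine gap, and it sits exactly where the catenarian hypothesis does its work. Your proposal never handles the case where the two conductors coincide, i.e.\ $(T:S)=(U:S)$ (equivalently, $T\subset S$ and $U\subset S$ have the same crucial maximal ideal). You correctly observe, via Lemma \ref{1.430}, that in that configuration $V:=T\cap U\subset T$ and $V\subset U$ are t-closed, so $V\subset S$ cannot be infra-integral --- but you then resolve the tension by declaring that ``the intended reading'' of the lemma excludes this case. It is not excluded by the hypotheses ($T\neq U$ minimal non-inert is all that is assumed); it must be \emph{proved impossible}, and this is precisely what catenarity is for. The paper's proof does this: if $(T:S)=(U:S)=:M$, then by Lemma \ref{1.430} $V\subset T$ is t-closed, so by Proposition \ref{1.31} there is $T'\in[V,T]$ with $T'\subset T$ minimal inert and $M=(T':T)=(T:S)$, which contradicts \cite[Lemma 4.10]{Pic 12} for a catenarian extension. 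Your closing guess that catenarity ``may instead be invoked to place this lemma in the right context'' or is only needed for $\ell[V,S]=2$ confirms the missing idea: without catenarity the equal-conductor configuration really occurs (compare Lemma \ref{1.431} and Remark \ref{1.421}) and the conclusion fails, so no reading of the statement that simply omits this case is available.

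In the remaining case $(T:S)\neq(U:S)$ your outline is essentially right but the justification is off: Proposition \ref{0.15} only gives that $V\subset S$ is integral with $V\neq T,U$; it does not give that $V\subset T$ and $V\subset U$ are minimal, nor their types. The statement you actually need --- distinct conductors force $V\subset T$ and $V\subset U$ to be minimal non-inert, whence $V\subset S$ is infra-integral of length $2$ by Proposition \ref{1.31}(3) --- is exactly \cite[Proposition 6.6]{DPPS}, which is what the paper cites; your ``crucial-ideal bookkeeping''/$q$-element type-tracking is a plausible sketch of why that reference works, but as written it does not substitute for it. The case split you set up (contractions to $R$ distinct or equal) is also not the relevant dichotomy; the proof turns on whether the conductors in $S$ are equal or not, and no localization at primes of $R$ is needed.
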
 
\begin{proof} Set $V:=T\cap U,\ M:=(T:S)$ and $N:=(U:S)$. Then, $M$ and $N$ are ideals of $S$. Assume first that $M\neq N$. According to \cite [Proposition 6.6]{DPPS}, $V\subset T$ and $V\subset U$ are both minimal non-inert; so that, $V\subset S$ is infra-integral. 

Assume now that $M=N$. By Lemma \ref{1.430}, we get, that $V\subset T$ is t-closed. Since $R\subset S$ has FCP, there exists $T'\in[V,T]$ such that $T'\subset T$ is minimal inert by Proposition \ref{1.31}. Moreover, $M=(T':T)=(T:S)$, a contradiction given by \cite[Lemma 4.10]{Pic 12}, since $R\subset S$ is catenarian.

To conclude, the only possible case is that $M\neq N$ and $T\cap U\subset S$ is infra-integral with $\ell[V,S]=2$.
\end{proof}

\begin{lemma}\label{1.469} Let $R\subset S$ be a catenarian FCP  integral extension. 
Let $T,U\in[R,S]$ be such that $T\subset S$ is infra-integral and $U\subset S$ is minimal non-inert. Then $T\cap U\subset S$ is infra-integral. 
  If, in addition, $T\not\subseteq U$, then $T\cap U\subset T$ is minimal non-inert.
 \end{lemma}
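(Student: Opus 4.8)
The plan is to establish the ``in addition'' assertion first --- that $T\cap U\subset T$ is minimal non-inert whenever $T\not\subseteq U$ --- and then to read off the first assertion from it: if $T\subseteq U$ then $T\cap U=T$ and $T\subset S$ is infra-integral by hypothesis, while if $T\not\subseteq U$ then, knowing $T\cap U\subset T$ is minimal non-inert, Proposition~\ref{1.31}(3) (applied to a maximal chain of $[T\cap U,S]$ beginning with $T\cap U\subset T$) shows $T\cap U\subset S$ is infra-integral. One cheap but essential preliminary: a subextension of a catenarian FCP extension is itself catenarian, because concatenating a maximal chain of $[R,R']$ with one of $[R',S]$ yields a maximal chain of $[R,S]$, so all maximal chains of $[R,R']$ have the same length; hence $R\subset R'$ is catenarian FCP integral for every $R'\in[R,S]$.

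I will prove the ``in addition'' part by induction on $n:=\ell[T,S]$, which is finite by FCP, with Lemma~\ref{1.467} as the one-step engine. For $n=1$ the extension $T\subset S$ is minimal, hence non-inert by Proposition~\ref{1.31}(3), and $T\neq U$ since $T\not\subseteq U$; Lemma~\ref{1.467} applied to $T,U\in[R,S]$ gives that $T\cap U\subset S$ is infra-integral with $\ell[T\cap U,S]=2$. As $T\cap U\subsetneq T\subsetneq S$ (the first inclusion proper because $T\not\subseteq U$), length $2$ forces $\ell[T\cap U,T]=1$, and non-inertness of $T\cap U\subset T$ follows because it is the initial step of a maximal chain of the infra-integral extension $T\cap U\subset S$.

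For $n\ge 2$, pick $R'\in[T,S]$ with $T\subset R'$ minimal and $\ell[R',S]=n-1$; this exists because $T\subset S$, being infra-integral and FCP, is catenarian by \cite[Proposition 4.7]{Pic 12}, so all its maximal chains have length $n$. Reading Proposition~\ref{1.31}(3) along such a chain makes $T\subset R'$ minimal non-inert and $R'\subset S$ infra-integral, and $R'\not\subseteq U$ because $T\subsetneq R'\subseteq U$ would contradict $T\not\subseteq U$. The induction hypothesis, applied to $R'\subset S$ and $U\subset S$, gives that $W:=R'\cap U\subset R'$ is minimal non-inert. Now $T\cap U=T\cap W$ since $T\subseteq R'$, and $T,W$ are incomparable in $[R,R']$: $T\subseteq W$ would force $W\in[T,R']=\{T,R'\}$, hence $W=T$ (as $W\subsetneq R'$) and then $T=W\subseteq U$, absurd; symmetrically $W\subseteq T$ is impossible. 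In particular $T\neq W$, so Lemma~\ref{1.467} applies to $T,W\in[R,R']$ --- legitimate since $R\subset R'$ is catenarian FCP integral --- giving that $T\cap W\subset R'$ is infra-integral with $\ell[T\cap W,R']=2$. Then $T\cap W\subsetneq T\subsetneq R'$ forces $\ell[T\cap W,T]=1$, and $T\cap W\subset T$ is non-inert as the initial step of a maximal chain of the infra-integral extension $T\cap W\subset R'$. Since $T\cap W=T\cap U$, the induction is complete, and the first assertion follows as indicated.

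The main obstacle is organizational rather than conceptual. One must switch the ambient extension from $S$ to the intermediate ring $R'$ at each inductive step so that Lemma~\ref{1.467} can be invoked, which forces the verification that $R\subset R'$ retains all three hypotheses (catenarian, FCP, integral), and one must repeatedly use $T\not\subseteq U$ to guarantee that the two rings fed into Lemma~\ref{1.467} are genuinely distinct --- indeed incomparable --- since that lemma says nothing when they coincide. It is precisely the catenarity hypothesis, through the elementary length implication ``$\ell[T\cap W,R']=2$ together with $T\cap W\subsetneq T\subsetneq R'$ gives $\ell[T\cap W,T]=1$'', that lets this induction bypass the intricate diagram-chasing used for the purely integral analogue in Lemma~\ref{0.16}.
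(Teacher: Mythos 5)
Your proof is correct and follows essentially the same route as the paper: induction on $\ell[T,S]$, passing to an intermediate ring $R'$ (the paper's $T'$) with $T\subset R'$ minimal, applying the induction hypothesis to $R'\subset S$ and then Lemma~\ref{1.467} to the two minimal non-inert extensions $T\subset R'$ and $R'\cap U\subset R'$, with catenarity (via \cite[Proposition 4.7]{Pic 12} and Proposition~\ref{1.31}) controlling the lengths and types. The only differences are cosmetic: you make explicit that subextensions of a catenarian FCP extension are catenarian and you extract minimality of $T\cap U\subset T$ from the length-two conclusion of Lemma~\ref{1.467}, points the paper leaves implicit.
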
 
\begin{proof} 
 Assume first that $T\subseteq U$. Then, $T\cap U=T$; so that, $T\cap U\subset S$ is infra-integral.

Assume now that $T\not\subseteq U$ and set $V:=T\cap U$. 

We will prove that $V\subset S$ is infra-integral and $V\subset T$ is minimal non-inert by induction on $r:=\ell[T,S]<\ell[R,S]$. For $r=1$, we have $\ell[T,S]=1$; so that, $T\subset S$ and $U\subset S$ are minimal non-inert. Then, Lemma \ref{1.467} gives the result because $T\neq U$. 

Assume that the result holds for some $1\leq r\leq\ell[R,S]-2$. Let $T,U\in[R,S],\ T\not\subseteq U$ be such that $T\subset S$ is infra-integral and $U\subset S$ is minimal non-inert, with $\ell[T,S]=r+1$. Since $r\geq 1$, $T\subset S$ is not minimal; so that, there exists $T'\in[T,S]$ such that $T\subset T'$ is minimal non-inert. Since $R\subset S$ is catenarian, we have $\ell[T,S]=\ell[T,T']+\ell[T',S]$, which implies that $\ell[T',S]=r$, and the induction hypothesis holds for $T'\subset S$ and $U\subset S$ because $T'\not\subseteq U$. Setting $V':=T'\cap U$, we get that $V'\subset S$ is infra-integral, with $V'\subset T'$ minimal (non-inert). We have the following commutative diagram:
\centerline{$\begin{matrix}
     T        & \to &      T'       & \to &      S        \\
\uparrow & {}  & \uparrow & {}   & \uparrow  \\
    V        & \to &      V'       & \to &      U
\end{matrix}$} 
Applying Lemma \ref{1.467} to the extensions $T\subset T'$ and $V'\subset T'$, it follows that $T\cap V'=T\cap T'\cap U=T'\cap V=V\subset T'$ is infra-integral, and so is $V\subset S$. In fact, $V'\neq T$. Otherwise, $V=T$ leads to $T\subseteq U$, a contradiction. Moreover, $V\subset T$ is minimal (non-inert). 
 Then, the induction hypothesis holds for $r+1$, and then for any $r$. 
\end{proof}

 \begin{lemma}\label{1.468} Let $R\subset S$ be an integral  catenarian FCP extension. If $T,U\in[R,S]$ are such that $T\subset S$ and $U\subset S$ are infra-integral, then $T\cap U\subset S$ is infra-integral. 
\end{lemma}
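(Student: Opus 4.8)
The plan is to follow the proof of Proposition~\ref{0.18} almost verbatim, replacing ``integral'' throughout by ``infra-integral'' and using Lemma~\ref{1.469} where that argument used Lemma~\ref{0.17}. Set $V:=T\cap U$. If $V\in\{T,U\}$ then $V\subset S$ is infra-integral by hypothesis, so assume $V\neq T,U$. First I would fix a maximal chain $U_0:=S\supset U_1\supset\cdots\supset U_n:=U$ of $[U,S]$; since $U\subset S$ is infra-integral, Proposition~\ref{1.31} forces every step $U_{i+1}\subset U_i$ to be minimal non-inert (decomposed or ramified) and, reading the chain from any $U_i$ up to $S$, every $U_i\subset S$ to be infra-integral. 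Put $V_i:=T\cap U_i$, so that $V_0=T$, $V_n=V$, and $V_{i+1}=V_i\cap U_{i+1}$. I would then prove by induction on $i$ that $V_i\subset S$ is infra-integral; the case $i=0$ is exactly the hypothesis on $T$.

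For the inductive step, assume $V_i\subset S$ is infra-integral. Then $V_i\subseteq U_i$ is infra-integral as well (refine a maximal chain of $V_i\subset S$ through $U_i$ and invoke Proposition~\ref{1.31}). The heart of the argument is to apply Lemma~\ref{1.469} to the extension $R\subset U_i$, taking $V_i$ as the infra-integral subextension (we just checked $V_i\subset U_i$ is infra-integral) and $U_{i+1}$ as the minimal non-inert subextension ($U_{i+1}\subset U_i$ is minimal non-inert): this yields that $V_i\cap U_{i+1}=V_{i+1}\subset U_i$ is infra-integral. Since $U_i\subset S$ is infra-integral, composing once more — via a maximal chain of $V_{i+1}\subset S$ refined through $U_i$ and Proposition~\ref{1.31} — shows $V_{i+1}\subset S$ is infra-integral, which closes the induction. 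Taking $i=n$ gives that $V=V_n\subset S$ is infra-integral, as desired.

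The one point that needs care — and the only place where the catenarity hypothesis of the present lemma genuinely enters — is that Lemma~\ref{1.469} requires its ambient extension to be catenarian, so I must justify that $R\subset U_i$ is catenarian whenever $R\subset S$ is. This is immediate: given two maximal chains of $[R,U_i]$, appending to each a fixed maximal chain of $[U_i,S]$ produces two maximal chains of $[R,S]$, necessarily of the same length, whence the two chains of $[R,U_i]$ have equal length; and $R\subset U_i$ is trivially still FCP and integral. I would also record that, unlike Lemma~\ref{0.17}, Lemma~\ref{1.469} imposes no inclusion relation between its two distinguished subextensions, so — in contrast with the proof of Proposition~\ref{0.18} — no separate treatment of the case $V_i\subseteq U_{i+1}$ is needed (when $V_i\subseteq U_{i+1}$ one has $V_{i+1}=V_i$ and the conclusion is already the induction hypothesis). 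I do not anticipate any real obstacle beyond this bookkeeping; all the substantive work has been isolated in Lemmas~\ref{1.467} and~\ref{1.469}.
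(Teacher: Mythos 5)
Your proposal is correct and is essentially the paper's own proof: the same induction down a maximal chain $S=U_0\supset\cdots\supset U_n=U$ of minimal non-inert steps, setting $V_i:=T\cap U_i$ and applying Lemma~\ref{1.469} to $V_i\subseteq U_i$ and $U_{i+1}\subset U_i$ at each stage, then composing with $U_i\subset S$ via Proposition~\ref{1.31}. Your only additions — starting the induction at $i=0$ instead of $i=1$ and explicitly checking that $R\subset U_i$ inherits catenarity (which the paper uses implicitly) — are harmless and correct.
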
 
\begin{proof} Set $n:=\ell[U,S]$. There exists a maximal chain $\{U_i\}_{i=0}^n$ such that $U_0:=S$ and $U_n:=U$, with $U_{i+1}\subset U_i$ minimal infra-integral. For each $i\in\mathbb N_n$, set $V_i:=T\cap U_i$. We  prove by induction on $i\in\mathbb N_n$ that $V_i\subset S$ is infra-integral for each $i\in\mathbb N_n$. 

For $i=1$, Lemma \ref{1.469} gives the result. 

Assume that the induction hypothesis holds for some $i\in\mathbb N_{n-1}$; so that, $V_i\subset S$ is infra-integral. In particular, $V_i\subseteq U_i$ is infra-integral and $U_{i+1}\subset U_i$ is minimal infra-integral. As $V_{i+1}=T\cap U_{i+1}$ and $U_{i+1}\subset U_i$, we get that $V_{i+1}=T\cap U_{i+1}\cap U_i=V_i\cap U_{i+1}$. Applying Lemma \ref{1.469} to the extensions $V_i\subseteq U_i$ and $U_{i+1}\subset U_i$, we get that $V_{i+1}\subseteq U_i$ is infra-integral, and so is $V_{i+1}\subset S$. To conclude, the induction hypothesis holds for any $i\in\mathbb N_n$. In particular, for $i=n$, it follows that $V_n=T\cap U\subset S$ is infra-integral.
\end{proof}

\begin{theorem}\label{1.628} Let $R\subset S$ be an integral FCP extension. Set $T:=\cap[U\in[R,S]\mid U\subseteq S$ is infra-integral $]$. Then the following conditions are equivalent:
 \begin{enumerate}
\item The co-infra-integral closure  of $R\subset S$ exists;

\item $T\subseteq S$ is infra-integral;

\item $T\subseteq S$ is catenarian.
\end{enumerate}
If these conditions hold, then $T$ is the co-infra-integral closure of $R\subset S$.
\end{theorem}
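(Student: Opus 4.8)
The plan is to run the cycle $(1)\Rightarrow(2)\Rightarrow(3)\Rightarrow(2)\Rightarrow(1)$ with $(2)$ as the pivot, and to extract the final assertion from the step $(2)\Rightarrow(1)$. First I would set $\mathcal F:=\{U\in[R,S]\mid U\subseteq S\text{ infra-integral}\}$, so that $S\in\mathcal F$, $T=\cap\mathcal F$, and every $U\in\mathcal F$ satisfies $T\subseteq U$; in particular $\mathcal F\subseteq[T,S]$. Since each $U\in\mathcal F$ has $U\subseteq S$ integral, Theorem \ref{0.19} gives $\underline R\subseteq U$ for all such $U$, hence $\underline R\subseteq T$, so that $T\subseteq S$ is integral, and $T\subseteq S$ inherits FCP from $R\subseteq S$. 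These remarks are what allow the lemmata to be applied with $T$ as base ring.

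Next come the easy implications. For $(1)\Rightarrow(2)$: if the co-infra-integral closure $W$ of $R\subset S$ exists, then $W\in\mathcal F$ forces $T\subseteq W$, while minimality of $W$ gives $W\subseteq U$ for every $U\in\mathcal F$, so $W\subseteq T$; hence $T=W\in\mathcal F$, i.e. $T\subseteq S$ is infra-integral. For $(2)\Rightarrow(3)$: an FCP infra-integral extension is catenarian by \cite[Proposition 4.7]{Pic 12}. For $(2)\Rightarrow(1)$: $T\in\mathcal F$ and $T\subseteq U$ for all $U\in\mathcal F$, so $T$ is the least element of $\mathcal F$, which is by definition the co-infra-integral closure of $R\subset S$; this also establishes the last sentence of the theorem.

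The substantial direction is $(3)\Rightarrow(2)$, and here Lemmata \ref{1.430}--\ref{1.468} are exactly what is needed. Assuming $T\subseteq S$ catenarian, it is an integral catenarian FCP extension, so Lemma \ref{1.468} applies with $T$ as base ring: for $U_1,U_2\in\mathcal F$ we have $U_1,U_2\in[T,S]$ with $U_1\subseteq S$ and $U_2\subseteq S$ infra-integral, whence $U_1\cap U_2\subseteq S$ is infra-integral (the cases $U_i=S$ or $U_1\subseteq U_2$ being trivial), and $U_1\cap U_2\in[T,S]$; thus $\mathcal F$ is stable under binary intersection. Since $[T,S]$ satisfies the descending chain condition ($R\subseteq S$ being FCP), $\mathcal F$ has a minimal element $W$, and for any $U\in\mathcal F$ the relations $W\cap U\in\mathcal F$ and $W\cap U\subseteq W$ force $W\cap U=W$, i.e. $W\subseteq U$. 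Hence $W=\cap\mathcal F=T$, so $T=W\in\mathcal F$ and $T\subseteq S$ is infra-integral, which is $(2)$.

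I expect the only delicate point to be bookkeeping rather than mathematics: one must first secure that $T\subseteq S$ is integral (via Theorem \ref{0.19}) and FCP before invoking Lemma \ref{1.468} with $T$ as base, and one must observe that every member of $\mathcal F$ automatically lies in $[T,S]$, so that the hypotheses of the lemma are genuinely met. Once these checks are in place, the proof is a short reduction of an a priori infinite intersection to a finite one via the chain conditions of the lattice, with the lemmata on pairwise intersections of infra-integral subextensions doing the real work.
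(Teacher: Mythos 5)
Your proof is correct, and its skeleton coincides with the paper's: the trivial implications $(1)\Leftrightarrow(2)$ and $(2)\Rightarrow(3)$ via \cite[Proposition 4.7]{Pic 12} are handled identically, and the substantial implication $(3)\Rightarrow(2)$ rests on the same key Lemma \ref{1.468} applied over the base ring $T$. The one genuine divergence is in how that implication is finished: the paper runs an induction on $k$, producing $U_k\in[T,S]$ with $U_k\subseteq S$ infra-integral and $\ell[U_k,S]=k$ (using catenarity of $T\subseteq S$ to control the lengths) until $U_m=T$ for $m=\ell[T,S]$, whereas you observe that Lemma \ref{1.468} makes the family $\mathcal F$ stable under pairwise intersection inside $[T,S]$, so the Artinian property of the lattice yields a minimal element $W$ which is then forced to equal $\cap\mathcal F=T$. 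Your route avoids the length bookkeeping entirely and mirrors the proof of Theorem \ref{0.19}, which is a mild but real simplification; the paper's chain construction has the minor virtue of exhibiting an explicit maximal chain of members of $\mathcal F$ descending to $T$. One small remark: your appeal to Theorem \ref{0.19} to get $\underline R\subseteq T$ and hence $T\subseteq S$ integral is superfluous, since the theorem's hypothesis already makes $R\subseteq S$ integral, so $T\subseteq S$ is integral (and FCP) for free; this does not affect the validity of the argument.
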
 
\begin{proof} 
(1) $\Rightarrow$ (2). We prove first that $T$ is the co-infra-integral closure of $R\subset S$. Indeed, let $T'$ be the co-infra-integral closure of $R\subset S$; so that, $T\subseteq T'$. But $T'\subseteq U$ for any $U\in[R,S]$ such that $U\subseteq S$ is infra-integral. Hence, $T'\subseteq T$ leads to $T'=T$ and then (1) $\Rightarrow$ (2). 

(2) $\Rightarrow$ (1) Since any $U\in[R,S]$ such that $U\subseteq S$ is infra-integral verifies $T\subseteq U$, it follows that $T$ is the co-infra-integral closure of $R\subset S$.

(2) $\Rightarrow$ (3) by \cite[Proposition 4.7]{Pic 12}.

(3) $\Rightarrow$ (2) Since $R\subset S$ has FCP, set $m:=\ell[T,S]$. If $T=S$, there is no $U\in[R,S]$ such that $U\subset S$ is infra-integral; so that, $S$ is the co-infra-integral closure of $R\subset S$. Assume that $m\geq 1$. Then, $T\subset S$. We show by induction on $k\in\mathbb N_m$ that there exists $U_k\in[T,S]$ such that $U_k\subset S$ is infra-integral with $\ell[U_k,S]=k$. Since $m\geq 1$, there is some $U_1\in[T,S[$ such that $U_1\subset S$ is minimal infra-integral; so that, the induction hypothesis holds for $k=1$. Assume that the induction hypothesis holds for some $k\in\mathbb N_{m-1}$; so that, there exists $U_k\in[T,S]$ such that $U_k\subset S$ is infra-integral with $\ell[U_k,S]=k$. In particular, $T\subset U_k$ since $k<m$. Assume that for any $V\in[R, S]$ such that $V\subset S$ is infra-integral we have $U_k\subseteq V$. Then, $T=U_k$, a contradiction. This implies that there exists some $V\in[R, S]$ such that $V\subset S$ is infra-integral, with $U_k\not\subseteq V$, giving $T\subseteq V\cap U_k\subset U_k$. By Lemma \ref{1.468}, $V\cap U_k\subset U_k$ is infra-integral, and there exists $U_{k+1}\in[V\cap U_k,U_k]$ such that $U_{k+1}\subset U_k$ is minimal infra-integral; so that, $\ell[U_{k+1},S]=k+1$. Then, the induction hypothesis holds for any $k\in\mathbb N_m$. Of course, for $k=m$, we get that $U_m=T$, and $T\subset S$ is infra-integral. 
\end{proof}

\begin{corollary}\label{1.629} Let $R\subset S$ be an integral catenarian FCP extension. Then the co-infra-integral closure  of $R\subset S$ exists.
\end{corollary}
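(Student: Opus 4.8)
The plan is to obtain the corollary as an immediate consequence of Theorem \ref{1.628}. As in that theorem, set $T:=\cap[U\in[R,S]\mid U\subseteq S$ is infra-integral$]$; this is a well-defined element of $[R,S]$ (an intersection of $R$-subalgebras of $S$ is again an $R$-subalgebra, and the family is nonempty since $S\subseteq S$ is trivially infra-integral). By the equivalence $(1)\Leftrightarrow(3)$ of Theorem \ref{1.628}, it suffices to check that $T\subseteq S$ is catenarian; then the same theorem yields both the existence of the co-infra-integral closure and the identification $S^t_R=T$.

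First I would isolate the elementary fact that every subextension of a catenarian FCP extension is catenarian. Fix a maximal chain $R=R_0\subset\cdots\subset R_k=T$ of $[R,T]$ (which exists and is finite by FCP), so that each $R_i\subset R_{i+1}$ is minimal and $k$ is a fixed integer. Given an arbitrary maximal chain $T=T_0\subset\cdots\subset T_m=S$ of $[T,S]$, each step $T_j\subset T_{j+1}$ is minimal, hence the concatenation $R=R_0\subset\cdots\subset R_k=T_0\subset\cdots\subset T_m=S$ is a maximal chain of $[R,S]$, of length $k+m$. Since $R\subset S$ is catenarian, $k+m=\ell[R,S]$, so $m=\ell[R,S]-k$ is independent of the chosen chain of $[T,S]$. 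Thus all maximal chains of $[T,S]$ have the same length, i.e. $T\subseteq S$ is catenarian.

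Combining these two steps finishes the proof via Theorem \ref{1.628}. I do not anticipate any genuine obstacle here: the only points needing a word of justification are that $T$ really lies in $[R,S]$ and that a concatenation of two maximal chains is again a maximal chain, both immediate from the FCP hypothesis together with the description of maximal chains as juxtapositions of minimal extensions. This is the exact analogue — with Theorem \ref{1.628} in place of Theorem \ref{1.425} — of the argument for Corollary \ref{1.439} in the co-subintegral case.
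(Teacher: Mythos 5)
Your proposal is correct and is essentially the paper's own proof: both take $T:=\cap[U\in[R,S]\mid U\subseteq S\ \text{infra-integral}]$, observe that $T\subseteq S$ is catenarian, and invoke Theorem \ref{1.628} ((3) $\Rightarrow$ (1)). Your concatenation-of-maximal-chains argument simply makes explicit the inheritance of catenarity that the paper asserts without proof, so there is no substantive difference.
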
 
\begin{proof} We use the ring $T:=\cap[U\in[R,S]\mid U\subseteq S$ is infra-integral$]$ defined in Theorem \ref{1.628}. Then $T\subseteq S$ is catenarian and Theorem \ref{1.628} shows the existence of the co-infra-integral closure  of $R\subset S$.
\end{proof}

\begin{proposition}\label{1.427} Let $R\subset S$ be an integral FCP extension with co-subintegral (resp.; co-infra-integral) closure $T$. For any $U\in[R,S],\ TU$ is  the co-subintegral (resp.; co-infra-integral) closure  of $U\subseteq S$, that is $(S^+_R)U=S^+_U$ (resp.; $(S^t_R)U=S^t_U$).
\end{proposition}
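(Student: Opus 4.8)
The plan is to establish both assertions simultaneously, writing $T$ for $S^+_R$ (resp.\ for $S^t_R$) and recalling that $TU\in[U,S]$ because $U\subseteq TU\subseteq S$. The statement to prove is that $TU$ is the \emph{least} $V\in[U,S]$ with $V\subseteq S$ subintegral (resp.\ infra-integral); this both identifies the co-subintegral (resp.\ co-infra-integral) closure of $U\subseteq S$ and shows it exists. So the proof splits into two halves: first that $TU\subseteq S$ has the relevant property, and then that $TU$ is contained in every competitor.

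For the first half, I would exploit the elementary normal form available under FCP. Since $T\subseteq S$ is subintegral (resp.\ infra-integral), it is $s$-integral (resp.\ $t$-integral) by Proposition~\ref{1.36}, and being finitely generated (an FCP extension is finitely generated) it is in fact cs-elementary (resp.\ ct-elementary): there is a tower $T=B_0\subseteq B_1\subseteq\cdots\subseteq B_m=S$ in which each $B_j\subseteq B_{j+1}$ is $s$-elementary (resp.\ $t$-elementary), say $B_{j+1}=B_j[b_j]$ with $b_j^2,b_j^3\in B_j$ (resp.\ with $p_r(b_j),b_jp_r(b_j)\in B_j$ for some $r\in B_j$), in the notation of Definition~\ref{1.33}. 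Multiplying this tower by $U$ gives $TU=B_0U\subseteq B_1U\subseteq\cdots\subseteq B_mU=SU=S$, and for each $j$ one has $B_{j+1}U=(B_jU)[b_j]$ with all the defining relations still holding with coefficients in $B_j\subseteq B_jU$; hence $B_jU\subseteq B_{j+1}U$ is again $s$-elementary (resp.\ $t$-elementary), or trivial in which case it is discarded. Thus $TU\subseteq S$ is cs-elementary (resp.\ ct-elementary), hence $s$-integral (resp.\ $t$-integral), i.e.\ subintegral (resp.\ infra-integral) again by Proposition~\ref{1.36}. (Equivalently, one could take a maximal chain of minimal extensions $T=T_0\subset\cdots\subset T_n=S$, all ramified, resp.\ ramified or decomposed, by Proposition~\ref{1.31}, and use the explicit generators of Theorem~\ref{minimal}: a ramified generator $q$ satisfies $q^2,q^3\in T_i$ and a decomposed one $q^2-q,q^3-q^2\in T_i$, since the conductor is an ideal of $T_{i+1}$; this yields the same tower after translation by $U$.)

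For the second half, let $V\in[U,S]$ with $V\subseteq S$ subintegral (resp.\ infra-integral). Then $V\in[R,S]$, so by the very definition of $T=S^+_R$ (resp.\ $S^t_R$) as the least element of $[R,S]$ that is subintegral (resp.\ infra-integral) over... in $S$, we get $T\subseteq V$; since also $U\subseteq V$, it follows that $TU\subseteq V$. Together with the first half this shows $TU$ is the least element of $[U,S]$ which is subintegral (resp.\ infra-integral) in $S$, that is, $S^+_U=TU=(S^+_R)U$ (resp.\ $S^t_U=TU=(S^t_R)U$). The only step requiring genuine care is the base-change claim in the first half, namely that $B_jU\subseteq B_{j+1}U$ inherits the $s$- (resp.\ $t$-) elementary structure; but this is immediate from $B_{j+1}U=(B_jU)[b_j]$ together with $B_j\subseteq B_jU$, so it is routine rather than an obstacle. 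It is worth stating explicitly at the outset that the existence of the co-closure of $U\subseteq S$ is not presupposed but is a byproduct of this argument.
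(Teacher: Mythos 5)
Your proof is correct, and its second half (minimality of $TU$ among all $V\in[U,S]$ with $V\subseteq S$ subintegral, resp.\ infra-integral) is exactly the paper's argument. Where you diverge is in the first half. The paper disposes of the claim that $TU\subseteq S$ has the relevant property in one line: since $TU\in[T,S]$ and $T$ is the co-closure, Proposition~\ref{1.5} (itself a consequence of Proposition~\ref{1.31} applied to maximal chains above $T$) already says that \emph{every} ring between $T$ and $S$ is subintegral (resp.\ infra-integral) in $S$. You instead prove directly that the property transfers from $T\subseteq S$ to $TU\subseteq S$ by writing $T\subseteq S$ as a tower of $s$-elementary (resp.\ $t$-elementary) steps $B_j\subseteq B_j[b_j]$ and observing that $B_jU\subseteq (B_jU)[b_j]$ inherits the same defining relations; the details check out (under FCP the extension is finitely generated, so $s$-/$t$-integral does collapse to cs-/ct-elementary, and your alternative via Proposition~\ref{1.31} and the generators of Theorem~\ref{minimal}, using $q^3\in MT_{i+1}\subseteq T_i$ resp.\ $q^3-q^2\in MT_{i+1}\subseteq T_i$, is also sound). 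The trade-off: your route is self-contained and actually proves the more general stability statement ``$T\subseteq S$ subintegral (resp.\ infra-integral) implies $TU\subseteq S$ subintegral (resp.\ infra-integral)'' without using that $T$ is a co-closure, at the cost of redoing elementwise work; the paper's route is shorter because it leans on the already-established structural fact about the interval $[T,S]$, which is tailor-made for this situation.
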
 
\begin{proof} Since $TU\in[T,S]$, we get that $TU\subseteq S$ is subintegral (resp.; infra-integral), with $TU\in[U,S]$. Let $V\in[U,S]$ be such that $V\subseteq S$ is subintegral (resp.; infra-integral). It follows that $T\subseteq V$; so that, $TU\subseteq V$, and $TU$ is the least $V\in[U,S]$ such that $V\subseteq S$ is subintegral (resp.; infra-integral). Then, $TU$ is the co-subintegral (resp.; co-infra-integral) closure of $U\subseteq S$. 
\end{proof}

\begin{remark}\label{1.428}  Let $R\subset S$ be an integral FCP extension with co-subintegral closure $T$ and let $U\in[R,S]$. Contrary to other closures like integral closure, seminormalization or t-closure, there is no reason for $U\cap T$ to be the co-subintegral closure of $R\subseteq U$ because $U\cap T$ is not necessarily the least $V\in[R,T]$ such that $U\cap T\subseteq U$ is subintegral. The example given in Remark \ref{1.317} and Theorem \ref{1.423} show that $R^2$ is the co-subintegral closure of $R\subset S$. But $R^2\cap{}^+_SR={}^+_{R^2}R$ is not the co-subintegral closure of $R\subset{}^+_SR$ since $R\subset{}^+_SR$ is subintegral. In this example, since $R\subset S$ is infra-integral, $R$ is both the co-infra-integral closure of $R\subset S$ and $R\subset{}^+_SR$. 
\end{remark} 

 We recall a result from \cite{Pic 15}.

\begin{proposition}\label{1.429} \cite[Proposition 5.21]{Pic 15} An integral FCP extension $R\subset S$, that is split at ${}^+_SR$ (resp.; ${}^t_SR$), has a co-subintegral (resp.; co-infra-integral) closure $S^+_R$ (resp.; $S^t_R$), which is its complement.
   \end{proposition}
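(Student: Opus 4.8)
The plan is to localize, exploiting that an integral FCP extension is a $\mathcal B$-extension, and to observe that the splitting hypothesis forces every localization of $R\subset S$ to be either subintegral or seminormal; the co-subintegral closure then exists by gluing the obvious local ones via Proposition \ref{1.426}, and a short localization computation identifies it with the complement of ${}^+_SR$. I will carry out the subintegral case, the infra-integral case being word-for-word the same after replacing ``subintegral/seminormal'' by ``infra-integral/t-closed'' and the relevant items of Proposition \ref{1.31}. We may assume $T_0:={}^+_SR\in\,]R,S[$: if $T_0=R$ then $R\subset S$ is seminormal, so $S^+_R=S$ by Proposition \ref{1.6} and $S$ is the (unique) complement of $R$; if $T_0=S$ then $R\subset S$ is subintegral, so $S^+_R=R$ and $R$ is the complement of $S$.

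First I would fix the combinatorics. From the short exact sequence $0\to T_0/R\to S/R\to S/T_0\to 0$ of $R$-modules one gets $\mathrm{MSupp}(S/R)=A\cup B$, where $A:=\mathrm{MSupp}(T_0/R)$ and $B:=\mathrm{MSupp}(S/T_0)$, and the hypothesis that $R\subset S$ splits at $T_0$ is precisely that $A\cap B=\emptyset$. Consequently, for $M\in A$ we have $M\notin B$, hence $S_M=(T_0)_M$ and $R_M\subset S_M=(T_0)_M$ is subintegral (a localization of the subintegral extension $R\subseteq T_0$); while for $M\in B$ we have $M\notin A$, hence $(T_0)_M=R_M$, and since seminormalization commutes with localization, $R_M={}^+_{S_M}R_M$, i.e.\ $R_M\subset S_M$ is seminormal. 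By Proposition \ref{1.6}, in either case $R_M\subset S_M$ admits a co-subintegral closure $T_{(M)}$, equal to $R_M$ when $M\in A$ and to $S_M$ when $M\in B$. Proposition \ref{1.426}(2) then yields that the co-subintegral closure $S^+_R$ of $R\subset S$ exists, characterised by $(S^+_R)_M=R_M$ for $M\in A$ and $(S^+_R)_M=S_M$ for $M\in B$.

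It remains to prove that $S^+_R$ is the complement of $T_0$. Localization is exact and commutes with composita, and outside the finite set $\mathrm{MSupp}(S/R)$ every ring in $[R,S]$ agrees with $S$ after localization; so it is enough to check that $(S^+_R\cap T_0)_M=R_M$ and $(S^+_RT_0)_M=S_M$ for each $M\in\mathrm{MSupp}(S/R)$. For $M\in A$ one computes $(S^+_R)_M\cap(T_0)_M=R_M\cap S_M=R_M$ and $(S^+_R)_M(T_0)_M=R_MS_M=S_M$; for $M\in B$, $(S^+_R)_M\cap(T_0)_M=S_M\cap R_M=R_M$ and $(S^+_R)_M(T_0)_M=S_MR_M=S_M$. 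Hence $S^+_R\cap T_0=R$ and $S^+_RT_0=S$, so $S^+_R$ is a complement of $T_0={}^+_SR$; as $R\subset S$ is FCP and splits at $T_0$, this complement is unique by \cite[Theorem 4.8]{Pic 15}, and therefore $S^+_R=({}^+_SR)^o$. The infra-integral case runs identically with $T_1:={}^t_SR$ in place of $T_0$, noting that on $A':=\mathrm{MSupp}(T_1/R)$ the extension $R_M\subset S_M$ is infra-integral and on $B':=\mathrm{MSupp}(S/T_1)$ it is t-closed, and invoking Proposition \ref{1.31}(3),(4) instead of (1),(5).

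I expect the argument to be essentially bookkeeping; the single substantive input is Proposition \ref{1.6}(2), that a seminormal (resp.\ t-closed) FCP extension is its own co-subintegral (resp.\ co-infra-integral) closure --- equivalently, that no proper subintegral (resp.\ infra-integral) subextension fits inside a seminormal (resp.\ t-closed) one --- and this is where Proposition \ref{1.31} (ramified minimal steps versus decomposed/inert ones along maximal chains) does the real work. The two things to be careful with are: assigning $({}^+_SR)_M$ to the correct side, $R_M$ or $S_M$, of the partition $A\sqcup B$; and the fact that seminormalization and $t$-closure commute with localization, so that ${}^+_SR\subseteq S$ and ${}^t_SR\subseteq S$ genuinely are seminormal and t-closed.
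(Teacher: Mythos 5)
Your argument is correct, and it is worth noting that the paper itself gives no proof of Proposition \ref{1.429}: the statement is simply imported from \cite[Proposition 5.21]{Pic 15} (with Remark \ref{1.436}(1) observing only that the complement identity could also be extracted from Proposition \ref{1.47}). What you supply is therefore a self-contained derivation inside the present paper's toolkit, and it works: the splitting hypothesis $\mathrm{MSupp}(S/{}^+_SR)\cap\mathrm{MSupp}({}^+_SR/R)=\emptyset$ does force each localization $R_M\subset S_M$, $M\in\mathrm{MSupp}(S/R)$, to be either subintegral (when $({}^+_SR)_M=S_M$) or seminormal (when $({}^+_SR)_M=R_M$, using that $({}^+_SR)_M\subseteq S_M$ stays seminormal, e.g.\ via Proposition \ref{1.31}(5) and the behaviour of minimal extensions under localization, or the standard fact that seminormalization commutes with localization); Proposition \ref{1.6} then gives the local co-closures, Proposition \ref{1.426}(2) glues them, and the local computation of $S^+_R\cap{}^+_SR$ and $S^+_R\,{}^+_SR$ (localization commuting with finite intersections and with composita) identifies $S^+_R$ with the complement, unique by \cite[Theorem 4.8]{Pic 15}. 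The same scheme runs verbatim for ${}^t_SR$. Two small remarks: the degenerate cases ${}^+_SR\in\{R,S\}$ are strictly outside the hypothesis, since splitting is only defined for $T\in\,]R,S[$, so treating them is harmless but unnecessary; and where you invoke commutation of seminormalization (resp.\ $t$-closure) with localization, the weaker and more readily available fact that a localization of a seminormal (resp.\ $t$-closed) integral FCP extension is again seminormal (resp.\ $t$-closed) already suffices, which keeps the argument entirely within results quoted in this paper.
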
 

\begin{remark}\label{1.436} 
(1) The last result of Proposition \ref{1.429} could also be gotten by Proposition \ref{1.47}.

(2) Let $R\subset S$ be an integral FCP extension such that the co-subintegral closure $S^+_R$ exists and is a complement of ${}^+_SR$. Then $R\subset S$ does not need to  split at ${}^+_SR$. See the example of Remarks \ref{1.428} and \ref{1.317}, and take $R_1:=R+(M\times M)$. Then, ${}^+_SR_1={}^+_SR$ and $S^+_{R_1}=S^+_R=R^2$. We get ${}^+_SR_1\cap S^+_{R_1}=R_1$ and $({}^+_SR_1)(S^+_{R_1})=S$. But $R_1$ is a local ring; so that, $R_1\subset S$ does not split at ${}^+_SR_1$.
\end{remark}

We end this section by giving some examples where we can check if the co-subintegral closure exists or not.  

The following Lemmata are special cases of Lemma \ref{1.430} and explain more precisely the situation of Remark \ref{1.421}.

\begin{lemma}\label{1.431} Let $K\subset S$ and $L\subset S$ be two minimal ramified extensions where $K$ and $L$ are distinct fields. Set $k:=K\cap L$. The following properties hold:
\begin{enumerate}
\item There exists $x\in S,\ x\neq 0$ such that $x^2=0$ with $S=K+Kx=L+Lx$, which is a local ring. Moreover, $M:=Kx=Lx$ is the maximal ideal of $S$. 

\item There exists a $k$-isomorphism of $k$-algebras $\varphi:K\to L$ defined, for any $a\in K$, by $\varphi(a)=b$, where $b$ is the unique element of $L$ such that $ax=bx$.

\item $k$ is a field and the extension $k\subset S$ does not admit a co-subintegral closure.
\end{enumerate} 
\end{lemma}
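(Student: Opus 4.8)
The plan is to exhibit the generator $x$ directly from the minimality of $K \subset S$, transport it across to $L \subset S$, and then read off all three assertions from the resulting description $S = K + Kx = L + Lx$. First I would apply Theorem \ref{minimal}(c) to the minimal ramified extension $K \subset S$ with conductor $M_0 := (K:S)$: since $K$ is a field, $M_0$ is a maximal ideal of $K$, hence $M_0 = 0$, so $(K:S) = 0$ and there is $q \in S \setminus K$ with $S = K[q]$ and $q^2 \in M_0 = 0$. Set $x := q$; then $x \neq 0$ (else $S = K$), $x^2 = 0$, and $S = K + Kx$ because $x^2 = 0$ forces $K[x] = K + Kx$. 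The ideal $Kx$ is nilpotent of square zero, and $S/Kx \cong K$ is a field, so $Kx \in \mathrm{Max}(S)$; since $Kx$ consists of nilpotents it is contained in every prime, so it is the unique maximal ideal, i.e. $M := Kx$ and $S$ is local. Running the identical argument for $L \subset S$ produces $x' \in S$ with $x'^2 = 0$ and $S = L + Lx'$, with $Lx'$ the maximal ideal of $S$; by uniqueness $Lx' = Kx = M$. Hence $x' = \lambda x$ for some unit $\lambda$ of $S$, and rescaling we may assume $S = L + Lx$ with the same $x$. This gives (1).

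For (2): each $a \in K$ satisfies $ax \in M = Lx$, so there is a unique $b \in L$ with $ax = bx$ (uniqueness because $x$ is part of a basis: $S = K \oplus Kx$ as a $K$-vector space, and similarly over $L$, so $\mathrm{Ann}_S(x)$ meets neither $K^\times$ nor $L^\times$ suitably — more precisely, if $bx = b'x$ with $b,b' \in L$ then $(b-b')x = 0$, and since $S = L + Lx$ with $x^2 = 0$ the annihilator of $x$ in $S$ is $Lx$, forcing $b - b' \in Lx \cap L = 0$). Define $\varphi(a) := b$. Additivity is clear. For multiplicativity, $\varphi(a_1)\varphi(a_2) x = \varphi(a_1)(a_2 x) = a_2 (\varphi(a_1) x) = a_2 a_1 x = \varphi(a_1 a_2) x$, where the middle step uses that $\varphi(a_1) \in L \subseteq S$ commutes with $a_2$; hence $\varphi(a_1 a_2) = \varphi(a_1)\varphi(a_2)$. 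If $a \in k = K \cap L$ then $ax = ax$ shows $\varphi(a) = a$, so $\varphi$ fixes $k$. Symmetrically one builds $\psi : L \to K$, and $\psi \circ \varphi = \mathrm{id}_K$, $\varphi \circ \psi = \mathrm{id}_L$, so $\varphi$ is a $k$-algebra isomorphism.

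For (3): $k = K \cap L$ is a subring of the field $K$ which is integrally closed nowhere in particular — but directly, $k$ is an intersection of two fields with the same $1$, and I claim it is a field. Indeed $K \subset S$ and $L \subset S$ are integral (minimal ramified extensions are finite), so $k \subseteq S$ is integral, hence $k \subseteq K$ is integral; as $K$ is a field and $k$ a domain, $k$ is a field (an integral domain integral inside a field, with the field as fraction field candidate, is a field — or simply: for $0 \neq c \in k$, $c^{-1} \in K$ is integral over $k$, and also $c^{-1} \in L$ since $c^{-1}$ is the $L$-inverse too, so $c^{-1} \in K \cap L = k$). Now suppose $k \subset S$ had a co-subintegral closure $T = S^+_k$. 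Both $K \subset S$ and $L \subset S$ are subintegral (minimal ramified extensions are subintegral by Proposition \ref{1.31}(1) applied to the one-step chain, or directly from Definition \ref{1.3} since their residual extensions are isomorphisms and they are $i$-extensions), so $T \subseteq K$ and $T \subseteq L$, whence $T \subseteq K \cap L = k$, forcing $T = k$. But then $k \subset S$ would itself be subintegral, hence infra-integral, so by Proposition \ref{1.31}(3) every minimal step in a maximal chain of $[k,S]$ is decomposed or ramified — in particular $k \subset S$ is an $i$-extension. Yet $[k,S] \ni K, L$ with $K \neq L$, and by Proposition \ref{0.15} applied to the two distinct minimal integral extensions $K \subset S$, $L \subset S$, the extension $K \cap L = k \subset S$ would be integral with a subextension structure — more to the point, the residual field extension of $k \subset K$ is $k \to K$, an isomorphism only if $k = K$; since $k = K$ would give $K \subseteq L$ contradicting $K \neq L$ (both have the same dimension $\tfrac12\dim_? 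S$ over... ), we instead argue: $k \subset K$ and $k \subset L$ are both subextensions of the subintegral $k \subset S$, hence subintegral, hence $k = K$ and $k = L$ (a subintegral field extension is trivial), contradicting $K \neq L$. Therefore no co-subintegral closure exists. The main obstacle I expect is pinning down the uniqueness in (2) and the "subintegral field extension is trivial" step cleanly; both reduce to the fact that a minimal ramified extension of a field is impossible (since its conductor, a maximal ideal of the field, is zero, yet must be properly contained in a maximal ideal of the target — giving the contradiction), so I would isolate that observation first and cite it in (3).
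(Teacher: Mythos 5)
Your proof is correct in substance and follows the paper closely in parts (1) and (2), but it diverges at two points worth noting, and one step is glossed. In (1), after producing $x$ from the ramified extension $K\subset S$, the paper simply observes that $x\notin L$ (because $x^2=0$ and $L$ is a field), so minimality of $L\subset S$ forces $S=L[x]=L+Lx$ and hence $Lx=Kx=M$; no second generator and no rescaling are needed. Your version, which extracts a separate $x'$ from $L\subset S$ and writes $x'=\lambda x$ with $\lambda$ a unit of $S$, leaves a small gap at ``rescaling we may assume $S=L+Lx$'': the unit $\lambda$ lies in $S$, not a priori in $L$, so you still must check $Lx'=Lx$. This is easy (write $\lambda=\ell_0+\ell_1x'$ with $\ell_0,\ell_1\in L$; then $x'=\lambda x=\ell_0x$ since $x'x=\lambda x^2=0$, and $\ell_0\neq 0$), but it needs saying. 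Part (2) is essentially the paper's argument: same uniqueness step ($b-b'$ kills $x$, hence lies in $M\cap L=0$), same multiplicativity computation; your explicit inverse $\psi$ replaces the paper's ``injective between fields and obviously surjective''. In (3), the middle of your discussion (the appeal to Proposition \ref{0.15} and the unfinished dimension count) is noise that you rightly discard; the argument you settle on --- the co-subintegral closure $T$ would satisfy $T\subseteq K\cap L=k$, so $k\subset S$ would be subintegral, whence $k\subseteq K$ and $k\subseteq L$ are subintegral and therefore $K=k=L$, contradicting $K\neq L$ --- is correct and, unlike the paper's, uses neither FCP nor Lemma \ref{1.430}(4): the paper instead notes that the residual extension $k\subset S/M\cong K$ is not an isomorphism and, in the FCP case, invokes Lemma \ref{1.430}(4). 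Your route buys independence from those tools, at the price of two facts you assert without proof: that the lower part $R\subseteq T$ of a subintegral extension $R\subseteq S$ is again subintegral, and that a subintegral field extension is trivial; both follow quickly from the definition (residual extensions factor and the i-property lifts), but you should state or cite them explicitly.
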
 
\begin{proof} (1) Since $K\subset S$ is ramified, there exists $x\in S,\ x\neq 0$ such that $x^2=0$ with $S=K+Kx$, which is a local ring with maximal ideal $M:=Kx$ by Theorem \ref{minimal}. Since $x^2=0$, we have $x\not\in L$ because $L$ is a field; so that, $S=L[x]$. Then, $S=L+Lx$ and  $M=Lx$. In particular, $Kx=Lx\ (*)$. 

(2) Let $a\in K$. There exists $b\in L$ such that $ax=bx$ by $(*)$. We claim that this $b$ is unique for such $a$. Let $b'\in L$ such that $ax=b'x$; so that, $ax=bx=b'x$, which leads to $(b-b')x=0$.  In particular, $b-b'\in M$. If not, $b-b'$ is a unit of $S$ since $S$ is a local ring, which leads to $x=0$, a contradiction with (1). Then, $b-b'\in M\cap L=\{0\}$; so that, $b=b'$. Define $\varphi:K\to L$ by $\varphi(a)=b$, for any $a\in K$, where $b$ is the unique element of $L$ such that $ax=bx$. If $a\in k=K\cap L$, the equality $ax=ax$ shows that $\varphi(a)=a$, giving that $\varphi$ is the identity on $k$. For $a,a'\in K$, we obviously have $\varphi(a+a')=\varphi(a)+\varphi(a')$. Set $b:=\varphi(a)$ and $b':=\varphi(a')$; so that, $ax=bx$ and $a'x=b'x$. Then, $aa'x=ab'x=b'ax=b'bx$ leads to $\varphi(aa')=\varphi(a)\varphi(a')$. In particular, this relation shows that $\varphi$ is a $k$-algebra injective morphism, because between two fields, and is obviously surjective. To conclude, $\varphi$  is a $k$-algebra isomorphism.

(3) $k$ is obviously a field, and $k=K\cap L\subset S$ is not subintegral since there is a residual extension $k\subset S/M\cong K$, which is not an isomorphism. Anyway, when $k\subset S$ has FCP, Lemma \ref{1.430}(4) shows that $k\subset S$ does not admit a co-subintegral closure since $(K:S)=(L:S)=0$, the maximal ideal of both $K$ and $L$.
\end{proof}

\begin{lemma}\label{1.433} Let $K\subset S$ and $L\subset S$ be two minimal decomposed extensions where $K$ and $L$ are distinct fields. Set $k:=K\cap L$. The following properties hold:
\begin{enumerate}
\item There exists $x\in S,\ x\neq 0$ such that $x^2=x$ with $S=K+Kx=L+Lx$. Moreover, $M_1:=Kx=Lx$ and $M_2:=K(1-x)=L(1-x)$ are the  maximal ideals of $S$. 

\item There exists a $k$-isomorphism of $k$-algebras $\varphi:K\to L$ defined, for any $a\in K$, by $\varphi(a)=b$, where $b$ is the unique element of $L$ such that $ax=bx$.

\item $k$ is a field and the extension $k\subset S$ does not admit a co-infra-integral closure.
\end{enumerate} 
\end{lemma}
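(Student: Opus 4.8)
The plan is to mirror the proof of Lemma \ref{1.431} almost verbatim, using the decomposed analogue of Theorem \ref{minimal}(b) in place of the ramified analogue (c). First I would invoke Theorem \ref{minimal}(b) applied to $K\subset S$: since $K$ is a field and $K\subset S$ is minimal decomposed, there is $q\in S\setminus K$ with $S=K[q]$ and $q^2-q\in (K:S)$; but $K$ is a field so $(K:S)$ is either $0$ or $K$, and it cannot be $K$, hence $q^2=q$, i.e. $q$ is a nontrivial idempotent. Set $x:=q$. Then $S=K\times K$ as a $K$-algebra via the idempotent decomposition, with maximal ideals $Kx$ and $K(1-x)$. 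Because $x^2=x$ with $x\neq 0,1$, the element $x$ cannot lie in the field $L$ (a field has no nontrivial idempotents), so $x\notin L$ and, $L\subset S$ being minimal, $S=L[x]=L+Lx$; comparing the two descriptions of $S$ gives $Kx=Lx=M_1$ and $K(1-x)=L(1-x)=M_2$, which proves (1).

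For (2), the argument is identical to Lemma \ref{1.431}(2): given $a\in K$, since $ax\in Lx$ there is $b\in L$ with $ax=bx$; uniqueness follows because if $ax=bx=b'x$ then $(b-b')x=0$, so $b-b'$ is a zero divisor in $S$, hence $b-b'\in M_1$ (the only maximal ideal not containing $x$ being $M_2$, and $b-b'\in M_1$ is forced since multiplication by $x$ kills it), whence $b-b'\in M_1\cap L$. One must check $M_1\cap L=\{0\}$: indeed $L$ is a field and $M_1$ is a proper ideal, so $M_1\cap L$ is a proper ideal of $L$, hence zero. Defining $\varphi(a):=b$, the same computations as before show $\varphi$ is a $k$-algebra homomorphism fixing $k$ pointwise, injective (being a homomorphism of fields $K\to L$), and surjective by symmetry of the roles of $K$ and $L$; so $\varphi$ is a $k$-algebra isomorphism $K\to L$.

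For (3), $k=K\cap L$ is a field being the intersection of subfields of $S$. The extension $k\subset S$ is not infra-integral: one of its residual extensions is $k\hookrightarrow S/M_1\cong K$, and since $K\neq L\supseteq k$ with $[K:k]$ forced to be $>1$ (as $K\subset S$ is minimal decomposed, $S/M_1\cong K$ and $S/M_2\cong K$ give $[S:K]=2$ so $[S:k]\geq 2[K:k]$ forces... more directly, if $K=k$ then $L=\varphi(K)=k=K$, contradicting $K\neq L$), this residual extension is not an isomorphism. When $k\subset S$ has FCP — which holds here since $[k,S]\subseteq[k,K]\times[k,L]$-type finiteness or simply because $K\subset S$ is minimal and module-finite so $k\subseteq S$ is module-finite — Lemma \ref{1.430}(4) applies with $T:=K$, $U:=L$, the common crucial maximal ideal being $(K:S)=(L:S)=0$ (both being the zero ideal, which is maximal in the fields $K$ and $L$), and both $K\subset S$, $L\subset S$ ramified is replaced by: in the decomposed case Lemma \ref{1.430}(3) shows $k=K\cap L\subset K$ is t-closed, so $k\subset S$ is not infra-integral, and any candidate co-infra-integral closure $T'$ would satisfy $T'\subseteq K$ and $T'\subseteq L$, hence $T'\subseteq k$, forcing $T'=k$, contradicting that $k\subset S$ is not infra-integral.

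The main obstacle — really the only nonroutine point — is the bookkeeping around uniqueness in (2): one must correctly identify which maximal ideal of $S$ fails to contain $x$ and argue that $(b-b')x=0$ with $b-b'\in L$ forces $b-b'=0$; this is exactly where the decomposed case differs cosmetically from the ramified case (there the nilpotent $x$ made $(b-b')x=0$ put $b-b'$ in the unique maximal ideal, here $x$ is idempotent so $(b-b')x=0$ means $b-b'\in\mathrm{Ann}(x)=M_1$, a slightly different but equally elementary computation). Everything else transfers mechanically from Lemma \ref{1.431}, and the FCP hypothesis needed to invoke Lemma \ref{1.430} is automatic because minimal extensions are module-finite, hence so is $k\subseteq S$.
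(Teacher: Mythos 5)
Your argument tracks the paper's proof of Lemma \ref{1.431} in exactly the way the paper itself does for Lemma \ref{1.433}, and the overall structure is sound, but two local points need fixing. In the uniqueness step of (2) you claim that $(b-b')x=0$ forces $b-b'\in M_1$ ``since multiplication by $x$ kills it''. That identification is wrong: since $x$ is idempotent, $\mathrm{Ann}_S(x)=K(1-x)=M_2$, not $M_1=Kx$ (indeed $x\cdot x=x\neq 0$). The slip is harmless, because all you need -- and all the paper uses -- is that $b-b'$ is a non-unit of $S$ lying in $L$, so $b-b'\in(M_1\cup M_2)\cap L=\{0\}$ as $L$ is a field; your next sentence goes through verbatim with $M_2$ in place of $M_1$.

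The more serious slip is in (3): the assertion that $k\subseteq S$ is module-finite (hence FCP) ``because $K\subset S$ is minimal and module-finite'' is a non sequitur, and it is false in general. Identifying $S$ with $K\times K$, $K$ with the diagonal and $L$ with the graph of an automorphism $\sigma$ of $K$, one has $k=K^{\sigma}$; taking $K=\overline{\mathbb F_p}$ and $\sigma$ the Frobenius gives $k=\mathbb F_p$, so $k\subseteq K$, and hence $k\subseteq S$, is neither finite nor FCP. Thus Lemma \ref{1.430} cannot be invoked for $k\subset S$ in general (the paper's own proof hedges with ``if $k\subset S$ has FCP'' at exactly this point). Fortunately your concluding argument does not need it: $K\subset S$ and $L\subset S$ are infra-integral, so any co-infra-integral closure $T'$ of $k\subset S$ would satisfy $T'\subseteq K\cap L=k$, i.e. $T'=k$, contradicting the fact -- which you proved directly from the residual extensions $k\to S/M_i\cong K$ and $K\neq k$, with no FCP needed -- that $k\subset S$ is not infra-integral. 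This is precisely the unconditional argument of Remark \ref{1.421}; simply delete the FCP claim and the detour through Lemma \ref{1.430}(3)--(4).
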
 
\begin{proof} (1) Since $K\subset S$ is decomposed, there exists $x\in S,\ x\neq 0,1$ such that $x^2=x$ with $S=K+Kx$,  which has maximal ideals $M_1:=Kx$ and $M_2:=K(1-x)$ by Theorem \ref{minimal}. Since $x^2-x=0$, we have $x\not\in L$ since $L$ is a field; so that, $S=L[x]$. Then, $S=L+Lx$ and  $M_1=Lx$ because $x\in M_1$. In a similar way, $M_2:=L(1-x)$. In particular, $Kx=Lx\ (*)$. 

(2) Let $a\in K$. There exists $b\in L$ such that $ax=bx$ by $(*)$. As in Lemma \ref{1.431}, we claim that this $b$ is unique for such $a$. Let $b'\in L$ such that $ax=b'x$; so that, $ax=bx=b'x$, which would lead to $(b-b')x=0$. In particular, $b-b'$ is not a unit of $S$, which leads to $x=0$, a contradiction with (1). Then, $b-b'\in(M_1\cup M_2)\cap L=\{0\}$; so that, $b=b'$. Define $\varphi:K\to L$ by $\varphi(a)=b$, for any $a\in K$, where $b$ is the unique element of $L$ such that $ax=bx$. If $a\in k=K\cap L$, the equality $ax=ax$ shows that $\varphi(a)=a$, giving that $\varphi$ is the identity on $k$. For $a,a'\in K$, we obviously have $\varphi(a+a')=\varphi(a)+\varphi(a')$. Set $b:=\varphi(a)$ and $b':=\varphi(a')$; so that, $ax=bx$ and $a'x=b'x$. Then, $aa'x=ab'x=b'ax=b'bx$ leads to $\varphi(aa')=\varphi(a)\varphi(a')$. In particular, this relation shows that $\varphi$ is a $k$-algebra injective morphism, because between two fields, and is obviously surjective. To conclude, $\varphi$ is a $k$-algebra isomorphism.

(3) $k$ is obviously a field, and $k=K\cap L\subset S$ is not infra-integral since there are two residual extensions $k\subset S/M_i\cong K,\ i=1,2$, which are not an isomorphism. Anyway, if $k\subset S$ has FCP, since $(K:S)=(L:S)=0$, the maximal ideal of both $K$ and $L$, a proof similar as the one of Lemma \ref{1.430} (4) shows that $k\subset S$ does not admit a co-infra-integral closure. 
\end{proof}

We remark that there do not exist $K\subset S$ and $L\subset S$ which are two minimal infra-integral extensions of different types, where $K$ and $L$ are distinct fields. Indeed, if $K\subset S$ is minimal ramified, then $S$ is local, while it has two maximal ideals if $L\subset S$ is minimal decomposed, a contradiction. See also \cite[Proposition 5.7]{DPPS}.

 The situation of Lemma \ref{1.433} is satisfied in the following example.
 
\begin{example}\label{1.434} Let $K:=\mathbb Q[j]$, where $j=(-1+i\sqrt 3)/2$, and $S:=K[X]/(X^2-X)=K[x]$, where $x$ is the class of $X$ in $S$ and satisfies $x^2=x$. Then, $K\subset S$ is a minimal decomposed extension, hence infra-integral, and $\mathbb Q\subset K$ is a minimal separable field extension of degree 2, and then t-closed. Set $\alpha:=-j+(1+2j)x\in S\setminus K$ because $x\not\in K$. A short calculation shows that $\alpha$ is a zero of the irreducible polynomial $X^2-X+1\in \mathbb Q[X]$; so that, $L:=\mathbb Q[\alpha]$ is a field, with $L\in[\mathbb Q,S],\ L\neq K$, because $\alpha\not\in K$, and $\mathbb Q\subset L$ a minimal separable field extension of degree 2. Moreover, $x\not\in L$ since $L$ is a field and $x^2-x=0$. 
  Since $(2x-1)^2=1$, this shows that $2x-1$ is a unit of $L[x]$. Then, $j=(\alpha-x)(2x-1)^{-1}\in L[x]$; so that, $S=L[x]$. 
It follows that $L\subset S$ is a minimal decomposed extension, and then infra-integral. As $\mathbb Q=K\cap L$, Lemma \ref{1.433} shows that $\mathbb Q\subset S$ does not admit a co-infra-integral closure.  
\end{example} 

By Lemma \ref{1.430}, we get that an FCP extension of the form $T\cap U\subset S$, where $T\subset S$ and $U\subset S$ are minimal ramified with the same crucial maximal ideal does not admit a co-subintegral closure. If $(T:S)\neq (U:S)$, then \cite[Proposition 6.6]{DPPS} asserts that $T\cap U\subset S$ is subintegral, and then $T\cap U$ is the co-subintegral closure of $T\cap U\subset S$. The following Proposition exhibits  cases where situation of Lemma \ref{1.430} does not occur.  

\begin{proposition}\label{1.432} Let $R\subset S$ be an unbranched integral FCP extension, where $(R,P)$ and $(S,N)$ are local rings. Assume that $R/P\subset S/N$ is a finite separable field extension. Then, there does not exist $T,U\in[R,S],\ T\neq U$ such that $T\subset S$ and $U\subset S$ are minimal ramified with the same crucial maximal ideal.
\end{proposition}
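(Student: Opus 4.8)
The plan is to argue by contradiction, and to reduce the statement, via passage to the conductor, to the uniqueness of coefficient fields in an Artinian local ring whose residue extension is separable. Suppose then that $T\neq U$ exist as in the statement, and let $M$ be their common crucial maximal ideal, so that $M=(T:S)=(U:S)$. Since $T\subset S$ and $U\subset S$ are minimal non-inert with the same crucial maximal ideal, Lemma \ref{1.430}(2) gives that $V:=T\cap U$ satisfies $M\in\mathrm{Max}(V)$ and $M=(V:S)$; and since $R\subset S$ is unbranched every member of $[R,S]$ is local, so $M$ is in fact the maximal ideal of each of $V$, $T$ and $U$, while $M$ remains an ideal of $S$ contained in all three.

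Next I would pass to the quotients modulo $M$. Write $\bar A:=A/M$ for $A\in\{V,T,U,S\}$ and let $N$ be the maximal ideal of $S$. The ramified case of Theorem \ref{minimal}, applied to $T\subset S$ (and likewise to $U\subset S$), gives $N^2\subseteq M\subset N$, $[S/M:T/M]=2$, and an isomorphism $T/M\xrightarrow{\sim}S/N$. Setting $K:=S/N$ and $\mathfrak n:=N/M$, this says precisely that $\bar S$ is a local Artinian ring with maximal ideal $\mathfrak n$, that $\mathfrak n^2=0$ and $\bar S/\mathfrak n=K$, and that $\bar T$ and $\bar U$ are coefficient fields of $\bar S$ (subfields mapping isomorphically onto $K$). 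Because $V=T\cap U$ with $M\subseteq V$, one checks $\bar V=\bar T\cap\bar U=:\ell$, so $\bar T$ and $\bar U$ are $\ell$-subalgebras of $\bar S$. Finally, the residual extension of $R\subset S$ at $N$ is the given finite separable one $R/P\hookrightarrow S/N$ (note $N\cap R=P$, as $R$ is local and $R\subset S$ integral), and it factors as $R/P\hookrightarrow\ell\hookrightarrow\bar T\cong K$; since $R/P\subseteq K$ is separable algebraic, so is $\ell\subseteq K$.

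The concluding step is to invoke: an Artinian local ring whose residue field $K$ is separable over a subfield $\ell$ has exactly one coefficient field containing $\ell$. I would prove this by the usual Hensel argument: pick a primitive element $\theta$ of $K/\ell$ with (separable) minimal polynomial $m\in\ell[X]$; any two coefficient fields containing $\ell$ furnish roots $\theta',\theta''$ of $m$ with $\delta:=\theta'-\theta''\in\mathfrak n$, and the Taylor expansion $0=m(\theta'')=m(\theta')-m'(\theta')\delta+(\text{terms divisible by }\delta^2)$ together with $m(\theta')=0$ and the fact that $m'(\theta')$ is a unit (its residue $m'(\theta)\neq 0$) forces $\delta\in\delta^2\bar S$, hence $\delta=0$ by nilpotence of $\mathfrak n$; the two fields, being $\ell[\theta']=\ell[\theta'']$, then coincide. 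Applying this to $\bar T$ and $\bar U$ yields $\bar T=\bar U$, and since $M\subseteq T\cap U$ this forces $T=U$, the desired contradiction. The part requiring care is the reduction modulo $M$ — verifying that $\bar T,\bar U$ are genuine coefficient fields and, crucially, that the separability hypothesis descends to the extension $\ell\subseteq K$; the Hensel step itself is routine once the problem is in this shape, and the example of Remark \ref{1.421} shows the hypothesis cannot be dropped, since there $K/\ell$ is purely inseparable and the two coefficient fields are truly different.
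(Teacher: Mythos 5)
Your proof is correct and is essentially the paper's own argument: both pass to $S/M$ with $M=(V:S)$ for $V=T\cap U$, descend separability from $R/P\subset S/N$ to the residue extension over $V/M$, take a primitive element with separable minimal polynomial, and play its first-order Taylor expansion against a square-zero nilpotent difference of two lifts of the residue field. The only difference is packaging: the paper routes through Lemma \ref{1.431} to get the explicit $k$-isomorphism $\varphi:T/M\to U/M$ and contradicts separability via $Q'(\beta)=0$, whereas you phrase the same computation as uniqueness of coefficient fields containing $\ell=V/M$ in the Artinian local ring $S/M$ and conclude $T/M=U/M$, hence $T=U$.
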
 

\begin{proof} Assume that there exist $T,U\in[R,S],\ T\neq U$ such that $T\subset S$ and $U\subset S$ are minimal ramified with the same crucial maximal ideal $M$. Set $V:=T\cap U$. According to Lemma \ref{1.430}, $V\subset T$ and $V\subset U$ are t-closed finite extensions such that $M=(V:S)=(V:T)=(V:U)\in\mathrm{Max}(V)$. Set $k:=V/M,\ K:=T/M,\ L:=U/M\neq K,\ S':=S/M$ and $N':=N/M$. Since $S/N\cong K\cong L$, we get that $R/P\subset K$ and $R/P\subset L$ are finite separable field extensions, and so are $k\subset K$ and $k\subset L$. Then, there exists $\alpha\in K$ such that $K=k[\alpha]$. Moreover, $K\subset S'$ and $L\subset S'$ are distinct minimal ramified extensions. We are in the situation of Lemma \ref{1.431}; so that, there exists $x\in S',\ x\neq 0$ such that $x^2=0$ with $S'=K[x]=L[x]$, where $N'=Kx=Lx$ is the maximal ideal of $S'$. In particular, $N'^2=0$. Furthermore, there exists a $k$-isomorphism of $k$-algebras $\varphi:K\to L$ defined, for any $a\in K$, by $\varphi(a)=b$, where $b$ is the unique element of $L$ such that $ax=bx$. Set $\beta:=\varphi(\alpha)$, which gives $L=k[\beta]$. Since $\alpha x=\beta x$ in $S'$, we have $(\alpha-\beta)x=0$; so that, $y:=\alpha-\beta\in N'$ satisfies $y^2=0$ with $y\neq 0$. Let $Q(X):=\sum_{i=0}^na_iX^{i}\in k[X]$ be the minimal monic polynomial of $\alpha$, which is also the minimal monic polynomial of $\beta$ through $\varphi$. Then, $Q(\alpha)=Q(y+\beta)=0=\sum_{i=0}^na_i(y+\beta)^{i}=
\sum_{i=0}^na_i\beta^{i}+\sum_{i=0}^nia_iy\beta^{i-1}=Q(\beta)+yQ'(\beta)=yQ'(\beta)$, with $Q'(\beta)\in k[\beta]=L$, which is a field. It follows that $Q'(\beta)=0$ since $y\neq 0$. But $Q(X)$ is a separable polynomial, a contradiction with $Q(X)$ the minimal polynomial of $\beta$. To conclude, our assumptions do not hold and there do not exist $T,U\in[R,S],\ T\neq U$ such that $T\subset S$ and $U\subset S$ are minimal ramified with the same crucial maximal ideal.
\end{proof}

Contrary to Proposition \ref{1.432}, Example \ref{1.434} shows that there exist $T,U\in[R,S],\ T\neq U$ such that $T\subset S$ and $U\subset S$ are minimal decomposed with the same crucial maximal ideal, where $(R,P)$ is a local ring, $\mathrm{Max}(S)=\{M_1,M_2\}$ and $R/P\subset S/M_i$ is a finite separable field extension for $i=1,2$. It is enough to take $R:=\mathbb Q,\ T:=K$ and $U:=L$; so that, $S/M_i\cong K\cong L$ for $i=1,2,\ R/P=\mathbb Q$ and $R/P\subset S/M_i$ is a finite separable field extension for $i=1,2$.  

\section{Cardinality of the lattices of  integral FIP extensions}

Let $R\subset S$ be an integral FIP extension. We are going to give information about $|[R, S]|$. Because $R\subset S$ is a $\mathcal B$-extension, it is enough to assume that $R$ is a local ring.  In order to examine the elements of $[R,S]$, we use the different closures (the t-closure and the seminormalization) we studied in the previous sections to get bounds for $|[R,S]|$. 

Let $R\subset S$ be an integral FCP  extension and  $T$ its   seminormalization (resp.; t-closure). When it exists let $T'$ be its co-subintegral (resp.; co-infra-integral) closure. Define the map $\varphi:[R,S]\to[R,T]\times [T,S]$ by $\varphi(U):=(T\cap U, TU)$ for any $U\in[R,S]$. Recall that $T^o$ denotes the complement of $T$ in $[R,S]$ when it exists and is unique. The following Proposition  gives relations between $\varphi$, the existences of a complement of $T$ and $T'$. 

\begin{proposition}\label{A4.0} Let $R\subset S$ be an integral FCP. Consider the following conditions:
\begin{enumerate}
\item $\varphi$ is bijective.
\item $\varphi$ is surjective.
\item $T$ admits a complement.
\item $T^o$ exists.
\item $T'$ exists.
\end{enumerate}
 We have the following implications: (1) $\Rightarrow$ (2) $\Rightarrow$ (3), (1) $\Rightarrow$ (4) $\Rightarrow$ (3) and (3)+(5)  $\Rightarrow$ (4)
with $T'=T^o$ in this last case.  
\end{proposition}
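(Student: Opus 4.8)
The plan is to prove the five implications in order; the first four are essentially formal, and $(3)+(5)\Rightarrow(4)$ carries all the content.

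\emph{Easy implications.} $(1)\Rightarrow(2)$ is immediate. For $(2)\Rightarrow(3)$: if $\varphi$ is surjective, choose $U\in[R,S]$ with $\varphi(U)=(R,S)$; then $T\cap U=R$ and $TU=S$, so $U$ is a complement of $T$. For $(1)\Rightarrow(4)$: $(1)$ entails $(2)$, hence $(3)$, so $T$ has a complement $U$; any complement $U'$ satisfies $\varphi(U')=(R,S)=\varphi(U)$, and injectivity of $\varphi$ forces $U'=U$, so the complement is unique and $T^o$ exists. Finally $(4)\Rightarrow(3)$ is trivial.

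\emph{Reducing $(3)+(5)\Rightarrow(4)$ to minimal extensions.} Recall that $R\subseteq T$ is subintegral (resp.\ infra-integral) and $T\subseteq S$ is seminormal (resp.\ t-closed), by the very definition of the seminormalization (resp.\ the t-closure). Let $U$ be a complement of $T$, provided by $(3)$. The key point is that $U\subseteq S$ is again subintegral (resp.\ infra-integral): indeed $R\subseteq T$ is subintegral (resp.\ infra-integral) and FCP, hence a tower $R=R_0\subset\cdots\subset R_n=T$ of minimal ramified (resp.\ ramified or decomposed) extensions by Proposition \ref{1.31}; writing each step as $R_{i+1}=R_i[q_i]$ with $q_i^2,q_i^3\in R_i$ in the ramified case and $q_i^2-q_i,\,q_i^3-q_i^2\in R_i$ in the decomposed case (Theorem \ref{minimal}), the translated chain $U=UR_0\subseteq UR_1\subseteq\cdots\subseteq UR_n=UT=S$ has s-elementary (resp.\ t-elementary) steps, so $U\subseteq S$ is subintegral (resp.\ infra-integral) by Definition \ref{1.33} and Proposition \ref{1.36}. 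Since by $(5)$ the ring $T'$ is the least $V\in[R,S]$ with $V\subseteq S$ subintegral (resp.\ infra-integral), we get $T'\subseteq U$; consequently $T\cap T'\subseteq T\cap U=R$, so $T\cap T'=R$, and $TT'=S$ by Proposition \ref{1.47}(1). Hence $T'$ is a complement of $T$ lying inside every complement of $T$.

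\emph{Uniqueness.} It remains to prove that an arbitrary complement $U$ of $T$ equals $T'$; since $T'\subseteq U$, suppose toward a contradiction that $T'\subsetneq U$. From $T\cap U=R$, that is ${}^+_SR\cap U=R$ (resp.\ ${}^t_SR\cap U=R$), the extension $R\subseteq U$ is seminormal (resp.\ t-closed) by the criterion recorded right after Example \ref{0.24}; thus, by Proposition \ref{1.31}(5) (resp.\ (4)), every step of every maximal chain of minimal extensions from $R$ to $U$ is decomposed or inert (resp.\ inert). But $T'\subseteq S$ is subintegral (resp.\ infra-integral), hence so is $T'\subseteq U$, and Proposition \ref{1.31}(1) (resp.\ (3)) then gives a minimal step $T'\subset V_1\subseteq U$ that is ramified (resp.\ ramified or decomposed) — in either case a forbidden type. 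Concatenating a maximal chain from $R$ to $T'$, the step $T'\subset V_1$, and a maximal chain from $V_1$ to $U$ produces a maximal chain of minimal extensions from $R$ to $U$ containing a forbidden step, a contradiction. Therefore $U=T'$, the complement of $T$ is unique, and $T^o=T'$.

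\emph{Where the difficulty lies.} The subtle step is the one in the second paragraph: showing that a complement $U$ of $T$ satisfies ``$U\subseteq S$ subintegral (resp.\ infra-integral)''. This is exactly what connects complements of the seminormalization (resp.\ t-closure) to the co-subintegral (resp.\ co-infra-integral) closure and makes hypothesis $(5)$ usable; it relies on translating the canonical tower of minimal extensions of $R\subseteq T$ along $R\hookrightarrow U$, for which the explicit normal forms of ramified and decomposed minimal extensions from Theorem \ref{minimal} are required. Once that is in hand, the uniqueness argument above is a short application of Proposition \ref{1.31}, and the only routine verification left is that a translated step $UR_i\subseteq UR_{i+1}$ is still s-elementary (resp.\ t-elementary), including when it degenerates to an equality.
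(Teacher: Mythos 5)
Your proof is correct and follows essentially the same route as the paper: the formal implications (1)$\Rightarrow$(2)$\Rightarrow$(3), (1)$\Rightarrow$(4)$\Rightarrow$(3) are handled identically, and for (3)+(5)$\Rightarrow$(4) you, like the paper, show that any complement $U$ of $T$ satisfies ``$U\subseteq S$ subintegral (resp.\ infra-integral)'', deduce $T'\subseteq U$ from minimality, and conclude $U=T'$ because $R\subseteq U$ seminormal (resp.\ t-closed) forbids a ramified (resp.\ non-inert) step above $T'$. The only difference is one of detail: where the paper asserts outright that $TU=S$ forces $U\subseteq S$ subintegral, you justify it by translating the tower of ramified/decomposed minimal steps of $R\subseteq T$ into s-/t-elementary steps over $U$, which is a legitimate filling-in of the same idea rather than a different approach.
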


\begin{proof} We make the proof for $T={}_{S}^+R$. 

 (1) $\Rightarrow$ (2) is obvious.

(2) $\Rightarrow$ (3): Since $\varphi$ is surjective, there exists $U\in[R,S]$ such that $\varphi(U)=(R,S)=(T\cap U,TU)$; so that, $T\cap U=R$ and $ TU=S$, which shows that $U$ is a complement of $T$.

(1) $\Rightarrow$ (4): Since $\varphi$ is bijective, the previous proof shows that $T$ has a unique complement.

(3) $+$ (5) $\Rightarrow$ (4): Assume that $T$ has a  complement $V$ and that the co-subintegral closure $T'$ exists. By definition, $TV=S$ implies that $V\subseteq S$ is subintegral; so that, $T'\subseteq V$. Assume that $T'\neq V$. Since $R\subseteq V$ is seminormal, so is $T'\subseteq V$ and $T'\subseteq V$ is subintegral because so is $T'\subseteq S$. Then, $T'=V$, and $T$ has a unique complement  $T^o=T'$.

When $T={}_{S}^tR$, it is enough to replace in the previous proof seminormal with t-closed and subintegral with infra-integral.
\end{proof}

We begin to consider integral u-closed extensions.

\begin{lemma}\label{A5.0} Let $R\subset S$ be an integral FCP u-closed extension where $(R,M)$ is a local ring such that $R\neq {}_S^+R\neq S$. Assume that $R\subseteq S'$ is t-closed for any $S'\in[R,S]$  such that $MS'$ is the maximal ideal of $S'$. Then, ${}_{S}^+R$ has a complement. If, in addition, this complement is unique, then it is the co-subintegral closure of $R\subset S$, that is $({}_{S}^+R)^o=S^+_R$.
 \end{lemma}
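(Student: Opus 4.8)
The plan is to produce a candidate complement of ${}_S^+R$ directly, using the canonical decomposition $R\subseteq {}_S^+R\subseteq {}_S^tR\subseteq S$ together with the hypothesis that $R\subset S$ is $u$-closed. Since the extension is $u$-closed, Proposition \ref{1.311} gives ${}_S^+R={}_S^tR$, so in fact $R\subseteq {}_S^+R\subseteq S$ with ${}_S^+R\subseteq S$ being $t$-closed (it equals ${}_S^tR$), hence an $i$-extension by Proposition \ref{1.31}. Because $R$ is local with maximal ideal $M$, I would first locate a natural ``$M$-saturated'' intermediate ring: let $S'$ be the largest $S'\in[R,S]$ for which $MS'$ is the maximal ideal of $S'$ (it exists because $R\subset S$ has FCP and the property of having $MS'$ maximal is preserved under compositum in the local $i$-extension setting — each minimal step $R_i\subset R_{i+1}$ is inert or ramified, and those of inert type with residual extension trivial or ramified keep $M$ as conductor). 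By hypothesis $R\subseteq S'$ is $t$-closed; since it sits inside the $t$-closed extension ${}_S^+R\subseteq S$ and $R\subseteq {}_S^+R$ is subintegral, the only way $R\subseteq S'$ can be $t$-closed is $S'\cap {}_S^+R=R$.

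Next I would verify $S'$ is a complement of $T:={}_S^+R$. For $T\cap S'=R$: since $R\subseteq S'$ is $t$-closed and $R\subseteq T$ is subintegral (hence infra-integral), the extension $R\subseteq T\cap S'$ is simultaneously a subextension of a $t$-closed extension and infra-integral, forcing $T\cap S'=R$. For $TS'=S$: the compositum $TS'$ contains $T={}_S^tR$, so $TS'\subseteq S$ is $t$-closed; but $TS'$ also contains $S'$, and I claim $TS'\subseteq S$ is subintegral. To see this, pass to residue rings modulo the conductor and use that each minimal step above $S'$ has the conductor strictly larger than $M$ (by maximality of $S'$), so by Theorem \ref{minimal} and the inert/ramified dichotomy for $u$-closed extensions (last sentence of Proposition \ref{1.311}), the minimal steps $S'\subset\cdots$ inside an $i$-extension with growing conductor are ramified, whence $S'\subseteq S$, and a fortiori $TS'\subseteq S$, is subintegral. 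A subintegral and $t$-closed extension is trivial, so $TS'=S$. Thus $S'$ is a complement of $T$.

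Finally, for the last assertion: suppose $T={}_S^+R$ has a unique complement $T^o$. By Proposition \ref{A4.0} (specifically the implication (3)+(5)$\Rightarrow$(4) with $T'=T^o$), it suffices to know the co-subintegral closure $S^+_R$ exists; but $T^o=S'$ with $S'\subseteq S$ subintegral by the previous paragraph, and $T^o$ is contained in every complement, hence in every $V\in[R,S]$ with $V\subseteq S$ subintegral (since such $V$ satisfies $TV=S$, so $V$ is a complement, forcing $T^o\subseteq V$). Therefore $T^o$ is the least $V\in[R,S]$ with $V\subseteq S$ subintegral, i.e. $T^o=S^+_R$, which is exactly $({}_S^+R)^o=S^+_R$. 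The main obstacle I anticipate is pinning down the combinatorics of the minimal steps above $S'$ to show rigorously that ``conductor properly containing $M$'' together with the $u$-closed $i$-extension hypothesis forces those steps to be ramified; this is where the hypothesis that $R\subseteq S'$ is $t$-closed for every such $S'$ (not just the maximal one) is really used, and I would lean on Lemma \ref{SUP1} and property $(\dag)$ to control the conductors and residue fields locally.
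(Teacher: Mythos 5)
Your plan breaks down at two load-bearing points, and both are exactly where the paper's proof takes a different (dual) route. First, the ring $S'$ you want to build on need not exist: the family $\mathcal G:=\{S'\in[R,S]\mid MS'\ \text{is the maximal ideal of}\ S'\}$ is not stable under compositum, so by FCP it has maximal elements but in general no greatest one. The paper's own Example \ref{A5.2} (see also Lemma \ref{1.431}), which satisfies all hypotheses of Lemma \ref{A5.0}, shows this: there $M=0$, the members of $\mathcal G$ are the subfields, $K$ and $L$ are two incomparable maximal members, and their compositum is $S$, for which $MS=0$ is not the maximal ideal; so your parenthetical ``preserved under compositum'' argument fails. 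Second, and more seriously, even after replacing ``largest'' by ``maximal'', your claim that $S'\subseteq S$ is subintegral is only justified for the first minimal step above $S'$: an inert step $S'\subset S''$ has conductor $MS'$, so $MS''=MS'$ would again be the maximal ideal of $S''$, contradicting maximality of $S'$; but after one ramified step the new ring leaves $\mathcal G$ (its maximal ideal strictly contains $MS'$), so maximality of $S'$ gives no control over the next step and an inert step can occur higher up. You flag this yourself as the anticipated obstacle, and it is precisely the crux. The paper runs the argument in the opposite direction: it takes $U_0$ with $\ell[U_0,S]$ maximal in $\mathcal F:=\{U\in[R,S]\mid U\subseteq S\ \text{subintegral}\}$, notes that no minimal ramified extension can sit directly below $U_0$, and invokes \cite[Lemma 17]{DPP4} together with locality to conclude that $MU_0$ is the maximal ideal of $U_0$; this is how one lands in the family where the t-closedness hypothesis applies, instead of trying to prove that a maximal member of $\mathcal G$ is subintegral in $S$.

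There is also a gap in your last paragraph. You assert that every $V\in[R,S]$ with $V\subseteq S$ subintegral is a complement of ${}_S^+R$ (``such $V$ satisfies $TV=S$, so $V$ is a complement''), but a complement must also satisfy $V\cap{}_S^+R=R$, which such a $V$ need not: $V=S$ is already a counterexample, and more generally any $V$ containing the co-subintegral closure together with a nontrivial piece of ${}_S^+R$ fails (cf.\ Proposition \ref{1.5}(2)). Hence uniqueness of the complement does not give $T^o\subseteq V$ for all such $V$, and your identification $T^o=S^+_R$ via Proposition \ref{A4.0} presupposes the existence of $S^+_R$, which is what remains to be proved. The paper instead sets $T':=\cap[U\in\mathcal F]$, shows $T'=T^o$ by producing a second complement (repeating the $U_0$-argument for a maximal-length $V\in\mathcal F\setminus[T^o,S]$) if $T'\neq T^o$, and then concludes that $T'=S^+_R$ by Theorem \ref{1.425}.
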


\begin{proof} Recall that ${}_S^+R={}_S^tR$ and $\mathrm{Spec}(S)\to\mathrm{Spec}(R)$ is a bijection by Proposition \ref{1.311}. Moreover, $R\subset S$ is unbranched and only ramified and inert minimal extensions appear in a maximal chain of extensions composing $R\subset S$. Set $\mathcal F:=\{U\in[R,S]\mid U\subseteq S$ is subintegral$\}$. We have $\mathcal F\neq\emptyset$ because $S\in\mathcal F$. Since $R\subset S$ has FCP, there exists some $U_0\in\mathcal F$ such that $\ell[U_0,S]$ is maximal; so that, there does not exists any $V\in[R,U_0]$ such that $V\subset U_0$ is minimal ramified. As $R\subset S$ is not subintegral, $R\neq U_0$. Then, according to \cite[Lemma 17]{DPP4}, $MU_0$ is a radical ideal of $U_0$, that is an intersection of maximal ideals of $U_0$. But, $U_0$ is a local ring; so that, $MU_0$ is the maximal ideal of $U_0$, giving that $R\subseteq U_0$ is t-closed by assumption. Then, $U_0\neq S$ because $R\subset S$ is not t-closed. As $U_0\subset S$ is subintegral, we get that ${}_{S}^+R\cap U_0=R$ and ${}_{S}^+RU_0=S$; so that, $U_0$ is a complement of ${}_{S}^+R$.

Assume, in addition, that this complement is unique and let $T$ be this complement. Then, $T\in \mathcal F$. Of course, ${}_{S}^+R\cap T=R$ and ${}_{S}^+RT=S$ show that $R\subseteq T$ is t-closed and $T\subseteq S$ is subintegral. Set $T':=\cap [U\in \mathcal F]$; so that, $T' \subseteq T$. If $T'\neq T$, there exists some $V\in\mathcal F$ with $V\not\in[T,S]$. Choose such $V\not\in [T,S]$ satisfying that $\ell[V,S]$ is maximal. As we proved in the beginning of the  proof for $U_0$, we get that $V$ is a complement of ${}_{S}^+R$, a contradiction with the uniqueness of this complement. Then, $T'=T$ implies that $T'\subseteq S$ is subintegral; so that, $T'$ is the co-subintegral closure of $R\subset S$ by Theorem \ref{1.425}. 
 \end{proof}

\begin{remark}\label{A5.1} Lemma \ref{1.430} shows that when ${}_{S}^tR$ has more than one complement, then the co-subintegral closure of $R\subset S$ does not exists.
\end{remark}

Contrary to Proposition \ref{1.432}, where separable field extensions appear, here is an example illustrating Lemma \ref{A5.0} 
 in the context of radicial field extensions.

\begin{example}\label{A5.2} Let $k$ be a field with $\mathrm{c}(k)=2$ and let $K:=k[t]$ be a radicial field extension of degree 2 such that $t^2=a\in k$. Set $S:=K[X]/(X^2)=K[x]$, where $x$ is the class of $X$ in $S$, so that $K\subset S$ is a minimal ramified extension and set $\alpha:=t+x$. Then, $\alpha^2=t^2=a\in k$; so that, $L:=k[\alpha]$ is a radicial field extension of $k$ of degree 2, with $Y^2-a$ the minimal polynomial of $\alpha$ and $L$ is a field different from $K$ since $x\not\in K$.
 Moreover, $L\subset S$ is a minimal ramified extension and $k\subset S$ has FCP. This example illustrates also Lemma \ref{1.431}. The maximal ideal of $S$ is $Kx=Lx$. Since $k$ is a field, its maximal ideal is $0$, and for any $S'\in[R,S]$ such that $0S'=0$ is the maximal ideal of $S'$, we have that $S'$ is a field; so that, $k\subseteq S'$ is t-closed. Then, $k\subseteq S$ satisfies the assumptions of Lemma \ref{A5.0}. In particular, $k':={}_{S}^tk=k+Kx=k+Lx$ because $k'/Lx\cong k$ shows that $k\subset k'$ is subintegral, while $k'\subset S$ is t-closed because $(k':S)=Lx$, with $k\cong k'/Lx\subset S/Lx\cong L$ a field extension such that $L\cong K$. To end, we remark that $K$ and $L$ are both complements of ${}_{S}^tk$. 
\end{example}

In case $R\subset S$ is an integral u-closed FIP extension, the following Proposition shows that when ${}_S^+R$ has a complement, it is unique and equal to the co-subintegral closure of $R\subset S$ when this last one  exists.

\begin{proposition}\label{B5.1} Let $R\subset S$ be an integral u-closed FIP extension. Assume that $(R,M)$ is a local ring and $R\neq {}_S^+R\neq S$.  

\begin{enumerate}
\item If ${}_S^+R$ has a complement, then $MS\not\in\mathrm{Max}(S)$. Moreover, this complement is unique. 
If, in  addition, $S^+_R$ exists, then $({}_{S}^+R)^o=S^+_R$.

\item The map $\varphi:[R,S]\to[R,{}_S^+R]\times[{}_S^+R,S]$ defined by $\varphi(U)=(U\cap{}_S^+R,{}_S^+RU)$ for $U\in[R,S]$ is injective. In particular, $|[R,{}_S^+R]|+|[{}_S^+R,S]|-1\leq|[R,S]|\leq|[R,{}_S^+R]||[{}_S^+R,S]|$.
\end{enumerate}
\end{proposition}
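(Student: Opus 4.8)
Throughout write $B:={}_S^+R={}_S^tR$, the equality being Proposition~\ref{1.311} since $R\subset S$ is $u$-closed; that proposition also gives that $\mathrm{Spec}(S)\to\mathrm{Spec}(R)$ is bijective, so, $R$ being local, $R\subset S$ is unbranched, every ring in $[R,S]$ is local, and $S$ is local with maximal ideal $N$. By definition $R\subseteq B$ is subintegral and $B\subseteq S$ is $t$-closed, so property~($\dag$) of Lemma~\ref{SUP1} gives $(B:S)=N$ with $S/N$ a finite field extension of $B/N\cong R/M$. The whole plan rests on two remarks, valid for every $T\in[R,S]$: first, ${}_T^tR={}_S^tR\cap T=B\cap T$, so $B\cap T=R$ is equivalent to $R\subseteq T$ being $t$-closed, hence (by Theorem~\ref{1.313}, as $R\subseteq T$ is $u$-closed) to $R\subseteq T$ being seminormal; second, if $TB=S$ then $T\subseteq S$ is subintegral, being obtained from the subintegral extension $R\subseteq B$ by the base change $R\to T$ (a tower of $s$-elementary steps staying $s$-elementary). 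In particular, any complement $T$ of $B$ satisfies $R\subsetneq T\subsetneq S$, $R\subseteq T$ is $t$-closed, and $T\subseteq S$ is subintegral.

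For part~(1)(a), take a complement $T$ of $B$. Since $T\subseteq S$ is subintegral and $T\neq S$, there is $V\in[T,S]$ with $V\subset S$ minimal ramified; its conductor $\mathfrak m:=(V:S)$ is the maximal ideal of the local ring $V$, so $\mathfrak m\cap R=M$, $\mathfrak m$ is an ideal of $S$, and $[S/\mathfrak m:V/\mathfrak m]=2$ by Theorem~\ref{minimal}, whence $\mathfrak m\subsetneq N$. Then $MS\subseteq\mathfrak m S=\mathfrak m\subsetneq N$, so $MS\neq N$; as $N$ is the only maximal ideal of $S$, this means $MS\notin\mathrm{Max}(S)$. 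For part~(1)(c), the equality $({}_S^+R)^o=S^+_R$ when $S^+_R$ exists is precisely the implication $(3)+(5)\Rightarrow(4)$ of Proposition~\ref{A4.0}. The uniqueness claim in~(1) I will deduce from the injectivity of $\varphi$ in part~(2), since a complement of $B$ is exactly a $U$ with $\varphi(U)=(R,S)$.

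For part~(2) the lower bound is immediate: $[R,B]\cup[B,S]\subseteq[R,S]$ and $[R,B]\cap[B,S]=\{B\}$, so $|[R,S]|\geq|[R,B]|+|[B,S]|-1$; the upper bound and the uniqueness of the complement both follow once $\varphi$ is injective. I would prove injectivity by induction on $\ell[R,S]$. Suppose $\varphi(U_1)=\varphi(U_2)$, and put $C:=U_1\cap B=U_2\cap B$, $D:=U_1B=U_2B$. Then $U_1,U_2$ are complements of $B$ in the interval $[C,D]$, inside which $B={}_D^+C={}_D^tC$, $C\subseteq B$ is subintegral, $B\subseteq D$ is $t$-closed, $C\subseteq D$ is $u$-closed and has FIP, and $C$ is local. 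If $C\neq R$ or $D\neq S$ then $\ell[C,D]<\ell[R,S]$ and the inductive hypothesis applied to $C\subset D$ gives $U_1=U_2$ (the degenerate cases $B=C$ and $B=D$ forcing $U_1=U_2=D$ directly). So it remains to treat the base case $C=R$, $D=S$, i.e.\ two complements $U_1,U_2$ of $B={}_S^+R$ in $[R,S]$ itself. Here set $E:=U_1U_2$; then $EB=S$ and $U_1\subseteq E\subseteq S$ shows $U_1\subseteq E$ is subintegral. The key claim is that $E\cap B=R$; granting it, $R\subseteq E$ is seminormal and $u$-closed, hence $t$-closed by Theorem~\ref{1.313}, so by Proposition~\ref{1.31}(4) the subextension $U_1\subseteq E$ is $t$-closed as well as subintegral, therefore trivial, so $E=U_1$ and, symmetrically, $E=U_2$, whence $U_1=U_2$.

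The main obstacle is thus the equality $E\cap B=R$, i.e.\ that $R\subseteq U_1U_2$ is $t$-closed. I would argue by contradiction: otherwise there is a minimal ramified $R\subset W\subseteq E\cap B$, and since $R\subseteq U_i$ is $t$-closed, $W\not\subseteq U_i$ and $W\cap U_i=R$, so $U_i\subset U_iW$ is minimal ramified, for $i=1,2$. To reach a contradiction I expect to use the FIP hypothesis together with the local structure: passing to the quotients by the conductor $(R:S)$, the complements of $B$ become the subrings $U$ of $S/(R:S)$ with $U+\overline N=S/(R:S)$ and $U\cap\overline N=\overline M$ ($\overline N,\overline M$ the images of $N,M$), i.e.\ the lifts of the residue field extension $R/M\subseteq S/N$ that contain $R/(R:S)$; one must show such a lift is unique given $R$. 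It is here that $R\neq{}_S^+R$ (so the subintegral part is nontrivial and the lift is genuinely constrained by $R$, not merely a coefficient field) and the finiteness of $[R,S]$ should both be essential. An alternative route would be to show first that a complement of $B$ cannot exist without $S^+_R$ existing (via Theorem~\ref{1.425}, by checking $\bigcap\{U\mid U\subseteq S\text{ subintegral}\}\subseteq S$ is catenarian once a complement is present), and then to invoke Proposition~\ref{A4.0} for uniqueness.
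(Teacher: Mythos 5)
Your part (1)(a) argument (conductor of a terminal minimal ramified step traps $MS$ strictly below the maximal ideal $N$) is correct and is just a hands-on version of the paper's appeal to \cite[Lemma 17]{DPP4}, and your use of Proposition \ref{A4.0} for $({}_S^+R)^o=S^+_R$, the lower bound in (2), and the reduction of injectivity of $\varphi$ to uniqueness of the complement of ${}_S^+R$ in intervals $[C,D]$ all match the paper's architecture. But the heart of the proposition — uniqueness of the complement — is not proved. You reduce it to the claim $U_1U_2\cap{}_S^+R=R$ and then say only that you ``expect'' to derive a contradiction from FIP and the local structure, or alternatively to go through Theorem \ref{1.425} and Proposition \ref{A4.0}; that claim is essentially equivalent to the uniqueness statement itself, so nothing has been gained, and the alternative route is close to circular, since the existence of $S^+_R$ is exactly what can fail when two complements are present (Lemma \ref{1.430}(4), Remark \ref{A5.1}), and Proposition \ref{A4.0} needs $S^+_R$ to exist as a hypothesis.

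This missing step is where the paper does real work, and it cannot be soft: Example \ref{A5.2} exhibits a u-closed local integral FCP extension in which ${}_S^+R$ has two distinct complements, so any correct argument must use FIP in an essential, quantitative way, not merely ``finiteness of $[R,S]$ should be essential.'' Concretely, the paper takes two complements $T',T''$, forms $T:=T'T''$, passes to $k:=R/M\subseteq K:=T/M=K'K''$ with $K':=T'/M\cong K'':=T''/M\cong S/N$, shows $K$ cannot be a field (else $T'\subset T$ would be t-closed while $T'\subset S$ is subintegral), and then splits into cases: for $|k|=\infty$ it invokes the primitive element theorem for FIP algebras \cite[Theorem 3.8]{ADM} to force $\dim_k(K)$ to be small and conclude $K=K'=K''$, a contradiction; for $k$ finite it uses that $k\subset K',K''$ are Galois together with \cite[Proposition 7.8]{DPPS} to contradict subintegrality of $T'\subset S$. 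None of this (nor any substitute for it) appears in your proposal, so the proof is incomplete at its decisive point.
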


\begin{proof} (1) Let $T'\in[R,S]$ be a complement of ${}_S^+R$. Since $R=T'\cap{}_S^+R$ and $S={}_S^+RT'$, we have $T'\neq R,S$ because $R\subset S$ is neither subintegral nor t-closed. In particular, $T'\subset S$ is subintegral; so that, there exists some $U\in[T',S]$ such that $U\subset S$ is minimal ramified. Then, \cite[Lemma 17]{DPP4} shows that  $MS$ is not a radical ideal of $S$, and in particular, $MS\not\in\mathrm{Max}(S)$.  

 Let $N$ be the maximal ideal of $S$, which is a local ring because $R\subset S$ is u-closed. 
Assume now that there exist another complement $T''\in[R,S]$ of ${}_S^+R$. Set $T:=T'T''\subseteq S$. Hence, $M$ is also the maximal ideal of $T'$ and $T''$ since $R\subset T'$ and $R\subset T''$ are t-closed, and also an ideal of $T$, a local ring since $R\subset S$ is an i-extension. Set $k:=R/M,\ K':=T'/M,\ K'':=T''/M$ and $K:=T/M=K'K''$; so that, $k,K'$ and $K''$ are fields, with $K',K''\cong S/N$, because $T'\subset S$ and $T''\subset S$ are subintegral. Moreover, $k\subset K$ has FIP, since so has $R\subset S$, and $K$ is a local Artinian ring. We claim that $K$ cannot be a field, because $K'\neq K''$. Otherwise, if $K$ is a field and $K'\neq K''$, then $K'\subset K$, and $T'\subset T$; so that, $M=(T':T)\in\mathrm{Max}(T)$, which gives that $T'\subset T$ is t-closed, a contradiction because $T'\subset S$ is subintegral. 
Using \cite[Theorem 3.8]{ADM}, we have to look at two cases. Assume first that $|k|=\infty$. Then, $K =k[\alpha]$, for some $\alpha\in K$ which satisfies $\alpha^3=0$. In this case, $\dim_k(K)\leq 3$ and is a prime number. But $\dim_k(K)=\dim_{K'}(K)[K':k]$ gives that either $\dim_{K'}(K)=1$ or $[K':k]=1 $. This last condition cannot happen since $k\neq K'$; so that, $\dim_{K'}(K)=1$, which gives $K=K'=K''$, a contradiction. At last, if $k$ is a finite field, so are $K'$ and $K''$. Moreover, they are isomorphic as $k$-algebras and $k\subset K',K''$ are Galois. According to \cite [Proposition 7.8]{DPPS}, we get  a contradiction since $T'\subset S$ is subintegral. Then, $T'=T''$.

Now  $T'=S^+_R$ by Proposition \ref{A4.0} when $S^+_R$ exists.

(2) We next  consider $\varphi:[R,S]\to[R,{}_S^+R]\times[{}_S^+R,S]$ defined by $\varphi(T)=(T\cap{}_S^+R,{}_S^+RT)$ for $T\in[R,S]$. Set $R':=T\cap{}_S^+R$ and $S':={}_S^+RT$, then ${}_S^+R={}_{S'}^+R'$. Let $T'\in[R,S]$ be such that $\varphi(T)=\varphi(T')$.
 Then $T$ and $T'$ are both complements of ${}_{S'}^+R'$ in $[R',S']$. It follows that $T=T'$, applying (1) to the extension $R'\subseteq S'$; so that, $\varphi$ is injective. So we get $|[R,S]|\leq|[R,{}_S^+R]||[{}_S^+R,S]|$ and $|[R,{}_S^+R]|+|[{}_S^+R,S]|-1\leq|[R,S]|$ comes from $[R,{}_S^+R]\cup[{}_S^+R,S]\subseteq[R,S]$ with $[R,{}_S^+R]\cap[{}_S^+R,S]=\{{}_S^+R\}$.
\end{proof}

\begin{corollary}\label{B5.2} Let $R\subset S$ be an integral u-closed FIP extension. Assume that $(R,M)$ is a local ring and $R\neq{}_S^+R\neq S$.  

If $R'\subseteq S''$ is t-closed for any $(R',S')\in[R,{}_S^+R[\times]{}_S^+R,S]$ and for any $S''\in[R',S']$ such that $M'S''\in\mathrm{Max}(S'')$, where $\mathrm{Max}(R')=\{M'\}$, then $|[R,S]|=|[R,{}_S^+R]||[{}_S^+R,S]|$.
\end{corollary}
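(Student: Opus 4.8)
The plan is to upgrade the injection $\varphi\colon[R,S]\to[R,{}_S^+R]\times[{}_S^+R,S]$, $\varphi(U)=(U\cap{}_S^+R,\,{}_S^+R\,U)$, of Proposition~\ref{B5.1}(2) to a bijection; since $R\subset S$ has FIP this is precisely the asserted equality $|[R,S]|=|[R,{}_S^+R]|\,|[{}_S^+R,S]|$. So the whole task is to prove $\varphi$ surjective. Two families of target pairs are immediate: for $S'\in[{}_S^+R,S]$ one has $\varphi(S')=(S'\cap{}_S^+R,\,{}_S^+R\,S')=({}_S^+R,S')$ because ${}_S^+R\subseteq S'$, and dually $\varphi(R')=(R',{}_S^+R)$ for $R'\in[R,{}_S^+R]$. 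Hence it remains to realise every pair $(R',S')$ with $R'\in[R,{}_S^+R[$ and $S'\in\,]{}_S^+R,S]$.

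Fix such a pair. The extension $R'\subset S'$ is again integral, has FCP, and is u-closed by Proposition~\ref{1.017}; moreover $R\subset S$ is an $i$-extension by Proposition~\ref{1.311}, so $S=\overline R$ is local, whence $R$ is unbranched in $S$ and every ring of $[R,S]$ is local — in particular $R'$, with maximal ideal $M'$, and $S'$. I would then check that ${}_{S'}^+R'={}_S^+R$. Since $R'\in[R,{}_S^+R]$ and $R\subseteq{}_S^+R$ is subintegral, the extension $R'\subseteq{}_S^+R$ is subintegral, and as ${}_S^+R\in[R',S']$ this gives ${}_S^+R\subseteq{}_{S'}^+R'$. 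Conversely, ${}_S^+R\subseteq S$ is seminormal, hence so is ${}_S^+R\subseteq S'$, hence so is ${}_S^+R\subseteq{}_{S'}^+R'$; on the other hand ${}_S^+R\subseteq{}_{S'}^+R'$, being a subextension of the subintegral extension $R'\subseteq{}_{S'}^+R'$, is subintegral. If it were nontrivial, a maximal chain of it would consist of ramified minimal extensions by Proposition~\ref{1.31}(1) and of decomposed or inert minimal extensions by Proposition~\ref{1.31}(5), which is absurd; so ${}_{S'}^+R'={}_S^+R$. In particular $R'\neq{}_{S'}^+R'\neq S'$, since $R'\in[R,{}_S^+R[$ and $S'\in\,]{}_S^+R,S]$.

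Now I would apply Lemma~\ref{A5.0} to $R'\subset S'$: it is an integral FCP u-closed extension, $(R',M')$ is local, $R'\neq{}_{S'}^+R'\neq S'$, and the standing hypothesis of the corollary says exactly that $R'\subseteq S''$ is t-closed for every $S''\in[R',S']$ with $M'S''\in\mathrm{Max}(S'')$ (which, $S''$ being local, means $M'S''$ is its maximal ideal). Lemma~\ref{A5.0} then yields a complement $U\in[R',S']$ of ${}_{S'}^+R'={}_S^+R$, i.e.\ $U\cap{}_S^+R=R'$ and $U\cdot{}_S^+R=S'$, that is $\varphi(U)=(R',S')$. Together with the two immediate families this shows $\varphi$ is onto $[R,{}_S^+R]\times[{}_S^+R,S]$, hence bijective, giving the cardinality formula.

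The point needing the most care is the identification ${}_{S'}^+R'={}_S^+R$ for an \emph{arbitrary} pair $(R',S')$ — the proof of Proposition~\ref{B5.1}(2) only required it for pairs in the image of $\varphi$ — together with the observation that all rings between $R$ and $S$ are local; once these are secured, Lemma~\ref{A5.0} does the rest.
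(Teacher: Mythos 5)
Your proposal is correct and follows essentially the same route as the paper's proof: reduce the cardinality formula to surjectivity of the map $\varphi$ from Proposition \ref{B5.1}, dispose of the pairs involving ${}_S^+R$ trivially, identify ${}_{S'}^+R'={}_S^+R$ for the remaining pairs, and apply Lemma \ref{A5.0} to $R'\subseteq S'$ to produce the required complement. You merely spell out details the paper leaves implicit (localness of all intermediate rings, u-closedness of $R'\subset S'$ via Proposition \ref{1.017}, and the verification that ${}_{S'}^+R'={}_S^+R$).
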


\begin{proof} First observe that $(R',S')\in[R,{}_S^+R]\times[{}_S^+R,S]$ implies that ${}_{S'}^+R'={}_S^+R$. 
Since $R\subset S$ is an i-extension, any element of $[R,S]$ is a local ring. Using the notation of the proof of Proposition~\ref{B5.1}, we have $|[R,S]|=|[R,{}_S^+R]||[{}_S^+R,S]|$ if and only if $\varphi$ is surjective, if and only if for any $(R',S')\in[R,{}_S^+R]\times[{}_S^+R,S]$, there exists $T\in[R,S]$ such that $R'=T\cap{}_{S}^+R$ and $S'={}_{S}^+RT\ (*)$. If $R'={}_S^+R$, it is enough to take $T=S'$, and if $S'={}_S^+R$, it is enough to take $T=R'$. Then $(*)$ holds also.

Assume that for any $(R',S')\in[R,{}_S^+R[\times]{}_S^+R,S]$ and for any $S''\in[R',S']$ such that $M'S''\in\mathrm{Max}(S'')$, where $\mathrm{Max}(R')=\{M'\}$, this  implies  that $R'\subseteq S''$ is t-closed. 
 We have $R'\neq{}_{S'}^+R'={}_{S'}^tR'\neq S'$. Applying Lemma~\ref{A5.0} to the extension $R'\subseteq S'$, we get that there exists $T\in[R',S']\subseteq [R,S]$ such that $R'=T\cap{}_{S}^+R$ and $S'={}_{S}^+RT$ and $(*)$ holds. It follows that $|[R,S]|=|[R,{}_S^+R]||[{}_S^+R,S]|$.
\end{proof}

\begin{proposition}\label{B5.3} Let $R\subset S$ be a seminormal integral FCP extension such that $(R,M)$ is a local ring and ${}_S^tR\neq R,S$. The following statements hold:
\begin{enumerate}
\item $S$ is a semilocal ring whose maximal ideals are $N_1,\ldots,N_p$. Then, $(R:S)=M=\cap_{i\in\mathbb N_p}N_i$. 
\item ${}_S^tR$ has a complement in $[R,S]$ if and only if $S/N_i\cong S/N_j$, as $R/M$-algebras, for each $i,j\in\mathbb N_p$. 
\item If, in addition, this complement is unique, then it is the co-infra-integral closure of $R\subset S$. 
\end{enumerate}
 \end{proposition}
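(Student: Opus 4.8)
The plan is to work throughout with the hypotheses that $R\subset S$ is seminormal integral FCP, $(R,M)$ is local, and $R\ne{}_S^tR\ne S$. For (1), since $R\subseteq S$ is seminormal and FCP, Proposition \ref{1.91} gives that $(R:S)$ is an intersection of finitely many maximal ideals of $S$, say $(R:S)=\cap_{i\in\mathbb N_p}N_i$ with the intersection irredundant. Because $M$ is the only maximal ideal of $R$ and $(R:S)$ is an ideal of $R$ contained in $M$, I would argue that $(R:S)=M$: indeed each $N_i$ lies over $M$ (as $(R:S)\subseteq M$ and $M$ is the unique maximal ideal, so $N_i\cap R\subseteq M$ forces $N_i\cap R=M$), hence $M=\cap N_i\cap R\supseteq(R:S)$, while conversely $M\cdot S\subseteq(R:S)$ would need to be checked — more cleanly, $M=(R:S)$ follows because $M$ is maximal in $R$ and $(R:S)$ is a radical ideal of $R$ (being the contraction of the radical ideal $\cap N_i$ of $S$) contained in $M$, so equals $M$ unless $(R:S)=R$, which is excluded since $R\ne S$. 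That $S$ is semilocal is immediate from FCP integrality. This part is essentially bookkeeping.

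For (2), the forward direction: suppose ${}_S^tR$ has a complement $T$, so $T\cap{}_S^tR=R$ and $T\cdot{}_S^tR=S$. From $T\cdot{}_S^tR=S$ and the fact that ${}_S^tR\subseteq S$ is t-closed (hence, by Proposition \ref{1.31}, built from inert minimal steps and an $i$-extension), one deduces $T\subseteq S$ is infra-integral; combined with $R\subseteq T$ being t-closed (which follows from $T\cap{}_S^tR=R$ together with ${}_S^tR$ being the t-closure), $T$ plays the role of a "co-t-closed" witness. Then I would localize/pass to the conductor: since $R$ is local with $(R:S)=M$ and $R\subseteq T$ is t-closed with $R\cong$ its residue field $k:=R/M$, and $T\subseteq S$ is infra-integral so the residue extensions $k\to S/N_i$ factor through $T$, I expect to extract $k$-algebra isomorphisms $S/N_i\cong S/N_j$ from the single "diagonal" copy $T/(M)$ mapping isomorphically onto each $S/N_i$ modulo the relevant maximal ideal — this uses that $T\subseteq S$ infra-integral means all its residue extensions are trivial, so $T/(\text{max ideal over }N_i\cap T)\cong S/N_i$, and these are all identified with $R/M$ composed with the fixed inclusion $R/M\subseteq T/M$. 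For the converse, assuming all $S/N_i$ are isomorphic as $R/M$-algebras, I would construct $T$ explicitly as the preimage in $S$ of the "diagonal" subalgebra of $S/M\cong\prod_i S/N_i$ (using the chosen isomorphisms to a common field $K$ to define the diagonal $\{(\bar s,\dots,\bar s)\}$). One checks $T\cap{}_S^tR=R$ (the diagonal meets the t-closure trivially because the t-closure corresponds on the residue side to the full product while being $i$ over $R$... here I'd instead verify directly that $R\subseteq T$ is t-closed and $T\subseteq S$ is infra-integral, then use Proposition \ref{1.31} or a splitting argument to get that $T$ is genuinely a complement) and $T\cdot{}_S^tR=S$.

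For (3), assume the complement is unique; call it $T$. I want $T=S^t_R$, i.e.\ $T$ is the least element of $[R,S]$ whose extension to $S$ is infra-integral. I would set $T_0:=\cap\{U\in[R,S]\mid U\subseteq S\text{ infra-integral}\}$ and invoke Theorem \ref{1.628}: the co-infra-integral closure exists iff $T_0\subseteq S$ is infra-integral (equivalently catenarian), in which case it equals $T_0$. So it suffices to show $T_0\subseteq S$ is infra-integral. Mimicking the argument in Lemma \ref{A5.0}: among all $U\in[R,S]$ with $U\subseteq S$ infra-integral, pick one, $U_0$, with $\ell[U_0,S]$ maximal; then no minimal non-inert extension sits below $U_0$ inside $[R,U_0]$, which forces $R\subseteq U_0$ t-closed (using seminormality of $R\subseteq S$, Proposition \ref{1.31}, and that $M U_0$ must be radical in the local ring $U_0$, hence maximal). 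Then $U_0$ is a complement of ${}_S^tR$, so by uniqueness $U_0=T$; running this for every candidate $U$ shows every infra-integral-over-$S$ subalgebra contains $T$, whence $T_0=T$ and $T_0\subseteq S$ is infra-integral. Theorem \ref{1.628} then gives $S^t_R=T$. The main obstacle I anticipate is part (2)'s converse — organizing the residue-field data so that a common $R/M$-algebra structure on the $S/N_i$ actually produces a subalgebra $T$ of $S$ that is simultaneously t-closed over $R$ and infra-integral in $S$, rather than just a ring with the right residue behavior; the splitting machinery (the extension splits at ${}_S^tR$, cf.\ Proposition \ref{1.429}) and Proposition \ref{1.31} should be the tools that close this gap, but getting the isomorphisms to be $R/M$-linear and compatible is the delicate point.
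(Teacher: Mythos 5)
Your parts (1) and (2) follow essentially the paper's route (reduce modulo $M$, identify $S/M\cong\prod_{i}S/N_i$, and use the diagonal copy of the common field $K$), and the outline is sound, with two small repairs needed: in (1), the shortcut ``$(R:S)$ is a radical ideal of $R$ contained in $M$, hence equals $M$'' is not a valid argument (a proper radical ideal of a local ring need not be maximal); the correct and immediate fix is either the last assertion of Proposition \ref{1.91} (the intersection is irredundant in $R$, and $R$ is local) or lying over: $(R:S)=\cap_iN_i\subseteq R$ gives $(R:S)=\cap_i(N_i\cap R)=M$. In (2), to pass from ``$T\subseteq S$ infra-integral'' to $S/N_i\cong S/N_j$ you must note that $T$ is local: $R\subseteq T$ is t-closed (since $T\cap{}_S^tR=R$), hence an $i$-extension by Proposition \ref{1.31}, and $T$ is integral over the local ring $R$; then all the quotients $T/(N_i\cap T)$ are one and the same field and the isomorphisms are $R/M$-linear, which is exactly how the paper argues.

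The genuine gap is in (3). The inference ``no minimal non-inert extension sits immediately below $U_0$, hence $R\subseteq U_0$ is t-closed'' is false for seminormal extensions, and the tools you transplant from Lemma \ref{A5.0} do not apply here: $U_0$ need not be local (decomposed steps create several maximal ideals, and $U_0=S$ is always a candidate), so ``$MU_0$ is radical in the local ring $U_0$, hence maximal'' has no content; \cite[Lemma 17]{DPP4} detects minimal \emph{ramified} steps, which are absent anyway in a seminormal extension, so it says nothing about decomposed steps; and the extra hypothesis of Lemma \ref{A5.0} (that $R\subseteq S'$ is t-closed whenever $MS'$ is the maximal ideal of $S'$) is simply not available in Proposition \ref{B5.3}. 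Concretely, let $k\subset K$ be a minimal field extension and $U_0:=K\times k$; then $[k,U_0]=\{k,\,k\times k,\,K\times k\}$, the only ring adjacent below $U_0$ is $k\times k$ and the step $k\times k\subset K\times k$ is inert, yet $k\subset U_0$ is seminormal and not t-closed. So your maximal-length $U_0$ cannot be certified to be a complement by this mechanism. The paper's proof of (3) instead exploits the residue-field picture established in (2): if $U\subseteq S$ is infra-integral, every residue field of $U$ is isomorphic to the common field $K$, so $U/M\cong K^n$, and the preimage in $U$ of the diagonal copy of $K$ is a complement of ${}_S^tR$; by uniqueness it equals $T$, whence $T\subseteq U$ for every such $U$, and since $T\subseteq S$ is itself infra-integral, $T=S^t_R$ (no appeal to Theorem \ref{1.628} is then needed). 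You should replace your step for (3) by this diagonal argument, or supply a genuinely different proof that the minimal such $U_0$ is t-closed over $R$.
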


\begin{proof} (1) $S$ is a semilocal ring because $R\subset S$ is a finite  extension. Let $N_1,\ldots,N_p$ be its maximal ideals. By Proposition \ref{1.91}, $(R:S)$ is an intersection of finitely many maximal ideals of $S$ and of $R$; so that, $(R:S)=M=\cap_{i\in\mathbb N_p}N_i$.

(2) Since $(R:S)=M$ is an ideal of $S$, let $\rho:S\to S/M$ be the canonical map. As $M=\cap_{i\in\mathbb N_p}N_i$, we get that $S':=S/M\cong\prod_{i\in\mathbb N_p}S/N_i$. Set $k:=R/M$ and $K_i:=S/N_i$ for each $i\in\mathbb N_p$, which are field extensions of $k$. Moreover, ${}_{S'}^tk=({}_{S}^tR)/M$ by \cite[Proposition 2.10]{Pic 1}.

Let $T\in[R,S]$; so that, $T':=T/M\in[k,\prod_{i\in\mathbb N_p}K_i]$, up to a $k$-algebra isomorphism. It follows that $T$ is a complement of ${}_S^tR$ in $[R,S]$ if and only if $T'$ is a complement of ${}_{S'}^tk$ in $[k,S']$ if and only if $T'\cap{}_{S'}^tk=k$ and $T'{}_{S'}^tk=S'$. Assume that this condition holds. Then, $k\subset T'$ is t-closed; so that, $T'$ is a field extension of $k$ by \cite[Lemme 3.10]{Pic 1} and $T'\subset S'$ is seminormal infra-integral, giving that $K_i\cong T'$, as $k$-algebras, for each $i\in\mathbb N_p$; so that, $S/N_i\cong S/N_j$, as $R/M$-algebras, for each $i,j\in\mathbb N_p$.

Conversely, assume that $S/N_i\cong S/N_j$, as $R/M$-algebras, for each $i,j\in\mathbb N_p$; so that, $K_i\cong K_j$, as $k$-algebras, for each $i,j\in\mathbb N_p$. Set, for example, $K:=K_1$; so that, $S'\cong K^p$. Let $T'\in[k,S']$ be such that $T'\cong K$ through the diagonal map; so that, $k\subset T'$ is t-closed and $T'\subset S'$ is seminormal infra-integral. There exists $T\in[R,S]$ such that $\rho(T)=T'$ and satisfying $R\subset T$ is t-closed and $T\subset S$ is seminormal infra-integral. Then, $T\cap{}_{S}^tR=R$ and $T{}_{S}^tR=S$, which shows that $T$ is a complement of ${}_{S}^tR$.

(3) Assume, in addition, that the complement of ${}_{S}^tR$ is unique. We are going to show that it is the co-infra-integral closure of $R\subset S$. We keep the previous notation with $T$ the complement of ${}_{S}^tR$, and, to make simpler the proof, assume that $S'=K^p$ (which is satisfied up to an isomorphism). Let $U\in[R,S]$ be such that $U\subseteq S$ is infra-integral. Then, $U$ is a semilocal ring. Set $\mathrm{Max}(U):= 
\{M_1,\ldots,M_n\}$; so that, $M=\cap_{j\in\mathbb N_n}M_j$. Since $U\subset S$ is infra-integral, we have $U/M_j\cong S/N_i$ for any $N_i$ lying over $M_j$. But $S/N_i\cong K$, as $R/M$-algebra, for each $i\in\mathbb N_p$ leads to $U/M_j\cong K$, as $R/M$-algebra, for each $j\in\mathbb N_n$. Then, $U/M\cong K^n$. In particular, there exists $V\in[R,U]$ such that $\rho(V)\cong K$. This shows that $R\subseteq V$ is t-closed and $V\subseteq S$ is seminormal infra-integral. This means that $V$ is a complement of ${}_{S}^tR$. By uniqueness of this complement, it follows that $V=T$, giving $T\subseteq U$. Then, $T$ is the  co-infra-integral closure of $R\subset S$. 
\end{proof}

\begin{remark}\label{B5.4} Let $R\subset S$ be an FIP seminormal extension such that ${}_S^tR\neq R,S$. Assume that $(R,M)$ is a local ring. Then, $S$ is semilocal since $R\subset S$ is a finite extension. Let $N_1,\ldots,N_p$ be the maximal ideals of $S$. As in Corollary~\ref{B5.2}, we can define a map $\varphi:[R,S]\to[R,{}_S^t R]\times
[{}_S^tR,S]$ by $\varphi(T):=(T\cap{}_S^tR,{}_S^tRT)$. But, contrary to the results of this corollary, we are going to show that $\varphi$ is not always injective and never surjective.  

Assume first that there exists $T$ such that $R\subset T$ is t-closed and $T\subset S$ is infra-integral; so that, $\varphi(T):=(R,S)$ and $T$ is a complement of ${}_S^tR$. Then $M\in\mathrm{Max}(T)$. Each $N _i$ lies over $M$ in $T$ and $M=\cap_{i=1}^pN_i=(R:S)$ by Proposition~\ref{1.91}. Moreover, $S/N_i\cong T/M$ for each $i=1,\ldots,p$. Set $k:=R/M,\ K:=T/M$ and $L:=S/M\cong\prod_{i=1}^pS/N_i\cong K^p$. Assume that the Galois group of the field extension $k\subset K$ has more than one element and let $\sigma$ be such an element different from the identity. Since $R\subset T$ has FIP, so has $k\subset K$, and there exists $x\in K$, algebraic over $k$, such that $K=k[x]$ in view of the Primitive Element Theorem. In particular, $\sigma(x)\neq x$. Set $y:= (\sigma(x),x,\ldots,x)\in K^p$ and $K':=k[y]$. Then $K$ and $K'$ are $k$-isomorphic distinct $k$-subalgebras of $K^p$, and the inverse image $T'$ of $K'$ under the canonical map $S\to S/M$ is an $R$-subalgebra of $S$ such that $R=T'\cap{}_{S}^tR$ and $S={}_S^tRT'$, with $T\neq T'$. It follows that $\varphi(T)=\varphi(T')$ and $\varphi$ is not injective. See also Example \ref{1.434}, where $\varphi(K)=\varphi(L)$ with $K\neq L$. But, if $k\subset K$ is a radicial extension, its Galois group has only one element, and $K^p$ has no $k$-subalgebra isomorphic to $K$ distinct from $K$. Then there is a unique complement of ${}_S^tR$. This reasoning still holds for any $(R',S')\in[R,{}_S^tR]\times[{}_S^tR,S]$. In this case, $\varphi$ is injective.

Now, we claim that $\varphi$ is never surjective. For each $i\in\mathbb N_p$, set $M_i:=N_i\cap{}_S^tR$. Then ${}_S^tR/M_i\cong R/M$. Let $R_1\in[{}_S^tR,S]$ be such that ${}_S^tR\subset R_1$ is minimal (inert) with conductor $M_1\in\mathrm{Max}({}_S^tR)$ (after a suitable reordering), and set $N'_j:=N_j\cap R_1$ for $j\in\{2,\ldots,p\}$. It follows that ${}_{R_1}^tR={}_S^tR$ in view of Proposition~\ref{1.31} and $R\subset R_1$ is an FIP seminormal extension. Moreover, $R_1/N'_j\cong {}_S^tR/M_j\cong R/M$ for each $j\in\{2,\ldots,p\}$. But $R_1/M_1\not\cong{}_S^tR/M_1$; so that, $R_1/M_1\not\cong R_1/N'_j$ for each $j\in\{2,\ldots,p\}$. It follows by Proposition~\ref{B5.3} that ${}_{R_1}^t R$  does not admits a complement in $[R,R_1]$; so that, there does not exist $T$ such that ${}_S^tRT=R_1$ and $R={}_S^tR\cap T$. Then, $\varphi$ is not surjective. 
 \end{remark}

\begin{proposition}\label{B5.5} Let $R\subset S$ be an  infra-integral  FCP extension. The map $\varphi:[R,S]\to[R,{}_S^+R]\times[{}_S^+R,S]$ defined by $\varphi(T):=(T\cap{}_S^+R,{}_S^+RT)$ is injective. 

In particular, if $R\subset S$ has FIP, then $|[R,S]|\leq|[R,{}_S^+R]||[{}_S^+R,S]|$.
 \end{proposition}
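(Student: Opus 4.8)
The plan is to deduce the injectivity of $\varphi$ from the uniqueness of the complement of a seminormalization, which is available through Theorem \ref{1.423} together with Proposition \ref{A4.0}; the cardinality estimate is then immediate. First I would record that $\varphi$ is well defined: since $R\subset S$ is infra-integral one has ${}_S^+R\in[R,S]$, and for every $T\in[R,S]$, $R\subseteq T\cap{}_S^+R\subseteq{}_S^+R$ and ${}_S^+R\subseteq{}_S^+R\,T\subseteq S$, so $\varphi(T)\in[R,{}_S^+R]\times[{}_S^+R,S]$.

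Now suppose $T,T'\in[R,S]$ satisfy $\varphi(T)=\varphi(T')$, and set $R':=T\cap{}_S^+R=T'\cap{}_S^+R$ and $S':={}_S^+R\,T={}_S^+R\,T'$, so that $R\subseteq R'\subseteq{}_S^+R\subseteq S'\subseteq S$. Being a subextension of $R\subset S$, the extension $R'\subset S'$ is again integral, infra-integral and FCP. The key preliminary step is the identification ${}_{S'}^+R'={}_S^+R$. For ``$\supseteq$'', the extension $R'\subseteq{}_S^+R$ is subintegral, being a subextension of the subintegral extension $R\subseteq{}_S^+R$ (\cite{S}), and ${}_S^+R\subseteq S'$; hence ${}_S^+R\subseteq{}_{S'}^+R'$. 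For ``$\subseteq$'', $R\subseteq R'$ is subintegral (again a subextension of $R\subseteq{}_S^+R$) and $R'\subseteq{}_{S'}^+R'$ is subintegral, so $R\subseteq{}_{S'}^+R'$ is subintegral by transitivity of subintegrality; since ${}_{S'}^+R'\subseteq S'\subseteq S$ and ${}_S^+R$ is the greatest subintegral subextension of $R$ in $S$, this gives ${}_{S'}^+R'\subseteq{}_S^+R$.

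With this identification, $T\cap{}_{S'}^+R'=T\cap{}_S^+R=R'$ and ${}_{S'}^+R'\,T={}_S^+R\,T=S'$, so $T$, and likewise $T'$, is a complement of ${}_{S'}^+R'$ in $[R',S']$. Since $R'\subset S'$ is infra-integral and FCP, Theorem \ref{1.423} guarantees that its co-subintegral closure exists. Thus for the extension $R'\subset S'$ and its seminormalization ${}_{S'}^+R'$, conditions (3) and (5) of Proposition \ref{A4.0} are met, so by that proposition $({}_{S'}^+R')^o$ exists; that is, ${}_{S'}^+R'$ has a \emph{unique} complement in $[R',S']$. Hence $T=T'$, proving that $\varphi$ is injective. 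When $R\subset S$ has FIP, $[R,S]$ is finite, and the injection $\varphi$ yields $|[R,S]|\le|[R,{}_S^+R]|\,|[{}_S^+R,S]|$.

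I expect the identification ${}_{S'}^+R'={}_S^+R$ to be the crux: it is exactly what places $T$ and $T'$ in the scope of the uniqueness statement of Proposition \ref{A4.0}, and it rests on the (standard, but here essential) facts that subintegrality is inherited by subextensions and is transitive. Everything else is formal bookkeeping combining Theorem \ref{1.423} with Proposition \ref{A4.0}. A variant avoiding this identification is to observe directly that $R'\subseteq T$ is seminormal — since ${}_T^+R'$ is subintegral over $R$ (both $R\subseteq R'$ and $R'\subseteq{}_T^+R'$ are) and contained in $T$, whence ${}_T^+R'\subseteq T\cap{}_S^+R=R'$ — so that the subintegral subextension $T\cap{}_{S'}^+R'$ of $R'$ inside $T$ must equal $R'$; either route shows that $T$ and $T'$ are complements of ${}_{S'}^+R'$.
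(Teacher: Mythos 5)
Your proof is correct and follows essentially the same route as the paper: reduce to the extension $R'\subseteq S'$, identify ${}_{S'}^+R'={}_S^+R$ so that $T$ and $T'$ are complements of the seminormalization there, and conclude uniqueness from Theorem \ref{1.423} combined with Proposition \ref{A4.0}. Your spelled-out verification of ${}_{S'}^+R'={}_S^+R$ is a welcome detail that the paper leaves implicit.
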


\begin{proof} Let $U,U'\in[R,S]$ be such that $\varphi(U)=\varphi(U')$. Set $R':=U\cap{}_S^+R$ and $S':={}_S^+RU$. Then, ${}_S^+R={}_{S'}^+R'$, with $R'\subseteq S'$ infra-integral. Moreover, $U$ and $U'$ are both complements of ${}_{S'}^+R'$. As Theorem~\ref{1.423} asserts that $R'\subseteq S'$ admits a co-subintegral closure $T$, it follows from  Proposition \ref{A4.0} that $T=U=U'$; so that, $\varphi$ is injective. 
 Then,   $|[R,S]|\leq|[R,{}_S^+R]||[{}_S^+ R, S]|$ if $R\subset S$ has FIP.
\end{proof}

The following proposition gives us another way to calculate $|[R,S]|$, when $R\subseteq S$ is an integral FIP extension with $R$ local.

\begin{proposition}\label{B5.7} Let $R\subset S$ be an FIP integral extension, over the local ring $(R,M)$ and set $\mathrm{Max}(S):=\{N_1,\ldots,N_p\}$. Let $n[R,S]$ be the number of complements of ${}_S^tR$ in $[R,S]$. The following conditions hold:

\begin{enumerate}
\item There exists a greatest $R$-subalgebra $R'$ of $S$ such that $M$ is an ideal of $R'$.  

\item If $S/N_i\not\cong S/N_j$ as $R/M$-algebras, for some $i\neq j\in\mathbb N_p$, then $n[R,S]=0$.

\item Assume that $n[R,S]\neq 0$. Then, any complement $T$ of ${}_S^tR$ is in $[R,R']$ and satisfies $S/N_i\cong T/M$ for any $i\in\mathbb N_p$. Let $P(X)\in(R/M)[X]$ be the minimal monic polynomial of some $y\in T/M$ such that $T/M=(R/M)[y]$. For any $i\in\mathbb N_p$, there exists some $y_i\in S/N_i$ such that $S/N_i=(R/M)[y_i]$ where $P(X)$ is also the minimal monic  polynomial of  $y_i$.
 Moreover, $n[R,S]=|\{(R/M)[x]\mid x\in R'/M,\ P(x)=0\}|$. 
\end{enumerate}
\end{proposition}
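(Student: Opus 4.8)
The plan is to prove each of the three items in turn, exploiting heavily the fact that $M=(R:S)$ is an ideal shared by many of the intermediate rings, so that one can pass to the quotient by $M$ and work with the finite $k$-algebra $S/M\cong\prod_{i=1}^p S/N_i$, where $k:=R/M$. First I would establish (1): the set $\mathcal R:=\{V\in[R,S]\mid M\text{ is an ideal of }V\}$ is nonempty (it contains $R$) and is closed under products, since if $M$ is an ideal of $V$ and of $W$ then $M=MV\cdot? $ — more carefully, $MVW\subseteq MV+MW\subseteq M$, so $M=MVW$ is an ideal of $VW$; because $[R,S]$ is finite (FIP), the product $R'$ of all members of $\mathcal R$ is again in $\mathcal R$ and is the greatest such subalgebra. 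Equivalently, $M$ is an ideal of $V$ iff $MV\subseteq R$, i.e. iff $V\subseteq (M:M)\cap? $; in any case finiteness of $[R,S]$ gives the greatest element directly.

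For (2): a complement $T$ of ${}_S^tR$ satisfies $T\cap{}_S^tR=R$ and $T\,{}_S^tR=S$. The first relation forces $R\subseteq T$ to be t-closed (by the characterization recalled after Definition~\ref{closure}, using ${}_S^tR\cap T=R$), hence by \cite[Lemme 3.10]{Pic 1} $T$ modulo its conductor is a field; the second relation, combined with the canonical decomposition $R\subseteq{}_S^tR$ infra-integral, forces $T\subseteq S$ to be infra-integral (use Proposition~\ref{1.31} on a maximal chain refining $T\subseteq S$: since ${}_S^tR\subseteq S$ is t-closed, i.e.\ a tower of inert minimal extensions, and $T{}_S^tR=S$, the extension $T\subseteq S$ has no inert minimal subextension). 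Then, as in the proof of Proposition~\ref{B5.3}(2), since $M$ is the conductor $(R:S)$ (Proposition~\ref{1.91}: $R\subset S$ seminormal — here infra-integral implies one still gets $M\in\mathrm{Max}(T)$) one gets $S/N_i\cong T/M$ as $R/M$-algebras for every $i$, because $T\subseteq S$ is seminormal infra-integral with $M$ an ideal of $T$. If two of the $S/N_i$ are non-isomorphic as $k$-algebras this is impossible, so $n[R,S]=0$.

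For (3): assume $n[R,S]\neq 0$ and fix a complement $T$. Since $M$ is an ideal of $T$, $T\subseteq R'$ by (1). From (2) (run with the common value) $S/N_i\cong T/M$ for all $i$; writing $T/M=(R/M)[y]$ with minimal monic polynomial $P(X)\in(R/M)[X]$ (Primitive Element Theorem, since $k\subseteq T/M$ is a finite FIP field extension), the $k$-isomorphism $T/M\xrightarrow{\sim} S/N_i$ carries $y$ to some $y_i$ with the same minimal polynomial $P$ and $S/N_i=(R/M)[y_i]$. Finally I would set up the bijection between complements of ${}_S^tR$ and subfields of $R'/M$ of the form $(R/M)[x]$ with $P(x)=0$: modding out by $M$ and using $\rho:S\to S/M$, a complement $T$ corresponds to the $k$-subalgebra $\rho(T)$ of $\prod_i S/N_i$ which is t-closed over $k$ (hence a field) and such that $\rho(T)\cdot\rho({}_S^tR)=S/M$ and $\rho(T)\cap\rho({}_S^tR)=k$; such $\rho(T)\subseteq R'/M$ is exactly a copy of $k[x]$ for $x$ a root of $P$ in $R'/M$, and conversely every such $k[x]$ pulls back to a complement. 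The correspondence $T\mapsto k[x]$ may be genuinely two-to-one on the level of elements $x$ but one-to-one on the subfields $k[x]$, which is why the count is $|\{(R/M)[x]\mid x\in R'/M,\ P(x)=0\}|$ rather than the number of roots.

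The main obstacle I anticipate is the last step — showing that the complements of ${}_S^tR$ are in natural bijection with the subalgebras $k[x]$, $x$ a root of $P$ in $R'/M$ — and in particular verifying that every root $x$ of $P$ in $R'/M$ does yield a genuine complement (one must check $k[x]\cdot\rho({}_S^tR)=S/M$, which uses that $\rho({}_S^tR)=\prod_i (k\hookrightarrow S/N_i)$ is the diagonal-type seminormal infra-integral part and that $k[x]$, being a field mapping to each $S/N_i$ via a $k$-isomorphism, together generate the product). The computation that $M$ being shared forces all the residual fields to be $k$-isomorphic copies of $T/M$ is the technical heart and will lean on Propositions~\ref{1.91} and \ref{1.31} exactly as in Proposition~\ref{B5.3}; the rest is bookkeeping with finite $k$-algebras.
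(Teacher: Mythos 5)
Your items (1) and the first half of (3) follow the paper's argument, but the core of the proof --- the identification $S/N_i\cong T/M$ and, above all, the converse step that every root $x\in R'/M$ of $P$ yields a complement --- is set up in a framework that does not exist here. You import the structure of Proposition \ref{B5.3}: you invoke Proposition \ref{1.91} to claim $M=(R:S)$, you call $T\subseteq S$ ``seminormal infra-integral'', and you work with a map $\rho\colon S\to S/M$ and with $\rho({}_S^tR)$ viewed as a diagonal inside $\prod_i S/N_i$. But Proposition \ref{B5.7} has no seminormality hypothesis: in general $M\neq(R:S)$, $M$ is \emph{not} an ideal of $S$ (that is precisely why the ring $R'$ is introduced in (1) --- if $M$ were an ideal of $S$ one would have $R'=S$), so $S/M$ and $\rho({}_S^tR)$ are not defined, and $S/M\cong\prod_i S/N_i$ fails. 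Since you yourself locate the whole weight of the count $n[R,S]=|\{(R/M)[x]\mid x\in R'/M,\ P(x)=0\}|$ in exactly this ``check $k[x]\cdot\rho({}_S^tR)=S/M$'' step, this is a genuine gap, not a cosmetic one.

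The statement in (2) is repairable without seminormality: a complement $T$ satisfies $T\cap{}_S^tR=R$, hence $R\subseteq T$ is t-closed, so (by \cite[Proposition 4.10]{DPP2}, as in Lemma \ref{SUP1}) $T$ is local with maximal ideal $M=(R:T)$; and $T\,{}_S^tR=S$ forces $T\subseteq S$ to be infra-integral, so each residual map $T/M=\kappa_T(N_i\cap T)\to S/N_i$ is an isomorphism --- no conductor computation in $S$ is needed. For the converse in (3) the correct route stays inside $R'$, where $M$ \emph{is} an ideal: take $T'$ to be the inverse image of $(R/M)[x']$ under $R'\to R'/M$; then $T'$ is local with maximal ideal $M$, $T'/M$ is a field, so $M=(R:T')$ and $R\subseteq T'$ is t-closed, i.e.\ $T'\cap{}_S^tR=R$; and since $T'/M\cong(R/M)[X]/(P(X))\cong S/N_i$ for every $i$, the injective residual maps $T'/M\to S/N_i$ are isomorphisms by degree count, so $T'\subseteq S$ is infra-integral, whence $T'\,{}_S^tR=S$ (the composite is simultaneously infra-integral and, lying over ${}_S^tR$, t-closed in $S$). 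Your final observation that the count is over the subfields $(R/M)[x]$ rather than over the roots $x$ is correct and matches the paper.
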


\begin{proof} First observe that $S$ is semilocal by Propositions~\ref{1.31} and ~\ref{1.91}.

(1) Set $\mathcal{R}:=\{T\in[R,S]\mid M$ is an ideal of $T\}$. Then $\mathcal{R}$ is a finite nonempty set. It has maximal elements. Assume that $R'$ and $R''$ are two maximal elements of $\mathcal{R}$. If $R'\neq R''$, then $R'\subset R'R''$ with $R'R''\in\mathcal{R}$, a contradiction. Then $\mathcal{R}$ has a unique maximal element, which is its greatest element. Let $R'$ be this element. 

(2) Assume that $S/N_i\not\cong S/N_j$ as $R/M$-algebras, for some $i\neq j\in\mathbb N_p$ and that $n[R,S]\neq 0$. Then ${}_S^tR$ admits some complement $T$ such that $R\subseteq T$ is t-closed; so that, $T$ is local with maximal ideal $M$ and $T\subseteq S$ is infra-integral. 
It follows that $S/N_i\cong T/M$ for each $i\in\mathbb N_p$. In particular, $S/N_i\cong S/N_j$ as $R/M$-algebras, for each $i\neq j\in\mathbb N_p$, a contradiction, whence  $n[R,S,]=0$.

(3) According to (2), $n[R,S]\neq 0$ implies that $S/N_i\cong S/N_j$ as $R/M$-algebras, for each $(i,j)\in\mathbb N_p$. 
 Assume that ${}_S^tR$ admits some complement $T$; so that,
    $R\subseteq T$ is t-closed. Therefore, $T$ is local with maximal ideal $M$, whence  $M=(R:T)$, giving $T\in[R,R']$. Then $R/M\subseteq T/M$ is a finite FIP field extension; so that, there exists some $y\in T/M$ such that $T/M=(R/M)[y]$. Since $T\subseteq S$ is infra-integral; so that, $S/N_i\cong T/M$ for each $i\in\mathbb N_p$, let $\varphi_i:T/M\to S/N_i$ be the $R/M$-algebra isomorphism, and set $y_i:=\varphi_i(y)$ for each $i\in\mathbb N_p$. It follows that $S/N_i=\varphi_i(T/M)=\varphi_i(R/M[y])=R/M[\varphi_i(y)]=R/M[y_i]$. Let $P(X)\in(R/M)[X]$ be the minimal monic polynomial of $y$, then $P(X)$ is also the minimal monic polynomial of  $y_i$ and $T/M\cong (R/M)[X]/(P(X))\cong S/N_i$ for each $i\in\mathbb N_p$.
     
Let $T'$ be another complement of ${}_S^tR$. As we have just seen before, $T'\in[R,R']$ and is local with maximal ideal $M$ and $T'\subseteq S$ is infra-integral, which implies that $S/N_i\cong T'/M$ for each $i\in\mathbb N_p$. Then, there exists an $R/M$-algebra isomorphism $\theta:T\to T'$. Setting $x:=\theta(y)$, we get that $P(X)$ is also the minimal monic polynomial of  $x$.

Conversely, let $x'\in R'/M$ be such that $P(x')=0$ and let $T'\in[R,R']$ be the inverse image of $(R/M)[x']$ by the canonical map $\rho:R'\to R'/M$. Then $T'$ is an $R$-subalgebra of $S$ with maximal ideal $M$, because $T'/M=(R/M)[x']\cong(R/M)[X]/(P(X))$, which is a field; so that, $R\subseteq T'$ is t-closed, since $M=(R:T')$. As we have $T'/M\cong(R/M)[X]/(P(X))\cong S/N_i$ for each $i\in\mathbb N_p$, it follows that $T'\subseteq S$ is infra-integral. Then $T'$ is a complement of ${}_S^tR$. To end, let $x,x'\in R'/M$ be such that $P(x)=P(x')=0$. If $(R/M)[x]=(R/M)[x']$, their inverse images by $\rho$ give the same $T$. Then $n[R,S]=|\{(R/M)[x]\mid x\in R'/M,\ P(x)=0\}|$. 
\end{proof}

We deduce the following theorem.

\begin{theorem}\label{B5.8} Let $R\subset S$ be an FIP integral extension, over the local ring $(R,M)$. For each $(R',S')\in[R,{}_S^tR]\times[{}_S^tR,S]$, let $n[R',S']$ be the number of complements of ${}_{S'}^tR'$ in $[R',S']$. Then $|[R,S]|=\sum_{(R',S')\in[R,{}_S^tR]\times[{}_S^tR,S]}n[R',S']$.
 \end{theorem}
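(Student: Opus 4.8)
The plan is to count $[R,S]$ fibrewise along the map
\[
\varphi\colon[R,S]\longrightarrow[R,{}_S^tR]\times[{}_S^tR,S],\qquad \varphi(T):=(T\cap{}_S^tR,\ {}_S^tR\,T),
\]
which is well defined since $R\subseteq T\cap{}_S^tR\subseteq{}_S^tR$ and ${}_S^tR\subseteq{}_S^tR\,T\subseteq S$, and whose target is finite because $R\subset S$ has FIP. The fibres of $\varphi$ partition $[R,S]$, so it will suffice to show that for every $(R',S')\in[R,{}_S^tR]\times[{}_S^tR,S]$ the fibre $\varphi^{-1}(R',S')$ has exactly $n[R',S']$ elements; summing then gives the asserted formula.

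The first step I would carry out is the key lemma that ${}_{S'}^tR'={}_S^tR$ for every $(R',S')\in[R,{}_S^tR]\times[{}_S^tR,S]$. Note that ${}_S^tR\in[R',S']$. Taking a maximal chain of $[R,{}_S^tR]$ that passes through $R'$, Proposition~\ref{1.31} shows each of its minimal steps is decomposed or ramified (since $R\subseteq{}_S^tR$ is infra-integral), hence the steps between $R'$ and ${}_S^tR$ form a maximal chain of $[R',{}_S^tR]$ with only decomposed or ramified steps, so $R'\subseteq{}_S^tR$ is infra-integral and ${}_S^tR\subseteq{}_{S'}^tR'$. Dually, taking a maximal chain of $[{}_S^tR,S]$ through $S'$, Proposition~\ref{1.31} shows each minimal step is inert (since ${}_S^tR\subseteq S$ is t-closed), hence ${}_S^tR\subseteq S'$ is t-closed and ${}_{S'}^tR'\subseteq{}_S^tR$. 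This gives equality.

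Next I would identify the fibre over $(R',S')$ with the set of complements of ${}_{S'}^tR'$ in $[R',S']$. Put $C:={}_S^tR={}_{S'}^tR'$. If $\varphi(T)=(R',S')$, then $CT=S'$ forces $T\subseteq S'$ and $T\cap C=R'$ forces $R'\subseteq T$, so $T\in[R',S']$; as $C$ and $T$ both lie in $S'$, the equalities $T\cap C=R'$ and $CT=S'$ say exactly that $T$ is a complement of ${}_{S'}^tR'$ in $[R',S']$. Conversely any such complement $T$ satisfies $T\cap{}_S^tR=T\cap C=R'$ and ${}_S^tR\,T=CT=S'$, hence $\varphi(T)=(R',S')$. (The degenerate cases $R'={}_S^tR$ and $S'={}_S^tR$ are covered uniformly, and a pair outside the image of $\varphi$ simply carries no complement, so $n[R',S']=0$ there.) Therefore $|\varphi^{-1}(R',S')|=n[R',S']$, and summing over the finite product $[R,{}_S^tR]\times[{}_S^tR,S]$ yields $|[R,S]|=\sum_{(R',S')}n[R',S']$.

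I expect the main obstacle to be the key lemma ${}_{S'}^tR'={}_S^tR$, namely the fact that, for an integral FCP extension, t-closedness passes to ``lower'' subextensions and infra-integrality passes to ``upper'' subextensions; this is precisely what the description of these properties in terms of the types (inert / decomposed / ramified) of the minimal steps in a maximal chain, i.e.\ Proposition~\ref{1.31}, provides. The remaining work---checking that the relevant intersections and products may be computed inside $S'$, and handling the boundary pairs---is routine since everything in sight is contained in $S'$.
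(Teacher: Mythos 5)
Your proposal is correct and takes essentially the same route as the paper: the paper also counts $[R,S]$ by the fibres of the map $\varphi(T)=(T\cap{}_S^tR,\,{}_S^tR\,T)$, identifying the fibre over $(R',S')$ with the set of complements of ${}_{S'}^tR'={}_S^tR$ in $[R',S']$, so that $|[R,S]|=\sum_{(R',S')}n[R',S']$ (with $n[R',S']=0$ for pairs outside the image). Your explicit verification of the key equality ${}_{S'}^tR'={}_S^tR$ via Proposition \ref{1.31} is a point the paper treats as already established earlier, but it rests on the same fact.
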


\begin{proof} Let $(R',S')\in[R,{}_S^tR]\times[{}_S^tR,S]$. As we already have seen, we have ${}_S^tR={}_{S'}^tR'$.

Let $T\in[R,S]$. Set $R':=T\cap{}_S^tR$ and $S':={}_S^tRT$; so that, $(R',S')\in[R,{}_S^tR]\times[{}_S^tR,S]$. Then, $T\in[R',S']$ and is a complement of ${}_{S'}^tR'={}_S^tR$ in $[R',S']$. 
Moreover, for any $(R'',S'')\in[R,{}_S^tR]\times[{}_S^tR,S]$ such that $(R'',S'')\neq(R',S')$, we get that $T$ is not a complement of ${}_{S''}^tR''={}_S^tR$ in $[R'',S'']$. 
Consider the map $\varphi:[R,S]\to[R,{}_S^tR]\times[{}_S^tR,S]$ defined by $\varphi(T):=(T\cap{}_S^tR,{}_S^tRT)$. Since $n[R',S']=|\{T\in[R',S']\mid R'=T\cap{}_{S'}^tR',\ S'={}_{S'}^tR'T\}|$, we get that $n[R',S']=|\{T\in [R,S]\mid\varphi(T)=(R',S')\}|$. It follows that $|[R,S]|=\sum_{(R',S')\in[R,
{}_S^tR]\times[{}_S^tR,S]}n[R',S']$, because $\varphi(T)=(R',S')$ implies that $T\in[R',S']$. We may remark that it can be that $n[R',S']=0 $ for some $(R',S')\in[R,{}_S^tR]\times[{}_S^tR,S]$ according to  Proposition ~\ref{B5.7}(2).
\end{proof}

\begin{remark}\label{B5.9} In the preceding theorem, we may define $n[R',S']$ even if $R'$ is not a local ring. But, since $R'\subseteq S'$ is an integral FIP extension, it is a $\mathcal B$-extension. 
Moreover, for any $M\in\mathrm{MSupp}_{R'}(S'/R')$, we have ${}_{S'_M}^tR'_M=({}_{S'}^tR')_M$; so that, for some $T\in[R,S]$, we have 
$R'=T\cap{}_{S'}^tR'$ and $S'={}_{S'}^tR'T$ if and only if $R'_M=T_M\cap {}_{S'_M}^tR'_M$ and $S'_M={}_{S'_M}^tR'_MT_M$ for any $M\in\mathrm{MSupp}_{R'}(S'/R')$. It follows that $n[R',S']=\prod_{i=1}^n n[R'_{M_i},S'_{M_i}]$, which we can calculate using Proposition 
~\ref{B5.7}.  
\end{remark}

We now consider  some types of integral FIP extensions. We still work with an extension $R\subset S$ such that $(R,M)$ is a local ring since $R\subset S$ is a $\mathcal B$-extension. In the following Proposition, we recall and generalize a result of \cite{DPP3}, using the Nagata rings $R(X)$ and $S(X)$.

\begin{proposition}\label{B5.10} Let $R\subset S$ be an FIP subintegral extension. Assume that $(R,M)$ is a local ring.
\begin{enumerate}
\item \cite[Proposition 4.13]{DPP3} If $|R/M|=\infty$, then $R\subset S$ is chained and $|[R,S]|=\ell[R,S]+1$.

\item Assume that $|R/M|<\infty$ and let $n$ be the nilpotency index of $M/(R:S)$. If $R+SM^2\subseteq S$ is chained and ${\mathrm L}_R(MS/M)=n-1$, then $R\subset S$ is chained and $|[R,S]|=\ell[R,S]+1={\mathrm L}_R(N/M)$, where $N$ is the maximal ideal of $S$.
\end{enumerate}
\end{proposition}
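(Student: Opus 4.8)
The identity $\ell[R,S]=\mathrm L_R(N/M)$ is in fact automatic for any subintegral FCP extension, so the real content is that $R\subset S$ is chained (equivalently, since the extension has FCP, that $|[R,S]|=\ell[R,S]+1$). The plan is to dispose first of the length formula and then to obtain chainedness by reduction to the infinite-residue-field case treated in part (1) via Nagata rings. For the length formula: as $R$ is local and $R\subset S$ is subintegral, $S$ is local with maximal ideal $N$ lying over $M$, the residual extension $R/M\to S/N$ is an isomorphism, and hence $S=R+N$, so $S/R\cong N/(N\cap R)=N/M$. By Proposition~\ref{1.31}(1) every minimal step in a maximal chain of $[R,S]$ is ramified; for a ramified minimal extension $A\subset A[q]$ with $q^2\in\mathcal C:=(A:A[q])$ one has $A[q]=A+Aq$ and $\mathcal Cq\subseteq\mathcal CA[q]\subseteq A$, so $A[q]/A=(A/\mathcal C)\bar q$ has length one over $A/\mathcal C\cong R/M$. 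Telescoping, $\ell[R,S]=\mathrm L_R(S/R)=\mathrm L_R(N/M)$, without using the hypotheses.

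It remains to prove $R\subset S$ chained. I would pass to the Nagata rings $R(X)\subseteq S(X)$: here $R(X)$ is local with maximal ideal $MR(X)$ and residue field $(R/M)(X)$, which is \emph{infinite}; the extension $R(X)\subseteq S(X)$ is again integral and subintegral (the $i$-extension property and the triviality of all residual extensions are inherited under this flat base change, and $S(X)$ is local with maximal ideal $NS(X)$); and $T\mapsto T(X)$ is an order embedding $[R,S]\hookrightarrow[R(X),S(X)]$ with $T(X)\cap S=T$ and $\ell[R(X),S(X)]=\ell[R,S]$. If one knows that $R(X)\subseteq S(X)$ has FIP, then part (1) applies to this FIP subintegral extension over the local ring with infinite residue field, so $R(X)\subseteq S(X)$ is chained; since $[R,S]$ then embeds into a chain it is itself a chain, whence $R\subset S$ is chained and $|[R,S]|=\ell[R,S]+1$.

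Thus everything reduces to showing that the two hypotheses force $R(X)\subseteq S(X)$ to have FIP, i.e.\ that $T\mapsto T(X)$ is onto. The intended mechanism is to split $[R(X),S(X)]$ at the ring $(R+SM^2)(X)=R(X)+S(X)\bigl(MR(X)\bigr)^2$. Above $(R+SM^2)(X)$ one invokes the first hypothesis: $[R+SM^2,S]$ being a finite chain, applying the Nagata construction step by step carries each minimal extension of that chain to a minimal extension and so yields all of $[(R+SM^2)(X),S(X)]$ — no new intermediate ring appears between consecutive terms — so every ring there has the form $T(X)$. Below $(R+SM^2)(X)$, and for controlling where an arbitrary $\mathcal T\in[R(X),S(X)]$ sits relative to $(R+SM^2)(X)$, one invokes the second hypothesis: the equality $\mathrm L_R(MS/M)=n-1$ is the numerical value realized in the chained case, and via the filtration $M=M+M^nS\subseteq M+M^{n-1}S\subseteq\cdots\subseteq M+MS=MS$ it rigidifies the $M$-adic structure of $S$ enough to prevent the finite-dimensional local algebras governing $[R,R+SM^2]$ from acquiring new subalgebras over $(R/M)(X)$; one deduces that $[R(X),(R+SM^2)(X)]$ likewise consists only of rings $T(X)$ and that $(R+SM^2)(X)$ is comparable with every element of $[R(X),S(X)]$, so $[R(X),S(X)]=\{T(X):T\in[R,S]\}$.

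I expect the main obstacle to be exactly this last point: turning the numerical hypothesis $\mathrm L_R(MS/M)=n-1$ into the statement that the base-change map $[R,R+SM^2]\to[R(X),(R+SM^2)(X)]$ is surjective and that $(R+SM^2)(X)$ is comparable with every intermediate ring of $R(X)\subseteq S(X)$. Everything else — the length formula, the inheritance of subintegrality and of the length function by the Nagata extension, the behaviour of chains under $T\mapsto T(X)$, and the appeal to part (1) — is routine or available from the cited literature.
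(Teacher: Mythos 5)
Your overall route is the same as the paper's: pass to the Nagata extension $R(X)\subseteq S(X)$, note that $R(X)$ is local with infinite residue field, apply part (1), and pull chainedness back along the order-preserving, order-reflecting injection $T\mapsto T(X)$ of \cite[Lemma 3.1(d)]{DPP3}; your direct computation $\ell[R,S]=\mathrm L_R(S/R)=\mathrm L_R(N/M)$ via ramified minimal steps is also fine and is what the paper gets from \cite[Lemma 5.4]{DPP2} (it agrees with the proof's conclusion $|[R,S]|=\ell[R,S]+1=\mathrm L_R(N/M)+1$; the ``$=\mathrm L_R(N/M)$'' in the displayed statement is a misprint).

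The genuine gap is exactly the point you flag yourself: you never prove that the two hypotheses force $R(X)\subseteq S(X)$ to have FIP, and your sketched mechanism does not close it. In the paper this step is not argued at all but is precisely the content of the cited \cite[Corollary 3.31]{DPP3}, whose hypotheses (residue field finite, $R+SM^2\subseteq S$ chained, $\mathrm L_R(MS/M)=n-1$ with $n$ the nilpotency index of $M/(R:S)$) are verbatim those of part (2); the proposition is stated so as to make that citation available. Your substitute argument has two weak links. First, above $(R+SM^2)(X)$: even granting that each minimal step $C_i\subset C_{i+1}$ of the chain $[R+SM^2,S]$ yields a minimal step $C_i(X)\subset C_{i+1}(X)$, this only says no ring lies strictly between consecutive terms of the image chain; it does not show that the interval $[(R+SM^2)(X),S(X)]$ contains no rings incomparable to that chain, which is what surjectivity of $T\mapsto T(X)$ there requires. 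Second, below $(R+SM^2)(X)$ and for comparability of $(R+SM^2)(X)$ with an arbitrary intermediate ring, the claim that $\mathrm L_R(MS/M)=n-1$ ``rigidifies the $M$-adic structure enough to prevent new subalgebras over $(R/M)(X)$'' is a heuristic, not a proof: the whole difficulty of the finite-residue-field case is that finite-dimensional algebras over $R/M$ can have few subalgebras for cardinality reasons while acquiring new ones after the base change to $(R/M)(X)$, and ruling this out is exactly the nontrivial content of \cite[Corollary 3.31]{DPP3}. So as written the proposal reduces the proposition to an unproved (and nontrivial) transfer statement; either prove that statement or cite it.
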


\begin{proof} Since $R$ is a local ring and $R\subset S$ is an FIP subintegral extension, $S$ is also a local ring.

(2) Assume that $|R/M|<\infty,\ R+SM^2\subseteq S$ is chained and ${\mathrm L}_R(MS/M)=n-1$. By \cite[Corollary 3.31]{DPP3}, we get that $R(X)\subseteq S(X)$ has FIP. Moreover, $(R(X),MR(X))$ is a local ring such that $|R(X)/MR(X)|=\infty$, and $R(X)\subset S(X)$ is a finite subintegral extension \cite[Lemma 3.15]{DPP3}. Then, (1) gives that $ R(X)\subset S(X)$ is chained. But the map $\varphi:[R,S]\to[R(X),S(X)]$, defined by $\varphi(T):=T(X)$ for each $T\in[R,S]$, is an order-preserving and order-reflecting injection \cite[Lemma 3.1(d)]{DPP3}; so that, $R\subset S$ is also chained, giving $|[R,S]|=\ell[R,S]+1={\mathrm L}_R(N/M)+1$ by \cite[Lemma 5.4]{DPP2}.
\end{proof} 

\begin{remark}\label{B5.101} (1) When $(R,M)$ is a local ring with $|R/M|<\infty$ such that $R(X)\subseteq S(X)$ has not FIP, the previous reasoning does not hold, as well as the equalities of Proposition~\ref{B5.10} (2). We use \cite[Example 3.12]{DPP3} to illustrate this situation. Let $K$ be a finite field with $n:=|K|$. Set $T:=K[Y]/(Y^4)=K[y]$, where $y$ is the class of $Y$ in $T$; so that, $y^4=0$. Then, $T$ is a finite local Artinian ring with maximal ideal $N:=yT$. Moreover, $K\subset T$ is finite subintegral. Let $S\in[K,T]$ be such that $S\subset T$ is minimal (ramified). Then, $S$ is also a finite local Artinian ring. Let $M$ be its maximal ideal. Then, $N^2\subseteq M$ gives $y^2,y^3\in M$; so that, $K [y^2,y^3] \subseteq S\subset T$ implies $K[y^2,y^3]=S$ by considering the dimensions of the different $K$-vector spaces. Obviously, $K\subset K':=K[y^3]$ and $K\subset K_{\alpha}:=K[y^2+\alpha y^3]$ are minimal extensions, all distinct, for any $\alpha\in K$. Moreover, $K'\subset S$ and $K_{\alpha}\subset S$ are also minimal extensions. Then, $[K,T]=\{K,K',K_{\alpha},S,T\mid \alpha\in K\}$; so that, $|[K,T]|=n+4$.  
 As $K\subset T$ is not chained, $K(X)\subset T(X)$ has not FIP, according to \cite[Corollary 3.31]{DPP3}.

(2) We can add another property about $|[R,S]|$ for an FIP subintegral extension $R\subseteq S$ when $(R,M)$ is a finite local ring. Let $N$ be the maximal ideal of $S$ and let $T\in[R,S]$ be such that $T\subset S$ is a minimal (ramified) extension. Then, $P:=(T:S)$ is the maximal ideal of $T$ and an $N$-primary ideal of $ S$ such that $P$ and $N$ are adjacent in $S$ (equivalently, $\dim_{R/M}(N/P)=1$ by Theorem~\ref{minimal}). Assume that there exists $T'\in[R,S]$ such that $T'\subset S$ is minimal with $(T':S):=P$. Then, $T=T'=R+P$. Conversely, if $P$ is an $N$-primary ideal of $S$ such that $P$ and $N$ are adjacent in $S$ and $M=P\cap R$, setting $S':=R+P$, we get that $S'\subset S$ is a minimal ramified extension. Then, the map $\psi:\{P$ an $N$-primary ideal of $S\mid P$ and $N$ adjacent, $M=P\cap R\}\to\{S'\in[R,S]\mid S'\subset S$ minimal$\}$ defined by $\psi(P):=R+P$ is a bijection. Let $m$ be the number of $N$-primary ideals $P$ of $S$ such that $P$ and $N$ are adjacent and $M=P\cap R$. In particular, if ${\mathrm L}_R(N/M)=2$, then $\ell[R,S]=2$ by \cite[Lemma 5.4]{DPP2} and $|[R,S]|=m+2$. It follows by \cite[Theorem 6.1 (6)]{Pic 6} that either $|[R,S]|=3\ (*)$ or $|[R,S]|=|R/M|+3\ (**)$. In case $(*)$, we get that $R\subseteq S$ is chained and $m=1$. In case $(**)$, we get that $|R/M|=m-1$.  
\end{remark}

\begin{proposition}\label{B5.103} Let $R\subset S$ be an FIP integral t-closed extension with $C:=(R:S)$. Then $|[R,S]|=\prod_{M\in\mathrm{V}_R(C)}|[S/MS,R/M]|$. In fact, $|[S/MS,R/M]|$ is the number of $R/M$-subalgebras of $S/SM$.
 \end{proposition}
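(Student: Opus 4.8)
The statement decomposes $|[R,S]|$ for an FIP integral $t$-closed extension into a product over the fibre at each maximal ideal in $\mathrm{V}_R(C)$. The plan is to combine two ingredients already available in the excerpt: first, that an integral FCP (hence FIP) extension is a $\mathcal B$-extension (stated in the paragraph on $\mathcal B$-extensions, citing \cite[Theorem 3.6]{DPP2}), so that $|[R,S]|=\prod_{M\in\mathrm{MSupp}(S/R)}|[R_M,S_M]|$; and second, that for a $t$-closed extension the conductor controls everything locally. By Proposition \ref{SUP} we have $\mathrm{MSupp}(S/R)=\mathrm{V}_R(C)$, which already replaces the indexing set. So the real content is to show, for each $M\in\mathrm{V}_R(C)$, the identification
\[
|[R_M,S_M]| \;=\; |[S/MS,\,R/M]|,
\]
and to interpret the right-hand side as the number of $R/M$-subalgebras of $S/MS$ (note the lattice $[S/MS,R/M]$ is the lattice of the \emph{fibre} extension $R/M\to S/MS$, written with the larger ring on the left only as a typographical convention in the statement).

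The key step is the localization-then-reduction-modulo-conductor argument. First I would fix $M\in\mathrm{V}_R(C)$ and pass to $R_M\subset S_M$, which is again an integral FIP $t$-closed extension over the local ring $(R_M,MR_M)$. By Lemma \ref{SUP1} (condition $(\dag)$), there is a unique maximal ideal $N$ of $S$ above $M$, $S_M=S_N$, and $I:=(R_M:S_M)=MS_N=NS_N=MR_M\cdot S_M$; in particular $MS_M$ is the conductor of $R_M\subset S_M$ and it is a maximal ideal of $S_M$. Since $MS_M=(R_M:S_M)$ is a common ideal of $R_M$ and $S_M$, the order-preserving bijection $[R_M,S_M]\to[R_M/MS_M,\,S_M/MS_M]$ of \cite[Lemma II.3]{DMPP} (already invoked in the proof of Lemma \ref{1.19}) applies. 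Now $R_M/MS_M=R_M/MR_M=\kappa_R(M)=R/M$ and $S_M/MS_M=S_N/NS_N \cong S/MS$ localized — more carefully, $S_M/MS_M=(S/MS)_M$, and since $S/MS$ is already an $R/M$-module supported only at $M$ (because $\mathrm{V}_R(C)=\mathrm{MSupp}(S/R)$ and mod $MS$ only the fibre at $M$ survives), one has $(S/MS)_M\cong S/MS$ as $R/M$-algebras. Thus $[R_M,S_M]\cong[R/M,\,S/MS]$ as posets, giving $|[R_M,S_M]|=|[S/MS,R/M]|$.

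Assembling: $|[R,S]|=\prod_{M\in\mathrm{MSupp}(S/R)}|[R_M,S_M]|=\prod_{M\in\mathrm{V}_R(C)}|[S/MS,R/M]|$, and each factor counts the $R/M$-subalgebras of the fibre ring $S/MS$, as claimed. The final sentence of the statement is then just the unwinding of what $[S/MS,R/M]$ means once one knows $S/MS$ is a finite-dimensional $R/M$-algebra.

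The main obstacle is bookkeeping about which ring sits "in front": the notation $[S/MS,R/M]$ is nonstandard (it is the interval lattice of the fibre extension $R/M\hookrightarrow S/MS$), so I must be careful that the $t$-closed hypothesis really does force $MS$ to be the conductor locally — this is exactly where Lemma \ref{SUP1} is essential and where an analogous statement would fail without $t$-closedness. A secondary point requiring care is the base-change isomorphism $S_M/MS_M\cong S/MS$: one should note $S/MS$ is annihilated by $M$-away primes, i.e. it is a module over the semilocalization at the finitely many primes of $\mathrm{V}_R(C)$ but is concentrated at $M$ after the quotient, so localizing at $M$ changes nothing. Once those two local facts are pinned down, the rest is the $\mathcal B$-extension product formula applied verbatim.
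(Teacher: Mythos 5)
Your proposal is correct and follows essentially the same route as the paper's proof: the $\mathcal B$-extension product formula, the identification $\mathrm{MSupp}(S/R)=\mathrm V_R(C)$, the local fact that $MR_M=MS_M$ is the conductor (which the paper re-derives in-line rather than quoting Lemma \ref{SUP1}), the crossing-the-conductor lattice bijection, and the identifications $R_M/MR_M\cong R/M$, $S_M/MS_M\cong S/MS$. The only differences are bookkeeping ones (the paper proves $N=SM$ globally and cites \cite[Proposition 3.7]{DPP2} for the bijection, while you cite \cite[Lemma II.3]{DMPP} and handle the quotient identification by localization), so no further comment is needed.
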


\begin{proof} Since $R\subset S$ is t-closed, $C=(R:S)$ is an intersection of maximal ideals of $R$ and $S$ by Proposition  ~\ref{1.91}. Since $R\subset S$ is an i-extension, for each $M\in\mathrm{V}_R(C)$, there is one and only one maximal ideal $N$ of $S$ lying over $M$. We claim that $N$ is of the form $N=SM$. First, $SM\subseteq N$. But $MR_M=MS_M=MS_N=NS_N$ by \cite[Proposition 2, page 40]{Bki A1} and Proposition~\ref{1.31}, since $S_M=S_N$. Moreover, for $M'\in\mathrm{Max}(R),\ M'\neq M$, we have $MS_{M'}=S_{M'}=NS_{M'}$. Then, $N=SM$. Since $R\subset S$ is a $\mathcal B$-extension, we have $|[R, S]|=\prod_{M\in\mathrm{MSupp}(S/R)}|[R_M,S_M]|$. But, $MR_M=(R_M:S_M)$; so that, $|[R_M/(MR_ M), S_M/(MR_M)]|=|[R_M,S_M]|$ by \cite[Proposition 3.7]{DPP2}. Since $ MR_M\in\mathrm{Max}(S_M)$, it follows that $R_M/(MR_M)\to S_M/(MR_M)$ is a field extension. At last, $R_M/(MR_M)\cong R/M$ and $S_M/(MR_M)\cong S/MS$.
\end{proof}

We end this section with open questions. In Lemma ~\ref{A5.0}, we proved that for a u-closed integral extension $R\subset S$ where $R$ is a local ring, under some special assumptions, if ${}_{S}^+R$ has a unique complement in $[R,S]$, then it is the co-subintegral closure of $R\subset S$. We can ask if the same result holds in the more general situation of Proposition ~\ref{B5.1}. In a similar way, Proposition ~\ref{B5.3} shows that for a seminormal integral extension $R\subset S$ where $R$ is a local ring, under some special assumptions, if ${}_{S}^tR$ has a unique complement in $[R,S]$, then it is the co-infra-integral closure of $R\subset S$. We can ask if the same result holds in the more general situation of Proposition~\ref{B5.7}.


\begin{thebibliography}{99999}

\bibitem{ADM} D. D. Anderson, D. E. Dobbs and B. Mullins, The primitive element theorem for commutative algebras, {\it Houston J. Math.}, {\bf 25}, (1999), 603--623.

\bibitem{Bki A1} N. Bourbaki, Alg\`ebre Commutative, Chs. 5--6, Hermann, Paris, 1964.

\bibitem{DMPP} D. E. Dobbs, B. Mullins, G. Picavet and M. Picavet-L'Hermitte, On the FIP property for extensions of commutative rings, {\it Comm. Algebra}, {\bf 33} (2005), 3091--3119.

\bibitem{DPP2} D. E. Dobbs, G. Picavet and M. Picavet-L'Hermitte, Characterizing the ring extensions that satisfy FIP or FCP, {\it J. Algebra}, {\bf 371} (2012), 391--429.

\bibitem{DPP3} D. E. Dobbs, G. Picavet and M. Picavet-L'Hermitte, Transfer results for the FIP and FCP properties of ring extensions, {\it Comm. Algebra}, {\bf 43} (2015), 1279--1316.

\bibitem{DPP4} D. E. Dobbs, G. Picavet and M. Picavet-L'Hermitte, When an extension of Nagata rings has only finitely many intermediate rings, each of those is a Nagata ring, {\it Int. J. Math. Math. Sci.}, {\bf Vol 2014} (2012), Article ID 315919, 13 pp..

\bibitem{DPPS} D. E. Dobbs, G. Picavet, M. Picavet-L'Hermitte and J. Shapiro, On intersections and composites of minimal ring extensions,  {\it JP J. Algebra, Number Theory and Appl.},
 {\bf 26} (2012), 103--158.

\bibitem{FO} D. Ferrand and J.-P. Olivier, Homomorphismes minimaux d'anneaux, {\it J. Algebra}, {\bf 16} (1970), 461--471.

\bibitem{EGA} A. Grothendieck and J. Dieudonn\'e, El\'ements de G\'eom\'etrie Alg\'ebrique I, Springer Verlag, Berlin, (1971).

\bibitem{I} B. Iversen, Generic local structure of the morphisms in Commutative Algebra, Lecture Notes in Mathematics, {\bf vol. 310},  Springer,  1973.

\bibitem{KZ} M. Knebusch and D. Zhang, Manis Valuations and Pr\"ufer Extensions I, Springer, Berlin (2002).

\bibitem{KU} K. Kubota, A note on  radicial extensions of rings, {\it Mem. Defense Acad.},  {\bf 16} (1976), no 3, 75--80.

 \bibitem{L} D. Lazard, Autour de la platitude, {\it Bull.  Soc. Math. France}, {\bf 97}, (1969), 81--128.

\bibitem{MA} M. Manaresi, Some properties of weakly normal varieties,  {\it   Nagoya Math. J.}, {\bf 77} (1980),  61--74.

\bibitem{Pic 0} G. Picavet, Anodality, {\it Comm. Algebra}, {\bf 26} (1998), 345--393.

\bibitem{PICET} G. Picavet and M. Picavet-L'Hermitte, Etale extensions with finitely many subextensions, {\it  Boll. Unione Mat. Ital.},  {\bf 10},  (2017), 549--573. DOI:10.1007/s40574-016-0088-7. 

\bibitem{Pic 1} G. Picavet and M. Picavet-L'Hermitte, Morphismes t-clos, {\it Comm. Algebra}, {\bf 21} (1993), 179--219.

\bibitem{Pic 2} G. Picavet and M. Picavet-L'Hermitte, T-Closedness, pp. 369--386, in: {\it Non-Noetherian Commutative Ring Theory, Math. Appl. 520}, Kluwer, Dordrecht, 2000.

\bibitem{Pic 3} G. Picavet and M. Picavet-L'Hermitte, Pr\"ufer and Morita hulls of FCP extensions, {\it Comm. Algebra}, {\bf 43} (2015), 102--119.

\bibitem{Pic 9} G. Picavet and M. Picavet-L'Hermitte, FIP and FCP products of ring morphisms, {\it Palest. J. Math.}, {\bf 5 } (Special Issue) (2016), 63--80.

\bibitem{Pic 5} G. Picavet and M. Picavet-L'Hermitte, Quasi-Pr\"ufer extensions of rings, pp. 307--336, in: {\it Rings, Polynomials and Modules}, Springer, 2017.

\bibitem{Pic 6} G. Picavet and M. Picavet-L'Hermitte, Rings extensions of length two,  {\it J. Algebra Appl.}, {\bf 18} (2019 no 8), 1950174, 34pp..

\bibitem{Pic 10} G. Picavet and M. Picavet-L'Hermitte, Boolean FIP ring extensions, {\it Comm. Algebra}, {\bf 48} (2020), 1821--1852. 

\bibitem{Pic 11} G. Picavet and M. Picavet-L'Hermitte, The Loewy series of an FCP (distributive) ring  extension, {\it Int. Electon. J.  Algebra}, {\bf 29}, (2021), 15--49.

\bibitem{Pic 12} G. Picavet and M. Picavet-L'Hermitte, Catenarian FCP  ring extensions, to appear in J. Commut. Algebra, projecteuclid.org/euclid.jca/1599184821.

\bibitem{Pic 15} G. Picavet and M. Picavet-L'Hermitte, {\it Splitting ring extensions},    arXiv:2107.04102v1 [math.AC] .

\bibitem{Pic 14} M. Picavet-L'Hermitte, Weak normality and t-closedness, {\it Comm. Algebra}, {\bf 28} (2000), 2395--2422. 

\bibitem{RAY} M. Raynaud, Anneaux locaux Hens\'eliens, Lecture Notes in Mathematics, {\bf vol. 169},  Springer, Berlin, 1970.

\bibitem{SAM} P. Samuel, La notion de place dans un anneau. {\it  Bull. Soc. Math. France}, {\bf 85}, (1957), 123--133. 

\bibitem{S} R. G. Swan, On seminormality, {\it J.  Algebra}, {\bf 67} (1980), 210--229.

\bibitem{YA} H. Yanagihara, On an intrinsic definition of weakly normal rings, {\it Kobe J. Math.}, {\bf 2} (1985), 89--98.
\end{thebibliography}
\end{document}